\documentclass{article}

\usepackage{mathrsfs, bm}

\usepackage[colorlinks,linkcolor={blue},citecolor={red}]{hyperref}
\pdfstringdefDisableCommands{%
	%
}

\everymath{\displaystyle}

\usepackage{amssymb,amsthm,amsmath,graphicx, mathabx}
\usepackage{color} 
\usepackage{enumerate}

\numberwithin{equation}{section}
\setlength{\oddsidemargin}{0mm} \setlength{\evensidemargin}{0mm}
\setlength{\topmargin}{-15mm} \setlength{\textheight}{220mm}
\setlength{\textwidth}{155mm}

\usepackage{rotating}
\usepackage{ wasysym }

\newcommand{\vp}{ {\scriptsize \normalfont \hspace{1.7pt}  \rotatebox{90}{\Bowtie}}}
\newcommand{\hp}{{\scriptsize \normalfont \Bowtie}}

\newcommand{\bigvp}{ { \hspace{1.7pt} \rotatebox{90}{\Bowtie}}}
\newcommand{\bighp}{{\Bowtie}}

\newcommand{\lin}{\mathrm{lin}}
\newcommand{\Fp}{\mathcal{F}}
\newcommand{\PL}{{\mathbb{P}_{\mathrm{L}}}}
\newcommand{\g}{\vec{g}}
\newcommand{\U}{\mathcal{U}}

\newcommand{\mua}{{\bar{\mu}}}

\newcommand{\wh}{\widehat}
\newcommand{\wt}{\widetilde}

\newcommand{\Z}{\mathbb{Z}}
\newcommand{\R}{\mathbb{R}}
\newcommand{\vr}{\varrho}
\renewcommand{\r}{\rho}
\newcommand{\z}{\zeta}
\renewcommand{\th}{\theta}
\newcommand{\Lmd}{\Lambda}
\newcommand{\lmd}{\lambda}
\newcommand{\F}{\mathscr{F}}
\newcommand{\supp}{\mathrm{supp}}
\newcommand{\Q}{\mathcal{Q}}
\renewcommand{\L}{\mathcal{L}}

\newcommand{\m}{\mathfrak{m}}
\newcommand{\res}{\mathrm{res}}
\newcommand{\nr}{\mathrm{nr}}

\newcommand{\f}{{ \vec{f}}}
\newcommand{\ue}{\vec{u}}

\newcommand{\rot}{\mathrm{rot}}

\newtheorem{theorem}{Theorem}[section]
\newtheorem{lemma}[theorem]{Lemma}
\newtheorem{corollary}[theorem]{Corollary}
\newtheorem{proposition}[theorem]{Proposition}
\newtheorem{Lemma A.1}{Lemma A.1}
\theoremstyle{definition}

\theoremstyle{remark}
\newtheorem{remark}[theorem]{Remark}

\allowdisplaybreaks[3]

\begin{document}
	\title{Global solutions to the Euler-Coriolis system}
	\author{Xiao Ren\footnotemark[1] \and Gang Tian\footnotemark[1]}
	
	\renewcommand{\thefootnote}{\fnsymbol{footnote}}
	\footnotetext[1]{Beijing International Center for Mathematical Research, Peking University, Beijing 100871, P.R.China. 
		
	\ \ \ Emails: renx@pku.edu.cn, gtian@math.pku.edu.cn}

	\maketitle

	\abstract{We prove the global well-posedness and scattering for the 3D incompressible Euler-Coriolis system with sufficiently small, regular and suitably localized initial data. Equivalently, we obtain the asymptotic stability for ``rigid body" rotational solutions to the pure Euler equations. This extends the recent work of Guo, Pausader and Widmayer \cite{GuoInvent} to the general non-axisymmetric setting. }

\setcounter{tocdepth}{1}
	\tableofcontents
	
	\section{Introduction}
	
	Consider the 3D incompressible Euler-Coriolis equations
	\begin{equation} 
		\left\{\begin{array}{l}
			\partial_t \ue + \ue \cdot \nabla \ue + \vec{e}_3 \times \ue + \nabla p = 0,\\
			\nabla \cdot \ue = 0,
		\end{array}\right. \label{eq:EC}
	\end{equation}
	where $\ue, p$ are the velocity and pressure fields respectively, and $\vec{e}_3$ is the unit vector $(0,0,1)$. The classical Coriolis force term $\vec{e}_3 \times \ue =  (-u_2, u_1, 0)$ in \eqref{eq:EC} 
	arises as a inertial force felt by the fluid due to a rotating frame of reference. The pressure $p$ can be recovered from $\ue$ through the elliptic equation $\Delta p = \partial_1 u_2 - \partial_2 u_1 - \mathrm{div} (\ue \cdot \nabla \ue)$.  From an alternative viewpoint, the system \eqref{eq:EC} describes Euler solutions (without Coriolis force) near a background flow given by the uniform ``rigid body" rotation. More precisely, we define the background velocity and pressure fields as
	$$\vec{U}_{\rot} = \left(-\frac{x_2}{2}, \frac{x_1}{2}, 0\right), \quad P_{\rot} = \frac{|x_h|^2}{4},$$ 
	which constitute an infinite energy stationary solution to the 3D Euler system  in full space. Let 
	$$\vec{U}(t,x) = \vec{U}_{\rot}(x) + R(t) \ue(t, R(-t) x), \quad P(t,x) = P_{\rot}(x) + p(R(-t) x),$$ 
	where we denote
	\begin{equation}
		R(t) = \left(\begin{matrix}
			\cos (t/2) & -\sin (t/2) & \\
			\sin (t/2) & \cos (t/2) & \\
			& & 1
		\end{matrix}\right).
	\end{equation}
	Then, $(\vec{U}, P)$ solves the pure Euler system
		\begin{equation} 
		\left\{\begin{array}{l}
			\partial_t \vec{U} + \vec{U} \cdot \nabla \vec{U} + \nabla P = 0,\\
			\nabla \cdot \vec{U} = 0
		\end{array}\right. \label{eq:E}
	\end{equation}
	if and only if $(\ue,p)$ solves the Euler-Coriolis system \eqref{eq:EC}. In \cite{GuoInvent}, Guo, Pausader and Widmayer were able to prove the global well-posedness and scattering result for \eqref{eq:EC} in the axisymmetric setting. The goal of this paper is to extend the result to the general non-axisymmetric case.
	
	\smallskip

	Denote $S = x \cdot \nabla$ and $\Omega = x_h^\perp \cdot \nabla_h = -x_2 \partial_1 + x_1 \partial_2$ 	(our convention is that $(a_1, a_2)^\perp = (-a_2, a_1)$), and define the Lie derivative $\bar{\Omega}$ acting on vector fields as
	\[ \bar{\Omega} \ue = \Omega \ue - \ue_h^{\perp} = x_h^\perp \cdot \nabla_h \ue - \ue_h^\perp. \]

	  \begin{theorem} \label{thm:main}
	  	There exist  constants $M_1, M_2 \ge 1$ and $\beta_0, {\varepsilon}_0 > 0$ such that if  $\ue_{0}$ is divergence free and satisfies
	  	\begin{equation} \label{eq:59-a}
	  		\|\ue_{0}\|_{H^{M_1}} + \sum_{a+b \le M_2} \|S^a \bar{\Omega}^b \ue_{0}\|_{L^2} + \||x|^{1+\beta_0}  \ue_{0} \|_{L^2} \le \varepsilon
	     \end{equation}
	     for some $0 <\varepsilon < {\varepsilon}_0$, then there exists a unique global solution $\ue \in C([0,+\infty); C^2(\R^3))$ to \eqref{eq:EC} with initial data $\ue_0$, and thus also a global solution $\vec{U}$ to \eqref{eq:E} with initial data $\vec{U}_{0} =\vec{U}_{\rot} + \ue_{0}$. Moreover, $\ue(t)$ decays over time with the optimal rate	 $\|\ue(t)\|_{L^\infty} \lesssim \varepsilon \langle t \rangle^{-1}$     and scatters linearly in $L^2$, that is, 
	     \begin{equation}
	     	\|\ue(t) - \ue_\lin(t)\|_{L^2} \to 0, \quad  t \to +\infty,
	     \end{equation}
	     for some solution $\ue_\lin$ to the linearization of \eqref{eq:EC} given by
	     	\begin{equation} 
	     	\left\{\begin{array}{l}
	     		\partial_t \ue_\lin  + \vec{e}_3 \times \ue_\lin + \nabla p_\lin = 0,\\
	     		\nabla \cdot \ue_\lin = 0.
	     	\end{array}\right. \label{eq:EC-lin-1}
	     \end{equation}
	  \end{theorem}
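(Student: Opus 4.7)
The plan is to prove Theorem \ref{thm:main} via a bootstrap argument organized around the space-time resonance method adapted to the Poincar\'e/inertial wave dispersion. The linearization \eqref{eq:EC-lin-1} after Helmholtz projection and diagonalization is governed by the dispersion relation $\omega_\pm(\xi) = \pm \xi_3 / |\xi|$, which is homogeneous of degree $0$, invariant under rotations around $\vec{e}_3$, and degenerates along $\{\xi_3 = 0\}$. Consequently the vector fields $S$ and $\bar{\Omega}$ commute with the linear flow (up to harmless lower-order terms arising from the Riesz-type projector), and the generic pointwise decay of the free evolution is $\langle t \rangle^{-1}$, consistent with the stated decay rate.

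First, I would introduce the profile $\f(t) = e^{-tL}\ue(t)$, where $L$ is the skew-adjoint operator associated with Coriolis plus Leray projection, and rewrite \eqref{eq:EC} via Duhamel's formula as an oscillatory bilinear expression involving the phases $\Phi_{\pm\pm\pm}(\xi,\eta) = \omega_\pm(\xi) - \omega_\pm(\xi-\eta) - \omega_\pm(\eta)$ and smooth bilinear symbols inherited from the quadratic nonlinearity. The bootstrap would track three quantities simultaneously: a high Sobolev norm $\|\ue\|_{H^{M_1}}$, a vector field norm $\sum_{a+b\le M_2}\|S^a \bar{\Omega}^b \ue\|_{L^2}$, and a $Z$-type dispersive/weighted norm tailored to the anisotropy of $\omega_\pm$ (capturing localization around the degenerate set $\{\xi_3=0\}$ and $L^1$-in-frequency control needed for $L^\infty$ decay with optimal rate).

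Second, I would close the Sobolev and vector-field energy estimates by differentiating and testing with the solution: the Coriolis term is skew-symmetric, the pressure drops out by incompressibility, and the top-order contribution from the transport term is handled by standard commutator/Moser-type bounds, so the losses are controlled by $\|\nabla \ue\|_{L^\infty}$, hence by the dispersive $t^{-1}$ decay obtained from the $Z$-norm. To control the $Z$-norm itself, I would perform a normal form transformation using the non-vanishing of $\Phi_{\pm\pm\pm}$ away from space-time resonant loci, reducing the effective nonlinearity to cubic or better away from those loci, and then analyze the remaining contributions via stationary-phase / set-size estimates around the resonant sets, exploiting null-structure hidden in the Leray projector and the cross-product structure of the Coriolis term to absorb the worst frequency interactions near $\xi_3 \approx 0$. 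Once both norms are bootstrapped, linear $L^2$ scattering of $\f(t)$ to some limit $\f_\infty$ follows from the integrability in time of the Duhamel increments, giving $\ue_\lin(t) = e^{tL}\f_\infty$.

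The main obstacle, and the novel contribution relative to \cite{GuoInvent}, lies in the non-axisymmetric resonance analysis. In the axisymmetric setting $\Omega \ue \equiv 0$, so the full angular Fourier spectrum collapses and only a restricted subset of bilinear interactions contribute; the resonant geometry effectively reduces to a two-dimensional problem in the meridional variables. Without axisymmetry, all angular modes are active, the resonant set of $\Phi_{\pm\pm\pm}$ is higher-dimensional, and degenerate phase/amplitude cancellations must be recovered at the level of the full angular decomposition. I expect the delicate part to be obtaining the $Z$-norm estimate for outputs concentrated near $\{\xi_3 = 0\}$ through resonant bilinear interactions involving high angular frequencies, where the phase Hessian degenerates simultaneously with the dispersion; this should require a careful angular Littlewood-Paley decomposition compatible with $\bar{\Omega}$, together with refined multiplier bounds that replace the axisymmetric reduction of \cite{GuoInvent}.
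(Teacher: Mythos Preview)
Your high-level framework --- profiles, bootstrap on a Sobolev norm, a vector-field norm, and a dispersive $Z$-type norm, closed via space-time resonance analysis and normal forms --- matches the paper's architecture. However, there is a genuine gap at exactly the point you flag as delicate. The nonlinearity of \eqref{eq:EC} contains a resonant 2D-Euler-type subsystem (the multipliers $\m_\mua^{(1)}, \m_\mua^{(2)}$ in Proposition~\ref{prop:nonlinear}) that does \emph{not} satisfy the null condition of \cite{GuoInvent}; no amount of ``null structure hidden in the Leray projector and cross-product structure'' in the generic sense will make these terms small on the resonant set \eqref{eq:524-3}. The paper's new observation is that after \emph{symmetrizing} the two inputs when $\mu_1=\mu_2$ (Remark~\ref{rem:null-sym}), the combined multiplier acquires a factor $|\eta_h|-|(\xi-\eta)_h|$, which vanishes precisely on \eqref{eq:524-3}. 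This is the specific cancellation that replaces axisymmetry, and without naming it your plan cannot close the high-high $\bighp+\bighp\to\bighp$ interactions with $\mu_1=\mu_2$ and $|\Lambda|\ll 1$.

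Even granting the null factor, exploiting it is nontrivial: the decay is exactly critical ($t^{-1}$), so one cannot afford any loss. The paper does \emph{not} use an angular Littlewood--Paley decomposition as you propose (that was the route in \cite{GuoInvent}); instead it introduces physical-space localizations $Z_l, H_l$ and a step-by-step \emph{multiscale bilinear decomposition} via operators $Q^J_j$ localizing near circles $|\xi_h|\sim\mathrm{const}$ (Section~\ref{sec:742}), combined with precised wave-packet estimates (Lemma~\ref{lem:51-3}) and an orthogonality argument \eqref{eq:51-31} to glue small-scale estimates. Your proposal of angular Littlewood--Paley plus refined multiplier bounds is plausible in spirit but underspecified; the paper's choice of spatial localizations is what permits sharp pointwise linear asymptotics and avoids the gauge singularities that complicated the norms in \cite{GuoInvent}.
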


	  Let us make a few remarks on the main result. 
	  
	 \begin{enumerate}[(1)]
	 	\item Our theorem is inspired by the remarkable work \cite{GuoInvent} in which the axisymmetric case of our result was proved. As we know, substantial difficulties may arise from extending global existence of Euler-type equations in the axisymmetric case to the general case. Indeed, it was pointed out in \cite[Section 2.2.2]{GuoInvent} that such an extension is highly nontrivial. The new difficulty is that the nonlinearity of \eqref{eq:EC} contains a resonant subsystem of 2D Euler type which fails to satisfy the null type condition used in \cite{GuoInvent}. Further, in view of the fast norm growth of 2D Euler solutions \cite{Kiselev} and the notoriously unstable nature of 3D Euler system (see, e.g., \cite{Elgindi, Chen}), it was unclear whether such nonlinearity could lead to more complicated dynamics or even blow-up.

	 	
	 	Our proof of Theorem \ref{thm:main} is based on the discovery of a  hidden null structure in \eqref{eq:EC} without axisymmetry assumption. Roughly speaking, we observe an inherent cancellation in \eqref{eq:EC} when the bilinear inputs are symmetrized and share  the same modulus of frequency.  We are able to fully control the 2D Euler type subsystem and maintain global estimates on the dynamics based on this new null type structure.
	 	
	 	
	 	\item The dispersive effect of rotation in fluid flows has been studied in the literature from various perspectives, see, \emph{e.g.}, geophysical flows \cite{Galla, McWilliams, Ped}, the $\beta$-plane model \cite{EW, PW,  LWZZ, WZZ, BFMT, Gren}, life span and asymptotic behaviour in the case of fast rotations \cite{Angulo, Chemin1,  Dut, KLT, WanChen} and almost global stability for \eqref{eq:EC} in the axisymmetric case \cite{GuoCpam}. See also \cite{EWsiam, Takada} for studies on the stratified Boussinesq flows and dispersive SQG, where linear dispersions of a similar nature are in effect. The dispersion relation underlying the linear system \eqref{eq:EC-lin-1} is given by
	 	\begin{equation}
	 		\Lmd(\xi) = \frac{\xi_3}{|\xi|}.
	 	\end{equation}
	 This dispersive mechanism exhibits  $O(t^{-1})$ decay rate in $L^\infty$ norm (see \cite{Rensq} for a direct proof), and is strongly anisotropic and degenerate.  Despite the existing results, it is still surprising that, in the absence of viscosity, the Coriolis force alone  is sufficient to stabilize the solutions globally in time in the full 3D setting.

	 	\item Inherent null type structures (with respect to certain linear dispersion) have been observed and studied in various incompressible hydrodynamical systems, \emph{e.g.}, the strong null structure in elastodynamics \cite{Lei, Cai2} and MHD equations \cite{Cai1}, the double null form in the $\beta$-plane equation \cite{PW}. The last one is quite similar to ours as it is also revealed via  symmetrization between the two inputs. Note that the effective use of null type structures is highly nontrivial in many cases, in particular, it depends strongly on the underlying dispersion and the geometry of the resonant set. In our proof, a step-by-step multiscale bilinear decomposition (see \eqref{eq:tele}) is designed to capture the inherent cancellation between spatially coherent waves for interactions at various scales. Moreover, a delicate orthogonality argument (see \eqref{eq:51-31}) plays a crucial role in gluing the local small-scale estimates into a global one. 
	 	
	 	\item Theorem \ref{thm:main} establishes the global asymptotic stability of the uniform rotating solution $\vec{U}_{\mathrm{rot}}$, hence it naturally connects to other studies on the stability of stationary Euler flows (in 2D or 3D), \emph{e.g.}, shear flows \cite{Mas,  Wei, WZZcmp, Jia2, Jiacmp}, vortex axisymmetrization \cite{JZV, Jia1, RWDZ} and Hill's spherical vortex \cite{Choi}, although the stability mechanisms are quite different. See also \cite{BBZD} for the study on 2D stratified Couette flow. As mentioned in \cite{GuoInvent}, $\vec{U}_{\mathrm{rot}}$ is but one example of a family of general rotating stationary flows for the 3D Euler equations, given by $\vec{U}_f = f(r) \vec{e}_\th$ with $f: \R^+ \to \R$. Even in the axisymmetric case, the global stability/instability of $\vec{U}_f$ for general $f$ is open and merits further study. 
	 	

	 	\item In our condition \eqref{eq:59-a}, the third weighted $L^2$-norm can be replaced by the  $X$ and $Y$ norms used in our main bootstrap argument \eqref{eq:BA}, which are weaker for the low frequencies (see \eqref{eq:59-b}). We choose to state the main result in this way due to the relatively simple form of \eqref{eq:59-a} and a more convenient local well-posedness theory. 
	 	
	 	\item Our result can also be stated in terms of fast rotations as follows. Given initial data $\ue_{0}$ with the norms in \eqref{eq:59-a} bounded, then the system
	 		\begin{equation} 
	 		\left\{\begin{array}{l}
	 			\partial_t \ue + \ue \cdot \nabla \ue + K \vec{e}_3 \times \ue + \nabla p = 0,\\
	 			\nabla \cdot \ue = 0,
	 		\end{array}\right. 
	 	\end{equation}
	 	is globally well-posed and scatters if the constant $K>0$ is sufficiently large. This is clearly equivalent to Theorem \ref{thm:main} using the rescaling $\ue(t,x) \mapsto K^{-1}\ue(K^{-1} t,x)$.
	 \end{enumerate}

	  Next, we highlight the key ideas and methods developed in the proof of Theorem \ref{thm:main}.
	  
	  \begin{itemize}
	  	\item \emph{Vectorial dispersive unknowns.}
	  	
	  	Due to the (non-obvious) fact that 
	  	$\PL (\vec{e}_3 \times \ue) = \partial_3 \Delta^{-1} (\nabla \times \ue)$
	  	 ($\PL$ is the Leray projector), we can diagonalize the linear system \eqref{eq:EC-lin-1} using the decomposition  
	  	 $$\ue = \ue_+ + \ue_- := \frac12 ( \ue + |\nabla|^{-1} \nabla \times \ue ) + \frac12 ( \ue - |\nabla|^{-1} \nabla \times \ue ),$$ 
	  	 which has been introduced by Lei, Lin and Zhou \cite{LeiLinZhou} to diagonalize the curl operator. Note that this decomposition is orthogonal in $L^2$ and commutes well with $S, \bar{\Omega}$ derivatives and Fourier multipliers. Then, it is natural to define the vectorial profiles as 
	  	 $$\f_\pm = e^{\mp  t |\nabla|^{-1} \partial_3} \ue_\pm.$$  
	  	 By fixing a gauge (see \eqref{eq:gauge1}--\eqref{eq:gauge2}) in phase space, $\f_\pm$ can be reduced to scalar profiles, which coincides with the choice of unknowns in \cite{GuoCpam, GuoInvent}. However,  any such gauge inevitably contains artificial singularities over the sphere according to the classical hairy ball theorem. This has led to a more complicated, carefully designed bootstrap norm in \cite{GuoInvent} in order to include non-smooth functions like ${|\xi_h|^{-1}}{\xi_h}$ arising from the gauge. We avoid such difficulty by working directly with the vectorial profiles $\f_\pm$ for vertical frequencies. This helps us to simplify considerably the choice of norms and the nonlinear analysis for frequencies with $\Lmd$ very close to $1$.

	  	\item \emph{Propagation of microlocal type norms.}
	  	
	  	The linear dispersion $e^{i t \Lmd}$ decays pointwisely with the slow rate $t^{-1}$ in two ``bad" physical regions: either close to the $x_3$-axis or close to the $x_h$-plane. On the Fourier side, they correspond to frequencies $\xi$ with $\Lmd(\xi) \approx 0$ or $\Lmd(\xi) \approx 1$ respectively. To separately handle the difficulties caused by these two types of frequencies, we introduce a basic smooth partition of unity for the phase space $1 \equiv \chi^\vp(\xi) + \chi^\hp(\xi)$, where the supports of $\chi^\vp$ and $\chi^\hp$ are contained in $|\Lmd| \ge 0.55$ (vertical frequencies) and $|\Lmd| \le 0.6$ (horizontal frequencies) respectively.  
	  	
	  	\smallskip
	  	
	  	The norms to be propagated via a standard bootstrap argument (see Proposition \ref{prop:BA}) will be $L^2$-based and incorporate a number of natural/good derivatives $\nabla, S$ and $\Omega$. The $X$ and $Y$ norms are designed for horizontal and vertical frequencies respectively. Together, they provide a  fractional  ($1+\beta$ order, $\beta \ll 1$) angular regularity over the sphere for the Fourier transform of the profiles, which is similar to the bootstrap norms used in \cite{GuoInvent} (see also \cite{PW} for a similar strategy). We emphasize that the definitions of the $X$ and $Y$ norms involve dyadic localizations both in frequency and in space. In contrast to the use of angular Littlewood-Paley projections in \cite{GuoInvent} involving spherical harmonics, here we employ simple spatial localization operators $Z_l$ and $H_l$ (see \eqref{eq:515-1}) to quantify the fractional Besov regularity on the Fourier side. This enables us to fully separate the estimates for the horizontal and vertical frequencies and effectively apply commutator estimates in many cases. Most importantly, such spatial localizations allow us to make intensive use of the sharp (pointwise) linear asymptotics (see Section \ref{sec:4})  for the nonlinear analysis.

	  	\item \emph{Handling the critical decay in time.}
	  	
	  	When propagating the $X$ norm for an $(l,k)$-piece ($l, k$ are localization parameters in space and frequency respectively) of the profile, an important feature is that the nonlinear contributions will have a critical decay rate in general. For instance, in the analysis of low-high/high-low interactions with horizontal input frequencies, the new null condition we mentioned above is useless. Applying commutator estimates and an $L^\infty$-$L^2$ type bound, we can estimate the nonlinear contribution by $t^{-1}$ in $X$ norm, which is not integrable in time in general. Nonetheless, based on the lower and upper bounds for the group velocity in the $x_3$-direction, we observe that for low-high and high-low interactions (with the two inputs  also suitably localized), the decay can be upgraded to integrable rates (cf. the term $2^{-(1+\beta)m}$ in \eqref{eq:807-o1}) \emph{outside} a resonant time interval $[t_*, C t_*]$ with $t_*$ depending on the localization parameters. Then, thanks to  the fact that
	  	\begin{equation}
	  		\int_{t_*}^{C t_*} t^{-1} dt \lesssim 1,
	  	\end{equation}
	  	we are still able to close the estimates globally (cf. the term $2^{-|m-l|}$ in \eqref{eq:809-yyy1}).  The high-high interactions also give a critical contribution even with the application of the null structure.
	  	
	  	\smallskip
	  	
	  	The critical nature of the problem makes our analysis much more delicate since we cannot afford any loss in the decay especially within the resonant time intervals. Firstly, we are forced to prove a set of sharp linear dispersive estimates (see Section \ref{sec:4}) which avoids the loss in $\beta$. Secondly, we simultaneously propagate two $X$ norms with different parameters, namely $X_{\beta}^{10}$ and $X_{\beta'}^{20,20}$ with $0<\beta'<\beta$, in order to overcome the inevitable loss in $S, \Omega$ derivatives in the dispersive estimates. Thirdly, in some cases we have to use energy type estimates instead of Duhamel's formula, in order to overcome the quasilinear nature of the equations. Note that energy type estimates are not necessarily compatible with normal forms, and we must avoid using them at the same time for many pieces of the output.

	  	\item \emph{A multiscale decomposition for bilinear forms.}
	  	
	  	In the analysis of high-high interactions for horizontal frequencies, we develop a step-by-step decomposition for bilinear forms according to the size of $|(\xi-\eta)_h|-|\eta_h|$ where $\xi-\eta$ and $\eta$ are the two input frequencies (see Section \ref{sec:742}). Roughly speaking, at the $J$-th step of the decomposition, we treat bilinear interactions with  $\big||(\xi-\eta)_h|-|\eta_h|\big| \sim 4^{-J}$. Then, by combining precised wave packet estimates for frequency localized near a circle (see Section \ref{sec:52}) with orthogonality properties of the bilinear decomposition, we are able to maximally utilize the cancellation in the null structure, which gives a crucial smallness factor for the interaction between coherent wave packets. Moreover,  an important idea in the analysis is to use (a version of) the uncertainty principle to achieve an extra smallness factor at small scales, which is important for the summability in $J$ (see \eqref{eq:51-21}--\eqref{eq:51-22}).

		\item \emph{The method of partial symmetries.}
		
		Our proof still relies heavily on the spacetime resonances framework developed in \cite{GuoInvent} called ``the method of partial symmetries", which features the use of angular dyadic decompositions (with parameters $p,q$), repeated integration by parts along $S$ and $\Omega$ vector fields as well as normal forms. The key observation behind the formalism is that, away from a resonant set (see \eqref{eq:524-3}), one can either apply integration by parts using easily accessible derivatives or perform a normal form. We closely follow many important ideas from \cite{GuoInvent}, especially in the analysis involving vertical frequencies, see Section \ref{sec:9}. Nonetheless, some simplifications are made along the way due to our simpler choice of norms and a better gain of decay for integration by parts in space. The reader may find more backgrounds on the general formalisms for spacetime resonances in, \emph{e.g.}, \cite{Klainerman, Shatah, Germain, GMS, GMS2}.
	  \end{itemize}

	  \subsection{Organization of the article}
	  
	  In Section \ref{sec:overview}, we introduce the dispersive unknowns, the functional framework and the key bootstrap assumptions  that will be used throughout the article. Moreover, in Section \ref{sec:25-boot} we give an overview on the necessary bilinear/trilinear estimates for the propagation of bootstrap norms.  
	  
	  In Section \ref{sec:3}, we establish the basic energy estimates involving natural and good derivatives, and examine the null structure and the energy structure of the system \eqref{eq:EC} at the level of bilinear multipliers. Then, the sharp dispersive estimates are presented in Section \ref{sec:4} as a foundation for all the subsequent analysis. In Section \ref{sec:dt}, we apply the dispersive estimates and $L^2$-$L^\infty$ type bounds to obtain general decay rates for $\partial_t S^a \bar{\Omega}^b \f_\pm$ in $L^2$. In particular, the case of large $l+k$ is tackled in Lemma \ref{lem:428-1}, so that in subsequent sections we can focus on the case $l+k < (1+\delta)m$. 
	  
	  The method of partial symmetries is briefly recalled in Section \ref{sec:srm}. The most delicate part of the article is contained in Section \ref{sec:8}, which focuses on the $\bighp + \bighp \to \bighp$  interactions. Finally, the other types of interactions ($\bighp + \bighp \to \bigvp$, $\bigvp+ \bigvp \to \bigvp$, $\bigvp+\bigvp\to \bighp$, $\bighp+ \bigvp$) are treated in Section \ref{sec:9}.

	  \section{Overview of the proof} \label{sec:overview}
	  
	 \paragraph{Notation.} Throughout the paper, we denote the Fourier transforms by
	 \begin{equation}
	 	(\F f) (\xi) = \widehat{f} (\xi)  = \int_{\R^3} f(x) e^{-i\xi \cdot x} dx, \quad
	 		(\F ^{-1}g) (x)  = \frac{1}{(2\pi)^3}\int_{\R^3} g(\xi) e^{i\xi \cdot x} d\xi,
	 \end{equation}
	 \begin{equation}
	 	(\F_h f) (\xi_h, x_3)  = \int_{\R^2} f(x) e^{-i\xi_h \cdot x_h} dx_h, \quad  	(\F_3 f) (x_h, \xi_3)  = \int_{\R} f(x) e^{-i\xi_3 \cdot x_3} dx_3.
	 \end{equation}
 	With slight abuse of notation, we will write $\Lmd$ for the dispersion relation
 	\begin{equation}
 		\Lmd(\xi) = \frac{\xi_3}{|\xi|}
 	\end{equation}
 	as well as for the corresponding operator
 	\begin{equation} \label{eq:812-aa1}
 		\Lmd = -i|\nabla|^{-1} \partial_3,
 	\end{equation}
 	depending on the context.
	For a given multiplier function $\m(\xi, \eta)$ and a multi-index $\mua = (\mu, \mu_1, \mu_2) \in \{+, -\}^3$, define the corresponding bilinear form $\Q_\mua[\m](\cdot, \cdot)$ at time $t$ as
	\begin{equation} \label{eq:510-52}
		\left\{\F \Q_\mua[\m](f_1, f_2)\right\}(\xi)=  \frac{1}{(2\pi)^3}\int_{\R^3} e^{it\Phi_\mua(\xi, \eta)} \m(\xi, \eta) \wh{f_1}(\xi -\eta) \wh{f_2}(\eta) d\eta,
	\end{equation}
	with the phase function given by
	\begin{equation}
		\Phi_\mua(\xi, \eta) = -\mu \Lmd(\xi) + \mu_1 \Lmd (\xi-\eta) + \mu_2 \Lmd (\eta),
	\end{equation}
	and we also denote that
	\begin{equation}
		\L_\mua[\m](f_1, f_2) = e^{\mu i  t \Lmd } \Q_\mua[\m](f_1, f_2).
	\end{equation} 
	
	\medskip
	
	\medskip
	
	\paragraph{Important constants.} Throughout the paper, we shall take the constants $M_1, M_2, \delta_0, \delta, \beta_0$ such that 
	$$M_1 = M_2 \gg \delta_0^{-1}$$ 
	and 
	$$0< \delta_0 \ll  \delta \ll \beta_0 \ll 1.$$ 
	Moreover, we will define
	$$\beta = \frac23\beta_0, \quad \beta' = \frac23 \beta.$$
	In practice, it is sufficient to take
	\begin{equation*}
		\beta_0 = 10^{-2}, \ \delta = 10^{-3}, \ \delta_0 = 10^{-4}, M_1=M_2=10^6.
	\end{equation*}
	We have not striven to optimize the number of vector fields and derivatives used in the proof.

	\subsection{The dispersive unknowns} \label{sec:12}
	We denote the Leray projector by
	\begin{equation}
		\PL  = \mathrm{Id} - \nabla \Delta^{-1} \nabla \cdot .
	\end{equation}
	The linear part of the Euler-Coriolis system, namely,
	\begin{equation}  \label{eq:EC-lin}
		\partial_t \ue  + \PL \, (\vec{e}_3 \times \ue) = 0,
	\end{equation}
	can be diagonalized in the following way. Let ${\vec \omega} = \nabla \times \ue$ be the vorticity, and define
	\begin{equation} \label{eq:57-2}
		\ue_+ = \frac12 ( \ue + |\nabla|^{-1} \vec\omega ), \quad \ue_- = \frac12 ( \ue - |\nabla|^{-1} \vec\omega ).
	\end{equation}
	Notice that
	\begin{equation} \label{eq:58-1}
		\ue = \ue_+ + \ue_-, \quad \nabla \times \ue_+ = |\nabla| \ue_+, \quad \nabla \times \ue_- = - |\nabla| \ue_-.
	\end{equation}
	Such a decomposition of velocity was introduced by Lei, Lin and Zhou \cite{LeiLinZhou} to exploit the structure of helicity and construct a class of large global solutions to the 3D Navier-Stokes equations. Using \eqref{eq:EC-lin}, we obtain
	\begin{align}
		\partial_t \ue_+ &= \frac12 (\partial_t \ue + |\nabla|^{-1} \nabla \times \partial_t \ue) \\
		&= -\frac12 ( \PL(\vec{e}_3 \times \ue) + |\nabla|^{-1} \nabla \times \PL (\vec{e}_3 \times \ue)) \\
		&= -\frac12 (\vec{e}_3 \times \ue + \nabla \Delta^{-1} \omega_3 -|\nabla|^{-1} \partial_3 \ue ).
	\end{align}
	Using tha fact that
	\begin{equation}
		\vec{e}_3 \times \Delta \ue + \nabla \omega_3 = -\vec{e}_3 \times (\nabla \times \vec \omega)  + \nabla \omega_3 = \partial_3 {\vec \omega},
	\end{equation}
	we get
	\begin{equation} \label{eq:57-1}
		\partial_t \ue_+ = -\frac12 (\partial_3 \Delta^{-1} \vec \omega - |\nabla|^{-1} \partial_3 \ue) = |\nabla|^{-1} \partial_3 \ue_+.
	\end{equation}
	 With the notation \eqref{eq:812-aa1}, \eqref{eq:57-1} can be simply written as
	\begin{equation}
		\partial_t \ue_+ = i \Lmd \ue_+.
	\end{equation}
	Similarly, we also have
	\begin{equation}
		\partial_t \ue_- = -i \Lmd \ue_-.
	\end{equation}
	It is interesting that the decomposition \eqref{eq:57-2} simultaneously diagonalize the operators $\nabla \times \cdot$ and $\PL(\vec{e}_3 \times \cdot)$ acting on divergence free vector fields. 
	

	To exploit the above dispersive mechanism, it is natural to define the dispersive profiles for the nonlinear problem \eqref{eq:EC} as
	\begin{equation} \label{eq:58-01}
		\f_+ = e^{-it \Lmd}\ue_+, \quad \f_- = e^{i t\Lmd} \ue_-.
	\end{equation}
	Denote 
	\begin{equation}
		\mathbb{P}_+ = \frac12 (\mathrm{Id} + |\nabla|^{-1} \nabla \times),\quad \mathbb{P}_- = \frac12 (\mathrm{Id} - |\nabla|^{-1} \nabla \times).
	\end{equation}
	The nonlinear evolution of $\f_\pm$ is then given by
	\begin{align} \label{eq:58-2}
		\partial_t \f_\pm &= e^{\mp i t \Lmd} (\partial_t \ue_\pm \mp i \Lmd \ue_\pm) \nonumber \\
	    &=  -e^{\mp i t \Lmd} \mathbb{P}_{\pm} \, \PL \nabla \cdot (\ue \otimes \ue) \nonumber \\
		&=  -e^{\mp i t \Lmd} \mathbb{P}_{\pm} \, \PL \nabla \cdot (e^{it\Lmd} \f_+ \otimes e^{it\Lmd} \f_+ + e^{it\Lmd} \f_+ \otimes e^{-it\Lmd} \f_- \nonumber \\
		&\qquad \qquad \qquad +e^{-it\Lmd} \f_- \otimes e^{it\Lmd} \f_+ +e^{-it\Lmd} \f_- \otimes e^{-it\Lmd} \f_-),
	\end{align}
	where the conventions $({\vec a} \otimes {\vec b})_{ij} = a_i b_j $ and $\nabla \cdot \mathbb{M} = \sum_i \partial_i M_{ij}$ are used.

	We emphasize that $\f_+$ and $\f_-$ are vector-valued functions. However, due to the two constraints $\nabla \cdot \f_{\pm} = 0$ and $\nabla \times \f_{\pm} = \pm |\nabla| \f_{\pm}$, they  can be reduced to scalar profiles via an appropriate choice  of basis vector fields on the Fourier side (cf. \cite{Zhou}). Indeed, taking the Fourier transform of the constraints, we obtain that
	\begin{equation}
		\xi \cdot \F {\f_\pm}(\xi) = 0, \quad i\xi \times \F {\f_\pm}(\xi) = \pm |\xi| \F {\f_\pm}(\xi).
	\end{equation}
	We will choose a vector field $\vec\Gamma_1(\xi)$ which is 0th order homogeneous, \emph{i.e.}, $\vec\Gamma_1(s\xi)=\vec\Gamma_1(\xi), \ \forall s>0$. Moreover, we require that $\xi \cdot \vec\Gamma_1 = 0$ and $|\vec\Gamma_1| > 0$ over the unit sphere. Let $\vec\Gamma_2 = \frac{\xi}{|\xi|} \times \vec\Gamma_1$.   Then, there exist scalar functions $a_\pm(\xi)$ such that
	\begin{equation}
		\F {\f_\pm}(\xi) =  i \, a_\pm(\xi) \cdot (\vec\Gamma_1(\xi) \pm i \vec\Gamma_2(\xi)).
	\end{equation}
	Note that, due to the well-known hairy ball theorem, $\vec\Gamma_1$ cannot be smooth over the unit sphere. To fix the gauge, in this paper we take
	\begin{equation} \label{eq:gauge1}
		\vec{\Gamma}_1 = \left( \frac{-\xi_2}{|\xi_h|}, \frac{\xi_1}{|\xi_h|}, 0 \right)
	\end{equation}
	and
	\begin{equation} \label{eq:gauge2}
		\vec{\Gamma}_2 = \frac{\xi}{|\xi|} \times \vec{\Gamma}_1 = \left( -\frac{\xi_1 \xi_3}{|\xi_h||\xi|}, -\frac{\xi_2 \xi_3}{|\xi_h| |\xi|}, \frac{|\xi_h|}{|\xi|} \right).
	\end{equation}
	Then, the corresponding scalar profiles coincide with the choice of unknowns in \cite{GuoInvent}, as we explain below. 
	
	Let us define two vector-to-scalar operators $R_\pm$ as  
	\begin{equation}
		R_\pm \ue = \frac{1}{2} \vec{e}_3 \cdot (| \nabla_h |^{- 1} \nabla \times \ue \pm | \nabla | | \nabla_h |^{- 1} \ue)
	\end{equation}
	and denote
	\begin{equation}
	\U_+ = R_+ \ue, \quad		\U_- = R_- \ue,
		\end{equation}
	\begin{equation} \label{eq:def-Fp}
		\Fp_+ = e^{- i t \Lambda} \U_+ = R_+ \f_+, \quad \Fp_- = e^{i t \Lambda} \U_- = R_- \f_-.
	\end{equation} 
	Note that $R_\pm$ satisfy the properties
	\begin{equation}
		R_\pm \ue = R_\pm \ue_\pm, \quad R_\pm \ue_\mp = 0.
	\end{equation}
	By construction, $\ue_\pm, \f_\pm$ can be uniquely determined by $\U_\pm, \Fp_\pm$, and we formally write
	\begin{equation}
		\ue_\pm = R_\pm^{-1} \U_\pm := (- \nabla_h^{\perp} | \nabla_h |^{- 1} \pm i \Lambda \nabla_h |
		\nabla_h |^{- 1} ) \, \U_\pm \pm \sqrt{1 - \Lambda^2} \,  \U_\pm \vec{e}_3,
	\end{equation}
	and
	\begin{equation} \label{eq:525-1}
		\f_\pm = R_\pm^{-1} \Fp_\pm := (- \nabla_h^{\perp} | \nabla_h |^{- 1} \pm i \Lambda \nabla_h |\nabla_h |^{- 1} ) \, \Fp_\pm \pm \sqrt{1 - \Lambda^2} \, \Fp_\pm \vec{e}_3.
	\end{equation}
	On the Fourier side, we indeed have 
	\begin{equation} \label{eq:57-01}
		\F {\ue_\pm} = -i \, (\F {\U_\pm}) (\vec{\Gamma}_1 \pm i \vec{\Gamma}_2), \quad \F{\f_\pm} = -i \, (\F {\Fp_\pm}) (\vec{\Gamma}_1 \pm i \vec{\Gamma}_2).
	\end{equation}
	We emphasize that $\vec{\Gamma}_{1,2}$ are not smooth  at the $\xi_3$-axis, which made the analysis in \cite{GuoInvent} more involved. In particular, the key bootstrap norm there is designed carefully  to include functions like $\frac{\xi_h}{|\xi_h|}$. In this paper, we avoid such difficulty by working directly with the vectorial profiles $\f_\pm$ for $\Lmd$ close to $1$. 
	 

	\smallskip
	
	We point out that the inherent null type structure of the nonlinear system \eqref{eq:EC} is already visible at the vectorial level.  Using the identity
	\begin{equation}
		\PL (\ue \cdot \nabla \vec{v}) = -\PL (\ue \times (\nabla \times \vec{v}))
	\end{equation}
	and the property \eqref{eq:58-1}, we can further write \eqref{eq:58-2} as
	\begin{align} 
		\partial_t \f_\pm &=  e^{\mp i t \Lmd} \mathbb{P}_{\pm} \, \PL  (e^{it\Lmd} \f_+ \times |\nabla| e^{it\Lmd} \f_+ - e^{it\Lmd} \f_+ \times |\nabla| e^{-it\Lmd} \f_- \nonumber \\
		&\qquad \qquad \qquad +e^{-it\Lmd} \f_- \times |\nabla| e^{it\Lmd} \f_+ - e^{-it\Lmd} \f_- \times |\nabla| e^{-it\Lmd} \f_-). 
	\end{align}
	By direct computation, the group velocity of the dispersion $e^{\pm i t \Lmd}$ in the $x_3$-direction is given by
	\begin{equation}
		\mp \partial_3 \Lmd = \mp \frac{|\xi_h|^2}{|\xi|^3}.
	\end{equation}
	The mixed terms $- e^{it\Lmd} \f_+ \times |\nabla| e^{-it\Lmd} \f_- +e^{-it\Lmd} \f_- \times |\nabla| e^{it\Lmd} \f_+$ are space nonresonant for input frequencies with $\Lmd \approx 0$, due to an opposite sign in the group velocities. The remaining two terms exhibit a null type structure via symmetrization, \emph{e.g.},
	\begin{equation} \label{eq:58-3}
		e^{it\Lmd} \f_+ \times |\nabla| e^{it\Lmd} \f_+  = \frac12 (e^{it\Lmd} \f_+ \times |\nabla| e^{it\Lmd} \f_+ - |\nabla| e^{it\Lmd} \f_+ \times  e^{it\Lmd} \f_+ ).
	\end{equation}
	The expression \eqref{eq:58-3} induces a crucial factor $|\xi - \eta| - |\eta|$ on the Fourier side ($\xi-\eta$ and $\eta$ are the frequencies of the two inputs), which vanishes on the key  resonant set
	\begin{equation} \label{eq:524-3}
		\{(\xi, \eta) \in \R^3 \times \R^3 : (\partial_3 \Lmd)(\xi-\eta) = (\partial_3 \Lmd)(\eta), \quad  \Lmd(\xi-\eta) = \Lmd(\eta)=0\}.
	\end{equation}
	This is closely related to the classical fact that the Beltrami flows are stationary solutions to the Euler system (cf. \cite{LeiLinZhou}). In Section \ref{sec:null}, we will examine such null type structure in more details at the scalar level, see Remark \ref{rem:null-sym}.

	\subsection{Localizations} \label{sec:515-22}
	Let $\psi \in C^{\infty} (\mathbb{R}, [0, 1])$ be an even non-increasing smooth bump
	function supported on $\left[ - 2, 2 \right]$ with $\psi
	\left|_{\left[ - 1.8, 1.8 \right]} \equiv 1 \right.$ and set
	$\varphi (x) = \psi (x) - \psi (2 x)$. By construction $\varphi$ is supported on $[-2, -0.9] \cup [0.9, 2]$. In general, we write $\wt{\psi}$ and
	$\wt{\varphi}$ for functions with similar support properties as $\psi$ and
	$\varphi$, which satisfy the identities $\wt{\psi} \psi =
	\psi$ and $\wt{\varphi} \varphi = \varphi$.
	
	\smallskip
	
	On the Fourier side, we shall make intensive use of the partition of unity
	\begin{equation} \chi^\hp (\xi) = \phi \big(\Lambda (\xi)\big), \quad \chi^\vp(\xi) = 1 -  \chi^\hp(\xi) = 1-\phi\big(\Lambda(\xi)\big), \end{equation}
	where the smooth cut-off function $\phi$ is chosen such that $\chi^\hp$ is supported on $\left\{\xi \in \R^3 :  | \Lambda(\xi) |
	\leqslant 0.6 \right\}$ and $\chi^\vp$ is supported on $\left\{\xi \in \R^3 : |
	\Lambda(\xi) | \ge 0.55 \right\}$. 
	For technical reasons we introduce another pair of cut-off functions $\widetilde{\chi}^\hp$ and $\widetilde{\chi}^\vp$ which also depend on $\xi$ through $\Lambda(\xi)$ and satisfies
	\begin{equation}\widetilde{\chi}^\hp(\xi) = \begin{cases} 0 \quad \text{for} \quad  \Lambda(\xi)>0.62 \\ 1 \quad \mbox{for}\quad \Lambda(\xi)<0.61  \end{cases} \mbox{and} \quad \widetilde{\chi}^\vp(\xi) = \begin{cases} 0 \quad \text{for} \quad \Lambda(\xi)<0.53 \\ 1 \quad \text{for} \quad \Lambda(\xi)>0.54 \end{cases}\end{equation}
	so that
	\begin{equation}\widetilde{\chi}^\hp \chi^\hp = \chi^\hp, \quad \widetilde{\chi}^\vp \chi^\vp = \chi^\vp.\end{equation}
	The associated projection type operators on the physical side will be denoted by
	\begin{equation}
		P^\hp f = \F^{-1} \left( \chi^\hp\widehat{f} \right), \quad P^\vp f = \F^{-1} \left( \chi^\vp\widehat{f} \right),
	\end{equation}
and
	\begin{equation}
		\wt{P}^\hp f = \F^{-1} \left( \wt{\chi}^\hp\widehat{f} \right), \quad \wt{P}^\vp f = \F^{-1} \left( \wt{\chi}^\vp\widehat{f} \right).
	\end{equation}

\medskip

	Similar to the standard Littlewood-Paley
	decompositions, we make the following dyadic localizations according to the modulus of horizontal/vertical frequency:
	\begin{equation} \label{eq:811-s1}
		\chi^\hp_k (\xi) = \chi^\hp(\xi) \varphi (2^{- k} | \xi_h |), \quad \chi^\vp_k
		(\xi) = \chi^\vp(\xi) \varphi (2^{- k} | \xi_3 |)
	\end{equation}
	\begin{equation}
		\chi_k = \chi^\hp_k + \chi^\vp_k.
	\end{equation}
	Sometimes we have to localize   according the size of $\Lambda$ and $\sqrt{1 - \Lambda^2}$ as in \cite{GuoInvent} to handle the resonances in regions where $\Lambda \approx 0$ or $\sqrt{1-\Lmd^2} \approx 0$. Given integer parameters $p, q \le -2$, we introduce
	\[ \chi^{\hp,q} (\xi) = \varphi (2^{- q} \Lambda (\xi)), \quad
	\chi^{\vp,p} (\xi) = \varphi \left( 2^{- p} \sqrt{1 -
		\Lambda^2 (\xi)} \right). \]
	For technical reasons, we also take smooth cut-off functions $\chi^{\hp, -1}$ and $\chi^{\vp,-1}$ depending on $\xi$ through $\Lambda(\xi)$ such that, for $\iota \in \{\bighp, \bigvp\}$,
	\begin{equation}
		\chi^{\iota} \chi^{\iota, \le -1}  = \chi^{\iota}, \quad \wt{\chi}^{\iota} \chi^{\iota, \le -1} = \chi^{\iota, \le -1}.
	\end{equation}
	Here, we have used the notations
	\begin{equation}\chi^{\hp, \le -1} = \sum_{q \le -1} \chi^{\hp, q}, \quad \chi^{\vp, \le -1} = \sum_{p \le -1} \chi^{\vp, p}.\end{equation}
	We shall always assume that $p, q \le -1$. Similar to \eqref{eq:811-s1} we define
	\begin{equation} \chi^{\hp,q}_k (\xi) = \chi^{\hp,q}(\xi) \varphi (2^{- k} | \xi_h |), \quad
	\chi^{\vp,p}_k (\xi) = \chi^{\vp,p}(\xi) \varphi (2^{- k} | \xi_3 |). \end{equation}
	The associated projection type operators on the physical side will be denoted by $P_k$, $P^\hp_k$, $P^{\hp,q}_k$, \emph{etc.} For instance, we have that $\F (P_k^\hp f) = \chi_k^\hp \wh{f}$ and $\F (P_k^{\vp,p} f) = \chi_k^{\vp,p} \wh{f}$. 
	
	Furthermore, we need two operators $Z_l, H_l$ for localizations in physical variables, which are simply defined as
	\begin{equation} \label{eq:515-1}
	Z_l f (x) = \varphi (2^{- l} | x_3 |) f(x), \quad  H_l f (x) = \varphi (2^{- l} | x_h |) f(x) . 
	\end{equation} 
	Due to the uncertainty principle, the commutators $[P^\hp_k, Z_l]$ and
	$[P^\vp_k, H_l]$ can be controlled (up to low order terms) only when $l + k$
	is not too negative. For such reason we also introduce the modified versions
	\begin{equation} Z_l^{(k)} = \left\{\begin{array}{l}
		Z_l, \hspace{4em} l \geqslant - k + 1,\\
		Z_{\leqslant -k}, \hspace{3em} l = - k,\\
		0, \hspace{4em} l \leqslant - k - 1,
	\end{array}\right. \end{equation}
	and
	\begin{equation} H_l^{(k)} = \left\{\begin{array}{l}
		H_l, \hspace{4em} l \geqslant - k + 1,\\
		H_{\leqslant -k}, \hspace{3em} l = - k,\\
		0, \hspace{4em} l \leqslant - k - 1.
	\end{array}\right. \end{equation}
	These operators are useful for quantifying the vertical and horizontal regularity of $\wh{f}(\xi)$ respectively. In particular, we have the Bernstein type properties
	\begin{equation}
		\|\partial_{\xi_3} \F(Z_{\le l} f)\|_{L^2}  \lesssim 2^{l} \| Z_{\le l} f\|_{L^2} 
	\end{equation}
	and
	\begin{equation}
		\|\nabla_{\xi_h} \F(H_{\le l} f)\|_{L^2}  \lesssim 2^{l} \| H_{\le l} f\|_{L^2}.
	\end{equation}
	Sometimes, we also need
	\begin{equation}
		Z^+_lf(x) =  \mathbf{1}_{x_3 > 0} \cdot Z_l f(x), \quad Z^-_lf(x) =  \mathbf{1}_{x_3 < 0} \cdot Z_l f(x).
	\end{equation}

	\paragraph{Conventions.} We will often use the slightly less formal relations $ \lesssim, \sim, \ll$, \emph{etc.} when the constants involved are independent of other key parameters and their exact values are unimportant for the proof. With slight abuse of notation, given a dyadic scale $2^p$, we shall write $p \lesssim 0$ for the additive bound $p \le C$, and also write $2^p \lesssim 1$ for the multiplicative bound $2^p \le C$.
	
	In the proof, we often localize according to a range of parameters instead of a single value. For example, $P^\hp_{[a,b]} = \sum_{k\in[a,b]} P^\hp_k$, $P^\hp_{[a,b]^c} = \sum_{k\notin[a,b]} P^\hp_k$, $P^\hp_{\le k} = P^{\hp}_{(-\infty, k]}$ and $P^\hp_{\sim k} = P^\hp_{[k-C_1,k+C_2]}$ for some constants $C_1, C_2 \in [1,10]$ whose exact values are unimportant for the proof (unless otherwise specified).

\subsection{Local charts and vector fields} \label{sec:22}

On the Fourier side, in standard cylindrical coordinates $(|\xi_h|, \xi_3, \th)$, we have
\begin{equation}
	\xi_1 = |\xi_h| \cos \th, \quad \xi_2 = |\xi_h| \sin \th.
\end{equation}
For our purpose, it is convenient to introduce two modified cylindrical coordinate systems on the support of $\chi^\hp$ and $\chi^\vp$ respectively, which are more compatible with the dispersion relation $\Lambda = \xi_3 / |\xi|$, and induce various useful (commuting) vector fields. 

\smallskip 

On the support of $\chi^\hp$, we often use $(\r, \z, \th)$ with
\begin{equation}
	\r = |\xi_h|, \quad \z = \frac{\xi_3}{|\xi_h|} = \frac{\Lmd}{\sqrt{1-\Lmd^2}}.
\end{equation}
Note that
\begin{equation}
	\xi_1 = \r \cos \th, \quad \xi_2 = \r \sin \th, \quad  \xi_3 = \r \z.
\end{equation}
Partial derivatives taken in the modified chart $(\r, \z, \th)$ induce three vector fields which commute with each other, \emph{i.e.},
\begin{equation} \label{eq:413-1}
	\partial_{\ln \r} = \r \partial_\r = S_\xi = \xi \cdot \nabla_\xi,
\end{equation}
\begin{equation} \label{eq:413-2}
	\partial_\z = \r \, \vec{e}_3 \cdot\nabla_\xi,
\end{equation}
\begin{equation} \label{eq:413-3}
	\partial_\th = \Omega_\xi = \xi^\perp_h \cdot \nabla_\xi.
\end{equation}
The standard volume element can be written as 
\begin{equation}
	d\xi = \r^2 d \r d \z d \th.
\end{equation}
Analogously, on the support of $\chi^\vp$, we introduce $(\vr, z, \th)$ with
\begin{equation}
	\vr = \frac{|\xi_h|}{|\xi_3|} = \frac{\sqrt{1-\Lmd^2}}{|\Lmd|}, \quad z = \xi_3,
\end{equation}
so that
\begin{equation}
	\xi_1 = \vr\, |z| \cos \th, \quad \xi_2 = \vr \,|z| \sin \th, \quad  \xi_3 = z.
\end{equation}
This new chart also naturally induces three commuting vector fields:
\begin{equation} \label{eq:425-1}
	\partial_{\ln |z|} = z \partial_{z} = S_\xi = \xi \cdot \nabla_\xi,
\end{equation}
\begin{equation} \label{eq:425-2}
	\partial_\vr = \vr^{-1} \xi_h \cdot \nabla_\xi,
\end{equation}
and again,
\begin{equation} \label{eq:425-3}
	\partial_\th = \Omega_\xi = \xi^\perp_h \cdot \nabla_\xi.
\end{equation}
The standard volume element can be written as 
\begin{equation}
	d\xi = \vr z^2 d z d \vr d \th.
\end{equation}
Note that $S_\xi \Lmd(\xi) = \Omega_\xi \Lmd(\xi) = 0$ and
\begin{equation}
	\F (S f) = -(S_\xi+3) \wh{f}, \quad \F(\Omega f) = \Omega_\xi \wh{f}.
\end{equation}
We remark that $\partial_\z$ and $\partial_\vr$ do \emph{not} commute with each other as they are taken in different coordinate charts. More importantly, they also do \emph{not} commute with the linear dispersion $e^{i t \Lmd}$. As a consequence, it is much harder to propagate regularity in $\partial_\z$, $\partial_\vr$ than in $S_\xi$, $\Omega_\xi$.

\subsection{Choice of norms} \label{sec:24}

With the notation $k^+ = \max\{0,k\}$, we introduce a collection of key (semi-)norms, which are of $L^2$-based Besov type and incorporate a given number of normal derivatives or $S, \Omega$ derivatives. The $X$ norms, parameterized by $n_1, n_2, \beta$, are designed to capture the regularity of $\chi^\hp \wh{f}$ in the vector field $\partial_\z$ (in the vertical direction):
\begin{equation} \label{eq:46-11}
	\|f\|_{X^{n_1,n_2}_\beta} = \sup_{k,l\in\mathbb{Z}} \ \sup_{a+b\le n_2}  2^{n_1 k^+ +(1+\beta)(l+k)} \| Z_l^{(k)} S^a \Omega^b P^\hp_k f\|_{L^2} 
\end{equation}
Here, $n_1, n_2 \in \mathbb{N}$ and $\beta \ge 0$ are parameters to be chosen.  We then define
\begin{equation} \label{eq:46-12}
	\|f\|_{X^n_\beta} = \sup_{n_1+n_2 \le n} \|f\|_{X^{n_1, n_2}_\beta}.
\end{equation}
Analogously, the $Y$ norms below capture the regularity of $\chi^\vp \wh{f}$ in the horizontal direction (in particular, in the vector field $\partial_\vr$):
\begin{equation} \label{eq:46-13}
	\|f\|_{Y^{n_1,n_2}_\beta} = \sup_{k,l\in\mathbb{Z}} \ \sup_{a+b\le n_2}  2^{n_1 k^+ +(1+\beta)(l+k)} \| H_l^{(k)} S^a \Omega^b P^\vp_k f\|_{L^2}, 
\end{equation}
\begin{equation} \label{eq:46-14}
	\|f\|_{Y^n_\beta} = \sup_{n_1+n_2 \le n} \|f\|_{Y^{n_1, n_2}_\beta}.
\end{equation}
Without surprise, for a vector-valued function $\f$, the $X, Y$ norms of $\f$ are defined as the sum of the norms of each component $f_i, i =1,2,3$ of $\f$.


\paragraph{Convention.} To be precise, the notation $\sup_{a+b \le n_2}$ stands for supremum taken over all $a, b \in \mathbb{N}$ such that $a+b \le n_2$. Sometimes, we also write $\{S, \Omega\}^{\le n}$ as abbreviation for the supremum over $S^{n_1} \Omega^{n_2}$ with $n_1 + n_2 \le n$.

\medskip

\medskip

Notice that the commutators $[S, Z_l]$ and $[S, P^{\hp/\vp}_k]$ have the similar support properties (on the physical or Fourier side) as $Z_l$ and $P^{\hp/\vp}_k$ respectively. Also, observe that $[\Omega, S] = [\Omega, Z_l] = [\Omega, P^{\hp/\vp}_k] = 0$. Hence, in the definitions \eqref{eq:46-11} and \eqref{eq:46-13}, one is allowed to change the order of $Z_l^{k}$ and $S^a\Omega^b$, or the order of $S^a\Omega^b$ and $P^{\hp/\vp}_k$, in the sense that the resulting norms will be equivalent to the original ones. Moreover, using the modified charts defined in Section \ref{sec:22}, we also have the following equivalent definitions:
\begin{equation} \label{eq:729-001}
	\|f\|_{X^{n_1,n_2}_\beta} \sim   \sup_{k \in \Z} 2^{n_1 k^+ + \frac32 k} \| \{\partial_{\r}, \partial_\th\}^{\le n_2} (\chi^\hp_0(\cdot) \wh{f}(2^{k} \cdot))\|_{L^2_{h}B^{1+\beta}_{2, \infty, \zeta}},
\end{equation}
\begin{equation} \label{eq:729-002}
	\|f\|_{Y^{n_1,n_2}_\beta} \sim   \sup_{k \in \Z} 2^{n_1 k^+ + \frac32 k} \| \{\partial_{z}, \partial_\th\}^{\le n_2} (\chi^\vp_0(\cdot) \wh{f}(2^{k} \cdot))\|_{L^2_{z}B^{1+\beta}_{2, \infty, h}},
\end{equation}
where $B^{1+\beta}_{2, \infty, \z}$ and $B^{1+\beta}_{2, \infty, h}$ stand for the usual inhomogeneous Besov norm in the $\z$ direction and in the horizontal directions respectively.


 Working in the modified charts and applying Sobolev embedding, it is easy to see that the $L^\infty$ norm of $\wh{f}$ is under control of the $X$ and $Y$ norms,
\begin{equation} \label{eq:56-1}
	\|\chi^\hp_k \wh{f}\|_{L^\infty} \lesssim 2^{-\frac32 k} \|f\|_{X^{0,2}_0}, \quad \| \chi^\vp_k \wh{f}\|_{L^\infty} \lesssim_\beta 2^{-\frac32 k} \| f\|_{Y^{0,2}_\beta} \ \ (\beta>0).
\end{equation}
In particular, \eqref{eq:56-1} implies that, for $p, q \le -2$,
\begin{equation} \label{eq:56-1001}
	\|\chi^{\hp,q}_k \wh{f}\|_{L^2} \lesssim 2^{\frac{q}{2}} \|f\|_{X^{0,2}_0}, \quad \| \chi^{\vp,p}_k \wh{f}\|_{L^2} \lesssim_\beta 2^{p} \| f\|_{Y^{0,2}_\beta} \ \ (\beta>0).
\end{equation}
It is also convenient to interpolate between the $X, Y$ norms and basic energy norms. There holds that
\begin{align} \label{eq:724-01}
	\|f\|_{X^{\wt{n_1}, \wt{n_2}}_{\wt{\beta}}} \lesssim \|f\|_{X^{{n_1}, {n_2}}_{{\beta}}}^{1-\vartheta} \|\{S, \Omega\}^{\le N_2} f \|_{H^{N_1}}^{\vartheta}, 	
\end{align}
and
\begin{align} \label{eq:724-02}
	\|f\|_{Y^{\wt{n_1}, \wt{n_2}}_{\wt{\beta}}} \lesssim \|f\|_{Y^{{n_1}, {n_2}}_{{\beta}}}^{1-\vartheta} \|\{S, \Omega\}^{\le N_2} f \|_{H^{N_1}}^{\vartheta} 
\end{align}
with $0 <\vartheta < 1$ and
\begin{align}
	\wt{n_1} = n_1 (1-\vartheta) + N_1 \vartheta, \quad \wt{n_2} = n_2 (1-\vartheta) + N_2 \vartheta, \quad \wt{\beta} = \beta - \vartheta - \beta \vartheta.
\end{align}
Another useful property is that
\begin{equation} \label{eq:59-b}
	\|P_k f\|_{X^0_\beta} + \|P_k f\|_{Y^0_\beta} \lesssim 2^{(1+\beta)k} \||x|^{1+\beta}f\|_{L^2},
\end{equation}
which follows from the potential estimate
\begin{equation}
	\|\langle|x|\rangle^{1+\beta} P_0 f\|_{L^2} \lesssim \||x|^{1+\beta} f\|_{L^2}
\end{equation}
as well as scaling.


\subsection{The key bootstrap argument} \label{sec:25-boot}

Theorem \ref{thm:main} is a direct corollary of the following bootstrap estimates, in which we propagate the $X$ and $Y$ norms for the profiles $\f_\pm$ globally in time.  We fix $\beta = \frac{2}{3} \beta_0$ and $\beta' = \frac23 \beta$ and will take $\beta_0>0$ sufficiently small.

\smallskip

\begin{proposition} \label{prop:BA}
	Let $u \in C([0,T], C^2(\mathbb{R}^3; \R^3))$ be a solution to \eqref{eq:EC} for some $T>0$ with dispersive profiles $\f_\pm$ defined in \eqref{eq:58-01}, and suppose that the initial data verifies the assumption of Theorem \ref{thm:main}. 	If for all $t \in [0,T]$ there holds that
	\begin{equation} \label{eq:BA}
		\|\f_\pm(t)\|_{X^{10}_\beta} + \|\f_\pm(t)\|_{X^{20,20}_{\beta'}} + \| \f_\pm(t)\|_{{Y}^{20, 20}_{\beta}} \le \varepsilon_1
	\end{equation}
	for some $\varepsilon_1 \le \varepsilon^\frac23$, then in fact we have the improved bounds
	\begin{equation} \label{eq:BA-improv}
		\|\f_\pm(t)\|_{X^{10}_\beta} + \|\f_\pm(t)\|_{X^{20,20}_{\beta'}}  + \| \f_\pm(t)\|_{{Y}^{20, 20}_{\beta}} \lesssim \varepsilon + \varepsilon_1^\frac32,
	\end{equation}
	and for some universal constant $C>0$ there holds that
	\begin{equation} \label{eq:slow-growth}
		\|\f_\pm(t)\|_{H^{M_1}} + \sum_{a+b \le M_2} \|S^a \bar{\Omega}^b \f_\pm(t)\|_{L^2} \lesssim \varepsilon \langle t \rangle^{C \varepsilon_1}.
	\end{equation}
\end{proposition}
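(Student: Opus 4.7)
The plan is to deduce both \eqref{eq:BA-improv} and \eqref{eq:slow-growth} from a Duhamel analysis of the profile equation \eqref{eq:58-2}, exploiting the null structure identified in \eqref{eq:58-3} together with the sharp dispersive bounds of Section~\ref{sec:4}. Writing $\f_\pm(t) = \f_\pm(0) + \int_0^t \partial_s \f_\pm(s)\,ds$, the contribution of the initial datum is $\lesssim \varepsilon$ in $X^{10}_\beta \cap X^{20,20}_{\beta'} \cap Y^{20,20}_\beta$ by applying \eqref{eq:59-b} dyadically to $\ue_0$ and combining with the weighted $L^2$ bound in \eqref{eq:59-a}. Since $\varepsilon_1 \le \varepsilon^{2/3}$, it then suffices to bound the time-integrated bilinear expression by $\varepsilon_1^{3/2}$ in each of the three norms.

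The main work is to control the bilinear forms $\L_\mua[\m](\f_{\mu_1}, \f_{\mu_2})$ appearing in \eqref{eq:58-2}. I would split both output and input frequencies using the $\chi^{\bighp}/\chi^{\bigvp}$ partition and treat the five resulting cases in turn. The most delicate is the purely horizontal interaction $\bighp + \bighp \to \bighp$ (Section~\ref{sec:8}), where the dispersion is most degenerate and the 2D-Euler-type subsystem is embedded. For low-high and high-low pieces I would use commutator arguments with an $L^\infty$-$L^2$ dispersive bound; the resulting critical $t^{-1}$ loss is rendered integrable by restricting to the resonant window $[t_*, Ct_*]$ dictated by the group-velocity gap in $x_3$, which contributes only $O(1)$ after integration. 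For the high-high pieces I would extract the symmetrization factor $|\xi-\eta|-|\eta|$ from \eqref{eq:58-3} via the multiscale decomposition \eqref{eq:tele}, combine the wave-packet estimates of Section~\ref{sec:52} with the orthogonality identity \eqref{eq:51-31}, and use an uncertainty-principle gain at small scales (see \eqref{eq:51-21}--\eqref{eq:51-22}) to make the sum over the scale parameter $J$ convergent.

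The remaining interaction types $\bighp+\bigvp$, $\bighp+\bighp\to\bigvp$, $\bigvp+\bigvp\to\bigvp$ and $\bigvp+\bigvp\to\bighp$ (Section~\ref{sec:9}) I would handle by the method of partial symmetries: introduce the angular dyadic parameters $p, q \le -1$ to localize $\Lmd$ and $\sqrt{1-\Lmd^2}$ near their degenerate values, integrate by parts repeatedly along the commuting vector fields $S_\xi$ and $\Omega_\xi$ away from the resonant set \eqref{eq:524-3}, and perform a normal form on the residual space-resonant pieces. The simultaneous propagation of two $X$-scales $\beta > \beta'$ is essential here: any dispersive estimate that spends $S, \Omega$ derivatives forces a small loss in the angular regularity exponent, and $\beta'$ acts as a buffer so that the main norm at exponent $\beta$ is preserved. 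In the few pieces where Duhamel is obstructed by the quasilinear structure I would instead use an energy identity on $\f_\pm$ in the modified chart of Section~\ref{sec:22}, being careful not to combine such pieces with a normal form on the same output.

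The slow-growth bound \eqref{eq:slow-growth} follows from a standard energy identity for $S^a \bar\Omega^b \ue$ in $L^2$; the reason for using the Lie derivative $\bar\Omega$ rather than $\Omega$ is precisely that its commutators with $\nabla$ and $\PL$ are harmless, leaving a $\|\nabla\ue\|_{L^\infty}$ factor that by the dispersive estimates of Section~\ref{sec:4} applied to the $X^{10}_\beta \cap Y^{20,20}_\beta$-controlled profile is $\lesssim \varepsilon_1 \langle t\rangle^{-1}$; Gronwall then yields the $\langle t\rangle^{C\varepsilon_1}$ growth. I expect the $\bighp+\bighp\to\bighp$ high-high estimate for the $X^{10}_\beta$ norm to be the central obstacle, as it is the only place where critical time decay, the extraction of the symmetrization factor $|\xi-\eta|-|\eta|$ at every scale, and the degenerate geometry of the resonant set near $\Lmd=0$ must all be confronted simultaneously, which is exactly why the multiscale decomposition, the wave-packet refinements, and the orthogonality argument are all needed at once.
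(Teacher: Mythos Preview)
Your proposal is essentially correct and matches the paper's approach: the slow-growth bound \eqref{eq:slow-growth} via energy estimates plus Gronwall, and the propagation of the $X$/$Y$ norms via the $\bighp/\bigvp$ splitting, with the $\bighp+\bighp\to\bighp$ case handled by the resonant-window mechanism for low--high/high--low pieces and by the multiscale decomposition \eqref{eq:tele} plus the orthogonality argument \eqref{eq:51-31} for high--high pieces, while the remaining interaction types are dispatched by the partial-symmetries formalism.

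One organizational point you gloss over but the paper makes explicit: the output localization parameter $l+k$ is split into three ranges, and each uses a different mechanism. For $l+k\ge (1+\delta)m$ the paper invokes the far-field finite-speed estimate (Lemma~\ref{lem:428-1}), which you do not mention; for $l+k\le m+4$ it uses Duhamel as you describe; and it is precisely in the intermediate range $m+5\le l+k<(1+\delta)m$ that the energy identity \eqref{eq:510-11} replaces Duhamel, with the energy structure \eqref{eq:ener-struc} exploited in the low--high quasilinear piece. Your remark ``where Duhamel is obstructed'' is correct in spirit, but the paper ties this region sharply to this $l+k$ window and uses Lemma~\ref{lem:51-1} to make normal forms compatible with the energy trilinear form there.
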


\begin{proof}  [Proof of Theorem \ref{thm:main} via Proposition \ref{prop:BA}]
	By the energy estimates  \eqref{eq:16-c}--\eqref{eq:16-3a} as well as \eqref{eq:59-c}, the norms in \eqref{eq:59-a} can be propagated at least locally in time. Due to \eqref{eq:59-b} and interpolation \eqref{eq:724-01}--\eqref{eq:724-02}, for some $T>0$, there holds that
	\begin{equation} \label{eq:59-b1}
		\|\ue(t)\|_{X^{10}_\beta} + \|\ue(t)\|_{X^{20,20}_{\beta'}} + \| \ue(t)\|_{{Y}^{20, 20}_{\beta}} \lesssim \varepsilon, \quad t \in [0,T].
	\end{equation}
	Hence, \eqref{eq:BA} is satisfied locally in time. Using Proposition \ref{prop:BA}, we can then extend the solution globally in time. The  scattering result directly follows from the fast decay of $\partial_t \f_\pm$ in $L^2$, see Lemma \ref{lem:528-01}.
\end{proof}


\medskip

Hence, proving Proposition \ref{prop:BA} will be the central goal for the rest of the paper. Next, we give an overview of the strategy and reduce the proof of Proposition \ref{prop:BA} to certain bilinear and trilinear estimates in Sections \ref{sec:dt}--\ref{sec:9}.

\begin{proof}[Outlined proof of Proposition \ref{prop:BA}]
	Under the bootstrap assumption \eqref{eq:BA}, it follows from the linear dispersive estimate in Corollary \ref{cor:59-1} that 
\begin{equation}
	\|\{1, S, \bar{\Omega}, \nabla\} \ue\|_{L^\infty} \lesssim \varepsilon_1 \langle t \rangle^{-1}.
\end{equation}
Hence, the slow growth of the energies in \eqref{eq:slow-growth} follows from a standard energy estimate for the system \eqref{eq:EC}, see Lemma \ref{lem:ee1} below. By interpolation, there also holds that
\begin{equation}
 	\|\{S, \bar{\Omega}\}^{\le \frac{M_2}{2}} \f\|_{H^{\frac{M_1}{2}}} =	\|\{S, \bar{\Omega}\}^{\le \frac{M_2}{2}} \ue\|_{H^{\frac{M_1}{2}}} \lesssim \varepsilon \langle t \rangle^{C \varepsilon_1}.
\end{equation}
As we have mentioned in \eqref{eq:59-b1},  \eqref{eq:BA-improv} is satisfied at least locally in time. Hence, to prove Proposition \ref{prop:BA}, it suffices to establish \eqref{eq:BA-improv} for $t \gg 1$, and we proceed as follows.

\smallskip

Write the nonlinear evolution  of the profiles \eqref{eq:58-2} more compactly as
\begin{equation}
	\partial_t \f_\mu = \sum_{\mu_1, \mu_2 = \pm} \mathcal{N}_\mua (\f_{\mu_1}, \f_{\mu_2}),
\end{equation}
where we denote $\mua = (\mu, \mu_1, \mu_2) \in \{\pm\}^3$ and
\begin{equation} \label{eq:510-51}
	\mathcal{N}_\mua(\f_{\mu_1}, \f_{\mu_2}) = - e^{-\mu i t \Lmd} \mathbb{P}_\mu \PL \nabla \cdot (e^{\mu_1 i t \Lmd} \f_{\mu_1} \otimes e^{\mu_2 i t \Lmd} \f_{\mu_2}).
\end{equation}
Applying a number of good derivatives $S^a \bar{\Omega}^b$, we get
\begin{equation} \label{eq:510-1}
	\partial_t S^a \bar{\Omega}^b \f_\mu = \sum_{\substack{a_1 + a_2 \le a \\ b_1 + b_2 \le b}} C_{\bar{a}, \bar{b}} \sum_{\mu_1, \mu_2 = \pm} \mathcal{N}_\mua (S^{a_1} \bar{\Omega}^{b_1} \f_{\mu_1}, S^{a_2} \bar{\Omega}^{b_2} \f_{\mu_2}), 
\end{equation}
where $\bar{a} = (a, a_1, a_2), \bar{b} = (b, b_1, b_2)$, and $C_{\bar{a}, \bar{b}}$ are constants symmetric in $a_1, a_2$ and in $b_1, b_2$. For simplicity, we shall denote
\begin{equation} \label{eq:807-y1}
	{\sum}^* = \sum_{\substack{a_1 + a_2 \le a \\ b_1 + b_2 \le b}} C_{\bar{a}, \bar{b}} \sum_{\mu_1, \mu_2 = \pm}.
\end{equation}
 Then, we decompose the inputs and the output into their $\bighp$ and $\bigvp$ parts, and apply different strategies for different types of interactions.

\medskip
\underline{\emph{Horizontal frequency output}}
\smallskip

For the $\bighp$-part of the output, we prefer to work with the scalar profiles $\Fp_\pm$ as in \cite{GuoInvent}.\footnote{It is also possible to work only with vectorial profiles. In that case, one has to keep track of the vectorial structure of the nonlinearity throughout the proof.}  Recall that
\begin{equation} \label{eq:808-i11}
	\Fp_\pm = R_\pm \f_\pm = \frac12 \, \vec{e}_3 \cdot (|\nabla_h|^{-1} \nabla \times \f_{\pm} \pm |\nabla| |\nabla_h|^{-1}  \f_\pm).
\end{equation}
Note that the vector-to-scalar operators $R_\pm$ commute with Littlewood-Paley type projections, and moreover,
\begin{equation}
	R_\pm S^a \bar{\Omega}^b \f_\mu = S^a {\Omega}^b \Fp_\pm.
\end{equation}
We emphasize that the $X$ norms of $\f_\pm$ and $\Fp_\pm$ are equivalent since the symbol of $R_\pm$ is smooth on the support of $\chi^\hp$. Acting $Z_l^{(k)} P_k^\hp R_\mu$ on \eqref{eq:510-1} and decomposing the inputs into $\bighp$ and $\bigvp$-parts, we receive that
\begin{align} 
	\partial_t Z_l^{(k)} P^\hp_k S^a {\Omega}^b \Fp_\mu &= 	{\sum}^* Z_l^{(k)} P^\hp_k R_\mu \mathcal{N}_\mua ( S^{a_1} \bar{\Omega}^{b_1} \f_{\mu_1},  S^{a_2} \bar{\Omega}^{b_2} \f_{\mu_2}) \label{eq:510-3-001} \\
	&= 	{\sum}^* \sum_{\iota_1, \iota_2 \in \{\hp, \vp\}} Z_l^{(k)} P^\hp_k R_\mu \mathcal{N}_\mua (  P^{\iota_1} S^{a_1} \bar{\Omega}^{b_1} \f_{\mu_1},  P^{\iota_2} S^{a_2} \bar{\Omega}^{b_2}  \f_{\mu_2}), \label{eq:510-3}
\end{align}
for $\mu \in \{\pm\}$. Using Proposition \ref{prop:nonlinear}, the term in \eqref{eq:510-3} with $\iota_1 = \iota_2 = \hp$ can be written as
\begin{align} \label{eq:807-000}
		{\sum}^* Z_l^{(k)} P^\hp_k \Q_\mua [\m_\mua] (P^\hp S^{a_1} \Omega^{b_1} \Fp_{\mu_1}, P^\hp S^{a_2} \Omega^{b_2} \Fp_{\mu_2}),
\end{align}
where the family of scalar bilinear multipliers $\m_{\mua}(\xi, \eta)$ exhibit the null type structure mentioned in Section \ref{sec:12} as well as the energy structure. 

\smallskip

Let $\mathcal{I}_m = [2^m, 2^{m+1}) \cap [1,t]$ with $m \ge 1$ and $t \in [1, T]$. To propagate the $X^{10}_\beta$ norm of $\Fp_\mu$ on the time interval $\mathcal{I}_m$, we assume that $a+b \le 10$ and  consider the following three cases  depending on the size of $l+k$. The propagation of the auxilliary $X^{20,20}_{\beta'}$ norm is based on a modification of the methods below, see Section \ref{sec:Xbetap}.

\medskip
\emph{Case 1.}  $l+k \ge (1+\delta)m$.
\smallskip

In this case, we  use  a dispersive estimate at far field (Lemma \ref{lem:largel}) to obtain fast decay in $L^2$ norm for 
$$Z_{\sim l}\{(e^{\mu_1 i t \Lmd} P_{k_1} S^{a_1} {\bar \Omega}^{b_1} \f_{\mu_1}) \otimes (e^{\mu_2 i t \Lmd} P_{k_2} S^{a_2} {\bar \Omega}^{b_2} \f_{\mu_2})\}.$$
Then, by \eqref{eq:510-3-001} and the commutator type estimate in Lemma \ref{lem:a1-1}, we deduce an integrable decay rate for  
$$2^{30k^+ + (1+\beta)(l+k)}\|\partial_t Z_l P^\hp_k S^a \Omega^b \Fp_\mu\|_{L^2}.$$
Moreover, here we can relax the assumption $a+b \le 10$ to $a+b \le 20$, hence the result also applies to the propagation of the $X^{20,20}_{\beta'}$ norm.  This is carried out in detail in Lemma \ref{lem:428-1}. The method here avoids the technical difficulties in \cite[Section 8.1]{GuoInvent}, essentially due to our simpler choice of norms.

\medskip
\emph{Case 2.}  $l+k \le m+4$.
\smallskip

In this case, we propagate the $X^{10}_\beta$ norm of $\Fp_\mu$ based on Duhamel's formula, \emph{i.e.}, 
\begin{align} \label{eq:510-12}
	\left\|Z_l^{(k)} P^\hp_k S^a {\Omega}^b \Fp_\mu\right\|_{L^2}(t_2) \le \left\|Z_l^{(k)} P^\hp_k S^a {\Omega}^b \Fp_\mu\right\|_{L^2}(t_1) 	+ \left\|\int_{t_1}^{t_2} \partial_s Z_l^{(k)} P^\hp_k S^a {\Omega}^b \Fp_\mu ds \right\|_{L^2}.
\end{align}
Using \eqref{eq:510-3} and \eqref{eq:807-000}, we separate the analysis according to the choice of $(\iota_1, \iota_2) \in \{\bighp, \bigvp\}^2$.

\medskip
\emph{Subcase 2.1.}  $\iota_1=\iota_2=\bighp$.
\smallskip

The proof of this case is carried out in detail in Section \ref{sec:8}. We will bound the $L^2$ norm of the bilinear form
\begin{equation} \label{eq:807-901}
	\int_{\mathcal{I}_m} {\sum}^* Z_l^{(k)} P^\hp_k \Q_\mua [\m_\mua] (P^\hp S^{a_1} \Omega^{b_1} \Fp_{\mu_1}, P^\hp S^{a_2} \Omega^{b_2} \Fp_{\mu_2}) \, ds.
\end{equation}
by
\begin{equation} \label{eq:807-o1}
	2^{-(10-a-b) k^+ -(1+\beta)m} \,  \varepsilon_1^2.
\end{equation}
Note that due to $m \ge l+k-4$, \eqref{eq:807-o1} gives an acceptable contribution  to the propagation of the $X^{10}_\beta$  norm of $\Fp_\mu$. Our strategy to prove \eqref{eq:807-901}--\eqref{eq:807-o1} is as follows. 

Since $Z_l^{(k)}$ is bounded from $L^2$ to $L^2$, one can simply ignore it in \eqref{eq:807-901}.  Further localizing the inputs into $P^{\hp}_{k_1}$ and $P^{\hp}_{k_2}$ pieces, we study the  high-low ($k_1 \gg k_2$), low-high ($k_1 \ll k_2$) and high-high ($k_1 \sim k_2$) interactions respectively. 

Concerning the high-low and the low-high interactions, we observe that the group velocities in the $x_3$-direction for the two inputs are different. Based on the precise linear asymptotics in Lemma \ref{lem:slab}, we show that the $L^2$ norm of $\Q_\mua [\m_\mua] (P^\hp_{k_1} S^{a_1} \Omega^{b_1} \Fp_{\mu_1}, P^\hp_{k_2} S^{a_2} \Omega^{b_2} \Fp_{\mu_2})$ decays with rate $2^{-(2+\beta)m}$. 

The high-high interactions with $\mu_1 = - \mu_2$ can be treated similarly as in the last paragraph. However, the case $\mu_1 = \mu_2$ is more involved as a direct $L^2$-$L^\infty$ or $L^\infty$-$L^2$ bound is no longer sufficient. We further decompose the inputs into two parts using
$$\mbox{Id}=P^{\hp, \le -m\alpha} + (\mbox{Id}-P^{\hp, \le -m\alpha}),$$
with $\alpha = 0.35$ (see \eqref{eq:614-aa1} for the choice of $\alpha$).
For the input frequencies with $|\Lmd|\gtrsim 2^{-m \alpha}$, we are away from the resonant set \eqref{eq:524-3}, hence a spacetime resonances approach based on the formalism of \cite{GuoInvent} can be applied. If both inputs are supported on frequencies with $|\Lmd| \lesssim 2^{-m\alpha}$, we have to exploit the null type structure of $\m_\mua$ using the so-called multiscale decomposition and the precised wave packet estimates in Section \ref{sec:52}. See Section \ref{sec:high-high} for the detailed arguments.

\medskip
\emph{Subcase 2.2.}  $(\iota_1, \iota_2)  \neq  (\bighp, \bighp)$.
\smallskip

These types of interactions are treated in Sections \ref{sec:93}-\ref{sec:84} using the spacetime resonances approach of \cite{GuoInvent}. We will bound the $L^2$ norm of
\begin{equation}
	 \int_{\mathcal{I}_m}	 Z_l^{(k)} P^\hp_k R_\mu \mathcal{N}_\mua ( P^{\iota_1} S^{a_1} \bar{\Omega}^{b_1}  \f_{\mu_1}, P^{\iota_2} S^{a_2} \bar{\Omega}^{b_2}  \f_{\mu_2}) ds,
\end{equation}
by
\begin{equation} \label{eq:807-u1}
	2^{-20 k^+ -(1+\beta)(l+k) - \delta m} \varepsilon_1^2.
\end{equation}
In fact, the stronger bilinear $L^2$ estimate
\begin{equation} \label{eq:809-ab1}
	\left\|\int_{\mathcal{I}_m}	  P^\hp_k R_\mu \mathcal{N}_\mua ( P^{\iota_1} S^{a_1} \bar{\Omega}^{b_1}  \f_{\mu_1}, P^{\iota_2} S^{a_2} \bar{\Omega}^{b_2}  \f_{\mu_2}) ds \right\|_{L^2} \lesssim 2^{-20 k^+  - (1+\beta+3\delta) m} \varepsilon_1^2.
\end{equation}
will be established. Moreover, here we can relax the assumption $a+b \le 10$ to $a+b \le 20$, hence \eqref{eq:807-u1} also gives an acceptable contribution to the $X^{20,20}_{\beta'}$ norm.

\medskip
\emph{Case 3.}  $m+5 \le l+k <(1+\delta) m.$
\smallskip

In this case, we propagate the $X^{10}_\beta$ norm of $\Fp_+$ and $\Fp_-$ together based on energy method, \emph{i.e.}, 
\begin{align} \label{eq:510-11}
	&\quad \sum_{\mu=\pm} \frac12  \|Z_l^{(k)} P^\hp_k S^a {\Omega}^b \Fp_\mu\|^2_{L^2}(t_2) - \sum_{\mu=\pm} \frac12 \|Z_l^{(k)} P^\hp_k S^a {\Omega}^b \Fp_\mu\|^2_{L^2}(t_1) \nonumber \\
	&\qquad = \int_{t_1}^{t_2} \sum_{\mu=\pm} \langle \partial_s Z_l^{(k)} P^\hp_k S^a {\Omega}^b \Fp_\mu,  Z_l^{(k)} P^\hp_k S^a {\Omega}^b \Fp_\mu \rangle_{L^2} ds.
\end{align}
As in Case 2, we separate the analysis according to the choice of $(\iota_1, \iota_2) \in \{\bighp, \bigvp\}^2$.

\medskip
\emph{Subcase 3.1.}  $\iota_1=\iota_2=\bighp$.
\smallskip

In Section \ref{sec:8}, we will bound the trilinear form
\begin{equation} \label{eq:807-9011}
	\int_{\mathcal{I}_m} \sum_{\mu=\pm} {\sum}^* \langle Z_l  P^\hp_k R_\mu \mathcal{N}_\mua (P^{\iota_1} S^{a_1} \bar{\Omega}^{b_1} \f_{\mu_1}, P^{\iota_2} S^{a_2} \bar{\Omega}^{b_2} \f_{\mu_2}), Z_l P_k^\hp S^a \Omega^b \Fp_\mu \rangle_{L^2} \, ds.
\end{equation}
by
\begin{equation}\label{eq:809-yyy1}
	2^{-2(10-a-b)k^+ - 2(1+\beta)(l+k)} \left(2^{-\delta m} + 2^{- |m-l| } + \mathbf{1}_{l+k \le m+15}\right) \,  \varepsilon_1^3.
\end{equation}
Similar to Subcase 2.1, we  further localize the inputs into $P^{\hp}_{k_1}$ and $P^{\hp}_{k_2}$ pieces and employ different methods for the  high-low, low-high and high-high  interactions. 

For the high-low and low-high interactions, using the commutator type estimate in Lemma \ref{lem:a1-1}, we are able to insert the operator $Z_{\sim l}$ after the multipliers $P_k^\hp R_\mu e^{-\mu i s \Lmd} \mathbb{P}_\mu \PL$ within \eqref{eq:807-9011} up to an admissible remainder.   Then, we use $L^2$-$L^\infty$ or $L^\infty$-$L^2$ bounds depending on the parameters of localization (\emph{i.e.}, $l, m, k, k_1, k_2$) and invoke dispersive bounds from Lemma \ref{lem:slab}. For the low-high interactions, there is an additional difficulty caused by the quasilinear nature of \eqref{eq:EC}. To tackle this, we use the energy structure of $\m_\mua$ and commutator estimates to exploit cancellations in the trilinear form. 

For the high-high interactions, we distinguish the following two cases. If $l+k \ge m+16$, we can apply far field linear estimates to deduce an integrable bound. If $l+k \le m+15$, then we use the same method as in Subcase 2.1 based on the multiscale decomposition.

\medskip
\emph{Subcase 3.2.}  $(\iota_1, \iota_2)  \neq  (\bighp, \bighp)$.
\smallskip

This is similar to Subcase 2.2, also treated in Sections \ref{sec:93}-\ref{sec:84} using the spacetime resonances approach. We can also relax the assumption $a+b \le 10$ to $a+b \le 20$, and the resulting estimate reads that 
\begin{equation}
	|\eqref{eq:807-9011}| \lesssim 2^{-40 k^+ - (2+2\beta+5\delta)m} \varepsilon_1^3  \lesssim 2^{-40 k^+ - 2(1+\beta)(l+k) - \delta m} \varepsilon_1^3.
\end{equation}



\medskip
\underline{\emph{Vertical frequency output}}
\smallskip

For the $\bigvp$-part of the output, it is more convenient to work with the vectorial profiles $\f_\pm$. Acting $H_l^{(k)} P^{\vp}_k$ on \eqref{eq:510-1}, we obtain that
\begin{equation} \label{eq:59-c1}
	\partial_t H_l^{(k)} P^\vp_k S^a \bar{\Omega}^b \f_\mu ={\sum}^* H_l^{(k)} P^\vp_k \mathcal{N}_\mua ( S^{a_1} \bar{\Omega}^{b_1} \f_{\mu_1},  S^{a_2} \bar{\Omega}^{b_2} \f_{\mu_2}).
\end{equation}  
Assume that $a+b \le 20$. We rely on Duhamel's formula to propagate the $Y^{20,20}_{\beta}$ norm of $\f_\mu$.

\medskip
\emph{Case 1.}  $l+k \ge (1+\delta) m.$
\smallskip

This is similar to Case 1 for horizontal frequency output. By the same method, we deduce an integrable decay rate for  
$$2^{30k^+ + (1+\beta)(l+k)}\|\partial_t H_l P^\vp_k S^a \bar{\Omega}^b \f_\mu\|_{L^2},$$
see Lemma \ref{lem:428-1}.

\medskip
\emph{Case 2.}  $l+k < (1+\delta) m.$
\smallskip

This case is treated in Sections \ref{sec:8-1}, \ref{sec:9-2} and \ref{sec:84}. Following the spacetime resonances approach, we will bound the $L^2$ norm of
\begin{equation}
	\int_{\mathcal{I}_m} P^\vp_k \mathcal{N}_\mua ( S^{a_1} \bar{\Omega}^{b_1} \f_{\mu_1},  S^{a_2} \bar{\Omega}^{b_2} \f_{\mu_2}) ds
\end{equation}
by the right hand side of  \eqref{eq:809-ab1}, hence also by \eqref{eq:807-u1}.

\medskip
In conclusion, the bootstrap norms in \eqref{eq:BA} can be propagated up to time $T$, satisfying the improved estimate \eqref{eq:BA-improv}.

\end{proof}

\section{Structure of the nonlinearity} \label{sec:3}

\subsection{Basic energy estimates}

\begin{lemma} \label{lem:ee}
	Assume that $\ue$ solves \eqref{eq:EC} on $0 \leqslant t \leqslant T$, then
	for any given $n \in \mathbb{N}$ there holds that
	\begin{equation} \label{eq:16-c}
		\frac{d}{d t} \| \ue \|_{H^n}^2 \lesssim \| \nabla \ue \|_{L^{\infty}} \| \ue
		\|_{H^n}^2, 
	\end{equation}
	\begin{equation} \label{eq:16-d}
		\frac{d}{d t} \| S^n \ue \|_{L^2}^2 \lesssim (\| S \ue \|_{L^{\infty}} + \|
		\nabla \ue \|_{L^{\infty}}) \left( \| \ue \|_{H^n} + \sum_{a = 1}^n \| S^a \ue
		\|_{L^2} \right)^2, 
	\end{equation}
	\begin{equation}\label{eq:16-3a}
		\frac{d}{d t} \| \bar{\Omega}^n \ue \|_{L^2}^2 \lesssim (\| \bar{\Omega} \ue
		\|_{L^{\infty}} + \| \nabla \ue \|_{L^{\infty}}) \left( \| \ue \|_{H^n} +
		\sum_{a = 1}^n \| \bar{\Omega}^a \ue \|_{L^2} \right)^2. 
	\end{equation}
\end{lemma}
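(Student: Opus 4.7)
The plan is to establish all three estimates via a uniform template: apply the relevant derivative operator ($\partial^\alpha$, $S^n$, or $\bar{\Omega}^n$) to \eqref{eq:EC}, pair in $L^2$ against the same object applied to $\ue$, and exploit three systematic cancellations. (i) The Coriolis contribution vanishes whenever the operator commutes with $\vec{e}_3\times\cdot$, since $\langle\vec{e}_3\times \vec w,\vec w\rangle=0$ pointwise. (ii) The pressure contribution vanishes provided the operator preserves the divergence-free constraint, so that $\langle \nabla(\text{something}),\vec w\rangle$ integrates by parts to zero. (iii) The principal transport piece $\langle\ue\cdot\nabla \vec w,\vec w\rangle$ vanishes by integration by parts using $\nabla\cdot\ue=0$. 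The residual commutator terms will then be handled by Moser/Kato--Ponce-type inequalities, producing exactly the RHS in \eqref{eq:16-c}--\eqref{eq:16-3a}.

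For \eqref{eq:16-c}, the operator $\partial^\alpha$ commutes with $\nabla$ and with $\vec{e}_3\times\cdot$ and preserves divergence-freeness, so cancellations (i) and (ii) fire exactly. What remains is the classical commutator bound $\|[\partial^\alpha,\ue\cdot\nabla]\ue\|_{L^2}\lesssim\|\nabla\ue\|_{L^\infty}\|\ue\|_{H^n}$, together with $\langle\ue\cdot\nabla\partial^\alpha\ue,\partial^\alpha\ue\rangle=0$, which together give the claim after summing over $|\alpha|\le n$.

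For \eqref{eq:16-d}, the crucial algebra is $[S,\partial_i]=-\partial_i$, whence $S\nabla=\nabla(S-1)$, and consequently $\nabla\cdot S^n\ue=(S+1)^n(\nabla\cdot\ue)=0$, $S^n\nabla p=\nabla(S-1)^n p$, and the pointwise commutation $S(\vec{e}_3\times\ue)=\vec{e}_3\times S\ue$. Thus (i) and (ii) fire and it remains to estimate the expansion
\[
S^n(\ue\cdot\nabla\ue) = \sum_{k=0}^n\binom{n}{k}\,S^k\ue\cdot\nabla(S-1)^{n-k}\ue
\]
in $L^2$ against $S^n\ue$. The $k=0$ piece cancels by integration by parts; the $k=n$ and $k=n-1$ extremes are absorbed using $\|\nabla\ue\|_{L^\infty}$ and $\|S\ue\|_{L^\infty}$ respectively; the intermediate $k$'s close via Gagliardo--Nirenberg interpolation into $\|\ue\|_{H^n}+\sum_{a=1}^n\|S^a\ue\|_{L^2}$.

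For \eqref{eq:16-3a}, the key structural point is that $\bar{\Omega}$ is the Lie derivative $\mathcal{L}_X$ along the divergence-free horizontal vector field $X=x_h^\perp\cdot\nabla_h$. Indeed a direct computation gives $\ue\cdot\nabla X=\ue_h^\perp$, so $\mathcal{L}_X\ue=X\cdot\nabla\ue-\ue\cdot\nabla X=\Omega\ue-\ue_h^\perp=\bar{\Omega}\ue$. The Lie formalism immediately delivers the clean identities $\bar{\Omega}(\vec{e}_3\times\ue)=\vec{e}_3\times\bar{\Omega}\ue$, $\bar{\Omega}(\nabla p)=\nabla(\Omega p)$, the Leibniz rule $\bar{\Omega}(\ue\cdot\nabla\vec v)=(\bar{\Omega}\ue)\cdot\nabla\vec v+\ue\cdot\nabla(\bar{\Omega}\vec v)$, and the preservation of incompressibility $\nabla\cdot\bar{\Omega}^n\ue=0$. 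All three cancellations therefore fire and the nonlinear estimate proceeds exactly as in the $S^n$ case. The only mild obstacle, common to \eqref{eq:16-d} and \eqref{eq:16-3a}, is organizing the Moser interpolation so that the intermediate-order products $\bar{\Omega}^k\ue\cdot\nabla\bar{\Omega}^{n-k}\ue$ with $1<k<n-1$ genuinely fit into the $L^\infty\times L^2$ partition dictated by the RHS; this is standard and uses only $\|\nabla\ue\|_{L^\infty}$ and $\|\bar{\Omega}\ue\|_{L^\infty}$ (respectively $\|S\ue\|_{L^\infty}$) to control the middle $L^p$ norms.
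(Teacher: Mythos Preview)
Your approach matches the paper's: the same three cancellations, the same commutation identities $S\nabla=\nabla(S-1)$, $\bar\Omega(\nabla p)=\nabla\Omega p$, $\bar\Omega(\ue\cdot\nabla\vec v)=\bar\Omega\ue\cdot\nabla\vec v+\ue\cdot\nabla\bar\Omega\vec v$, and the same Leibniz expansion of $S^n(\ue\cdot\nabla\ue)$. Two small remarks on the interpolation step.

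First, the extreme directly absorbed by $\|S\ue\|_{L^\infty}$ is $k=1$, not $k=n-1$: for $k=1$ one bounds by $\|S\ue\|_{L^\infty}\|\nabla S^{n-1}\ue\|_{L^2}\|S^n\ue\|_{L^2}$, whereas for $k=n-1$ integrating by parts in space leaves you with $\nabla S^n\ue$, which is one derivative too many, so that term must be handled together with the intermediate $k$'s.

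Second, for the range $2\le k\le n-1$ the paper does not invoke generic Gagliardo--Nirenberg but uses a specific maneuver: integrate by parts in space (using $\nabla\cdot S^k\ue=0$) to produce $\int(S^k\ue\cdot\nabla S^n\ue)\cdot S^{n-k}\ue\,dx$, then use the adjoint identity $\int(S\phi)\psi=-\int\phi(S+3)\psi$ to trade $\nabla S^n\ue$ for $\nabla S^{n-1}\ue$, and close with the bespoke interpolation inequalities
\[
\|S^af\|_{L^{2n/a}}\lesssim\|f\|_{L^\infty}^{1-a/n}\Big(\sum_{b\le n}\|S^bf\|_{L^2}\Big)^{a/n},\qquad \|S^af\|_{H^{n-a}}\lesssim\|f\|_{H^n}^{1-a/n}\Big(\sum_{b\le n}\|S^bf\|_{L^2}\Big)^{a/n}
\]
proved in the Appendix (and their $\Omega$ analogues for \eqref{eq:16-3a}). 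Your sketch is correct in spirit, but ``standard Gagliardo--Nirenberg'' does not literally apply since $S$ and $\bar\Omega$ are not translation-invariant derivatives; these custom lemmas are what make the middle terms close.
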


\begin{proof}
	These estimates extend {\cite[Proposition 5.1]{GuoCpam}} to the
	non-axisymmetric case. The estimate \eqref{eq:16-c} is classical. To prove
	\eqref{eq:16-d}, we apply $S^n = (x \cdot \nabla)^n$ to the system \eqref{eq:EC} and get
	\begin{equation} \label{eq:524-1}
		\partial_t S^n \ue + S^n (\ue \cdot \nabla \ue) + \vec{e}_3 \times S^n \ue + \nabla (S-1)^n p =
		0.
	\end{equation} 
	Testing \eqref{eq:524-1} by $S^n \ue$ gives
	\[ \frac{1}{2}  \frac{d}{d t} \| S^n \ue \|_{L^2}^2 = - \int_{\R^3} S^n (\ue \cdot
	\nabla \ue) \cdot S^n \ue \]
	\[ = -\sum_{k = 0}^n  \big({\large \substack{n \\ k}}\big)   \int_{\R^3} (S^k \ue \cdot \nabla) S^{n - k} \ue \cdot S^n \ue d x
	+ l.o.t. \]
	Here, $l.o.t.$ stands for lower order terms coming from the commutator
	$[\nabla, S] = \nabla$, which are easier to estimate. For $k = 0$ or $n$,
	the estimates are straightforward:
	\[ \int_{\R^3} (\ue \cdot \nabla) S^n \ue \cdot S^n \ue dx = 0, \]
	\[ \left| \int_{\R^3} (S^n \ue \cdot \nabla) \ue \cdot S^n \ue d x \right| \lesssim \|
	\nabla \ue \|_{L^{\infty}} \| S^n \ue \|_{L^2}^2 . \]
	For $2 \leqslant k \leqslant n - 1$, using integration by parts there holds
	that
	\[ - \int_{\R^3} (S^k \ue \cdot \nabla S^{n - k} \ue) \cdot S^n \ue d x = \int_{\R^3} (S^k \ue
	\cdot \nabla S^n \ue) \cdot S^{n - k} \ue d x \]
	\begin{equation}  \label{eq:524-2}
		\quad = - \int_{\R^3} (S^{k + 1} \ue \cdot \nabla S^{n - 1} \ue) \cdot S^{n - k} \ue d x -
	\int_{\R^3} (S^k \ue \cdot \nabla S^{n - 1} \ue) \cdot S^{n - k + 1} \ue d x + l.o.t.
	\end{equation}
	The first integral in \eqref{eq:524-2} can be estimated using Holder's inequality and the interpolation estimates
	\eqref{eq:16-a} and \eqref{eq:16-2b},
	\[ \left| \int_{\R^3} (S^{k + 1} \ue \cdot \nabla S^{n - 1} \ue) \cdot S^{n - k} \ue d x
	\right| \lesssim \| S^{k + 1} \ue \|_{L^{\frac{2 (n - 1)}{k}}}  \| S^{n -
		k} \ue \|_{L^{\frac{2 (n - 1)}{n - k - 1}}} \| \nabla S^{n - 1} \ue \|_{L^2}
	\]
	\[ \lesssim \| S \ue \|_{L^{\infty}}  \left( \sum_{a = 1}^n \| S^a \ue \|_{L^2}
	\right) \left( \| \ue \|_{H^n} + \sum_{a = 0}^n \| S^a \ue \|_{L^2} \right)
	\]
	and second integral in \eqref{eq:524-2} can be estimated in a similar way. This finishes the
	proof of \eqref{eq:16-d}.
	
	To show \eqref{eq:16-3a}, we first observe that for $\ue$ divergence free, $\bar{\Omega} \ue$ is also
	divergence free. Moreover, there hold that
	\[ \bar{\Omega} \nabla p = \nabla \Omega p, \]
	\[ \bar{\Omega} (\vec{e}_3 \times \ue) = \vec{e}_3 \times \bar{\Omega} \ue, \]
	and
	\[ \bar{\Omega} (\ue \cdot \nabla \ue) = \bar{\Omega} \ue \cdot \nabla \ue + \ue \cdot
	\nabla \bar{\Omega} \ue. \]
	Applying a number of $\bar{\Omega}$ operators to the Euler-Coriolis system
	gives
	\[ \partial_t \bar{\Omega}^n \ue + \sum_{k = 0}^n \big({\large \substack{n \\ k}}\big)    \, \bar{\Omega}^k \ue \cdot
	\nabla \bar{\Omega}^{n - k} \ue + \vec{e}_3 \times \bar{\Omega}^n \ue + \nabla
	\Omega^n p = 0. \]
	Testing by $\bar{\Omega}^n \ue$ gives
	\[ \frac{1}{2}  \frac{d}{{dt}} \| \bar{\Omega}^n \ue \|_{L^2}^2 = -
	\sum_{k = 0}^n  \big(\substack{n \\ k}\big) \, \int_{\R^3} (\bar{\Omega}^k \ue \cdot \nabla) \bar{\Omega}^{n - k}
	\ue \cdot \bar{\Omega}^n \ue d x \]
	Then, using the interpolation estimates \eqref{eq:16-b} and
	\eqref{eq:16-3b}, we can proceed using the same argument as above for $S$ derivatives.
	
\end{proof}

\begin{lemma} \label{lem:ee1}
	Under the bootstrap assumption \eqref{eq:BA}, there holds that
	\begin{equation} \label{eq:46-1}
		\| \ue (t) \|_{H^{M_1}} + \sum_{a + b \leqslant M_2}
		\| S^a \bar{\Omega}^b \ue(t) \|_{L^2} \lesssim \varepsilon
		\langle t \rangle^{C \varepsilon_1}.
	\end{equation}
	As a consequence, we also have
	\begin{equation}  \label{eq:59-02}
		\| \f_\pm (t) \|_{H^{M_1}} + \sum_{a + b \leqslant M_2}
		\| S^a \bar{\Omega}^b \f_\pm (t) \|_{L^2} \lesssim \varepsilon
		\langle t \rangle^{C \varepsilon_1},
	\end{equation}
	and
	\begin{equation} \label{eq:59-03}
		\| \Fp_\pm (t) \|_{H^{M_1}} + \sum_{a + b \leqslant M_2}
		\| S^a \Omega^b \Fp_\pm (t) \|_{L^2} \lesssim \varepsilon
		\langle t \rangle^{C \varepsilon_1}.
	\end{equation}
\end{lemma}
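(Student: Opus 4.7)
The plan is to combine the basic energy estimates of Lemma \ref{lem:ee} with dispersive $L^\infty$-decay of $\ue$, close via Gr\"onwall, and then transfer the resulting bound to the profiles $\f_\pm$ and $\Fp_\pm$ by purely linear reductions.

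First, I would observe that the bootstrap assumption \eqref{eq:BA} controls exactly the three norms required to invoke Corollary \ref{cor:59-1} on each of $\f_+$ and $\f_-$. Since $\ue = e^{it\Lambda} \mathbb{P}_+^{-1} \f_+ + e^{-it\Lambda} \mathbb{P}_-^{-1}\f_-$ and $\Lambda$ is $0$-homogeneous with $S_\xi \Lambda = \Omega_\xi \Lambda = 0$, the dispersion $e^{\pm it\Lambda}$ commutes with $S$ and $\Omega$, and its commutators with $\nabla$ and with the zero-order correction inside $\bar\Omega$ cost no regularity. Applying the dispersive estimate also to $S\f_\pm$, $\bar\Omega\f_\pm$, $\nabla\f_\pm$ (still controlled by the $X^{20,20}_{\beta'}$ and $Y^{20,20}_\beta$ parts of \eqref{eq:BA}) yields
\[
  \|\ue(t)\|_{L^\infty} + \|S\ue(t)\|_{L^\infty} + \|\bar\Omega\ue(t)\|_{L^\infty} + \|\nabla\ue(t)\|_{L^\infty} \lesssim \varepsilon_1 \langle t\rangle^{-1}.
\]

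Next, I would define the total energy
\[
  \mathcal{E}(t) := \|\ue(t)\|_{H^{M_1}}^2 + \sum_{a+b \le M_2} \|S^a\bar\Omega^b\ue(t)\|_{L^2}^2
\]
and sum the three differential inequalities \eqref{eq:16-c}--\eqref{eq:16-3a} over all relevant $n, a, b$. Substituting the $L^\infty$-decay above into each right-hand side produces the scalar inequality
\[
  \tfrac{d}{dt}\mathcal{E}(t) \lesssim \varepsilon_1 \langle t\rangle^{-1}\,\mathcal{E}(t).
\]
Gr\"onwall then gives $\mathcal{E}(t) \lesssim \mathcal{E}(0)\, \langle t\rangle^{C\varepsilon_1}$, and the hypothesis \eqref{eq:59-a} bounds $\mathcal{E}(0) \lesssim \varepsilon^2$; taking square roots produces \eqref{eq:46-1}.

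Finally, to transfer the bound to the profiles, I would use $\f_\pm = e^{\mp it\Lambda}\mathbb{P}_\pm \ue$, where $e^{\mp it\Lambda}$ is an $L^2$-isometry commuting with $\nabla$, $S$, and $\Omega$, and $\mathbb{P}_\pm$ is a bounded order-zero Fourier multiplier; the commutator $[e^{\mp it\Lambda},\bar\Omega]$ reduces to the pointwise zero-order operation $\ue_h^\perp$. These facts propagate the bound on $\ue$ to the analogous bound on $\f_\pm$, giving \eqref{eq:59-02}. For \eqref{eq:59-03}, the identity $R_\pm S^a\bar\Omega^b\f_\mu = S^a\Omega^b\Fp_\pm$ from Section \ref{sec:12} together with the $L^2$-boundedness of $R_\pm$ on helical vector fields (where the apparent $|\nabla_h|^{-1}$ factor is cancelled against the curl constraint $\nabla\times\f_\pm = \pm|\nabla|\f_\pm$) finishes the argument. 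The main (mild) obstacle I anticipate is the bookkeeping verification that each commutation of $S, \bar\Omega, \nabla$ with $e^{\pm it\Lambda}$ and $\mathbb{P}_\pm$ stays within the $10$--$20$ vector-field budget of \eqref{eq:BA}; given $M_1 = M_2 \gg 20$, this is comfortably within the margins.
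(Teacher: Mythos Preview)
Your overall strategy matches the paper's proof: dispersive $L^\infty$ decay from Corollary~\ref{cor:59-1}, Gr\"onwall via the energy inequalities of Lemma~\ref{lem:ee}, then transfer to $\f_\pm$ and $\Fp_\pm$ by commutation. However, there is one genuine gap.

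The differential inequalities \eqref{eq:16-c}--\eqref{eq:16-3a} control only $\|\ue\|_{H^n}$, $\|S^n\ue\|_{L^2}$, and $\|\bar\Omega^n\ue\|_{L^2}$ for \emph{pure} powers of $S$ or $\bar\Omega$; they give no direct differential inequality for the mixed quantities $\|S^a\bar\Omega^b\ue\|_{L^2}$ with $a,b\ge 1$. Hence your $\mathcal{E}(t)$, which includes all mixed terms, is not closed under those inequalities, and the step ``sum \eqref{eq:16-c}--\eqref{eq:16-3a} over all relevant $n,a,b$ to get $\tfrac{d}{dt}\mathcal{E}\lesssim \varepsilon_1\langle t\rangle^{-1}\mathcal{E}$'' does not follow as written. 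The paper handles this by first running Gr\"onwall only on the pure-derivative energies to obtain
\[
  \|\ue(t)\|_{H^{M_1}} + \sum_{a\le M_2}\|S^a\ue\|_{L^2} + \sum_{b\le M_2}\|\bar\Omega^b\ue\|_{L^2} \lesssim \varepsilon\langle t\rangle^{C\varepsilon_1},
\]
and then invoking the interpolation inequality \eqref{eq:26aa} of Lemma~\ref{lem:730-a} to recover the mixed $S^a\bar\Omega^b$ terms. You should either insert that interpolation step, or separately derive (by the same integration-by-parts scheme as in Lemma~\ref{lem:ee}) a mixed energy inequality for $\|S^a\bar\Omega^b\ue\|_{L^2}$.

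Two cosmetic points: (i) $\mathbb{P}_\pm$ are projections, so writing $\mathbb{P}_\pm^{-1}$ is incorrect; the relation is simply $\ue = e^{it\Lambda}\f_+ + e^{-it\Lambda}\f_-$. (ii) For the transfer step, the paper records the cleaner fact that $[S,\mathbb{P}_\pm]=[\bar\Omega,\mathbb{P}_\pm]=[e^{\pm it\Lambda},\mathbb{P}_\pm]=0$, which is all that is needed.
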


\begin{proof}
	Using \eqref{eq:BA} and the dispersive estimate in Corollary \ref{cor:59-1} below, we have
	\begin{equation}  \label{eq:59-01}
		\| \ue \|_{L^{\infty}} + \| \nabla \ue \|_{L^{\infty}} + \| S
		\ue \|_{L^{\infty}} + \| \bar{\Omega} \ue \|_{L^{\infty}} \lesssim
		\frac{\varepsilon_1}{\langle t \rangle}.
	\end{equation}
	By the energy estimates \eqref{eq:16-c}--\eqref{eq:16-3a}, \eqref{eq:59-01} and Gronwall's inequality, we obtain
	\begin{equation} \label{eq:46-2}
		\| \ue (t) \|_{H^{M_1}} + \sum_{a \le M_2}
		\| S^a \ue \|_{L^2} + \sum_{b \le M_2}
		\| \bar{\Omega}^b \ue \|_{L^2} \lesssim \varepsilon
		\langle t \rangle^{C \varepsilon_1}.
	\end{equation}
	Then \eqref{eq:46-1} follows from \eqref{eq:46-2} and the interpolation estimate \eqref{eq:26aa}. \eqref{eq:59-02}--\eqref{eq:59-03} follows from \eqref{eq:46-1}, the definition of $\f_\pm, \Fp_\pm$ as well as the fact that $[S, \mathbb{P}_\pm] = [\bar{\Omega}, \mathbb{P}_\pm] = [e^{i t \Lmd}, \mathbb{P}_\pm] = 0$.
\end{proof}

%

\medskip

\begin{lemma} \label{lem:wee}
	Assume that $\ue$ solves \eqref{eq:EC} on $0 \leqslant t \leqslant T$, then there holds that
	\begin{equation} \label{eq:59-c}
		\frac{d}{d t} \| \langle|x|\rangle^{1+\beta} \ue \|_{L^2}^2 \lesssim (1 + \|\ue\|_{L^\infty}) \| \langle |x| \rangle^{1+\beta} \ue \|_{L^2}^2. 
	\end{equation}
\end{lemma}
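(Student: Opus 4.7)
The approach is a weighted $L^2$ energy estimate at the velocity level. I would set $w(x) = \langle|x|\rangle^{2(1+\beta)}$ and test \eqref{eq:EC} against $w\ue$, integrating over $\R^3$. Since $\ue \cdot (\vec{e}_3 \times \ue) \equiv 0$ pointwise, the Coriolis contribution drops out. For the transport term, I rewrite $\ue \cdot (\ue \cdot \nabla \ue) = \tfrac12 \ue \cdot \nabla |\ue|^2$ and integrate by parts using $\nabla \cdot \ue = 0$ to obtain $\tfrac12 \int |\ue|^2 \, \ue \cdot \nabla w$; the bound $|\nabla w| \lesssim \langle|x|\rangle^{2\beta+1} \lesssim w$ gives control by $\|\ue\|_{L^\infty} \|\langle|x|\rangle^{1+\beta}\ue\|_{L^2}^2$.

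The delicate step is the pressure term. A second integration by parts yields $-\int w\, \ue \cdot \nabla p = \int p \, \ue \cdot \nabla w$, which by Cauchy--Schwarz is bounded by $\|\langle|x|\rangle^\beta p\|_{L^2} \|\langle|x|\rangle^{1+\beta}\ue\|_{L^2}$. The lemma thus reduces to the weighted pressure estimate
\[
\|\langle|x|\rangle^\beta p\|_{L^2} \lesssim (1+\|\ue\|_{L^\infty}) \|\langle|x|\rangle^{1+\beta}\ue\|_{L^2}.
\]
From $\Delta p = \omega_3 - \partial_i \partial_j(u_i u_j)$ I obtain the decomposition
\[
p = R_i R_j(u_i u_j) - R_1 |\nabla|^{-1} u_2 + R_2 |\nabla|^{-1} u_1,
\]
where $R_i$ denote the Riesz transforms. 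Since $\beta \ll 1$, the weight $\langle|x|\rangle^{2\beta}$ is a Muckenhoupt $A_2$ weight on $\R^3$, so Riesz transforms are bounded on $L^2(\langle|x|\rangle^{2\beta}\,dx)$; this immediately controls the quadratic piece by $\|\ue\|_{L^\infty} \|\langle|x|\rangle^\beta \ue\|_{L^2}$.

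The main obstacle is the linear Coriolis-induced piece $\Delta^{-1}\omega_3$, which contains the nonlocal operator $|\nabla|^{-1}$ that is not directly bounded on $L^2$. After absorbing the outer Riesz transform via the same $A_2$ boundedness, it suffices to estimate $\|\langle|x|\rangle^\beta |\nabla|^{-1} u_j\|_{L^2}$ for $j=1,2$. On the far-field region $|x| > 1$ I would invoke the Stein--Weiss inequality
\[
\||x|^\beta |\nabla|^{-1} f\|_{L^2(\R^3)} \lesssim \||x|^{1+\beta} f\|_{L^2(\R^3)},
\]
valid in $\R^3$ for $0 < \beta < 1/2$ (the scaling identity $s = \alpha + \gamma$ with $s=1$, $\alpha = -\beta$, $\gamma = 1+\beta$ is satisfied, with both exponents in $(-3/2, 3/2)$). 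Near the origin, where $\langle|x|\rangle^\beta$ is harmless, I combine the Sobolev embedding $|\nabla|^{-1} : L^2 \to \dot{H}^1 \hookrightarrow L^6(\R^3)$ with H\"older on the unit ball to obtain $\|\mathbf{1}_{|x|\le 1} |\nabla|^{-1} u_j\|_{L^2} \lesssim \|u_j\|_{L^2} \lesssim \|\langle|x|\rangle^{1+\beta} \ue\|_{L^2}$. Piecing the two regions together yields the required pressure bound and closes the Gr\"onwall-type inequality \eqref{eq:59-c}.
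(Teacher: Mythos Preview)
Your proposal is correct and follows essentially the same route as the paper's proof: test the equation against $\langle|x|\rangle^{2+2\beta}\ue$, handle the transport term by integration by parts, and reduce the pressure contribution to the bound $\|\langle|x|\rangle^{\beta}p\|_{L^2}\lesssim(1+\|\ue\|_{L^\infty})\|\langle|x|\rangle^{1+\beta}\ue\|_{L^2}$ via the $A_2$-boundedness of Riesz transforms together with the potential estimate $\|\langle|x|\rangle^{\beta}|\nabla|^{-1}f\|_{L^2}\lesssim\|\langle|x|\rangle^{1+\beta}f\|_{L^2}$. The only cosmetic difference is that the paper quotes this last inequality directly, whereas you spell it out as Stein--Weiss on $\{|x|>1\}$ plus $L^2\to L^6$ Sobolev embedding on the unit ball; both are standard ways to obtain the same bound.
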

\begin{proof}
	Testing \eqref{eq:EC} by $\langle|x| \rangle^{2+2\beta}\ue$, we get
	\begin{equation}
		\frac12 \frac{d}{dt} \|\langle|x| \rangle^{1+\beta} \ue\|_{L^2}^2 =  \frac12 \int_{\R^3} |\ue|^2 \ue \cdot \nabla  \langle|x| \rangle^{2+2\beta} dx + \int_{\R^3} p \,\ue \cdot \nabla  \langle|x| \rangle^{2+2\beta} dx
	\end{equation}
	For the first term, we have
	\begin{align}
		\int_{\R^3} |\ue|^2 \ue \cdot \nabla  \langle|x| \rangle^{2+2\beta} dx \lesssim \|\ue\|_{L^\infty} \|\langle|x| \rangle^{1+\beta} \ue\|_{L^2}^2.
	\end{align} 
	Using the potential estimate
	\begin{equation}
		\|\langle|x| \rangle^{\beta} |\nabla|^{-1} f\|_{L^2} \lesssim \|\langle|x| \rangle^{1+\beta}  f\|_{L^2}
	\end{equation}
	and the fact that $\langle|x| \rangle^{\beta}$ is an $A_2$ weight (cf. \cite{Grafa}), we  deduce that
	\begin{align}
		\| \langle|x| \rangle^{\beta}  p\|_{L^2} & \le  \| \langle|x| \rangle^{\beta}  \Delta^{-1}(\omega_3 )\|_{L^2} + \| \langle|x| \rangle^{\beta}  \Delta^{-1} \nabla^2:( \ue \otimes \ue)\|_{L^2} \nonumber\\
		&\lesssim  \| \langle|x| \rangle^{1+\beta}  \ue\|_{L^2} + \| \langle|x| \rangle^{\beta}   |\ue|^2 \|_{L^2}
	\end{align}
	Hence, for the pressure term we have
	\begin{align}
		\int p \,\ue \cdot \nabla  \langle|x| \rangle^{2+2\beta} dx &\le \|\langle|x| \rangle^{1+\beta} \ue\|_{L^2} \| \langle|x| \rangle^{\beta}  p\|_{L^2} \nonumber\\
		&\lesssim (1 + \|\ue\|_{L^\infty}) \| \langle |x| \rangle^{1+\beta} \ue \|_{L^2}^2.
	\end{align}
	This finishes the proof of \eqref{eq:59-c}.
\end{proof}

\subsection{Null type structures} \label{sec:null}

In this section, we further study the special nonlinear structure of \eqref{eq:EC}. In particular, we demonstrate the null type structures at the level of bilinear multipliers, which will be important for the bilinear/trilinear estimates in Section \ref{sec:8}.  

The basic form of the nonlinear contribution to the vectorial profiles $\f_\pm$ is summarized in the following result.

\begin{proposition} \label{prop:510}
	Let $\mu \in \{\pm\}$ and  $f_\mu$ be any component of the vectorial profile $\f_\mu$ given by \eqref{eq:58-01}, then $\partial_t f_\mu$ can be linearly decomposed into bilinear terms of the form
	\begin{align} \label{eq:57-11}
		  \Q_{\mua}[ \m](f_{\mu_1}, f_{\mu_2}).
	\end{align}
	Here, $\mua = (\mu, \mu_1, \mu_2) \in \{\pm\}^3$ and for $ i=1,2$,  $f_{\mu_i}$ is some component of $\f_{\mu_i}$. Moreover, $\m = |\xi| \mathfrak{n}(\xi)$ and $\mathfrak{n}$ stands for some 0th order homogeneous multiplier which is (multiplicatively) generated by 
	\begin{equation}\label{eq:57-12}
		R_0 = \left\{\frac{\xi_1}{|\xi|}, \frac{\xi_2}{|\xi|}, \frac{\xi_3}{|\xi|} \right\}.
	\end{equation}
\end{proposition}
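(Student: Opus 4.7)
The plan is to verify the decomposition by a direct Fourier-side computation on the evolution \eqref{eq:58-2}. Fix $\mu\in\{\pm\}$ and a component index $n$, and substitute $\ue = e^{it\Lmd}\f_+ + e^{-it\Lmd}\f_-$ into the right-hand side of \eqref{eq:58-2}. After expanding the tensor product $\ue\otimes\ue$ and writing out each vector operator in components, $\partial_t f^{(n)}_\mu$ becomes a finite sum indexed by $(\mu_1,\mu_2)\in\{\pm\}^2$ and by the tensorial indices inside the expression
\begin{equation*}
    -\,e^{-\mu it\Lmd}\,\mathbb{P}_\mu\,\PL\,\nabla\cdot\bigl(e^{\mu_1 it\Lmd}\f_{\mu_1}\otimes e^{\mu_2 it\Lmd}\f_{\mu_2}\bigr).
\end{equation*}
On the Fourier side, the three dispersion exponentials combine into the single oscillation $e^{it\Phi_\mua(\xi,\eta)}$ in agreement with \eqref{eq:510-52}; the divergence contributes a linear factor $i\xi_j$ for the relevant summed index $j$; the Leray projector $\PL$ contributes its matrix symbol $I-\xi\otimes\xi/|\xi|^2$; and the helicity projection $\mathbb{P}_\mu=\tfrac{1}{2}(I+\mu|\nabla|^{-1}\nabla\times)$ contributes a matrix symbol whose entries are constants together with ratios of the components of $\xi/|\xi|$. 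Each summand is therefore a bilinear form of the shape $\Q_\mua[\m](f^{(i)}_{\mu_1},f^{(k)}_{\mu_2})$ for a scalar multiplier $\m$ assembled from these symbols.

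The remaining work is purely algebraic. I factor $i\xi_j = |\xi|\cdot i(\xi_j/|\xi|)$ to extract precisely one power of $|\xi|$ from the divergence. Every other building block of $\m$ --- the signs $\mu,\mu_1,\mu_2$, the Kronecker symbols, the Levi--Civita coefficients coming from $\nabla\times$, and the rational expressions $\xi_l/|\xi|$ arising from both $\PL$ and $\mathbb{P}_\mu$ --- is either a constant or a $0$-homogeneous polynomial in the three generators listed in \eqref{eq:57-12}, and products of such pieces remain in the algebra generated by $R_0$. Consequently $\m = |\xi|\mathfrak{n}(\xi)$ with $\mathfrak{n}$ being $0$-homogeneous and generated multiplicatively by $R_0$, which is exactly the assertion of the proposition. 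The only delicate point --- and not really an obstacle --- is to observe that the negative power $|\nabla|^{-1}$ appearing inside $\mathbb{P}_\mu$ is paired with the $\nabla\times$ to form ratios of the form $\xi_q/|\xi|$, rather than lowering the homogeneity of $\m$ in $|\xi|$, so that exactly one factor of $|\xi|$ survives in the final multiplier.
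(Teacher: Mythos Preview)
Your proof is correct and follows essentially the same approach as the paper's: both amount to observing that the symbols of $\PL$ and $\mathbb{P}_\mu$ are matrices with entries in the algebra generated by $R_0$, while the divergence contributes the single factor of $|\xi|$. Your version is simply more explicit about the component-wise bookkeeping.
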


\begin{proof}
	The claim follows directly from the definition of $\mathcal{N}_\mua$ in \eqref{eq:510-51}, the definition of $\Q_\mua$ in \eqref{eq:510-52} and the fact that both $\mathbb{P}_\pm, \PL$ have symbols generated by $R_0$. 
\end{proof}

\begin{remark}
	 For $\vp + \vp/\hp \to \vp$ interactions there is a null type structure which was observed and used in \cite{GuoInvent} but plays no role in our analysis. We document it here for possible future applications. In the nonlinear term $\partial_i (u_i u_j)$ with $i = 3$, we can rewrite the first input using the divergence free condition as
	 \begin{align}
	 	 \wh{u_3} (\xi-\eta) &= \frac{(\xi-\eta)_1}{(\xi-\eta)_3} \wh{u_1}(\xi-\eta) + \frac{(\xi-\eta)_2}{(\xi-\eta)_3} \wh{u_2}(\xi-\eta).
	 \end{align}
	 As a consequence, each component of	$P^\vp \mathcal{N}_\mua(P^\vp \f_{\mu_1},  \f_{\mu_2})$ can be linearly decomposed into bilinear terms of the form
	\begin{align}
		P^\vp \Q_\mua[|\xi| \mathfrak{n}'](P^\vp f_{\mu_1},  f_{\mu_2}), 
	\end{align}
	where $\mathfrak{n}'$ is (multiplicatively) generated by 
	\begin{equation}
		R_0 \cup \left\{\frac{(\xi-\eta)_1}{(\xi-\eta)_3}, \frac{(\xi-\eta)_2}{(\xi-\eta)_3}\right\}
	\end{equation}
	and always contains a factor of $\frac{\xi_i}{|\xi|}$ or  $\frac{(\xi-\eta)_i}{(\xi-\eta)_3}$  with $i \in \{1,2\}$. In particular, on the support of $\chi^\vp(\xi)\chi^\vp(\xi-\eta)$, there holds either $|\mathfrak{n}'| \lesssim \sqrt{1-\Lmd^2(\xi)}$ or $|\mathfrak{n}'| \lesssim \sqrt{1-\Lmd^2(\xi-\eta)}$. This is helpful in \cite{GuoInvent} to handle the degeneracy of the linear dispersion at $\Lmd = 1$.
\end{remark}

\medskip

Next, we give a precised description on the bilinear multipliers for the nonlinear evolution of the scalar profiles $\Fp_\pm$ defined by \eqref{eq:def-Fp}. The key null structure is revealed via symmetrization between the two inputs, see Remark \ref{rem:null-sym} below. The multipliers $\m_\mua^{(1)}$ and $\m_\mua^{(2)}$  given by \eqref{eq:m1-def} and \eqref{eq:m2-def} below arise from a 2D subsystem of \eqref{eq:EC}, and their associated bilinear forms vanish in the axisymmetric case.

\begin{proposition}
	\label{prop:nonlinear} For each $\mua = (\mu, \mu_1, \mu_2) \in \{+,-\}^3$, there holds that
	\begin{equation}
		R_\mu \mathcal{N}_{\mua}(R_{\mu_1}^{-1} G_1,R_{\mu_2}^{-1} G_2) =  \Q_\mua[\m_\mua](G_1, G_2), \quad \forall \ \mathrm{scalar \ functions} \ G_1, G_2, 
	\end{equation}
	hence in particular,
	\begin{equation}
		\partial_t \Fp_{\mu} = \sum_{\mu_1, \mu_2 = \pm} R_\mu \mathcal{N}_{\mua}(R_{\mu_1}^{-1} \Fp_{\mu_1},R_{\mu_2}^{-1} \Fp_{\mu_2}) = \sum_{\mu_1, \mu_2 = \pm} \Q_\mua[\m_\mua](\Fp_{\mu_1}, \Fp_{\mu_2}).
	\end{equation}
	Here, the bilinear multipliers $\m_\mua = \m_\mua (\xi, \eta)$
	can be decomposed into three parts
	\begin{equation}
		\m_\mua = \m_\mua^{(1)} + \m_\mua^{(2)} + \m_\mua^{(3)} 
	\end{equation}
	satisfying the following statements.
	\begin{enumerate}[(i)]
		\item $\m_\mua^{(1)}$ is given by
		\begin{equation} \label{eq:m1-def}
			\m_\mua^{(1)} = - \frac{1}{2} \xi_h \cdot
			\frac{(\xi - \eta)_h^{\perp}}{| (\xi - \eta)_h |}  \frac{\xi_h}{| \xi_h
				|} \cdot \frac{\eta_h}{| \eta_h |}, \quad \forall \mua
			\in \{ +, - \}^3.
		\end{equation}
		\item $\m_\mua^{(2)}$ is given by
		\begin{equation} \label{eq:m2-def}
			 \m_\mua^{(2)} = - \frac{\mu \mu_2}{2} \xi_h \cdot
			\frac{(\xi - \eta)_h^{\perp}}{| (\xi - \eta)_h |}, \quad \forall \mua \in \{ +, - \}^3.
		\end{equation}
		\item (GPW null type structure \cite{GuoInvent}) For each $ \mua \in \{ +, - \}^3$, $\m_\mua^{(3)}$ is a linear combination of multipliers taking the form
		\begin{equation} \label{eq:GPW-null}
			| \xi | \Lambda (\zeta_1) \sqrt{1 - \Lambda^2 (\zeta_2)}  \, \mathfrak{s}
			(\xi, \eta), \quad  \zeta_{1,2} \in \{\xi, \xi - \eta, \eta \}
		\end{equation}
		where  $\mathfrak{s}$  is a 0th order homogeneous symbol generated by the 
		set $\mathfrak{S} := \mathfrak{S}_1 \cup \mathfrak{S}_2$ with
		\[ 
		\mathfrak{S}_1 = \left\{ \Lambda (\zeta), \sqrt{1 - \Lambda^2 (\zeta)},
		\left(1 + \sqrt{1 - \Lambda^2 (\zeta)}\right)^{-1} : \zeta \in \{ \xi, \xi -
		\eta, \eta \} \right\}, \]
		\[ \mathfrak{S}_2 = \left\{ \frac{\zeta \cdot \vartheta}{| \zeta | |
			\vartheta |}, \frac{\zeta \cdot \vartheta^{\perp}}{| \zeta | |
			\vartheta |} : \zeta \in \{ \xi, \xi - \eta, \eta \}, \vartheta \in \{
		\xi - \eta, \eta \} \right\} . \]
		\item (Energy structure) For each $j=1,2,3$ and each $\mua \in \{ +, - \}^3$, there holds that
		\begin{align} \label{eq:ener-struc}
			\mathfrak{m}^{(j)}_{(\mu, \mu_1, \mu_2)}(\xi,\eta) + \overline{\mathfrak{m}^{(j)}_{(\mu_2, \mu_1, \mu)}(\eta,\xi)} = 0.
		\end{align}
	\end{enumerate}
\end{proposition}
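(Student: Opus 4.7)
My plan is to work entirely on the Fourier side, first deriving a compact closed formula for $\m_{\mua}$ and then performing the algebraic decomposition into three pieces. Starting from the evolution law \eqref{eq:58-2} for $\f_\mu$, I apply $R_\mu$ on both sides. Two elementary identities combine to give a clean form for $R_\mu \PL$: first, because $\vec{\Gamma}_{1,2}(\xi) \perp \xi$, the Leray projector is invisible to any contraction with $\vec{\Gamma}_{1,2}$; and second, a direct computation of $R_\pm$ on a divergence-free vector yields $\F(R_\mu \PL \vec{w})(\xi) = \tfrac{i}{2} V_{-\mu}(\xi)\cdot \F \vec{w}(\xi)$ for any vector $\vec{w}$, where $V_{\nu}(\xi) := \vec{\Gamma}_1(\xi) + i\nu\vec{\Gamma}_2(\xi)$. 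Plugging in the Fourier representation $\F \f_{\mu_i}(\zeta) = -i V_{\mu_i}(\zeta) \F \Fp_{\mu_i}(\zeta)$ from \eqref{eq:57-01} and using the solenoidal identity $\xi \cdot \F \ue(\xi-\eta) = \eta \cdot \F \ue(\xi-\eta)$ to transfer the $\nabla\cdot$ operator onto the other input, one reads off
\[
\m_{\mua}(\xi,\eta) = -\tfrac{1}{2} \bigl( V_{-\mu}(\xi) \cdot V_{\mu_2}(\eta) \bigr) \bigl( \eta \cdot V_{\mu_1}(\xi-\eta) \bigr).
\]

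Next I expand $V_\nu = \vec{\Gamma}_1 + i\nu\vec{\Gamma}_2$ into eight monomials and evaluate each using \eqref{eq:gauge1}--\eqref{eq:gauge2}: key computations are $\vec{\Gamma}_1(\xi)\cdot\vec{\Gamma}_1(\eta) = \xi_h\cdot\eta_h/(|\xi_h||\eta_h|)$, $\vec{\Gamma}_2(\xi)\cdot\vec{\Gamma}_2(\eta) = \Lmd(\xi)\Lmd(\eta) \xi_h\cdot\eta_h/(|\xi_h||\eta_h|) + \sqrt{1-\Lmd^2(\xi)}\sqrt{1-\Lmd^2(\eta)}$, and $\eta \cdot \vec{\Gamma}_1(\xi-\eta) = \xi_h \cdot (\xi-\eta)_h^\perp / |(\xi-\eta)_h|$. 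Exactly two monomials survive the horizontal limit $\Lmd(\xi) = \Lmd(\xi-\eta) = \Lmd(\eta) = 0$: the all-$\vec{\Gamma}_1$ term reproduces $\m^{(1)}_{\mua}$ directly, while the vertical piece of $\vec{\Gamma}_2(\xi)\cdot\vec{\Gamma}_2(\eta)$ times $\eta\cdot\vec{\Gamma}_1(\xi-\eta)$, evaluated at $\sqrt{1-\Lmd^2(\cdot)} = 1$, gives $\m^{(2)}_{\mua}$. I set $\m^{(3)}_{\mua}$ to be the residual. To confirm its GPW form I use three structural observations: (a) every factor $\vec{\Gamma}_2(\zeta)$ supplies $\Lmd(\zeta)$ (from its horizontal component) or $\sqrt{1-\Lmd^2(\zeta)}$ (from its vertical); (b) the scalar contraction $\xi_h \cdot (\xi-\eta)_h^\perp / |(\xi-\eta)_h|$ rewrites as $|\xi|\sqrt{1-\Lmd^2(\xi)}$ times a $0$th-order symbol in $\mathfrak{S}$, and contractions $\eta\cdot\vec{\Gamma}_2(\xi-\eta)$ can be redistributed via $\eta_h = \xi_h - (\xi-\eta)_h$ to put the magnitude prefactor on the output frequency $\xi$; (c) the awkward residual $\sqrt{1-\Lmd^2(\xi)}\sqrt{1-\Lmd^2(\eta)} - 1$ rationalizes as $\sqrt{1-a^2}\sqrt{1-b^2}-1 = -\tfrac{a^2}{1+\sqrt{1-a^2}} - \sqrt{1-a^2}\,\tfrac{b^2}{1+\sqrt{1-b^2}}$, which exhibits the needed $\Lmd(\zeta_1)\sqrt{1-\Lmd^2(\zeta_2)}$ factors while producing only $\mathfrak{s}$-factors drawn from $\mathfrak{S}_1$.

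For the energy structure (iv), $j=1,2$ fall by direct inspection: both $\m^{(1)}$ and $\m^{(2)}$ are real-valued and independent of $\mu_1$, and $\xi_h\cdot(\xi-\eta)_h^\perp/|(\xi-\eta)_h|$ is antisymmetric under $(\xi,\eta)\leftrightarrow(\eta,\xi)$ (using $\vec{a}^\perp\cdot\vec{b} = -\vec{a}\cdot\vec{b}^\perp$), so the sum in (iv) cancels term by term. For $j=3$ I instead verify (iv) for the full multiplier $\m_{\mua}$ and then subtract: using $\overline{V_\nu(\xi)} = V_{-\nu}(\xi)$ (since $\vec{\Gamma}_{1,2}$ are real) and the parities $\vec{\Gamma}_1(-\xi) = -\vec{\Gamma}_1(\xi)$, $\vec{\Gamma}_2(-\xi) = \vec{\Gamma}_2(\xi)$ (from the explicit formulas), one has $V_{-\mu_1}(-(\xi-\eta)) = -V_{\mu_1}(\xi-\eta)$; combined with $\xi\cdot V_{\mu_1}(\xi-\eta) = \eta\cdot V_{\mu_1}(\xi-\eta)$ (from $(\xi-\eta)\cdot V_{\mu_1}(\xi-\eta) = 0$), a direct substitution into the closed formula above yields $\overline{\m_{(\mu_2,\mu_1,\mu)}(\eta,\xi)} = -\m_{\mua}(\xi,\eta)$. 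Since (iv) then holds for $\m_{\mua}$ itself, and also for each of $\m^{(1)}, \m^{(2)}$ separately, it must hold for $\m^{(3)} = \m_{\mua} - \m^{(1)} - \m^{(2)}$. The principal obstacle will be step (c) of the previous paragraph: there are six off-horizontal monomials plus two residuals to dispatch, and extracting $|\xi|$ (rather than $|\eta|$ or $|\xi-\eta|$) as the magnitude prefactor in each case requires patient case-by-case rearrangement using $\xi = (\xi-\eta)+\eta$; this is where the bulk of the calculation lies, although the argument remains entirely elementary.
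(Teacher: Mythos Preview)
Your approach is correct and takes a genuinely different, more algebraic route than the paper. The paper expands $e^{\mu it\Lmd}R_\mu\mathcal{N}_\mua$ in physical space as the sum of a $|\nabla_h|^{-1}\mathrm{curl}_3$ piece and a $\mu|\nabla||\nabla_h|^{-1}\vec{e}_3\cdot\PL$ piece, then goes index-by-index through $\partial_i(g_{1,i}g_{2,j})$ and $\partial_j\partial_k(g_{1,j}g_{2,k})$, computing each Fourier transform and sorting the resulting terms into $\m^{(1)},\m^{(2)},\m^{(3)}$; this produces the explicit formula \eqref{eq:m3-explicit} for $\m^{(3)}$ as a byproduct, and (iv) is then checked by direct inspection of that formula using $W(\xi,\eta)=W(\eta,\xi)$. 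Your compact identity $\m_\mua=-\tfrac12\bigl(V_{-\mu}(\xi)\cdot V_{\mu_2}(\eta)\bigr)\bigl(\eta\cdot V_{\mu_1}(\xi-\eta)\bigr)$ is a cleaner starting point: it makes the identification of $\m^{(1)},\m^{(2)}$ as the horizontal limit transparent, and your verification of (iv) via the parity/conjugation relations $\overline{V_\nu}=V_{-\nu}$, $V_{-\mu_1}(-\zeta)=-V_{\mu_1}(\zeta)$, and $(\xi-\eta)\cdot V_{\mu_1}(\xi-\eta)=0$ is considerably more elegant than the paper's term-by-term check. The trade-off is that the paper's approach yields \eqref{eq:m3-explicit} directly, whereas in your route the detailed GPW structure (iii) still has to be extracted from the seven residual monomials by the case-by-case rewriting you describe in (a)--(c); this is essentially the same amount of work as the paper's index-by-index expansion, just organized differently. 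Your rationalization in (c) matches the paper's treatment of the term \eqref{eq:31-e}.
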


\begin{proof}
	Let us denote
	\begin{equation} \label{eq:510-71}
		\g_1 = R_{\mu_1}^{-1} e^{\mu_1 i t \Lmd} G_1 = (- \nabla_h^\perp |\nabla_h|^{-1} + \mu_1 i \Lmd \nabla_h |\nabla_h|) e^{\mu_1 i t \Lmd} G_1 + \mu_1 \sqrt{1-\Lmd^2} e^{\mu_1 i t \Lmd} G_1  \vec{e}_3
	\end{equation}
	and 
	\begin{equation} \label{eq:510-72}
		\g_2 = R_{\mu_2}^{-1} e^{\mu_2 i t \Lmd} G_2 = (- \nabla_h^\perp |\nabla_h|^{-1} + \mu_2 i \Lmd \nabla_h |\nabla_h|) e^{\mu_2 i t \Lmd} G_2 + \mu_2 \sqrt{1-\Lmd^2} e^{\mu_2 i t \Lmd} G_2 \vec{e}_3
	\end{equation}
	By the definitions of $R_\mu, \mathcal{N}_\mua$, we have
	\begin{align} \label{eq:510-61}
		e^{\mu i t \Lmd} R_\mu \mathcal{N_\mu} (\g_1, \g_2) &= - \frac12 \left(|\nabla_h|^{-1} \mathrm{curl}_3  \nabla \cdot (\g_1 \otimes \g_2) + \mu |\nabla| |\nabla_h|^{-1} \vec{e}_3 \cdot (\PL \nabla \cdot (\g_1 \otimes \g_2))\right) 
	\end{align}
	We study the two terms in \eqref{eq:510-61} separately.
	
	\medskip
	
	For the first part of \eqref{eq:510-61}, there holds that
		\begin{eqnarray} \label{eq:31-a} 
	 | \nabla_h |^{- 1} \text{curl}_3
		(\nabla \cdot (\g_1 \otimes \g_2)) =
	\sum_{i = 1}^3 ( | \nabla_h |^{- 1} \partial_1 \partial_i (g_{1,i}
		g_{2,2}) - | \nabla_h |^{- 1} \partial_2 \partial_i (g_{1,i} g_{2,1})) .
	\end{eqnarray}
	where we write $g_{i,j}$ for the $j$-th component of $\g_i$. For the terms in \eqref{eq:31-a} with $i = 1, 2$, we compute the Fourier transform as
	\begin{align} 
	& 	\F \{  | \nabla_h |^{- 1} \partial_1 \nabla_h \cdot (g_{1,h} g_{2,2}) -
	| \nabla_h |^{- 1} \partial_2 \nabla_h \cdot (g_{1,h} g_{2,1}) \} (\xi) \nonumber \\
	&\quad = -\frac{1}{(2\pi)^3}\int_{\R^3}
	\xi_h \cdot \wh{g_{1,h}} (\xi - \eta)  \frac{\xi_h^{\perp}}{| \xi_h |} \cdot
	\wh{g_{2,h}} (\eta) d \eta 
	\end{align}
	Using the expressions \eqref{eq:510-71}--\eqref{eq:510-72} and the fact that $\xi_h = | \xi | \sqrt{1 - \Lambda^2 (\xi)} 
	\frac{\xi_h}{| \xi_h |}$, we see that the terms with coefficient $\mu_1$ or $\mu_2$ all contribute to $\mathfrak{m}_{\mua}^{(3)}$ satisfying (iii), while the remaining
	one term
	\[ -\frac{1}{(2\pi)^3} \int_{\R^3} \xi_h \cdot \frac{i (\xi - \eta)_h^{\perp}}{| (\xi - \eta)_h |}
	e^{\mu_1 i t \Lmd(\xi-\eta)} \wh{G_1} (\xi - \eta)  \frac{\xi_h}{| \xi_h |} \cdot \frac{i \eta_h}{|
		\eta_h |} e^{\mu_2 i t \Lmd(\eta)} \wh{G_2} (\eta) d \eta \]
	accounts for $\mathfrak{m}_{\mua}^{(1)}$. For the term in \eqref{eq:31-a} with $i = 3$,
	using \eqref{eq:510-71}--\eqref{eq:510-72},
	we obtain a bilinear expression
	\begin{eqnarray*}
		&   \F \{  | \nabla_h |^{- 1} \partial_1 \partial_3 (g_{1,3} g_{2,2})
		- | \nabla_h |^{- 1} \partial_2 \partial_3 (g_{1,3} g_{2,1}) \} (\xi)\\
		& =  - \frac{\mu_1 }{(2\pi)^3} \int_{\R^3} {| \xi |} {\Lambda
			(\xi) \sqrt{1 - \Lambda^2 (\xi - \eta)}}  e^{\mu_1 i t \Lmd(\xi-\eta)} \wh{G_1} (\xi - \eta) \cdot \\
		&\qquad \qquad \qquad \frac{\xi_h^{\perp}}{| \xi_h |} \cdot \left( - \frac{i \eta_h^{\perp}}{|
			\eta_h |}   - \mu_2 \frac{\eta_h}{| \eta_h |} \Lambda (\eta) 
		 \right) e^{\mu_2 i t \Lmd(\eta)} \wh{G_2} (\eta) d \eta
	\end{eqnarray*}
	which contributes to $\mathfrak{m}_3$.
	
	\medskip
	
	For the second part of \eqref{eq:510-61}, there holds that
	\begin{align}
		&\qquad \qquad  |\nabla| |\nabla_h|^{-1} \vec{e}_3 \cdot (\PL \nabla \cdot (\g_1 \otimes \g_2)) \nonumber \\
		&=  | \nabla | | \nabla_h |^{- 1} \left\{  \sum_{i = 1}^3 \partial_i
		(g_{1,i} g_{2,3}) - \sum_{j, k = 1}^3 \partial_3 \Delta^{- 1} \partial_j
		\partial_k (g_{1,j} g_{2,k}) \right\} .  \label{eq:31-d}
	\end{align}
	For the case $i = j = k = 3$, notice that
	\[ | \nabla | | \nabla_h |^{- 1} \{  \partial_3 (g_{1,3} g_{2,3}) - \partial_3
	\Delta^{- 1} \partial_3 \partial_3 (g_{1,3} g_{2,3}) \} =  | \nabla | |
	\nabla_h |^{- 1} \partial_3 \left( 1 + \frac{\partial_3^2}{| \nabla
		|^2} \right) (g_{1,3} g_{2,3}), \]
	and on the Fourier side there holds that
	\begin{align*}
		\F \left\{ | \nabla | | \nabla_h |^{- 1} \partial_3 \left(
		1 + \frac{\partial_3^2}{| \nabla |^2} \right) (g_{1,3} g_{2,3}) \right\}
		= \frac{i}{(2\pi)^3} \int_{\R^3} | \xi | \Lambda (\xi) \sqrt{1 -
			\Lambda^2 (\xi)}  \wh{g_{1,3}} (\xi - \eta) \wh{g_{2,3}} (\eta) d \eta
	\end{align*}
	which gives a contribution to $\mathfrak{m}_3$. For the first summation in \eqref{eq:31-d} over $i = 1, 2$, we compute that
	\begin{align}
		& \qquad  \F \{  | \nabla | | \nabla_h |^{- 1} \nabla_h \cdot (g_{1,h}
		g_{2,3}) \} \nonumber \\
		& = - \frac{\mu_2}{(2\pi)^3} \int_{\R^3} {| \xi |} \frac{\xi_h}{| \xi_h |} \cdot
		\left( - \frac{(\xi - \eta)_h^{\perp}}{| (\xi - \eta)_h |}   + i \mu_1 {\Lambda (\xi - \eta)} \frac{(\xi -
			\eta)_h}{| (\xi - \eta)_h |}  \right)  \cdot \nonumber \\
		&\qquad \qquad \qquad \cdot e^{\mu_1 i t \Lmd(\xi-\eta)} \wh{G_1} (\xi - \eta) {\sqrt{1 - \Lambda^2 (\eta)}}  e^{\mu_2 i t \Lmd(\eta)} \wh{G_2} (\eta) d \eta.
	\end{align}
	The term with a coefficient $\mu_1$ contributes to $\mathfrak{m}_3$.
	The remaining one term can be decomposed into
	\begin{equation}
		  \frac{\mu_2}{(2\pi)^3} \int_{\R^3} \left( \frac{\sqrt{1 - \Lambda^2 (\eta)}}{\sqrt{1 - \Lambda^2
				(\xi)}} - 1 \right) \xi_h \cdot \frac{(\xi - \eta)_h^{\perp}}{| (\xi -
			\eta)_h |} e^{\mu_1 i t \Lmd(\xi-\eta)} \wh{G_1} (\xi - \eta)   e^{\mu_2 i t \Lmd(\eta)} \wh{G_2} (\eta) d
		\eta  
		\label{eq:31-e}
	\end{equation}
	and
	\begin{equation} \label{eq:510-110}
		 \frac{\mu_2}{(2\pi)^3} \int_{\R^3} \xi_h \cdot \frac{(\xi - \eta)_h^{\perp}}{| (\xi - \eta)_h |}
		e^{\mu_1 i t \Lmd(\xi-\eta)} \wh{G_1} (\xi - \eta)   e^{\mu_2 i t \Lmd(\eta)} \wh{G_2} (\eta) d \eta.
	\end{equation}
	Using the identity
	\[ \frac{\sqrt{1 - \Lambda^2 (\eta)}}{\sqrt{1 - \Lambda^2 (\xi)}} - 1 =
	\frac{\sqrt{1 - \Lambda^2 (\eta)}}{\sqrt{1 - \Lambda^2 (\xi)}} 
	\frac{\Lambda^2 (\xi)}{1 + \sqrt{1 - \Lambda^2 (\xi)}} - \frac{\Lambda^2
		(\eta)}{1 + \sqrt{1 - \Lambda^2 (\eta)}}, \]
	we see that  \eqref{eq:31-e} gives a contribution to
	$\m_\mua^{(3)}$. The integral \eqref{eq:510-110} clearly accounts
	for $\m_\mua^{(2)}$. For the second summation in \eqref{eq:31-d} over $j = 1, 2$ with $k = 3$ (or
	similarly $j = 3$, $k = 1, 2$), we compute that
	\begin{align} 
		&\qquad \F \{ -| \nabla | | \nabla_h |^{- 1} \partial_3 \Delta^{- 1}
	\nabla_h \cdot \partial_3 (g_{1,h} g_{2,3}) \} (\xi)  \nonumber \\
	&= - \frac{i}{(2\pi)^3} \int_{\R^3} {| \xi
		| \Lambda^2 (\xi)} \frac{ \xi_h}{| \xi_h |} \cdot \wh{g_{1,h}} (\xi - \eta)
	{\sqrt{1 - \Lambda^2 (\eta)}} e^{\mu_2 i t \Lmd(\eta)} \wh{G_2} (\eta) d \eta
	\end{align}
	which gives a contribution to $\m_\mua^{(3)}$. Finally, for the summation
	in \eqref{eq:31-d} over $j = 1, 2$ and $k = 1, 2$, we compute that
	\begin{align} 
	&\qquad \F \left\{ - \sum^2_{j, k = 1} | \nabla | | \nabla_h |^{- 1}
	\partial_3 \Delta^{- 1} \partial_j \partial_k (g_{1,j} g_{2,k}) \right\} (\xi) \nonumber \\
	&= -\frac{i}{(2\pi)^3} \int_{\R^3}
	{\Lambda (\xi) \sqrt{1 - \Lambda^2 (\xi)} | \xi |} 
	\frac{\xi_h}{| \xi_h |} \cdot \wh{g_{1,h}} (\xi - \eta)  \frac{\xi_h}{|
		\xi_h |} \cdot \wh{g_{2,h}} (\eta) d \eta 
		\end{align}
	which also gives a contribution to $\m_\mua^{(3)}$. 
	
	\medskip
	
	In summary, each bilinear term in
	\eqref{eq:510-61} gives a contribution to one of
	$\m_\mua^{(j)}$, $j = 1, 2, 3$, verifying (i)-(iii). 
	
	\medskip
	
	Moreover, collecting all contributions, we arrive at the following explicit formula for $\m_\mua^{(3)}$:
	\begin{align}
		\m_\mua^{(3)} &= \frac{i \mu_2}{2}  \Lambda(\eta) \xi_h \cdot  \frac{(\xi-\eta)_h^\perp}{|(\xi-\eta)_h|}  \frac{\xi_h^\perp}{| \xi_h |} \cdot  \frac{\eta_h}{| \eta_h |} + \frac{i\mu}{2}  \Lambda(\xi) \xi_h \cdot \frac{(\xi-\eta)_h^\perp}{|(\xi-\eta)_h|}  \frac{\xi_h^\perp}{|\xi_h|} \cdot \frac{\eta_h}{|\eta_h|} \nonumber \\
		&\quad - \frac{\mu_1 \mu_2}{2} {W}(\xi,\eta) \Lambda(\eta) \frac{\xi_h^\perp}{| \xi_h |} \cdot \frac{\eta_h}{| \eta_h |} - \frac{\mu\mu_1}{2} {W}(\xi,\eta) \Lambda(\xi) \frac{\xi_h^\perp}{|\xi_h|} \cdot \frac{\eta_h}{|\eta_h|} \nonumber \\
		&\quad + \frac{\mu \mu_2}{2} \left\{ - \Lambda(\xi) \Lambda(\eta) \frac{\xi_h}{|\xi_h|} \cdot \frac{ \eta_h}{|\eta_h|}  - \sqrt{1-\Lambda^2(\xi)} \sqrt{1-\Lambda^2(\eta)}  + 1 \right\} \xi_h \cdot \frac{ (\xi-\eta)^\perp_h}{|(\xi-\eta)_h|} \nonumber \\
		&\quad - \frac{i \mu \mu_1 \mu_2}{2} {W}(\xi,\eta) \left\{ \Lambda(\xi) \Lambda(\eta) \frac{\xi_h}{|\xi_h|} \cdot \frac{ \eta_h}{|\eta_h|} + \sqrt{1 -
			\Lambda^2 (\xi)} \sqrt{1 - \Lambda^2
			(\eta)} \right\} \nonumber \\
		&\quad  - \frac{i \mu_1}{2}   {W}(\xi,\eta) \frac{\xi_h}{| \xi_h |} \cdot \frac{ \eta_h}{|\eta_h|}, \label{eq:m3-explicit}
	\end{align}
	where we denote
	\begin{align} \label{eq:510-31}
		{W}(\xi,\eta) &= -\Lambda(\xi-\eta) \xi_h \cdot \frac{(\xi-\eta)_h}{|(\xi-\eta)_h|} +\xi_3 \sqrt{1-\Lambda^2(\xi-\eta)} \nonumber \\
		&= -\Lambda(\xi-\eta) \eta_h \cdot \frac{(\xi-\eta)_h}{|(\xi-\eta)_h|} +\eta_3 \sqrt{1-\Lambda^2(\xi-\eta)}.
	\end{align}
	By definition, $\m_\mua^{(1)}$ and $\m_\mua^{(2)}$ clearly satisfy \eqref{eq:ener-struc}.  A direct computation using \eqref{eq:m3-explicit}--\eqref{eq:510-31} and the fact that
	\begin{align*}
		W(\xi,\eta) = W(\eta,\xi)
	\end{align*}
	shows that $\m_\mua^{(3)}$ also satisfies \eqref{eq:ener-struc}. Hence, (iv) is proved.
\end{proof}

\smallskip

\begin{remark} \label{rem:null-sym}
	$\mathfrak{m}^{(1)}$ and $\mathfrak{m}^{(2)}$ exhibit the following hidden null type structures via symmetrization. When $\mu_1 = \mu_2$, there holds that
	\begin{align} \label{eq:510-210}
		\m_\mua^{(1), \mathrm{Sym}}&:= \m_\mua^{(1)}(\xi, \eta) + \m_\mua^{(1)}(\xi, \xi - \eta) \nonumber \\
		&= \frac{(\xi_h \cdot \eta_h^\perp) \big( \xi_h \cdot (2\eta_h - \xi_h) \big)}{2 |(\xi-\eta)_h||\xi_h||\eta_h|} \nonumber \\
		&= \frac{(\xi_h \cdot \eta_h^\perp) (|\eta_h|+|(\xi-\eta)_h|) (|\eta_h|-|(\xi-\eta)_h|)}{2 |(\xi-\eta)_h||\xi_h||\eta_h|}
	\end{align}
	and
	\begin{align} \label{eq:510-211}
		\m_\mua^{(2), \mathrm{Sym}} &:= \m_\mua^{(2)}(\xi, \eta) + \m_\mua^{(2)}(\xi, \xi - \eta) \nonumber \\
		&= \frac{\mu \mu_1}{2} \xi_h \cdot \eta_h^\perp \left(\frac{1}{|(\xi-\eta)_h|} - \frac{1}{|\eta_h|} \right)  \nonumber \\
		&= \frac{\mu \mu_1}{2} \frac{ \xi_h \cdot \eta_h^\perp}{|(\xi-\eta)_h||\eta_h|} \big(|\eta_h| - |(\xi-\eta)_h|\big).
	\end{align}
	The common factor $|\eta_h| - |(\xi-\eta)_h|$ in \eqref{eq:510-210}--\eqref{eq:510-211} is of null form since it vanishes on the key resonant set \eqref{eq:524-3}. This structure will play an essential role in the analysis of Section \ref{sec:742}.
\end{remark}

\medskip


The next lemma helps to recover the basic physical product structure of the Euler nonlinearity.

\begin{lemma} \label{lem:L-bilinear}
	For $\m_\mua$ given by Proposition \ref{prop:nonlinear},  $\Q_\mua[\m_\mua](G_1, G_2)$ can be linearly decomposed into bilinear forms of the type
	\begin{equation} \label{eq:411-1}
		|\nabla| e^{-\mu i  t \Lmd} D_0[(D_1 e^{\mu_1 i  t \Lmd} G_1)\cdot(D_2 e^{ \mu_2 i  t \Lmd} G_2)],
	\end{equation}
	where each of $D_{0,1,2}$ is a product of operators from the set
	\begin{equation} \label{eq:415-02}
		\left\{\Lmd, \sqrt{1-\Lmd^2}, \frac{\partial_1}{|\nabla_h|}, \frac{\partial_2}{|\nabla_h|} \right\}.
	\end{equation}
	Moreover, the above statement is still true if we replace \eqref{eq:411-1} by
	\begin{equation}
		  e^{-\mu i  t \Lmd} D_0[(D_1 e^{\mu_1 i  t \Lmd} G_1)\cdot(|\nabla| D_2 e^{\mu_2 i  t \Lmd} G_2)].
	\end{equation}
\end{lemma}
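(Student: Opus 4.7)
The plan is to reduce the statement to the elementary observation that if a bilinear symbol factorizes as a tensor product $m(\xi,\eta) = m_0(\xi)\, m_1(\xi-\eta)\, m_2(\eta)$, then by the definition of $\Q_\mua$ in \eqref{eq:510-52},
\begin{equation*}
  \Q_\mua\bigl[|\xi|\, m\bigr](G_1, G_2) = |\nabla|\, m_0(\nabla)\, e^{-\mu i t\Lmd} \bigl[ (m_1(\nabla) e^{\mu_1 i t\Lmd} G_1) \cdot (m_2(\nabla) e^{\mu_2 i t\Lmd} G_2) \bigr],
\end{equation*}
and analogously with $|\xi|$ replaced by $|\eta|$, in which case the $|\nabla|$ factor moves onto the second input. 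The task then becomes to rewrite each piece of $\m_\mua = \m_\mua^{(1)} + \m_\mua^{(2)} + \m_\mua^{(3)}$ from Proposition \ref{prop:nonlinear} as a finite sum of such tensor products, in which the common extracted factor is $|\xi|$ (for the main statement) or $|\eta|$ (for the moreover part), and in which each scalar factor $m_j$ is built from $\Lmd(\zeta)$, $\sqrt{1-\Lmd^2(\zeta)}$ and $\zeta_k/|\zeta_h|$ with $k=1,2$ for a single $\zeta \in \{\xi,\xi-\eta,\eta\}$; these correspond exactly to the operators in \eqref{eq:415-02}.

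For the main statement, the bookkeeping is direct. In $\m_\mua^{(1)}$ and $\m_\mua^{(2)}$ one writes $\xi_h = |\xi|\sqrt{1-\Lmd^2(\xi)}\,(\xi_h/|\xi_h|)$ to extract $|\xi|$, then expands every dot product componentwise, e.g.\
\[
 (\xi_h/|\xi_h|) \cdot \bigl((\xi-\eta)_h^\perp/|(\xi-\eta)_h|\bigr) = \sum_{i=1,2} (\xi_h/|\xi_h|)_i\,\bigl((\xi-\eta)_h^\perp/|(\xi-\eta)_h|\bigr)_i,
\]
producing a finite sum of tensor products of the required form. For $\m_\mua^{(3)}$, the explicit formula \eqref{eq:m3-explicit} shows that $|\xi|$ is produced either by an explicit $\xi_h$ in certain summands, or by the function $W(\xi,\eta)$ in the remaining ones. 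In the latter case we invoke the first equality in \eqref{eq:510-31} together with $\xi_h = |\xi|\sqrt{1-\Lmd^2(\xi)}\,(\xi_h/|\xi_h|)$ and $\xi_3 = |\xi|\Lmd(\xi)$. Componentwise expansion of all remaining dot products then finishes Step 1.

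For the moreover statement, the factor $|\xi|$ has to be traded for $|\eta|$ in each piece, and two algebraic identities do the work. First, since $(\xi-\eta)_h \cdot (\xi-\eta)_h^\perp = 0$ and $\xi = (\xi-\eta) + \eta$, one has $\xi_h \cdot (\xi-\eta)_h^\perp = \eta_h \cdot (\xi-\eta)_h^\perp$, which lets us replace $\xi_h$ by $\eta_h = |\eta|\sqrt{1-\Lmd^2(\eta)}\,(\eta_h/|\eta_h|)$ throughout $\m_\mua^{(1)}$, $\m_\mua^{(2)}$ and the summands of $\m_\mua^{(3)}$ built from this combination. Second, the second equality in \eqref{eq:510-31} expresses $W(\xi,\eta)$ purely in terms of $\eta$ and $\xi-\eta$, so applying $\eta_h = |\eta|\sqrt{1-\Lmd^2(\eta)}\,(\eta_h/|\eta_h|)$ and $\eta_3 = |\eta|\Lmd(\eta)$ yields the desired $|\eta|$ factor in the remaining summands. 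After these substitutions the same componentwise expansion as before concludes the proof.

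The main obstacle is almost entirely bookkeeping: $\m_\mua^{(3)}$ is a sum of many summands and each must be routed through the appropriate identity. The only genuinely non-trivial algebraic input is the second form of $W$, which relies on the cancellation $-\Lmd(\xi-\eta)|(\xi-\eta)_h| + (\xi-\eta)_3\sqrt{1-\Lmd^2(\xi-\eta)} = 0$ (immediate from $\Lmd(\xi-\eta) = (\xi-\eta)_3/|\xi-\eta|$ and $\sqrt{1-\Lmd^2(\xi-\eta)} = |(\xi-\eta)_h|/|\xi-\eta|$); this is precisely what ensures that $W$ carries a factor of $|\eta|$ in addition to the manifest factor of $|\xi|$ in its first form, which makes the moreover part work on the same footing as the main statement.
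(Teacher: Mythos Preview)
Your proposal is correct and follows essentially the same approach as the paper's proof. The paper's own proof is very terse---it simply notes the identity $\xi_h \cdot \frac{(\xi-\eta)_h^\perp}{|(\xi-\eta)_h|} = \eta_h \cdot \frac{(\xi-\eta)_h^\perp}{|(\xi-\eta)_h|}$ and asserts that $\m_\mua$ can be written as a linear combination of terms $|\xi|\,\mathfrak{s}_0(\xi)\mathfrak{s}_1(\xi-\eta)\mathfrak{s}_2(\eta)$ or $|\eta|\,\mathfrak{s}_0(\xi)\mathfrak{s}_1(\xi-\eta)\mathfrak{s}_2(\eta)$ with each $\mathfrak{s}_i$ a product of the allowed symbols---whereas you make explicit the role of the two expressions for $W(\xi,\eta)$ from \eqref{eq:510-31} in handling the $W$-containing summands of $\m_\mua^{(3)}$, which the paper leaves implicit.
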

\begin{proof}
	By the exact form of $\m_\mua$ in \eqref{eq:m1-def}, \eqref{eq:m2-def} and \eqref{eq:m3-explicit} as well as the fact that
	\begin{equation}
		\xi_h \cdot \frac{(\xi-\eta)_h^\perp}{|(\xi-\eta)_h|} = \eta_h \cdot \frac{(\xi-\eta)_h^\perp}{|(\xi-\eta)_h|},
	\end{equation}
	we can write $\m_\mua$ as a linear combination of multipliers taking the form
	\begin{equation*}
		|\xi| \mathfrak{s}_0(\xi) \mathfrak{s}_1(\xi-\eta) \mathfrak{s}_2(\eta) \quad  \mathrm{or} \quad |\eta| \mathfrak{s}_0(\xi) \mathfrak{s}_1(\xi-\eta) \mathfrak{s}_2(\eta),
	\end{equation*} 
	where each $\mathfrak{s}_i(\zeta), \zeta \in \{\xi, \xi-\eta, \eta\}$  is a product of symbols from the set
	\begin{equation}
		\left\{\Lambda(\zeta), \sqrt{1-\Lambda^2(\zeta)}, \frac{\zeta_1}{|\zeta_h|}, \frac{\zeta_2}{|\zeta_h|}\right\}.
	\end{equation}
\end{proof}

\section{Linear dispersion}\label{sec:4}

In this section, we take any $\beta>0$ (The constants of this section will depend only on $\beta$.) and establish various sharp dispersive estimates using the $X$ and $Y$ norms defined in Section \ref{sec:24}.  

For frequencies $\xi$ from the support of $\wt{\chi}^\hp(\xi) \varphi(|\xi_h|)$ (see Section \ref{sec:515-22}), the group velocity for $e^{it\Lmd}$ in the vertical direction satisfies
\begin{equation}
	-\partial_3 \Lmd = - \frac{|\xi_h|^2}{|\xi|^3} \in \left( -\frac{3 t}{2}, -\frac{t}{10}\right).
\end{equation}
Hence, for a function $f$ with $\wh{f}$ smooth and supported on such frequencies, the $L^2$ norm of $e^{it\Lmd} f (t \gg 1)$ will concentrate in the slab  $\left\{-\frac{3 t}{2} < x_3 < -\frac{t}{10}\right\}$, while strong decay estimates are expected outside the slab. A sharp quantitative statement on the linear asymptotics  is proved in Lemma \ref{lem:slab} below for functions from  our $X$ space. 

In Section \ref{sec:52}, we introduce a family of Fourier-side localizations with support close to circles on the $\xi_h$-plane, which will be used in our  bilinear multiscale analysis. In Lemma \ref{lem:51-3}, we perform a kind of wave packet decomposition and obtain precised linear asymptotics in the physical space for each localized piece.

It has been observed in \cite[Section 4]{GuoInvent} that for frequencies away from $\Lmd = 0$ and $\Lmd = 1$, the dispersive estimates can be enhanced. We prove (essentially) the  same result in Lemma \ref{lem:hp-disp} and Lemma \ref{lem:vp-disp} below based on our choice of norms and coordinate systems. 

Finally, in Lemma \ref{lem:largel}, we prove the optimal decay rate in $L^2$ and $L^\infty$ norms at far field, which can be viewed as a property of finite propagation speed.

\subsection{Dispersive estimates for $\bighp$-pieces} \label{sec:41}

\begin{lemma} \label{lem:slab}
	Let $f = P^\hp_k F$ with $F \in X^{0,3}_\beta$ and $k \in\mathbb{Z}$, and let $\mu \in \{+,-\}$, $2^m \le t < 2^{m+1} \, (m \ge 1)$, then the following statements hold.
	\begin{enumerate}[(i)]
		\item If $|x_3| \sim t 2^{-k}$, we have the pointwise estimate
		\begin{equation}
			 \left|e^{\mu i t \Lmd} f (x)\right| \lesssim \frac{2^{\frac32 k} }{t \langle 2^k |x_h|\rangle^\frac12} \|F\|_{X^{0,3}_{\beta}}.
		\end{equation}
		\item Pointwise decay outside a slab. Let $\mathcal{R} = Z_{[m-k-4,m-k+1]^c}^{-\mu} e^{\mu i t \Lmd} f$, then 
		\begin{equation} \label{eq:803-001}
			\left|\mathcal{R}(x) \right| \lesssim \frac{2^{\frac32 k} }{(2^k |x_3| + t)^{1+\beta}} \|F\|_{X^{0}_{\beta}}.
		\end{equation}
		\item $L^2$ decay outside a slab.
		\begin{equation} \label{eq:803-002}
			\sup_{l \in \Z} 2^{(1+\beta)(l+k)} \left\| Z_l \mathcal{T}_{\mu 2^{m-k}}  \mathcal{R} \right\|_{L^2} \lesssim \|F\|_{X^{0}_{\beta}},
		\end{equation}
		\begin{equation} \label{eq:803-003}
			\sup_{l \in \Z} 2^{(1+\beta)(l+k)} \left\| Z_l \mathcal{T}_{\mu 2^{m-k}}  \mathcal{R} \right\|_{L^2_{x_3}  L^\infty_{x_h}  } \lesssim 2^{ k}\|F\|_{X^{0}_{\beta}}.
		\end{equation}
		Here, we used the notations 
		$$(\mathcal{T}_s g)(x) := g(x_h, x_3-s)$$
		and
		$$\|g\|_{ L^2_{x_3}  L^\infty_{x_h} } :=  \left(\int_\R  \mathrm{ess\,sup}_{x_h \in \R^2} |g(x_h,x_3)|^2 dx_3 \right)^\frac12.$$
	\end{enumerate}
\end{lemma}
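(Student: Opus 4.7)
My plan is to work in the modified cylindrical chart $(\rho,\zeta,\theta)$ from Section \ref{sec:22} on the support of $\chi^{\bighp}$, where the dispersion relation simplifies to $\Lambda = \zeta/\sqrt{1+\zeta^2}$. Writing the oscillatory integral
\begin{equation*}
e^{\mu i t\Lambda} f(x) = \frac{1}{(2\pi)^3}\int e^{i\Psi(\xi;x,t)} \widehat{f}(\xi)\, d\xi, \qquad \Psi = x\cdot\xi + \mu t\Lambda(\xi),
\end{equation*}
and using $d\xi = \rho^2\, d\rho\, d\zeta\, d\theta$, the phase factors as $\rho |x_h|\cos(\theta-\theta_0) + \rho\zeta x_3 + \mu t\zeta/\sqrt{1+\zeta^2}$, with $\rho\sim 2^k$ and $|\zeta|\lesssim 1$ on the support of $\chi_k^{\bighp}$. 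The key computation is $\partial_\zeta \Psi = \rho x_3 + \mu t (1+\zeta^2)^{-3/2}$, whose zero pinpoints the slab $x_3 \sim -\mu t 2^{-k}$ predicted by the group velocity.

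For part (i), I would first perform the angular integration in $\theta$, which produces the standard Bessel factor $J_0(\rho|x_h|) \sim \langle \rho|x_h|\rangle^{-1/2}$ and thus explains the horizontal decay. Next, in the slab regime $|x_3|\sim t 2^{-k}$, I would apply stationary phase in $\zeta$: the stationary point is non-degenerate with second derivative $\sim t$, yielding a gain of $t^{-1/2}$. Combined with Bernstein for the $\rho$-integration (contributing $2^{3k/2}$), this gives the targeted $t^{-1}\langle 2^k|x_h|\rangle^{-1/2}$ decay. The three $S,\Omega$ derivatives in the $X^{0,3}_\beta$ norm would be used to absorb remainder terms in the stationary phase expansion and to handle the derivatives falling on $\widehat{F}$ through the identification $\mathcal{F}(Sf) = -(S_\xi+3)\widehat{f}$.

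For parts (ii) and (iii), the strategy is integration by parts along $\partial_\zeta$, which commutes with $S_\xi$ and $\Omega_\xi$ and aligns with the $\zeta$-direction Besov regularity encoded by the $Z_l^{(k)}$ localizations via the equivalence \eqref{eq:729-001}. Away from the slab, $|\partial_\zeta\Psi|\gtrsim 2^k|x_3|+t$, so each integration by parts gains a factor $(2^k|x_3|+t)^{-1}$; doing this $(1+\beta)$ times (in the sense of fractional Besov-type IBP) produces the $(2^k|x_3|+t)^{-(1+\beta)}$ decay of (ii). For (iii), translating by $\mu 2^{m-k}$ absorbs the leading linear-in-$\xi_3$ part of the oscillation, after which Plancherel together with the dyadic decomposition $\widehat{F} = \sum_l \widehat{Z_l^{(k)} F}$ converts the physical-space localization $Z_l$ into the matching frequency-side weight $2^{-(1+\beta)(l+k)}$; the $L^2_{x_3}L^\infty_{x_h}$ variant follows by Bernstein in $x_h$ paying a single factor $2^k$.

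The main obstacle I anticipate is carrying out the fractional integration by parts with the correct $\beta$ exponent while faithfully converting it into the physical-space Besov control of $X^0_\beta$. In particular, the operators $Z_l^{(k)}$ are \emph{not} exactly Fourier multipliers, so one must handle commutators between $Z_l$ and $P_k^{\bighp}$ (relevant precisely at $l+k\lesssim 0$, which is why the modified localization $Z_l^{(k)}$ appears), and one must ensure that the translation $\mathcal{T}_{\mu 2^{m-k}}$ in part (iii) recenters the oscillation without introducing spurious losses in $\beta$ at the endpoint scale $l+k \approx 0$. The delicate matching between the physical-side dyadic $Z_l^{(k)}$ and the $\zeta$-side Besov regularity on the Fourier side is what makes this estimate sharp and is the technical heart of the proof.
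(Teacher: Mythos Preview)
Your plan contains two genuine gaps.

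\textbf{Part (i): the decay accounting is off.} After the angular integration (which indeed gives $\langle 2^k|x_h|\rangle^{-1/2}$), your proposed ``stationary phase in $\zeta$'' followed by ``Bernstein in $\rho$'' yields at best $t^{-1/2}$, not $t^{-1}$: the $\rho$-integral over a region of width $\sim 2^k$ contributes no additional time decay. Moreover, the second $\zeta$-derivative of the phase is $\partial_\zeta^2\Psi = -3\mu t\zeta(1+\zeta^2)^{-5/2}$, which degenerates at $\zeta=0$, so one-variable stationary phase in $\zeta$ is not even uniformly nondegenerate. The paper instead integrates by parts in $\rho$ (twice), using $\partial_\rho\Psi_1 = x_3\zeta + |x_h|$, to get a pointwise-in-$\zeta$ bound $\min\{(x_3\zeta+|x_h|)^{-2},1\}\langle|x_h|\rangle^{-1/2}\|F\|_{X^{0,3}_\beta}$, and then obtains the full $t^{-1}$ from the $\zeta$-integral $\int \min\{(x_3\zeta+|x_h|)^{-2},1\}\,d\zeta \lesssim |x_3|^{-1}\sim t^{-1}$. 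An alternative that would work is two-variable stationary phase in $(\rho,\zeta)$ jointly (the Hessian has determinant $\sim -x_3^2\sim -t^2$), but that is not what you described.

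\textbf{Parts (ii)--(iii): ``fractional integration by parts'' is not a well-defined operation here.} The $X^0_\beta$ norm only gives $B^{1+\beta}_{2,\infty}$ regularity of $\widehat{f}$ in the $\zeta$-direction, and there is no standard way to integrate by parts a non-integer number of times. You correctly identified this as the main obstacle, but your proposal does not resolve it. The paper's resolution is a clean physical-space splitting that avoids fractional calculus entirely: for a target parameter $l$ (namely $l=\lfloor\log_2|x_3+\mu 2^{m-k}|\rfloor$), write $f = h_1 + h_2$ with $h_2 = \wt{P}^{\hp} Z_{<l-7} f$ and $h_1 = \wt{P}^{\hp} Z_{\ge l-7} f$. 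The piece $h_2$ has $\F_h h_2$ localized near the origin in the $x_3$-variable, so on its Fourier side one may integrate by parts in $\zeta$ \emph{arbitrarily many} times to get decay $(t+2^k|x_3|)^{-K}$ for any $K$. The piece $h_1$ is handled without any integration by parts at all: one simply uses the $X$ norm bound $\|h_1\|_{L^2}\lesssim 2^{-(1+\beta)(l+k)}\|F\|_{X^0_\beta}$ and, for the $L^\infty_{x_h}$ variant, a crude Young/Bernstein bound $\|e^{it\Lambda}h_1\|_{L^2_{x_3}L^\infty_{x_h}}\lesssim \|h_1\|_{L^2}$ (after scaling). The translation $\mathcal{T}_{\mu 2^{m-k}}$ in (iii) is just bookkeeping: it recenters the physical slab so that the $Z_l$ scales match the splitting parameter.
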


\begin{proof}
In the proof we shall use the local coordinate system $(\r,\z,\th)$, in particular the partial derivatives $\partial_{\r,\z,\th}$ are defined as in \eqref{eq:413-1}--\eqref{eq:413-3}.	By scaling and reflection with respect to the $x_h$-plane, it suffices to consider the case $k=0$ and $\mu = +$. By definition, we can write
	\begin{equation} \label{eq:F-rep-1}
		e^{i t \Lmd} f = \frac{1}{(2 \pi)^3}\int_{\R^+ \times \R \times \mathbb{S}^1} e^{i \Psi} \wh{f}(\r,\z,\th) \r^2 d\r d\z d\th,
	\end{equation}
	\begin{equation} \label{eq:Psi-def}
		\Psi = t \Lmd + x \cdot \xi = \frac{\z t}{\sqrt{1+\z^2}} + x_3 \r \z + |x_h| \r \cos (\th - \th_x).
	\end{equation}
	Here, $\theta_x$ is the angular coordinate of $x$ in the usual cylindrical chart.
	
	First, we consider pointwise estimate for $e^{ i t \Lmd} f$ in the case $|x_3| \sim t$. Let $\chi_1(\theta)$ be a smooth bump function on the unit circle, with support contained in $[-\frac{\pi}{2}, \frac{\pi}{2}]$ and is equal to $1$ on $[-\frac{\pi}{4}, \frac{\pi}{4}]$, and then let $\chi_2(\theta)  =  1-\chi_1(\theta)$. Performing the integration in $\th$ in \eqref{eq:F-rep-1}, we get
	\begin{equation} \label{eq:F-decomp}
		e^{ i t \Lmd} f(x) =  \sum_{j=1,2}  \int_{\R^+ \times \R} e^{i \Psi_{j}} g_{j}(\r, \z) \r^2 d\r d\z =: \sum_{j=1,2} A_j,
	\end{equation}
	where we denote
	\begin{equation} \label{eq:41-1}
		\Psi_1 = \frac{\z t}{\sqrt{1+\z^2}} + x_3 \r \z + |x_h| \r,
	\end{equation}
	\begin{equation} \label{eq:41-2}
		\Psi_2 = \frac{\z t}{\sqrt{1+\z^2}} + x_3 \r \z - |x_h| \r,
	\end{equation}
	and
	\begin{equation}  \label{eq:41-3}
		g_1(\r,\z) = \frac{1}{(2 \pi)^3} \int_{-\pi}^{\pi} e^{ i |x_h| \r (\cos (\th - \th_x) - 1)} \chi_1(\th - \th_x) \wh{f}(\r, \z, \th) d\theta,
	\end{equation}
	\begin{equation} \label{eq:41-4}
		g_2(\r,\z) = \frac{1}{(2 \pi)^3} \int_{-\pi}^{\pi} e^{ i |x_h| \r (\cos (\th - \th_x) + 1)} \chi_2(\th - \th_x) \wh{f}(\r, \z, \th) d\theta.
	\end{equation}
	Let us focus on the estimate for $A_1$ as $A_2$ can be treated in a  similar way. By standard theory for 1D oscillatory integrals, we have 
	\begin{equation} \label{eq:41-5}
		|\partial_\r^n g_1(\rho, \zeta)| \lesssim \langle |x_h| \rangle^{-\frac12} \int_{-\pi}^{\pi} \left|\{\partial_\r,\partial_\th\}^{\le n+1} \wh{f}(\r,\z,\th) \right| d\theta, \quad n=0,1,2,\cdots.
	\end{equation}
	Using
	\begin{equation} \label{eq:41-8}
		\partial_\r \Psi_1 = x_3 \z + |x_h|	
	\end{equation}
	and integrating by parts in $\r$ twice, we obtain 
	\begin{align} \label{eq:427-1}
		\left| \int_{\R^+} e^{i \Psi_{1}} g_{1}(\r, \z) \r^2 d\r \right| &=   \left| \int_{\R^+} \frac{e^{i \Psi_1}}{(\partial_\r \Psi_1)^2} \partial_\r^2 (g_1 \r^2) d\r \right|  \nonumber \\
		&\le (x_3 \z + |x_h|)^{-2}  \int_{\R^+} \left| \partial_\r^2 (g_1 \r^2)  \right| d\r \nonumber \\
		&\lesssim (x_3 \z + |x_h|)^{-2} \langle |x_h| \rangle^{-\frac12}  \int_{\R^+\times \mathbb{S}^1} |\{S, \Omega\}^{\le 3} \wh{f}| d\r d\th \nonumber \\
		&\lesssim (x_3 \z + |x_h|)^{-2} \langle |x_h| \rangle^{-\frac12} \|F\|_{X^{0,3}_{\beta}}.
	\end{align}
	In the last line we used Sobolev embedding in the $\z$ direction. On the other hand, a crude bound gives
	\begin{align} \label{eq:427-2}
			\left| \int_{\R^+} e^{i \Psi_{1}} g_{1}(\r, \z) \r^2 d\r \right| &\lesssim  \int_{\R^+} 	\left|  g_{1}(\r, \z) \right| d\r \nonumber \\
			&\lesssim \langle |x_h| \rangle^{-\frac12} \int_{\R^+ \times \mathbb{S}^1}	| \{S, \Omega\}^{\le 1} \wh{f} | d\r d\th \nonumber \\
			&\lesssim  \langle |x_h| \rangle^{-\frac12} \|F\|_{X^{0,3}_{\beta}}.
	\end{align}
	Integrating \eqref{eq:427-1}--\eqref{eq:427-2} in $\z$ gives
	\begin{align}
		|A_1| &\lesssim \langle |x_h| \rangle^{-\frac12} \int_{\R} \min \left\{ (x_3 \z + |x_h|)^{-2}, 1 \right\} d\z \   \|F\|_{X^{0,3}_{\beta}} \\
		&\lesssim \langle |x_h| \rangle^{-\frac12} |x_3|^{-1} \|F\|_{X^{0,3}_{\beta}} \\
		&\lesssim \langle |x_h| \rangle^{-\frac12} t^{-1} \|F\|_{X^{0,3}_{\beta}}.
	\end{align}
	This finishes the proof of (i) in the case $|x_3| \sim t$.
	
	Next, consider the case $-x_3 \notin [2^{m-4}, 3 \times 2^{m}]$. Let $l = \lfloor \log_2 |x_3 + 2^m| \rfloor$, then $t \lesssim |x_3 + 2^m| \sim 2^l$. Recall that $\wt{P}^\hp$ is a localization operator which has similar support properties as $P^\hp$ on the Fourier side and satisfies $\wt{P}^\hp P^\hp = P^\hp$.  We make the decomposition
	\begin{equation}
		f = \wt{P}^\hp Z_{\ge l-7} f +  \wt{P}^\hp  Z_{< l-7} f =: h_1 + h_2.
	\end{equation}
	Using the formula
	\begin{equation} \label{eq:427-001}
		(\F_h e^{i t \Lmd} f)(\xi_h, x_3) = \F_3^{-1} (e^{i t \Lmd} \wh{f}) = \frac{1}{2 \pi} \int_{\R} e^{i (t \Lmd + \xi_3 x_3)} \wh{f}(\xi) d\xi_3,
	\end{equation}
	we make a corresponding decomposition
	\begin{equation}
		\F_h e^{i t \Lmd} f = I_1 + I_2:= \frac{1}{2 \pi} \int_{\R} e^{i (t \Lmd + \xi_3 x_3)} \wh{h_1}(\xi) d\xi_3 + \frac{1}{2 \pi} \int_{\R} e^{i (t \Lmd + \xi_3 x_3)} \wh{h_2}(\xi) d\xi_3.
	\end{equation}
	Note that $I_1$ and $I_2$ are supported on $|\xi_h|\sim 1$. Since $[\F_h, Z_l] = 0$, the second part $I_2$ can be written as
	\begin{align} \label{eq:31-1}
		 2\pi I_2		 &= \int_{\R} e^{i (t \Lmd + \xi_3 x_3)} \wt{\chi}^\hp(\z) (\F_3 Z_{< l-7} \F_h f)(\xi)  \r d\z \nonumber \\
		&=\int_{\R}  (Z_{< l-7} \F_h f)(\xi_h, y_3) dy_3 \int_{\R} \r \wt{\chi}^\hp(\z)  e^{i (t \Lmd +  \r \z  (x_3 - y_3))}  d\z.
	\end{align}
	Note that for $y_3$ in the support of $Z_{<l-7}$ and for $\xi$ in the support of $\wt{\chi}^{\hp}(\xi) \varphi(|\xi_h|)$, we have
	\begin{equation}
		|\partial_\z (t \Lmd +  \r \z  (x_3 - y_3))| = \left|\frac{t}{(1+\z^2)^{\frac32}} + \r (x_3-y_3) \right| \gtrsim t + |x_3|,
	\end{equation}
	and 
	\begin{equation}
		|\partial_\z^n (t \Lmd +  \r \z  (x_3 - y_3))|  \lesssim_n t, \quad n=2,3,\cdots.
	\end{equation}
	Hence, repeated integration by parts in $\z$ gives, for any $K \ge 1$,
	\begin{align}
		|I_2(\xi_h, x_3)| &\lesssim_K (t+|x_3|)^{-K} \int_{\R} \left|( \F_h f)(\xi_h, y_3)\right|  dy_3 \\
		&\lesssim (t+|x_3|)^{-K} \left(\int_{\R} \left|( \F_h f)(\xi_h, y_3)\right|^2 \langle y_3\rangle^2   dy_3\right)^\frac12,
	\end{align}
	and consequently, using Young's inequality for $\F_h^{-1}$, we obtain for any $x_3$,
	\begin{align} \label{eq:427-3}
		\|e^{i t \Lmd} {h_2}\|_{L^2_{x_h}\cap L^\infty_{x_h}}(x_3) &\lesssim_K (t+|x_3|)^{-K} \left(\int_{\R^3} \left|( \F_h f)(\xi_h, y_3)\right|^2 \langle y_3\rangle^2   d\xi_h dy_3\right)^\frac12 \nonumber\\
		&= (t+|x_3|)^{-K} \left(\int_{\R^3} \left|f(y)\right|^2 \langle y_3\rangle^2   dy\right)^\frac12 \nonumber\\
		&\lesssim  (t+|x_3|)^{-K} \|F\|_{X^{0}_\beta}.
	\end{align}
	In particular, by integrating \eqref{eq:427-3} in $x_3$ we deduce that
	\begin{align}
		\| e^{i t \Lmd} {h_2}\|_{L^2\left(\{-x_3 \notin [2^{m-4}, 3 \times 2^{m}], |x_3+t|\in [2^l, 2^{l+1})\}\right)} \lesssim_K  (t+2^l)^{-K} \|F\|_{X^{0}_\beta}.
	\end{align}
	Hence, the contribution of $h_2$ to $e^{it\Lmd} f$ is acceptable for (in fact, much better than) the estimates in (ii) and (iii). It remains to consider the contribution of $h_1$, \emph{i.e.}, $e^{i t \Lmd} h_1$. For the $L^2$ norm we have the direct estimate
	\begin{equation}
		 \|e^{it\Lmd} h_1\|_{L^2} \le \|h_1\|_{L^2} \lesssim 2^{-(1+\beta)l} \|F\|_{X^{0}_\beta}.
	\end{equation}
	Recall that $\wh{h_1}$ is supported on $\{\r \sim 1, |\zeta| \lesssim 1\}$. Using Young's inequality for the Fourier transform, we deduce an acceptable bound
	\begin{equation}
		\|e^{it\Lmd} h_1\|_{L^\infty} + \|e^{it\Lmd} h_1\|_{L^2_{x_3} L^\infty_{x_h}  } \lesssim \|h_1\|_{L^2} \lesssim 2^{-(1+\beta)l} \|F\|_{X^{0}_\beta}.
	\end{equation}
\end{proof}

\medskip

\begin{lemma} \label{lem:hp-disp}
	Let $f = P_k^\hp F$ with $F \in X^{0,3}_{\beta}$, $k \in \Z$ and let $\mu \in \{\pm\}$, $ 2^m \le t < 2^{m+1} (m \ge 1)$, $q\in \Z_-$, then we have
	\begin{enumerate}[(i)]
		\item If $|x_3| \sim t 2^{-k}$,
		\begin{equation} \label{eq:427-02}
			|e^{\mu i t \Lmd} P^{\hp, q} f| \lesssim \min\{ t^{-\frac32} 2^{-\frac{q}{2}}, t^{-1} \}\, 2^{\frac{3k}{2}} \|F\|_{X^{0,3}_{\beta}}.
		\end{equation}
		\item Decompose $f = f^I + f^{II}$ with
		\begin{equation} \label{eq:427-03}
			f^I := \wt{P}^\hp Z_{< m-k-7} f, \quad f^{II} := \wt{P}^\hp Z_{\ge m-k-7} f.
		\end{equation}
		Then, there holds that
		\begin{equation} \label{eq:427-002}
			\|e^{\mu i t \Lmd} P^{\hp,q} f^I\|_{L^\infty} \lesssim \min \{ t^{-\frac32} 2^{-\frac{q}{2}}, t^{-1} \} 2^{\frac32 k } \|F\|_{X^{0,3}_{\beta}}
		\end{equation}
		and
		\begin{equation} \label{eq:427-01}
			\|e^{\mu i t \Lmd} P^{\hp,q} f^{II}\|_{L^2} \lesssim t^{-(1+\beta)} \|F\|_{X^0_\beta},
		\end{equation}
		\begin{equation} \label{eq:730-101}
			\|e^{\mu i t \Lmd} P^{\hp,q} f^{II}\|_{L^\infty} \lesssim t^{-(1+\beta)} 2^{\frac{q}{2}+ \frac{3k}{2}} \|F\|_{X^0_\beta}.
		\end{equation}
	\item 
	\begin{equation} \label{eq:730-102}
			\sup_{q \in \Z_-} \|e^{\mu i t \Lmd} P^{\hp,q} f\|_{L^\infty} + \|e^{\mu i t \Lmd} f\|_{L^\infty} \lesssim t^{-1} 2^{\frac{3k}{2}} \|F\|_{X_\beta^{0,3}}.
	\end{equation}
	\end{enumerate}
\end{lemma}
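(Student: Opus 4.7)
My plan is to establish (i) via a two-dimensional stationary-phase computation that refines the one-dimensional argument underlying Lemma \ref{lem:slab}(i), then to deduce (ii) and (iii) by combining (i) with Lemma \ref{lem:slab} and a few Bernstein-type estimates. First I reduce to $k=0$ by scaling. In (i), the bound by $t^{-1}$ is immediate from Lemma \ref{lem:slab}(i), so the task is the sharper bound $t^{-3/2}2^{-q/2}$ for the further localized piece $P^{\hp,q} f$. Working in the local coordinates $(\rho,\zeta,\theta)$ as in the proof of Lemma \ref{lem:slab} (see \eqref{eq:F-rep-1}--\eqref{eq:F-decomp}) and performing the angular integration to extract a Bessel-type factor $\langle|x_h|\rho\rangle^{-1/2}$, the problem reduces to bounding 2D oscillatory integrals
\[
A_j^q = \int\!\!\int e^{i\Psi_j}\, g_j^q(\rho,\zeta)\, \rho^2\, d\rho\, d\zeta
\]
with $g_j^q$ carrying the cutoff $\varphi(2^{-q}\zeta)$ and the Bessel decay.

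The phase $\Psi_1 = \zeta t/\sqrt{1+\zeta^2}+x_3\rho\zeta+|x_h|\rho$ has a (possible) critical point $(\rho_*,\zeta_*)$ with $\zeta_*=-|x_h|/x_3$ and $\rho_*\sim 1$; its Hessian determinant equals $-x_3^2\sim -t^2$, so non-degenerate 2D stationary phase contributes a factor $t^{-1}$. The critical point lies in the $\zeta$-support of $g_1^q$ only when $|x_h|\sim t\,2^q$, and in this regime the Bessel factor contributes $(t\,2^q)^{-1/2}=t^{-1/2}2^{-q/2}$, producing the claimed bound. Outside this regime one has $|x_3\zeta+|x_h||\gtrsim \max\{t\,2^q,|x_h|\}$, so repeated integration by parts in $\rho$ (the phase is linear in $\rho$) gives decay $\min\{(t\,2^q)^{-2},|x_h|^{-2}\}$; combined with the $\zeta$-support length $2^q$ and the Bessel factor, this still yields $t^{-3/2}2^{-q/2}$ in both subcases $|x_h|\ll t\,2^q$ and $|x_h|\gg t\,2^q$, provided $q\ge -m$ (the complementary range $q<-m$ is subsumed in the $t^{-1}$ bound).

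For (ii), the $L^\infty$ bound on $e^{\mu i t\Lmd} P^{\hp,q} f^I$ inside the slab $|x_3|\sim t\,2^{-k}$ follows from (i) applied to $f^I$, which inherits the $X$-norm bound. Outside the slab, the physical localization $|x_3|<2^{m-k-7}$ of $f^I$ renders $\wh{f^I}$ smooth in $\xi_3$ at scale $2^{k-m}$; since the stationary-phase relation $x_3=-t\partial_3\Lmd\sim -t\,2^{-k}$ then fails, repeated integration by parts in $\xi_3$ produces decay much faster than required. The $L^2$ estimate on $e^{\mu i t\Lmd} P^{\hp,q} f^{II}$ is immediate from unitarity of $e^{\mu i t\Lmd}$ and the $X^0_\beta$-definition,
\[
\|f^{II}\|_{L^2}\lesssim \sum_{l\ge m-k-7} 2^{-(1+\beta)(l+k)}\|F\|_{X^0_\beta}\lesssim t^{-(1+\beta)}\|F\|_{X^0_\beta},
\]
while the $L^\infty$ bound follows from $\|\wh{f^{II}}\|_{L^1}\lesssim |\supp\wh{f^{II}}|^{1/2}\,\|f^{II}\|_{L^2}$ together with the support volume $\lesssim 2^{3k+q}$ of $\chi^{\hp,q}_k$.

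Finally, (iii) is a direct consequence: for each $q$ the slab estimate from (i), combined with the outside-slab bounds from (ii) via $f=f^I+f^{II}$, yields the sup in $q$; and $\|e^{\mu i t\Lmd}f\|_{L^\infty}$ is handled by Lemma \ref{lem:slab}(i)--(ii) directly. The main obstacle will be the bookkeeping in (i): one has to confirm that the stationary-phase regime ($|x_h|\sim t\,2^q$) and the various non-stationary regimes ($|x_h|\not\sim t\,2^q$) all produce the same bound $t^{-3/2}2^{-q/2}$, which requires carefully tracking how the Bessel factor and the $\zeta$-support length balance the gains from integration by parts in $\rho$.
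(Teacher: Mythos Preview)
Your treatment of (ii) and (iii) matches the paper's: the $L^2$ and $L^\infty$ bounds for $f^{II}$ are just Bernstein plus the $X^0_\beta$ definition, and (iii) follows by combining (i) with the $f^I/f^{II}$ splitting. One small imprecision in your outside-the-slab argument for $f^I$: each integration by parts in $\zeta$ gains only $(t\,2^q)^{-1}$, since $\partial_\zeta$ hitting $\chi^{\hp,q}$ costs $2^{-q}$; this is why the paper first disposes of $q<-m$ by the crude bound $\|e^{it\Lambda}P^{\hp,q}f\|_{L^\infty}\lesssim 2^q\|\wh f\|_{L^\infty}\le t^{-1}\|F\|_{X^{0,3}_\beta}$.

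The real issue is in (i), in the regime $|x_h|\sim t\,2^q$. You propose two-dimensional stationary phase in $(\rho,\zeta)$; the Hessian computation is correct, but the amplitude $g_j(\rho,\zeta)\,\varphi(2^{-q}\zeta)\,\rho^2$ does not have enough $\zeta$-regularity for this. The function $g_j$ depends on $\zeta$ only through $\wh f(\rho,\zeta,\theta)$, and the norm $X^{0,3}_\beta$ controls $\{\partial_\rho,\partial_\theta\}^{\le 3}\wh f$ together with merely $B^{1+\beta}_{2,\infty}$ regularity in $\zeta$ (see \eqref{eq:729-001}); standard stationary-phase remainder bounds would ask for at least two $\zeta$-derivatives of the amplitude, which you do not have.

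The paper sidesteps this completely by never differentiating in $\zeta$. It reuses the pointwise-in-$\zeta$ bounds \eqref{eq:427-1}--\eqref{eq:427-2} from the proof of Lemma \ref{lem:slab} (two integrations by parts in $\rho$, or the crude bound), multiplies by the extra cutoff $\chi^{\hp,q}$, and then just integrates in $\zeta$:
\[
|A_1|\;\lesssim\;\langle|x_h|\rangle^{-1/2}\int \varphi(2^{-q}\zeta)\,\min\{(x_3\zeta+|x_h|)^{-2},1\}\,d\zeta\;\|F\|_{X^{0,3}_\beta}.
\]
After substituting $\lambda=x_3\zeta$ the $\zeta$-integral is $\lesssim t^{-1}$ when $|x_h|\gtrsim t\,2^q$ (the would-be critical point contributes a unit bump) and $\lesssim t^{-2}2^{-q}$ when $|x_h|\ll t\,2^q$; combined with the Bessel factor this yields $t^{-3/2}2^{-q/2}$ in both cases. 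So your case split and your IBP-in-$\rho$ analysis for $|x_h|\not\sim t\,2^q$ are exactly right; the fix is simply to treat the ``stationary'' regime the same way rather than invoking 2D stationary phase.
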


\begin{proof}
	In the proof we use the local coordinate system $(\rho, \zeta, \th)$. By scaling and reflection, we may assume that $k = 0$ and $\mu = +$.  The estimate \eqref{eq:427-01} and \eqref{eq:730-101} follow from a direct X norm bound
	\begin{equation}
		\|e^{ i t \Lmd} P^{\hp,q} f^{II}\|_{L^2} \le \|f^{II}\|_{L^2} \lesssim t^{-(1+\beta)} \|F\|_{X^0_\beta}.
	\end{equation}
	and a crude estimate
	\begin{equation}
		\|e^{ i t \Lmd} P^{\hp,q} f^{II}\|_{L^\infty} \lesssim 2^{\frac{q}{2}}\|f^{II}\|_{L^2}.
	\end{equation}
	The estimate \eqref{eq:730-102} is a corollary of \eqref{eq:427-002} and \eqref{eq:730-101}. Hence, it remains to prove \eqref{eq:427-02} and \eqref{eq:427-002}. Due to the crude bound 
	\begin{align}
		\|e^{ i t \Lmd} P^{\hp, q} \{f, f^I\}\|_{L^\infty} \lesssim \|\chi^{\hp, q} \{\wh{f}, \wh{f^I}\}\|_{L^1} \lesssim 2^q \| \wh{f}\|_{L^\infty} \lesssim 2^q \|F\|_{X^{0,3}_{\beta'}},
	\end{align}
	it is sufficient to consider the case  $q \ge -m$.
	
	First, we consider the case $|x_3|\sim t$. By definition, one can write
	\begin{equation}
		e^{ i t \Lmd} P^{\hp,q} f(x) = \frac{1}{(2\pi)^3} \int_{\R^+ \times \R \times \mathbb{S}^1} e^{i\Psi} \chi^{\hp, q}(\xi) \wh{f}(\xi) \r d\r d\z d\th
	\end{equation}
	with $\Psi$ defined in \eqref{eq:Psi-def}. Similar to \eqref{eq:F-decomp}, here we have
	\begin{equation}
		e^{ i t \Lmd} P^{\hp,q} f = \sum_{j = 1,2} A_j:= \sum_{j=1,2} \int_{\R^+ \times \R} e^{i\Psi_j} \chi^{\hp, q}(\xi) g_j(\r,\z) \r^2 d\r d\z
	\end{equation}
	where $\Psi_j$  and $g_j$ are defined as in \eqref{eq:41-1}--\eqref{eq:41-4}. Multiplying \eqref{eq:427-1}--\eqref{eq:427-2} by $\chi^{\hp,q}$ and then integrating in $\z$, we get  
	\begin{align}
		|A_1| &\lesssim \langle |x_h| \rangle^{-\frac12} \int_{\R} \chi^{\hp,q}(\xi) \min\{(x_3 \z + |x_h|)^{-2}, 1\} d\z \, \|F\|_{X^{0,3}_{\beta}} \nonumber \\
		&\lesssim t^{-1} \langle |x_h| \rangle^{-\frac12} \int_{\R} \varphi(2^{-q} x_3^{-1} \lmd) \min\{(\lmd + |x_h|)^{-2}, 1\} d\lmd \, \|F\|_{X^{0,3}_{\beta}}.
	\end{align}
	If $|x_h| \gtrsim t 2^q$,  there holds that
	\begin{equation}
		|A_1| \lesssim t^{-1} \langle |x_h| \rangle^{-\frac12} \|F\|_{X^{0,3}_{\beta'}} \lesssim t^{-\frac32} 2^{-\frac{q}{2}}\|F\|_{X^{0,3}_{\beta}}.
	\end{equation}
	If  $|x_h| \ll t 2^q$, there holds that
	\begin{align}
		|A_1| &\lesssim t^{-1}  \int_{\R} \varphi(2^{-q} x_3^{-1} \lmd) \lmd^{-2} d\lmd \, \|F\|_{X^{0,3}_{\beta}} \nonumber \\
		&\lesssim t^{-2} 2^{-q} \, \|F\|_{X^{0,3}_{\beta}} \lesssim t^{-\frac32} 2^{-\frac{q}{2}} \, \|F\|_{X^{0,3}_{\beta}}.
	\end{align}
	The estimate for $A_2$ is similar. The same argument also applies to $e^{ i t \Lmd} P^{\hp,q} f^I$.
	
	Next, consider the case $|x_3| \notin [2^{m-4}, 3\times 2^m]$. As in \eqref{eq:427-001} and \eqref{eq:31-1}, we can write
	\begin{align}
		&\quad \ 2\pi (\F_h e^{i t \Lmd} P^{\hp,q} f^{I})(\xi_h, x_3) \nonumber \\
		&=  \int_{\R} e^{i(t\Lmd +\xi_3 x_3)} \chi^{\hp, q}\, \wh{f^{I}}(\xi) d\xi_3 \nonumber \\
		&=  \int_{\R} (Z_{<m-7} \F_h f)(\xi_h, y_3)dy_3  \int_{\R} \r \chi^{\hp,q}(\z)  e^{i(t\Lmd + \r\z(x_3-y_3))} d\z.
	\end{align}
	Note that for $y_3$ in the support of $Z_{<m-7}$ and for $\xi$ in the support of $\wt{\chi}^{\hp}(\xi) \varphi(|\xi_h|)$, we have
	\begin{equation}
		|\partial_{\z} (t \Lmd +  \r \z  (x_3 - y_3))| = \left|\frac{t}{(1+\z^2)^{\frac32}} + (x_3-y_3) \r\right| \gtrsim t.
	\end{equation}
	Hence, repeated integration by parts in $\z$ gives
	\begin{align}
		|(\F_h e^{i t \Lmd} P^{\hp,q} f^{I})(\xi_h, x_3)| &\lesssim_K 2^q (t 2^{q})^{-K} \int_\R |\F_h f(\xi_h, y_3)| dy_3 \nonumber \\
		&\lesssim \min\{t^{-1}, t^{-2} 2^{-q}\} \left(\int_\R |\F_h f(\xi_h, y_3)|^2 \langle y_3 \rangle^2 dy_3\right)^\frac12.
	\end{align}
	Applying $\F_h^{-1}$ in the same way as in \eqref{eq:427-3}, we obtain the desired estimate \eqref{eq:427-002}.
\end{proof}

\subsection{Wave packets with horizontal frequencies} \label{sec:52}


Throughout the rest of the paper, we fix two positive real numbers $\alpha$, $\gamma$ satisfying the constraints
\begin{equation} \label{eq:614-aa1}
	\begin{cases}
		\frac{\gamma}{2}< \alpha< 1-\gamma, 		\vspace{5pt} \\
		\alpha < \frac12, \  \gamma > \frac{1}{2}.
	\end{cases}
\end{equation}
The first line in \eqref{eq:614-aa1} is required by the  linear analysis of this section, while the second line comes from the nonlinear analysis in Section \ref{sec:8}. For definiteness, we take 
\begin{equation} \label{eq:802-001}
	\alpha = 0.35, \quad \gamma = 0.6.
\end{equation}

\medskip

Consider a smooth partition of unity for $\R$ given by
\begin{equation}
	1 \equiv \sum_{j \in \Z} \phi(\cdot - j)
\end{equation}
which satisfies
\begin{equation}
	\supp \, \phi \subset (-0.6, 0.6), \quad \supp \, \nabla \phi \subset (-0.6, -0.4) \cup (0.4, 0.6).
\end{equation}
Then, we denote the rescaled and translated versions by
\begin{equation}
	\phi^J_j(\cdot) = \phi(4^{J}\cdot -j),
\end{equation}
and define a family of Fourier  localization operators $Q^J_j$ with $5 \le J \le m \gamma /2, |j| \le 10 \cdot 4^J$ as
\begin{equation}
	(\F Q^J_j f)(\xi) := \phi^J_j(\ln \r) \wh{f}(\xi), \quad \r = |\xi_h|. 
\end{equation}
To state the next lemma, we need to define a family of cylinders in $\R^3$. Let
\begin{equation}
	\mathcal{C} :=  \left\{x \in \R^3 :  |x_h| < C_1 \,  t^{1-\alpha},  C_2^{-1}  t< -x_3 < C_2  t \right\},
\end{equation}
($C_1, C_2 $ are large absolute constants whose values are specified in Remark \ref{rem:413} below) and
\begin{equation}
	B^J_j(0) := \left\{x \in \mathcal{C} : \frac{t}{\r^J_{j,2}}<-x_3<\frac{t}{\r^J_{j,1}} \right\},
\end{equation}
where the numbers $\r^J_{j,1}, \r^J_{j,2}$ are defined as
\begin{equation}
	\r^J_{j,1} := \exp (4^{-J} (j-0.7)), \quad \r^J_{j,2} := \exp (4^{-J} (j+0.7)).
\end{equation}
For $x \notin B^J_j(0)$, we define a distance
\begin{equation}
	d^J_j(x) = 2^{-m+2J} \min \left\{ \left|x_3 + \frac{t}{\r_{j,1}^J}\right|, \left|x_3 + \frac{t}{\r_{j,2}^J}\right| \right\},
\end{equation}
and then define the sets
\begin{equation}
	B^J_j(1) := \left\{x \in \mathcal{C}\, \setminus  B^J_j(0) : d^J_j(x) < 4   \right\},
\end{equation}
\begin{equation}
	B^J_j(l) := \left\{x \in \mathcal{C} \,\setminus  B^J_j(0) : 2^l \le d^J_j(x)  < 2^{l+1}  \right\}, \  l = 2,3,4, \cdots.
\end{equation}
For $l \gg m-2J$, we have $B^J_j(l) = \emptyset$. Note that the family of sets $\{B^J_j(l)\}_{l\in \mathbb{N}}$ are mutually disjoint, and 
\begin{equation}
	\mathcal{C} = \bigcup_{0 \le l \lesssim m-2J} B^J_j(l).
\end{equation}
The key property of the above construction is that, for   $x \in B^J_j(l)$ with $l \ge 1$, there holds that
\begin{equation} \label{eq:414-3}
	\mathrm{dist} \left(x_3, \left\{t  \r^{-1}: \ln \r \in \mathrm{supp} \, \phi_j^J \right\} \right)  \sim 2^{l+m-2J}.
\end{equation}

\smallskip

\begin{lemma} \label{lem:51-3}
	Suppose that $2^m \le t < 2^{m+1} (m \ge 1) $ , $5 \le J \le \frac{m \gamma}{2}$, $|j| \le 10 \times 4^J$ and  $f = P^\hp_{[-10, 10]} F$ with $F \in X^{0,3}_\beta$. Denote $f^J_j = Q^J_j P^{\hp, \le -m\alpha} f$.	Then the following statements hold.
	\begin{enumerate}[(i)]
		\item 
		For $|x_h| \lesssim t^{1-\alpha}$ and $|x_3| \sim t$,
		\begin{equation}
			|e^{i t \Lmd} f^J_j(x)| \lesssim t^{-1} \langle |x_h| \rangle^{-1/2} \|F\|_{X^{0,3}_{\beta}}.
		\end{equation}
		
		\item For $|x_h| \gg t^{1-\alpha}$ and $|x_3| \lesssim t$,
		\begin{equation}
			|e^{i t \Lmd} f^J_j(x)| \lesssim t^{-3} \|F\|_{X_0^{0,100}}, \quad 			|e^{i t \Lmd} P^{\hp, \le -m\alpha} f (x)| \lesssim t^{-3} \|F\|_{X_0^{0,10}}.
		\end{equation}

%

		\item
		For each $l \ge 1$ and $K \ge 1$,
		\begin{equation} \label{eq:525-a1}
			\|e^{i t \Lmd} f^J_j\|_{L^2(B^J_{j}(l))} + 2^J \|e^{i t \Lmd} f^J_j\|_{L^2_{x_3} L^{\infty}_{x_h}(B^J_{j}(l))} \lesssim  \|Z_{\sim l +m- 2J }Q^J_j f\|_{L^2}  + C_K t^{-K} \|F\|_{X^0_0}.
		\end{equation}
	\end{enumerate}

\end{lemma}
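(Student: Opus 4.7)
The plan is to work in the modified chart $(\rho,\zeta,\theta)$ of Section \ref{sec:22} and apply stationary/non-stationary phase to
\begin{equation*}
e^{it\Lambda}f^J_j(x)=\frac{1}{(2\pi)^3}\int e^{i\Psi}\,\phi^J_j(\ln\rho)\,\chi^{\hp,\le -m\alpha}(\zeta)\,\wh{f}(\rho,\zeta,\theta)\,\rho^2\, d\rho\, d\zeta\, d\theta,
\end{equation*}
with $\Psi=t\zeta(1+\zeta^2)^{-1/2}+x_3\rho\zeta+|x_h|\rho\cos(\theta-\theta_x)$. A preliminary stationary phase in $\theta$ as in Lemma \ref{lem:slab} produces the amplitude gain $\langle|x_h|\rangle^{-1/2}$ and two phases $\Psi_{1,2}=t\zeta(1+\zeta^2)^{-1/2}+x_3\rho\zeta\pm|x_h|\rho$. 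For part (i), the critical point of $\Psi_1$ in $(\rho,\zeta)$ is $(\rho^\ast,\zeta^\ast)=(\mp t|x_3|^2/|x|^3,\mp|x_h|/x_3)$, which lies in the support $|\zeta|\lesssim 2^{-m\alpha}$ since $|x_h|\lesssim t^{1-\alpha}$ and $|x_3|\sim t$; the Hessian determinant there is $-x_3^2\sim -t^2$, so two-dimensional stationary phase asymptotics combined with $\|\wh{f}\|_{L^\infty}\lesssim \|F\|_{X^{0,2}_0}$ from \eqref{eq:56-1} yield the bound $t^{-1}\langle|x_h|\rangle^{-1/2}\|F\|_{X^{0,3}_\beta}$. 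For part (ii), $|\partial_\rho\Psi_{1,2}|=\big||x_h|\pm x_3\zeta\big|\gtrsim |x_h|$ uniformly on the support (since $|x_3\zeta|\lesssim t^{1-\alpha}\ll |x_h|$) and $\partial_\rho^k\Psi_{1,2}=0$ for $k\ge 2$; repeated integration by parts in $\rho$ yields decay $|x_h|^{-K}$, absorbing $\phi^J_j$-derivative losses of size $4^J\le 2^{m\gamma/2}$, and choosing $K$ large enough produces the $t^{-3}$ estimate. The analogous argument without the $\phi^J_j$ factor gives the second bound in (ii) at the smaller cost of ten derivatives.

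For part (iii), the key wave-packet statement, the central observation is that the group velocity $-\partial_3\Lambda=-\rho^{-1}(1+\zeta^2)^{-3/2}\approx -\rho^{-1}$ on the support dictates that $e^{it\Lambda}$ acts approximately as an $x_3$-translation by $-t/\rho$. Concretely, take the partial Fourier transform in $x_h$, set $\zeta=\xi_3/\rho$, and expand
\begin{equation*}
t\Lambda(\xi)+x_3\xi_3=(x_3+t/\rho)\xi_3-\tfrac{t}{2}\zeta^3+O(t|\zeta|^5).
\end{equation*}
On the support of $\chi^{\hp,\le -m\alpha}$, $|t\zeta^3|\lesssim 2^{m(1-3\alpha)}\ll 1$ by $\alpha>1/3$, and $|t\zeta^5|\lesssim 2^{m(1-5\alpha)}$ is even smaller; these correction terms can be Taylor-expanded and then handled by repeated integration by parts in $\xi_3$, producing errors absorbed into the remainder $C_Kt^{-K}\|F\|_{X^0_0}$ in \eqref{eq:525-a1}. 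The leading contribution is then $(\F_h f^J_j)(\xi_h,x_3+t/\rho)$ smoothed in $x_3$ on scale $\rho^{-1}2^{m\alpha}$, so $e^{it\Lambda}f^J_j$ behaves like a translation by $-t/\rho$ in $x_3$ of $f^J_j$ for each $\rho\in\supp\phi^J_j$.

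On $B^J_j(l)$ with $l\ge 1$, the crucial geometric fact is $|x_3+t/\rho|\sim 2^{l+m-2J}$ uniformly for $\rho\in\supp\phi^J_j$, since the spread of $t/\rho$ across this support is at most $t\cdot 4^{-J}(\rho^J_j)^{-2}\sim 2^{m-2J}\lesssim 2^{l+m-2J}$, and the smoothing scale $2^{m\alpha}$ is dominated by $2^{l+m-2J}\ge 2^{1+m(1-\gamma)}$ using $\gamma+\alpha<1$. Hence only the piece $Z_{\sim l+m-2J}Q^J_j f$ of the input contributes to the leading term, and Plancherel in $x_h$ delivers the $L^2$ bound in (iii). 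For the $L^2_{x_3}L^\infty_{x_h}$ bound with prefactor $2^J$, invoke Bernstein's inequality: $\wh{f^J_j}$ has $\xi_h$-Fourier support in an annulus of area $\sim 4^{-J}$, so any $g$ whose horizontal Fourier support lies in the same set obeys $\|g\|_{L^2_{x_3}L^\infty_{x_h}}\lesssim 2^{-J}\|g\|_{L^2}$, which applied to the leading term of $e^{it\Lambda}f^J_j$ yields the prefactor. The main obstacle in executing this plan is rigorously controlling the error from the cubic phase correction $-t\zeta^3/2$ and verifying that amplitude-derivative losses of order $4^J\cdot 2^{m\alpha}$ remain strictly smaller than the gain $2^{l+m-2J}$ per integration by parts in $\xi_3$, which is ensured by the parameter constraints $\gamma+\alpha<1$ and $\alpha>1/3$ from \eqref{eq:614-aa1}.
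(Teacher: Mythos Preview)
Your approaches to parts (ii) and (iii) are morally the same as the paper's, but your proposal for part (i) has a real gap, and your execution of (iii) contains an unnecessary detour and some inaccurate parameter claims.

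\textbf{Part (i).} A direct two-dimensional stationary phase in $(\rho,\zeta)$ does not give the bound uniformly in $J$, because the amplitude $\phi^J_j(\ln\rho)$ is rough: each $\rho$-derivative costs a factor $4^J$. After rescaling so the Hessian is of unit size (both eigenvalues are $\sim t$), the remainder in the stationary-phase expansion is controlled by $t^{-1}\cdot t^{-1}\cdot\|\phi^J_j g_1\|_{C^N}$ for some $N\ge 2$, which carries a factor $4^{NJ}$; since $J$ may be as large as $m\gamma/2$, this overwhelms the main term $t^{-1}$. The paper does \emph{not} use stationary phase here. It follows the scheme of Lemma~\ref{lem:slab}(i): for each fixed $\zeta$ integrate by parts twice in $\rho$, yielding the pointwise bound
\[
\Big|\int e^{i\Psi_1}\phi^J_j(\ln\rho)g_1\,\rho^2\,d\rho\Big|\lesssim (x_3\zeta+|x_h|)^{-2}\langle|x_h|\rangle^{-1/2}\,4^{J}\,\|F\|_{X^{0,3}_\beta},
\]
where the net $4^J$ comes from two derivatives on $\phi^J_j$ (cost $4^{2J}$) compensated by the support size $4^{-J}$ in $\rho$. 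This is then balanced against the crude bound $\lesssim\langle|x_h|\rangle^{-1/2}4^{-J}\|F\|_{X^{0,3}_\beta}$; taking the minimum and integrating in $\zeta$ produces $|x_3|^{-1}$ with the $4^{\pm J}$ factors cancelling exactly. This cancellation is the point, and your proposal does not capture it.

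\textbf{Part (ii).} Your argument is the paper's argument. One slip: $4^J\le 2^{m\gamma}$, not $2^{m\gamma/2}$; the conclusion still follows since $\gamma<1-\alpha$.

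\textbf{Part (iii).} Your decomposition into the $Z_{\sim l+m-2J}$ piece and its complement, followed by repeated integration by parts in the vertical frequency for the latter and a crude $L^2$/Bernstein bound for the former, is precisely the paper's argument. However, the paper integrates by parts in $\zeta$ on the \emph{full} phase $t\Lambda+\rho\zeta(x_3-y_3)$, using
\[
|\partial_\zeta(t\Lambda+\rho\zeta(x_3-y_3))|=\big|t(1+\zeta^2)^{-3/2}+\rho(x_3-y_3)\big|\gtrsim t\,4^{-J}
\]
directly; this needs only $t\zeta^2=o(t4^{-J})$, i.e.\ $\gamma/2<\alpha$, and the amplitude loss $t^\alpha$ from $\partial_\zeta\chi^{\hp,\le -m\alpha}$ is absorbed by $\alpha<1-\gamma$. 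Your Taylor expansion of the cubic phase correction is unnecessary and introduces the extraneous requirement $\alpha>1/3$, which is \emph{not} among the constraints \eqref{eq:614-aa1} (though it happens to hold for the specific choice $\alpha=0.35$). Also, there is no $4^J$ loss in the $\xi_3$-integration by parts: $\phi^J_j(\ln|\xi_h|)$ is independent of $\xi_3$, so your stated ``amplitude-derivative losses of order $4^J\cdot 2^{m\alpha}$'' should simply be $2^{m\alpha}$.
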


\begin{remark} \label{rem:413}
	We will choose $C_1, C_2$ in the definition of $\mathcal{C}$ in the following way. Let $f$ be defined as in Lemma \ref{lem:51-3}. Take $C_2$ sufficiently large such that outside the slab $\{C_2^{-1} t \le -x_3 \le C_2 t\}$, the $L^2$ and $L^\infty$ norms of $e^{i t \Lmd} f$  and $e^{i t \Lmd} P^{\hp, \le - m \alpha} f$ decay with rate $t^{-1-\beta}$ as proved in Lemma \ref{lem:slab} (ii)--(iii) (the proof there also applies to $e^{i t \Lmd} P^{\hp, \le - m \alpha} f$ with direct adaptations). Then, we take $C_1$ sufficiently large such that (ii) applies when $|x_h| \ge C_1 t^{1-\alpha}$ and $C_2^{-1} t \le x_3 \le C_2 t$ holds.
\end{remark}

\begin{proof}
	By definition, we can write, with $\Psi$ given by \eqref{eq:Psi-def},
	\begin{equation} \label{eq:F-rep}
		e^{i t \Lmd} f^J_j(x) = \frac{1}{(2 \pi)^3}\int_{\R^+\times \R \times \mathbb{S}^1} e^{i \Psi} \phi^J_j(\ln \r) \chi^{\hp, \le -m\alpha}(\xi) \wh{f}(\xi) \r^2 d\r d\z d\th.
	\end{equation}
	First, we consider the case $|x_h| \lesssim t^{1-\alpha}, |x_3| \lesssim t$.  As in \eqref{eq:F-decomp}, here we can write
	\begin{equation} \label{eq:414-2}
		e^{i t \Lmd} f^J_j(x) = \sum_{\sigma = 1,2} \int_{\R^+\times \R} e^{i \Psi_\sigma} \phi^J_j(\ln \r) \chi^{\hp, \le -m\alpha}(\xi) g_\sigma(\r,\z) \r^2 d\r d\z =: \sum_{\sigma = 1,2} A^J_{j, \sigma},
	\end{equation}
	in which $\Psi_{1,2}$ and $g_{1,2}$ are given by \eqref{eq:41-1}--\eqref{eq:41-4}. We shall focus on $A^J_{j,1}$ as the estimate for $A^J_{j,2}$ is similar. Using \eqref{eq:41-5}--\eqref{eq:41-8}, we integrate by parts in $\r$ twice to get, for any $\z \lesssim 1$,
	 \begin{align} \label{eq:41-11}
	 	& \quad \left| \int_{\R^+} e^{i \Psi_{1}} \phi^J_j(\ln \r) g_{1}(\r, \z) \r^2 d\r \right| \nonumber \\
	 	&=   \left| \int_{\R^+} \frac{e^{i \Psi_1}}{(\partial_\r \Psi_1)^2} \partial_\r^2 (g_1 \phi^J_j(\ln \r) \r^2) d\r \right|  \le (x_3 \z + |x_h|)^{-2}  \int_{\R^+} \left| \partial_\r^2 (g_1 \phi^J_j(\ln \r)  \r^2) \right| d\r \nonumber \\
	 	&\lesssim (x_3 \z + |x_h|)^{-2} \langle |x_h| \rangle^{-\frac12} \left( 4^{2J} \int_{\mathrm{supp} (\phi_j^J \circ \ln)} |g_1| d\r +  4^{J} \int_{\R^+} |\partial_\r g_1| d\r +  \int_{\R^+} |\partial_\r^2 g_1| d\r   \right) \nonumber \\
	 	&\lesssim (x_3 \z + |x_h|)^{-2} \langle |x_h| \rangle^{-\frac12} 4^{J} \|F\|_{X^{0,3}_{\beta}}.
	 \end{align}
 	 In the penultimate line, we used \eqref{eq:41-5} and $|\partial_\r^2 \phi^J_j(\ln \r)| \lesssim 4^{2J}$, while in the last line, we used Sobolev embedding in  the $\r$ and $\z$ directions. On the other hand, a crude bound gives
	 \begin{align} \label{eq:414-1}
	 	\left| \int_{\R^+} e^{i \Psi_{1}} \phi^J_j(\ln \r) g_{1}(\r, \z) \r^2 d\r \right| &\lesssim  \int_{\R^+} \phi^J_j(\ln \r)	\left|  g_{1}(\r, \z) \right| d\r \nonumber \\
	 	&\lesssim  \langle |x_h| \rangle^{-\frac12} 4^{-J} \|F\|_{X^{0,3}_{\beta}}.
	 \end{align}
	 Integration of \eqref{eq:41-11} and \eqref{eq:414-1} in $\z$ gives
	 \begin{align}
	 	|A^J_{j,1}| &\lesssim \langle |x_h| \rangle^{-\frac12} \int_{\R} \min \left\{ (x_3 \z + |x_h|)^{-2} 4^J, 4^{-J} \right\} d\z \   \|F\|_{X^{0,3}_{\beta}} \\
	 	&\lesssim \langle |x_h| \rangle^{-\frac12} |x_3|^{-1} \|F\|_{X^{0,3}_{\beta}} \\
	 	&\lesssim \langle |x_h| \rangle^{-\frac12} t^{-1} \|F\|_{X^{0,3}_{\beta}}.
	 \end{align}
	 This finishes the proof of (i).
	 
	 Next,  consider the case $|x_h| \gg t^{1-\alpha}$, $|x_3| \lesssim t$. Note that for $\zeta \lesssim  t^{-\alpha}$, there holds that
	 \begin{equation}
	 	|\partial_\r \Psi_1| = |x_3 \zeta + |x_h|| \gtrsim t^{1-\alpha},
	 \end{equation}
	 which enables us to apply repeated integration by parts in $\r$ to \eqref{eq:414-2}. Note that $\partial_\rho$ landing on $\phi^J_j(\ln \r)$ would give a factor of $4^J$ which is bounded by $t^\gamma$ ($\ll t^{1-\alpha}$ if $t \gg 1$). As a result, we obtain
	 \begin{equation}
	 	|A^J_{j,1}| \lesssim t^{-3} \|F\|_{X^{0,100}_0}.
	 \end{equation}
	 The estimate for $A^J_{j,2}$ and for $e^{i t \Lmd} P^{\hp, \le -m \alpha} f$ is similar. This proves (ii).
	 
	 Finally, we consider the case $x \in B_j^J(l)$ with $l \ge 1$.  We make the decomposition
	 \begin{align}
	 	f^J_j &= {P}^{\hp, \le -m\alpha } Z_{[l +m- 2J-C,l +m- 2J+C]} Q^J_{j} f \nonumber \\
	 	&\quad + {P}^{\hp, \le -m\alpha } Z_{[l +m- 2J-C,l +m- 2J+C]^c} Q^J_{j} f \nonumber \\
	 	&=: h_1 + h_2.
	 \end{align}
	 Correspondingly, we decompose
	 \begin{equation}
	 	(\F_h e^{i t \Lmd} f^J_j)(\xi_h, x_3) = I_1 + I_2 := \frac{1}{2 \pi} \int_\R e^{i (t \Lmd + \xi_3 x_3)} \left\{\wh{h_1}(\xi) + \wh{h_2}(\xi)\right\} d\xi_3. 
	 \end{equation}
	 Note that $I_1(\xi_h, x_3)$ and $I_2(\xi_h, x_3)$ are both supported on 
	 \begin{equation} \label{eq:414-4}
	 	\r = |\xi_h| \in \left[ \exp(4^{-J}(i-0.6)), \exp(4^{-J}(i+0.6))\right].
	 \end{equation}
	  For $I_2$ we can write
	  \begin{align} \label{eq:44-1}
	 	2 \pi I_2  &= \int_\R e^{i (t \Lmd + \xi_3 x_3)} {\chi}^{\hp, \le -m\alpha}(\xi) (\F_3 Z_{[l +m- 2J-C,l +m- 2J+C]^c} \F_h Q^J_j f)(\xi)  \r d\z \nonumber \\
	 	&=\int_\R \r {\chi}^{\hp, \le -m\alpha}(\xi) \int_\R  (Z_{[l +m- 2J-C,l +m- 2J+C]^c} \F_h Q^J_j f)(\xi_h, y_3) e^{i (t \Lmd +  \r \z  (x_3 - y_3))}dy_3  d\z.
	 \end{align}
 	For $|\Lambda| \le 2^{-m \alpha+1}$, we have
 	\begin{equation}
 		\partial_\z \Lambda  = \frac{1}{(1+\z^2)^\frac32} = 1 + O(t^{-2\alpha}) = 1+o(t^{-\gamma}),
 	\end{equation}
	 hence if $y_3$ is in the support of $Z_{[l +m- 2J -C, l +m- 2J +C]^c}$ (with the absolute constant $C$ sufficiently large) and $\r$ satisfies \eqref{eq:414-4}, there holds that
	 \begin{equation}
	 	|\partial_\z (t \Lambda + \r \z (x_3-y_3))| \gtrsim t 4^{-J} \gtrsim t^{1-\gamma},
	 \end{equation}
 	 which enables us to apply repeated integration by parts in $\z$ to \eqref{eq:44-1}. Note that $\partial_\z$ landing on ${\chi}^{\hp, \le -m\alpha+1}$ gives a factor bounded by $t^\alpha \ll t^{1-\gamma}$. As a result, for any $K \ge 1$, there holds that
	 \begin{align}
	 	|I_2(\xi_h, x_3)| &\lesssim_K t^{-K} \int_\R |\F_h Q^J_j f(\xi_h, y_3) | dy_3 \\
	 	&\lesssim t^{-K} \left(\int_\R |\F_h Q^J_j f(\xi_h, y_3) |^2 \langle y_3 \rangle^2 dy_3 \right)^\frac12.
	 \end{align}
	Hence, upon taking $\F_h^{-1}$ and integrating in the $x_3$ direction, we get
	 \begin{equation}
	 	\|e^{i t \Lmd} h_2\|_{L^2\cap L^2_{x_3}L^\infty_{x_h}(B^J_j(l))} \lesssim_K t^{-K} \|Q^J_j f\|_{X^{0}_0},
	 \end{equation}
	 which is acceptable for the estimates in (ii). It remains to consider $e^{i t \Lmd} h_1$. Using a crude bound, we have
	 \begin{align}
	 	\|e^{i t \Lmd} h_1\|_{L^2(B^J_j(l))} \lesssim \| h_1\|_{L^2}
	 	\lesssim \|Z_{[l +m- 2J-C,l +m- 2J+C]} Q^J_j f\|_{L^2}.
	 \end{align}
	 Using Young's inquality for $\F_h^{-1}$ and \eqref{eq:414-4}, we also deduce that
	 \begin{align}
	  	\|e^{i t \Lmd} h_1\|_{L^2_{x_3} L^\infty_{x_h}  (B^J_j(l))} &\lesssim  \| \F_h h_1 \|_{L^2_{x_3} L^1_{\xi_h}}
	  	\lesssim 2^{-J} \| h_1\|_{L^2} \nonumber \\
	  	&\lesssim 2^{-J} \|Z_{[l +m- 2J-C,l +m- 2J+C]} Q^J_j f\|_{L^2}.
	 \end{align}


\end{proof}


\subsection{Dispersive estimates for $\bigvp$-pieces}

\begin{lemma} \label{lem:vp-disp}
	Let $f = P_k^{\vp} F$ with $F \in {Y}^{0,3}_\beta$, $\mu \in \{+,-\}$, $p \in \Z_-$ and $2^m \le t < 2^{m+1} (m \ge 1)$, then we have
	\begin{enumerate}[(i)]
		\item If $|x_h| \sim t 2^{p-k}$, 
		\begin{equation} \label{eq:426-03}
			|e^{\mu i  t \Lmd}P^{\vp, p} f| \lesssim \min\{t^{-\frac32} 2^{-p}, 2^{2p}\} 2^{\frac32 k} \|F\|_{Y^{0,3}_{\beta}}.
		\end{equation}
		\item Decompose $f = f^I + f^{II}$ with
		\begin{equation}
			f^I := \begin{cases}
				H_{\le m-k+p-C} f, \ \ \mbox{if} \ \ 2p \ge -m, \\
				f, \ \ \mbox{if} \ \ 2p < -m,
			\end{cases}
		\end{equation}
	and
	\begin{equation}
		f^{II} := \begin{cases}
			H_{> m-k+p-C} f, \ \ \mbox{if} \ \ 2p \ge -m, \\
			0, \ \ \mbox{if} \ \ 2p < -m,
		\end{cases}
	\end{equation}
		where $C$ is a sufficiently large absolute constant, then there holds  that
		\begin{equation} \label{eq:426-01}
			\|e^{\mu i  t \Lmd}P^{\vp, p} f^I\|_{L^\infty} \lesssim  \min\{t^{-\frac32} 2^{-p}, 2^{2p}\} 2^{\frac32 k} \|F\|_{{Y}^{0,3}_{\beta}},
		\end{equation}
		and
		\begin{equation} \label{eq:426-02}
			\|e^{\mu i  t \Lmd} P^{\vp, p} f^{II}\|_{L^2} \lesssim (t 2^p)^{-(1+\beta)} \|F\|_{{Y}^0_\beta},
		\end{equation}
		\begin{equation} \label{eq:0525-1}
			\|e^{\mu i  t \Lmd} P^{\vp, p} f^{II}\|_{L^\infty} \lesssim t^{-(1+\beta)} 2^{-\beta p + \frac32 k} \|F\|_{{Y}^0_\beta}.
		\end{equation}
		\item 
		\begin{equation} 
			\sup_{p \in \Z_-} \|e^{\mu i t \Lmd} P^{\vp, p} f\|_{L^\infty} + \|e^{\mu i t \Lmd} f\|_{L^\infty} \lesssim t^{-1} 2^{\frac32 k} \|F\|_{{Y}^{0,3}_{\beta}}. \label{eq:59-a1}
		\end{equation}
	\end{enumerate}
\end{lemma}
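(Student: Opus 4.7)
The plan is to mirror the proof of Lemma \ref{lem:hp-disp} in the modified cylindrical chart $(\vr, z, \th)$ for $\bigvp$-frequencies introduced in Section \ref{sec:22}, where $\Lmd(\xi) = \mathrm{sgn}(z)/\sqrt{1 + \vr^2}$ depends only on $\vr$ and the sign of $z$, and the volume element is $d\xi = \vr z^2 \, d\vr \, dz \, d\th$. By the rescaling $\xi \mapsto 2^k \xi$ and, if needed, a reflection $x_3 \mapsto -x_3$, one reduces to $k = 0$ and $\mu = +$. The crude $L^1$ estimate $\|\chi^{\vp,p}_0 \wh{f}\|_{L^1} \lesssim 2^{2p} \|F\|_{Y^{0,2}_\beta}$ (from \eqref{eq:56-1} and $|\supp \chi^{\vp,p}_0| \sim 2^{2p}$) directly provides the $2^{2p}$ branch of the minimum in both (i) and \eqref{eq:426-01}, so the rest of the argument focuses on $t \gg 2^{-2p}$.

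For (i), express $e^{i t \Lmd} P^{\vp,p} f(x)$ as an oscillatory integral with phase $\Psi = t\,\mathrm{sgn}(z)/\sqrt{1+\vr^2} + \vr|z||x_h|\cos(\th-\th_x) + x_3 z$. Applying stationary phase in $\th$ exactly as in Lemma \ref{lem:slab}(i) decomposes the integral as $A_1 + A_2$, with phases $\Psi_\sigma = t\,\mathrm{sgn}(z)/\sqrt{1+\vr^2} \pm \vr|z||x_h| + x_3 z$ and amplitudes $g_\sigma$ controlled by $\langle|x_h|\vr|z|\rangle^{-1/2}$ times derivatives of $\wh{f}$. The key observation is that $\partial_z \Psi_\sigma = x_3 \pm \vr|x_h|\,\mathrm{sgn}(z)$ is constant in $z$ on each half-line $\{z \gtrless 0\}$, so integrating by parts twice in $z$ (combined with the obvious crude bound) gives
\[
\Bigl|\int e^{i\Psi_\sigma} g_\sigma(\vr,z)\,z^2\,dz\Bigr| \lesssim \langle|x_h|\vr\rangle^{-1/2}\min\bigl\{(x_3\pm\vr|x_h|)^{-2},\,1\bigr\}\|F\|_{Y^{0,3}_\beta}.
\]
Changing variable $\lmd = \vr|x_h|$ in the remaining $\vr$-integration (with $\lmd$ over an interval of size $\sim t 2^{2p}$) and using $\int_\R \min\{(x_3\pm\lmd)^{-2},1\}\,d\lmd \lesssim 1$, one obtains $|A_\sigma| \lesssim |x_h|^{-2}(t 2^{2p})^{1/2}\|F\|_{Y^{0,3}_\beta} \sim t^{-3/2} 2^{-p}\|F\|_{Y^{0,3}_\beta}$ in the regime $|x_h| \sim t 2^p$, which is (i).

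For (ii), the bound \eqref{eq:426-02} on $f^{II}$ is immediate from the definition of $Y^0_\beta$: sum $\|H_l P^\vp_0 f\|_{L^2} \lesssim 2^{-(1+\beta)l}\|F\|_{Y^0_\beta}$ over $l > m + p - C$. The $L^\infty$ bound \eqref{eq:0525-1} then follows from Bernstein's inequality using $|\supp \chi^{\vp,p}_0| \sim 2^{2p}$. For \eqref{eq:426-01}, the band $|x_h|\sim t 2^p$ is already covered by (i); in the remaining regimes $|x_h|\ll t 2^p$ and $|x_h|\gg t 2^p$ we use a Fourier representation argument in the spirit of \eqref{eq:427-002}, exploiting the horizontal localization $|y_h| \lesssim 2^{m+p-C}$ of $f^I$. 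On this support, the $\vr$-derivative $-t\,\mathrm{sgn}(z)\vr(1+\vr^2)^{-3/2} + |z|\cos(\th-\th_{x-y})|x_h-y_h|$ of $t\Lmd + \xi\cdot(x-y)$ is bounded below on the support of $\chi^{\vp,p}$ by $\max\{t 2^p,\,|x_h|\}$ (the two terms cannot cancel), and repeated IBP in $\vr$, each step absorbing the derivative loss $2^{-p}$ from $\chi^{\vp,p}$, yields arbitrary polynomial decay in $t$.

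Finally, for (iii), combining (i) and (ii) gives $\|e^{i t\Lmd} P^{\vp,p} f\|_{L^\infty} \lesssim \min\{t^{-3/2} 2^{-p},\,2^{2p}\}\|F\|_{Y^{0,3}_\beta}$ for each $p \le -1$; the balance occurs at $2^p \sim t^{-1/2}$ where both branches equal $t^{-1}$, and geometric dyadic summation on either side yields \eqref{eq:59-a1}. The main obstacle is the Fourier-IBP step in (ii) outside the cone $|x_h|\sim t 2^p$, where one must carefully balance the $2^{-p}$ derivative losses against the $|\partial_\vr \Psi|^{-1}$ gains and patch with (i) at the boundary; this parallels the corresponding step in Lemma \ref{lem:hp-disp}(ii) with the horizontal and vertical roles swapped.
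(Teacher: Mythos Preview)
Your approach matches the paper's, and parts (i), (iii), and the $f^{II}$ estimates in (ii) are correct. There is, however, a gap in your treatment of \eqref{eq:426-01} in the regime $|x_h| \gg t\,2^p$. Your claim that
\[
\partial_\vr\bigl(t\Lmd + \xi\cdot(x-y)\bigr) \;=\; -t\,\mathrm{sgn}(z)\,\vr(1+\vr^2)^{-3/2} + |z|\cos(\th-\th_{x-y})\,|x_h-y_h|
\]
is bounded below in modulus by $\max\{t\,2^p,\,|x_h|\}$ is false in this regime: since $\cos(\th-\th_{x-y})$ ranges over all of $[-1,1]$ as $\th$ varies, when $|x_h-y_h| \sim |x_h| \gg t\,2^p$ one can pick $\th$ with $\cos(\th-\th_{x-y}) = t\,\mathrm{sgn}(z)\,\vr(1+\vr^2)^{-3/2}\big/\bigl(|z|\,|x_h-y_h|\bigr) \in (-1,1)$, making $\partial_\vr$ of the phase vanish. (Your claim \emph{is} valid when $|x_h| \ll t\,2^p$, since then the second term is uniformly dominated by the first.)

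The paper handles this by first performing the $\th$-stationary phase --- exactly as you already do in part (i) --- which reduces to phases $\wt\Psi_j = t/\sqrt{1+\vr^2} \pm |x_h-y_h|\,\vr z$ (with oscillatory amplitudes $G_j$ of size $\langle |x_h-y_h|\vr z\rangle^{-1/2}$). For these, $|\partial_\vr \wt\Psi_j| \gtrsim t\,2^p + |x_h|$ holds unconditionally on $\supp\chi^{\vp,p}$, and repeated integration by parts in $\vr$ then gives $\lesssim_K 2^{2p}(t\,2^{2p})^{-K}$. With this correction your argument goes through and coincides with the paper's proof.
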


\begin{proof}
	In this proof we use the local coordinate system $(\vr, z, \theta)$ defined in Section \ref{sec:22}, in particular, the partial derivatives with respect to $\vr, z, \theta$ are given by \eqref{eq:425-1}--\eqref{eq:425-3}. By reflection and scaling, without loss of generality, we may assume that  $\mu = +$, $k = 0$.  Moreover, we assume that $\wh{f}$ is supported on $z > 0$ as the opposite case can be similarly treated (for instance, one may consider negative times).

	The estimates \eqref{eq:426-02}--\eqref{eq:0525-1} follow from the crude bounds
	\begin{equation}
		\|e^{i t \Lmd} P^{\vp, p} f^{II}\|_{L^2} \lesssim \|f^{II}\|_{L^2} \lesssim (t 2^p)^{-(1+\beta)} \|F\|_{Y^{0}_\beta}
	\end{equation}
	and
	\begin{equation}
		\|e^{i t \Lmd} P^{\vp, p} f^{II}\|_{L^\infty} \lesssim 2^{p} \|f^{II}\|_{L^2}.
	\end{equation}
	The estimate \eqref{eq:59-a1} is a corollary of \eqref{eq:426-01} and \eqref{eq:0525-1} via summation in $p$. Hence, it suffices to consider \eqref{eq:426-03} and \eqref{eq:426-01}. If $2p < -m$, then we have a crude bound
	\begin{equation} \label{eq:0525-11}
		\|e^{it\Lmd} P^{\vp,p} f\|_{L^\infty} \lesssim 2^{2p} \| \wh{f}\|_{L^\infty} \lesssim 2^{2p} \|F\|_{Y^{0,3}_{\beta}},
	\end{equation}
	which is sufficient for \eqref{eq:426-03} and \eqref{eq:426-01}. Hence, in the sequel we shall assume that $2p \ge -m$.
	
	By definition, there holds that
	\begin{equation} \label{eq:10-1}
		e^{it\Lmd} f(x) = \frac{1}{(2\pi)^3} \int_{\R^+\times\R^+\times \mathbb{S}^1} e^{i\Psi} \chi^{\vp, p}(\vr) \wh{f}(\xi) \vr z^2 d\vr dz d\theta,
	\end{equation}
	where we denote
	\begin{equation}
		\Psi = \frac{t}{\sqrt{1+\vr^2}} + x_3 z + x_h \cdot \xi_h.
	\end{equation}
    To obtain decay estimates,  we use integration by parts with respect to $z, \th$ or $\vr$ depending on the location of $x$. 
    
	\medskip
	\emph{Case 1.} $|x_h| \sim t \, 2^p$.
	\medskip
	
	Let $\chi_1(\theta)$ be a smooth bump function on the unit circle, with support contained in $[-\frac{\pi}{2}, \frac{\pi}{2}]$ and is equal to $1$ on $[-\frac{\pi}{4}, \frac{\pi}{4}]$, and then let $\chi_2(\theta)  =  1-\chi_1(\theta)$. Integrating \eqref{eq:10-1} in $\theta$, we get
	\begin{equation}
		e^{it\Lmd} f(x) = A_1+A_2 := \sum_{j = 1,2} \int_{\R^+ \times \R^+} e^{i\Psi_{j}}\chi^{\vp, p}(\vr) g_j(\vr, z) \vr z^2 d\vr dz,
	\end{equation}
	where we denote that
	\begin{equation}
		\Psi_1 = \frac{t}{\sqrt{1+\vr^2}} + x_3 z + |x_h| \vr z,
	\end{equation}
	\begin{equation}
		\Psi_2 = \frac{t}{\sqrt{1+\vr^2}} + x_3 z - |x_h| \vr z,
	\end{equation}
and
	\begin{equation}
		g_1 = \frac{1}{(2\pi)^3} \int_0^{2\pi} e^{i|x_h|\vr z (\cos(\theta-\theta_x) - 1) }\chi_1(\theta - \theta_x) \wh{f}(z, \vr, \th)  d\th,
	\end{equation}
	\begin{equation}
		g_2 = \frac{1}{(2\pi)^3} \int_0^{2\pi} e^{i|x_h|\vr z (\cos(\theta-\theta_x) + 1) }\chi_2(\theta - \theta_x) \wh{f}(z, \vr, \th)  d\th.
	\end{equation}
	By the standard theory for 1D oscillatory integrals, we have for $j = 1,2$,
	\begin{equation}
		|\partial_{z}^n g_j(\vr,z)| \lesssim  \langle |x_h| \vr \rangle^{-\frac12} \int_0^{2\pi} |\{S, \Omega\}^{\le n+1} \wh{f}| d\th,\quad  n=1,2,3, \cdots.
	\end{equation}
	We shall focus on $A_1$, as the estimate for $A_2$ is similar. Using that
	\begin{equation}
		\partial_{z} \Psi_1 = x_3 + |x_h| \vr, \quad \partial_{z}^2 \Psi_1 = 0,
	\end{equation}
	and repeated integration by parts in $z$, we get
	\begin{align} \label{eq:425-5}
		\left| \int_{\R^+} e^{i\Psi_1}   g_1(\vr, z) z^2 dz \right| &\lesssim \left|\int e^{i\Psi_1} (x_3 + |x_h| \vr)^{-2} \partial_{z}^2 (g_1 z^2)  dz \right| \nonumber \\
		&\lesssim (x_3 + |x_h| \vr)^{-2} \langle |x_h| \vr \rangle^{-\frac12} \int |\{S, \Omega\}^{\le 3} \wh{f}| dz d\theta.
	\end{align}
    On the other hand, a crude bound gives
	\begin{align} \label{eq:425-4}
		\left| \int_{\R^+} e^{i\Psi_1}  g_1(\vr, z) z^2 dz \right| \lesssim \int |g_1| dz \lesssim  \langle |x_h| \vr \rangle^{-\frac12} \int | \Omega \wh{f}| dz d\theta.
	\end{align}
	Integrating \eqref{eq:425-5}--\eqref{eq:425-4} in $\vr$ leads to
	\begin{align}
		|A_1| &\lesssim \int_{\vr \sim 2^p} \min \{(x_3 + |x_h| \vr)^{-2}, 1\} \, \langle |x_h| \vr \rangle^{-\frac12} \vr d\vr \, \|F\|_{Y^{0, 3}_{\beta}} \nonumber \\ 
		&\lesssim t^{-2} 2^{-2p} \int_{\lmd \sim t 2^{2p}} \langle x_3 + \lmd\rangle^{-2} \,  \lmd^\frac12 d\lmd \, \|F\|_{Y^{0, 3}_{\beta}} \nonumber \\
		&\lesssim t^{-\frac32} 2^{-p}\, \|F\|_{Y^{0, 3}_{\beta}}.
	\end{align}
	Note that the argument in this case also works with $f$ replaced by $f^I$.
	
	\medskip
	\emph{Case 2.} $|x_h| \nsim t2^p$.
	\medskip
	
	 Using $\F = \F_h \F_3$ and $[\F_3, H_l] = 0$, we deduce that
	\begin{equation} \label{eq:425-6}
		\F_3 e^{it\Lmd} f^I(x_h, z) = \frac{1}{(2\pi)^2} \int \psi(2^{-m-p+C} |y_h|) {\chi}^{\vp,p}(\vr)   e^{i \wt{\Psi}} \F_3f(y_h,z) \vr z^2 d\vr d\th dy_h,
	\end{equation}
    in which we denote that
	\begin{equation}
		\wt{\Psi}  = \frac{t}{\sqrt{1+\vr^2}} + (x_h - y_h) \cdot \xi_h.
	\end{equation}
	We perform the $\th$ integration in \eqref{eq:425-6} and obtain that
	\begin{align}
		\F_3 e^{it\Lmd} f^I &= I_1+ I_2 \nonumber \\
		&:= \sum_{j= 1,2} \int \psi(2^{-m-p+C} |y_h|) \F_3f(y_h,z) dy_h \int e^{i \wt{\Psi}_j} {\chi}^{\vp,p}(\vr) G_j(|x_h - y_h| \vr z)  \vr z^2 d\vr  
	\end{align}
	where we denote
	\begin{equation}
		\wt{\Psi}_1 = \frac{t}{\sqrt{1+\vr^2}} + |x_h - y_h| \vr z, \quad 	\wt{\Psi}_2 = \frac{t}{\sqrt{1+\vr^2}} - |x_h - y_h| \vr z,
	\end{equation}
	and
	\begin{equation}
		G_1(r) = \int   e^{i r (\cos \theta -1)} \chi_1(\theta)  d\theta, \quad G_2(r) = \int   e^{i r (\cos \th  +1)} \chi_2(\th)  d\theta.  
	\end{equation}
	Standard theory for 1D oscillatory integrals gives that, for $n =0,1,2,\cdots$,
	\begin{equation}
		|\partial_r^n G_{1,2}(r)| \lesssim_n \langle r \rangle^{-\frac12 - n}.
	\end{equation}
	We shall focus on $I_1$ as $I_2$ can be treated in a similar way. Due to
	\begin{equation} \label{eq:426-1}
		\vr \sim 2^p, \quad z \sim 1, \quad |x_h| \nsim t 2^p, \quad |y_h| \ll t 2^p,
	\end{equation}
    there holds that
	\begin{equation} \label{eq:426-2}
		|\partial_\vr \wt{\Psi}_1| = \left|-\frac{t \vr}{(1+\vr^2)^\frac32} + |x_h-y_h| z\right| \gtrsim t 2^p + |x_h|.
	\end{equation}
	Moreover, notice that under the constraints \eqref{eq:426-1}, there holds that
	\begin{equation} \label{eq:426-3}
		|\partial_{\vr}^n \wt{\Psi}_1| \lesssim_n t, \quad  n = 2,3,\cdots,
	\end{equation}
	and
	\begin{equation} \label{eq:426-4}
		|\partial_\vr^n {\chi}^{\vp, p}| + |\partial_\vr^n G_j(|x_h - y_h|\vr z)|\lesssim (2^{-p})^n, \quad n=1,2,3,\cdots.
	\end{equation}
	Using the integration by parts formula (for $h$ compactly supported in $\R_+$)
	\begin{equation}
		-i \int e^{i\wt{\Psi}_1} h(\vr) d\vr = \int e^{i\wt{\Psi}_1} \partial_\vr \bigg(\frac{h(\vr)}{\partial_\vr \wt{\Psi}_1}\bigg) d\vr
	\end{equation}
	and \eqref{eq:426-2}--\eqref{eq:426-4}, we see that repeated integration by parts in $\vr$ gives 
	\begin{align}
		|I_{1}(x_h, z)| &\lesssim_K 2^{2p} (t 2^{2p})^{-K}  \int |\F_3 f(y_h, z)| dy_h, \quad K=1,2,3,\cdots,
	\end{align}
	and consequently,
	\begin{equation}
		\|\F_3^{-1} I_{1}\|_{L^\infty} \lesssim \|I_{1}\|_{L^\infty_{x_h} L^2_z} \lesssim_K 2^{2p} (t 2^{2p})^{-K} \|F\|_{Y^0_{\beta}},
	\end{equation}
	which is an acceptable contribution to \eqref{eq:426-01}.
\end{proof}

Combining \eqref{eq:59-a1} and Lemma \ref{lem:slab}(i)-(ii), we  deduce the following pointwise decay estimate.

\smallskip

\begin{corollary} \label{cor:59-1} There holds that
	\begin{equation}
		\|P_k e^{\pm i t \Lmd} F\|_{L^\infty} \lesssim  \langle t \rangle^{-1} 2^{\frac32 k} (\|F\|_{X^{0,3}_{\beta}} + \|F\|_{Y^{0,3}_{\beta}}).
	\end{equation}
	As a consequence, we also have
	\begin{equation}
		\| e^{\pm i t \Lmd} F\|_{L^\infty} \lesssim  \langle t \rangle^{-1}  (\|F\|_{X^{3,3}_{\beta}} + \|F\|_{Y^{3,3}_{\beta}}).
	\end{equation}
\end{corollary}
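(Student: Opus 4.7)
The approach is to decompose $P_k = P_k^\hp + P_k^\vp$ using the phase-space partition of Section \ref{sec:515-22}, and bound each piece with the dispersive estimates already proven. The vertical part is handled immediately by \eqref{eq:59-a1} of Lemma \ref{lem:vp-disp}(iii). For the horizontal part $f := P_k^\hp F$ and $t \ge 2$, set $m = \lfloor \log_2 t \rfloor$ and cover $\R^3$ by the concentration slab $\{|x_3| \sim t 2^{-k}\}$ and its complement. On the slab, Lemma \ref{lem:slab}(i) gives (after discarding the harmless factor $\langle 2^k |x_h|\rangle^{-1/2}$) the pointwise bound $t^{-1} 2^{3k/2} \|F\|_{X^{0,3}_\beta}$. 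Off the slab, the pointwise estimate \eqref{eq:803-001} of Lemma \ref{lem:slab}(ii) yields the stronger decay $(2^k|x_3|+t)^{-(1+\beta)} 2^{3k/2} \lesssim t^{-1} 2^{3k/2}$. Taking the supremum in $x$ gives the first displayed bound for $t \ge 2$. The mild technicality that \ref{lem:slab}(ii) is formally stated with the cutoff $Z^{-\mu}$ is inessential: the integration-by-parts proof of (ii) applies identically on the opposite-sign side of $\{x_3 \ne 0\}$ and in fact yields faster decay there.

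For $t \lesssim 1$ the target reduces to $2^{3k/2}(\|F\|_{X^{0,3}_\beta} + \|F\|_{Y^{0,3}_\beta})$, which follows from Bernstein's inequality $\|P_k G\|_{L^\infty} \lesssim 2^{3k/2}\|P_k G\|_{L^2}$ applied to $G = e^{\pm it\Lmd} F$ (whose $L^2$ norm is preserved), together with $\|P_k F\|_{L^2} \lesssim \|F\|_{X^0_\beta} + \|F\|_{Y^0_\beta}$. The latter is obtained by dyadic summation of the pieces $Z_l^{(k)} P_k^\hp F$ and $H_l^{(k)} P_k^\vp F$, each bounded by $2^{-(1+\beta)(l+k)}$ times the corresponding norm and summable in $l$.

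For the consequence, write $\|e^{\pm it\Lmd} F\|_{L^\infty} \le \sum_{k\in\Z} \|P_k e^{\pm it\Lmd} F\|_{L^\infty}$. Applying the first bound to each dyadic piece, and using the $k^+$ weight built into the definitions \eqref{eq:46-11} and \eqref{eq:46-13} to upgrade $\|P_k F\|_{X^{0,3}_\beta \cap Y^{0,3}_\beta} \lesssim 2^{-3k^+}(\|F\|_{X^{3,3}_\beta} + \|F\|_{Y^{3,3}_\beta})$ (since $P_k F$ is frequency-localized at $\sim 2^k$), one obtains the per-piece bound $\langle t \rangle^{-1} 2^{3k/2 - 3k^+}(\|F\|_{X^{3,3}_\beta} + \|F\|_{Y^{3,3}_\beta})$. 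The geometric sum $\sum_k 2^{3k/2 - 3k^+} \lesssim 1$ then closes the estimate. I do not expect any serious obstacle — the corollary is essentially a clean bookkeeping of the dispersive inputs assembled over the previous subsections.
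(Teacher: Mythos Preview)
Your proposal is correct and follows exactly the approach indicated by the paper, which simply cites \eqref{eq:59-a1} and Lemma~\ref{lem:slab}(i)--(ii) in a single sentence; you have faithfully unpacked that sentence, including the minor point about the $Z^{-\mu}$ cutoff in (ii) and the handling of $t\lesssim 1$ and the summation over $k$. One could shortcut the horizontal piece by directly invoking \eqref{eq:730-102} of Lemma~\ref{lem:hp-disp}(iii), but your route via Lemma~\ref{lem:slab} is the one the paper actually names.
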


\subsection{Decay at far field}

\begin{lemma} \label{lem:largel}
	Suppose that  $f = P_k F$ with $k \in \Z, F \in X^{0,2}_{\beta}\cap {Y}^{0,2}_{\beta}$, and let $\mu \in \{+,-\}$, $l\ge m-k+2$, then there holds that
	\begin{equation} \label{eq:55-1}
		2^{-\frac32 k} \|\{Z_l, H_l\} e^{\mu i t \Lmd} f\|_{L^\infty}  + \|\{Z_l, H_l\} e^{\mu i t \Lmd} f\|_{L^2} \lesssim  2^{-(1+\beta)(l+k)} \|F\|_{X^{0,2}_{\beta}\cap {Y}^{0,2}_{\beta}}.
	\end{equation}
\end{lemma}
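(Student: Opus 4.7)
The lemma is a finite-propagation-speed bound: on the frequency support $|\xi|\sim 2^k$ the group velocity
$$\nabla_\xi \Lambda \;=\; \left(-\tfrac{\xi_3\xi_h}{|\xi|^3},\; \tfrac{|\xi_h|^2}{|\xi|^3}\right)$$
has every component bounded by $2^{-k}$, so the wave $e^{\mu i t \Lambda} f$ propagates a total distance at most $t\cdot 2^{-k}\sim 2^{m-k}$ in each direction. The hypothesis $l \ge m-k+2$ then puts both regions $\{|x_3|\sim 2^l\}$ and $\{|x_h|\sim 2^l\}$ (supports of $Z_l$ and $H_l$ respectively) outside this effective support by a factor of at least four, and the claimed rate $2^{-(1+\beta)(l+k)}$ matches the built-in weight of the $X$ and $Y$ norms.

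My plan is to decompose $f = P^\hp_k F + P^\vp_k F$ via $\chi^\hp+\chi^\vp\equiv 1$ and handle the two pieces separately, controlled by the $X$ and $Y$ norms respectively. The \emph{diagonal} sub-cases are immediate consequences of the previous two lemmas. For $Z_l e^{\mu i t \Lambda}P^\hp_k F$, Lemma \ref{lem:slab}(ii)--(iii) supplies both the pointwise and $L^2$ far-field bounds with the exact weight: under $l\ge m-k+2$ the cutoff $Z_l$ lies in the tail region defining $\mathcal{R}$, and the translation $\mathcal{T}_{\mu 2^{m-k}}$ only shifts by $\lesssim 2^{l-2}$, which is negligible on scale $2^l$. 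Symmetrically, $H_l e^{\mu i t \Lambda}P^\vp_k F$ is handled by Lemma \ref{lem:vp-disp}(ii)--(iii) after summing the $\chi^{\vp,p}$ decomposition in $p\le -2$. For the \emph{off-diagonal} sub-cases $H_l e^{\mu i t \Lambda}P^\hp_k F$ and $Z_l e^{\mu i t \Lambda}P^\vp_k F$, I would run the same stationary-phase / IBP scheme as in the proof of Lemma \ref{lem:slab} but in the complementary spatial variable: in the $\bighp$ case, pass to the chart $(\r,\z,\th)$, do stationary phase in $\th$ to gain $\langle|x_h|\r\rangle^{-1/2}$ plus one $\Omega$-derivative on $\wh F$, then integrate by parts twice in $\r$ using
$$|\partial_\r\Psi|\;=\;\bigl||x_h|\pm x_3\z\bigr|\;\gtrsim\;2^l$$
(after splitting the amplitude into $Z_{\le m-k+C}$ and $Z_{>m-k+C}$ pieces so the $x_3\z$ contribution is $\lesssim 2^{m-k}\ll 2^l$), each costing one $S$-derivative from the $X^{0,2}_\beta$ budget. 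The $\bigvp$ case is analogous in the chart $(\vr,z,\th)$ with IBP in $z$ against $S_\xi = z\partial_z$ on $\bigvp$-support, powered by the $Y^{0,2}_\beta$ norm.

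The principal technical obstacle is precisely these off-diagonal sub-cases: the two vector fields $S,\Omega$ span only two of the three directional derivatives in $\xi$-space, and the ``missing'' direction $\partial_\z$ on $\bighp$-support (resp.\ $\partial_\vr$ on $\bigvp$-support) neither commutes with $e^{it\Lambda}$ nor is controlled by the bootstrap norm. The IBP must therefore be arranged so that after the $\th$-step only the radial chart-direction (which is $S_\xi$-accessible on the relevant support) is differentiated, rather than the missing one. A secondary bookkeeping task is to distribute the two available $\{S,\Omega\}$ derivatives together with the $\beta$-weight so that the final exponent is exactly $-(1+\beta)(l+k)$; this is routine because each IBP over-provides geometric decay of size $2^{-(l+k)+k}$ well beyond the needed $2^{-\beta(l+k)}$, allowing a small loss from interpolation between the $Z_{\le m-k+C}$ and $Z_{>m-k+C}$ splits.
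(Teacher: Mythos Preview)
Your approach over-complicates the argument and, more importantly, over-spends the derivative budget. In the off-diagonal sub-case $H_l e^{\mu i t\Lambda}P^\hp_k F$, stationary phase in $\th$ already costs one $\{\partial_\r,\partial_\th\}$-derivative (see \eqref{eq:41-5} with $n=0$), so two further integrations by parts in $\r$ would require $\{\partial_\r,\partial_\th\}^{\le 3}$ on $\wh F$ --- one more than the $X^{0,2}_\beta$ norm provides. With only one $\r$-IBP (which is all the budget allows) you gain $2^{-3l/2}$ pointwise, and after integrating over $|x_h|\sim 2^l$, $|x_3|\lesssim 2^m$ this yields at best $2^{-l/2}$ in $L^2$, not $2^{-(1+\beta)l}$. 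The same counting issue arises for $Z_l e^{\mu i t\Lambda}P^\vp_k F$. Separately, your diagonal case $H_l e^{\mu i t\Lambda}P^\vp_k F$ does not follow from Lemma~\ref{lem:vp-disp} as you claim: that lemma has no analogue of Lemma~\ref{lem:slab}(iii), and the $f^I$ bound in (ii) is uniform in $x_h$ (no decay for large $|x_h|$), while the $f^{II}$ bound $\sim (t2^p)^{-(1+\beta)}$ diverges when summed over $p\to-\infty$.

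The paper's proof avoids all of this by \emph{not} splitting into $\bighp/\bigvp$ pieces. The key observation you are missing is \eqref{eq:426-001}: the combined norm $X^{0,2}_\beta\cap Y^{0,2}_\beta$ already controls $\|\{Z_l,H_l\}P_0 F\|_{L^2}\lesssim 2^{-(1+\beta)l}$ in \emph{both} spatial directions. Indeed, on $\chi^\hp_0$ the $X^{0,2}$ norm supplies two $\{\partial_\r,\partial_\th\}$-derivatives, hence two full $\nabla_{\xi_h}$-derivatives, giving $\|H_l P^\hp_0 F\|_{L^2}\lesssim 2^{-2l}$; the $\bigvp$ piece is symmetric. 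With this in hand, the paper writes $\F_h(e^{it\Lambda}f)(\xi_h,x_3)=\int e^{i(t\Lambda+\xi_3(x_3-y_3))}\chi_0(\xi)\,\F_h f(\xi_h,y_3)\,d\xi_3\,dy_3$, splits at $|y_3|=2^{l-5}$, and for the near part integrates by parts repeatedly in the \emph{Cartesian} variable $\xi_3$ using $|\partial_{\xi_3}\Lambda|\le |\xi|^{-1}\lesssim 1$ on $\chi_0$. Because $\F_h f(\xi_h,y_3)$ is independent of $\xi_3$, these IBPs hit only the smooth cutoff $\chi_0$ and cost \emph{no} $S,\Omega$-derivatives, yielding arbitrarily fast decay $|x_3|^{-K}$. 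The far part is handled directly by \eqref{eq:426-001}. The $H_l$ estimate is identical with $\xi_3$ replaced by $\xi_1$ or $\xi_2$.
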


\begin{proof}
	By scaling and reflection with respect to the $x_h$ plane, we may take $k = 0$ and $\mu = +$. Note that $l \ge 0$, hence $Z_l^{(0)} = Z_l$ and $H_l^{(0)} = H_l$. In view of \eqref{eq:729-001}--\eqref{eq:729-002} and the standard theory of Besov spaces, we have, for any $l \ge 0$,
	\begin{equation} \label{eq:426-001}
		\|\{Z_l, H_l\} f\|_{L^2} \lesssim 2^{-(1+\beta)l} \|F\|_{X^{0,2}_{\beta}\cap Y^{0,2}_{\beta}}.
	\end{equation}
	By definition, we can write
	\begin{align} 
		\F_h (e^{ i t \Lmd} f) (\xi_h, x_3) &= \frac{1}{2\pi} \int_{\R^2}  e^{i t\Lmd + i \xi_3 (x_3-y_3)} \chi_0(\xi) \F_hf(\xi_h, y_3)  d\xi_3 dy_3 \nonumber \\
		&= \frac{1}{2\pi} \int_{\R^2} \mathbf{1}_{|y_3| < 2^{l-5}}  e^{i t\Lmd + i \xi_3 (x_3-y_3)} \chi_0(\xi) \F_hf(\xi_h, y_3)  d\xi_3 dy_3 \nonumber \\
		&\quad + \frac{1}{2\pi} \int_{\R^2} \mathbf{1}_{|y_3| > 2^{l-5}}  e^{i t\Lmd + i \xi_3 (x_3-y_3)} \chi_0(\xi) \F_hf(\xi_h, y_3)  d\xi_3 dy_3 \nonumber \\
		&=: I_1+I_2.
	\end{align}
	Note that for $l \ge m+2$, $|y_3| < 2^{l-5}$, $|x_3| \ge \frac45 \times 2^l$ and $\xi$ in the support of $\chi_0$, there holds that
	\begin{equation}
		|\partial_{\xi_3} (t \Lmd + \xi_3(x_3 - y_3))| = \left|t \frac{|\xi_h|^2}{|\xi|^3} + (x_3 - y_3) \right| \gtrsim |x_3|.
	\end{equation}
	Hence, repeated integration by parts in $\xi_3$ gives, for $|x_3| \ge \frac45 \times 2^l$,
	\begin{align} \label{eq:426-002}
		|I_1| \lesssim_K  |x_3|^{-K} \int_{\R} |\F_h f| dy_3, \quad K=1,2,3, \cdots.
	\end{align}
	For $I_2$, we can use \eqref{eq:426-001} to deduce that
	\begin{equation} \label{eq:426-003}
		\|I_2\|_{L^2} \lesssim 2^{-(1+\beta)l} \|F\|_{X^{0,2}_\beta\cap Y^{0,2}_\beta}.
	\end{equation}
	Note that both $I_1$ and $I_2$ are supported on $|\xi_h| \lesssim 1$. Hence, Applying $\F_h^{-1}$ to $I_{1,2}$ and using \eqref{eq:426-002}--\eqref{eq:426-003}, we get the desired estimates for $Z_l e^{ i t \Lmd} f$. 
	
	The estimates for $H_l e^{\mu i t \Lmd} f$ is similar as the above analysis can be adapted to $x_1$ and $x_2$ directions directly, hence we omit the proof.
	
	
\end{proof}

\section{Time decay of $\partial_t S^a \bar{\Omega}^b \f_\pm$}	 \label{sec:dt}

Throughout the rest of the paper, we take $\beta = \frac{2}{3} \beta_0$ and $\beta' = \frac23 \beta$, with $0<\beta_0 \ll 1$. The dispersive estimates of Section \ref{sec:4} will be applied with parameter either $\beta$ or $\beta'$ depending on our need.

First, we prove the general decay rate $t^{-\frac32 + }$ for $\partial_t S^a \bar{\Omega}^b \f_\pm$ in $L^2$, see Lemma \ref{lem:528-01}. Then, in Lemma \ref{lem:428-1} and Lemma \ref{lem:51-1}, we give two results on the improved decay rates at far field based on Lemma \ref{lem:largel}. Note that in this section we  rely only on the physical product structure of the nonlinearity, without invoking any null type condition.

\begin{lemma} \label{lem:528-01}
	Assuming the bootstrap assumption \eqref{eq:BA} and let  $a+b \le 20$, $\mu \in \{+,-\}$, $k \in \Z$, then there holds that
	\begin{equation}
		\|\partial_t P_k S^a \bar{\Omega}^b  \f_\mu\|_{L^2} \lesssim 2^{k-30 k^+} t^{-\frac32 + C \varepsilon_1} \ln t \,\, \varepsilon_1^2.
	\end{equation}
\end{lemma}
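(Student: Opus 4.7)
The plan is to reduce $\partial_t P_k S^a \bar{\Omega}^b \f_\mu$ to a finite sum of bilinear expressions and then apply an $L^4\times L^4$ bilinear dispersive estimate built from the sharp pointwise bounds of Section~\ref{sec:4}. Starting from the evolution equation \eqref{eq:58-2}, $\partial_t \f_\mu = \sum_{\bar\mu}\mathcal{N}_{\bar\mu}(\f_{\mu_1},\f_{\mu_2})$, I would apply $P_k S^a \bar\Omega^b$ and distribute the derivatives via Leibniz as in \eqref{eq:510-1}, producing terms of the form
\[
P_k \mathcal{N}_{\bar\mu}\bigl(S^{a_1}\bar\Omega^{b_1}\f_{\mu_1},\ S^{a_2}\bar\Omega^{b_2}\f_{\mu_2}\bigr),\qquad a_1+a_2\le a,\ b_1+b_2\le b.
\]
Since $e^{\pm it\Lmd}$, $\mathbb{P}_\pm$, $\PL$ are $L^2$-bounded and $P_k\nabla$ has multiplier norm $\lesssim 2^k$, the task is reduced to bounding $2^k\bigl\|P_k(e^{\mu_1 it\Lmd}F_1\otimes e^{\mu_2 it\Lmd}F_2)\bigr\|_{L^2}$ with $F_j=S^{a_j}\bar\Omega^{b_j}\f_{\mu_j}$.

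For the bilinear control, I would perform a Littlewood-Paley decomposition $F_j=\sum_{k_j}P^{\iota_j}_{k_j}F_j$ with $\iota_j\in\{\bighp,\bigvp\}$ and handle the high-low, low-high, high-high interactions separately. The key step is an $L^4\times L^4$ Hölder bound. Using the refined in-slab pointwise decay $|e^{\pm it\Lmd}P^\hp_{k'} F(x)|\lesssim t^{-1}\langle 2^{k'}|x_h|\rangle^{-1/2}\cdot 2^{3k'/2}$ from Lemma~\ref{lem:slab}(i) together with the far-field $L^\infty$/$L^2$ decay of order $t^{-(1+\beta)}$ from Lemma~\ref{lem:slab}(ii)-(iii), and the analogous vertical-frequency estimates from Lemma~\ref{lem:vp-disp}, one obtains
\[
\bigl\|e^{\pm it\Lmd}P^\iota_{k'} F\bigr\|_{L^4}\lesssim t^{-3/4}(\ln t)^{1/4}\cdot 2^{3k'/4}\bigl(\|F\|_{X^{0,3}_\beta}+\|F\|_{Y^{0,3}_\beta}\bigr),
\]
the $(\ln t)^{1/4}$ arising from the logarithmic divergence of $\int_{|y|\lesssim t}\langle y\rangle^{-2}dy$ inside the dispersive slab. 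Applying Hölder's inequality in $L^4\times L^4$ to the bilinear product then produces the $t^{-3/2}\ln t$ time decay claimed in the lemma (a second factor of $(\ln t)^{1/2}$ arising from the summation in $k_1, k_2$).

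The high-frequency gain $2^{-30k^+}$ is obtained by interpolation via \eqref{eq:724-01}--\eqref{eq:724-02} between the bootstrap norms ($X^{10}_\beta$, $X^{20,20}_{\beta'}$, $Y^{20,20}_\beta$) and the slow-growth energy estimate $\|\{S,\bar\Omega\}^{\le M_2}\f_\pm\|_{H^{M_1}}\lesssim\varepsilon\langle t\rangle^{C\varepsilon_1}$ of Lemma~\ref{lem:ee1}. Since $M_1\gg 30$, the required power of $2^{k^+}$ is absorbed at the cost of a small $\langle t\rangle^{C\varepsilon_1}$ loss, accounting for the $t^{C\varepsilon_1}$ correction in the exponent. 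The derivative counting works because, by symmetry, I may assume $a_1+b_1\le 10$, so the $L^4$-estimate on $F_1$ needs only $\{S,\Omega\}^{\le 13}$ (well within the bootstrap envelope), while $F_2$ uses the full $20$ derivatives through the $L^2$ energy norm.

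The main obstacle is extracting the decay rate $t^{-3/2+C\varepsilon_1}$ rather than the naive $t^{-1}$ from an $L^\infty\times L^2$ Hölder bound: because of the degenerate anisotropic dispersion, $\|e^{\pm it\Lmd}\cdot\|_{L^\infty}\lesssim t^{-1}$ is sharp, so the extra half-power $t^{-1/2}$ has to be squeezed simultaneously from \emph{both} inputs by using the full pointwise profile $\langle|x_h|\rangle^{-1/2}$ inside the slab together with the $t^{-(1+\beta)}$ far-field decay and Lemma~\ref{lem:largel}. Doing this uniformly across horizontal/vertical frequency regimes, keeping track of the commutators between $P^{\hp/\vp}_k$, $Z_l^{(k)}, H_l^{(k)}$ and $S,\bar\Omega$, and summing carefully over all Littlewood-Paley scales, is the most delicate part of the argument.
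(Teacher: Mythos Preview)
Your route through an $L^4\times L^4$ H\"older estimate is genuinely different from the paper's proof, which instead performs an angular decomposition of each input in the parameter $q$ (for $\bighp$-pieces) or $p$ (for $\bigvp$-pieces) and then applies an asymmetric $L^2\times L^\infty$ bound. Concretely, the paper uses the set-size estimate $\|P^{\hp,q_1}f_1\|_{L^2}\lesssim 2^{q_1/2}\|F_1\|_{X^{0,2}_0}$ from \eqref{eq:56-1001} together with the enhanced pointwise decay $\|e^{it\Lmd}P^{\hp,q_2}f_2^{I}\|_{L^\infty}\lesssim t^{-3/2}2^{-q_2/2}$ of Lemma~\ref{lem:hp-disp} (and the analogue Lemma~\ref{lem:vp-disp} for $\bigvp$-inputs); assuming $q_1\le q_2$ this product gives $2^{(q_1-q_2)/2}t^{-3/2}\le t^{-3/2}$, and the $\ln t$ then comes from the dyadic sums over $q_1,q_2$ (or $p_1,p_2$) and $k_1,k_2$, each ranging over $O(m)$ values.

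The gap in your argument is the $L^4$ estimate itself. The in-slab pointwise profile $t^{-1}\langle 2^{k'}|x_h|\rangle^{-1/2}$ from Lemma~\ref{lem:slab}(i) is \emph{not} in $L^4_{x_h}(\R^2)$: the integral $\int_{\R^2}\langle y\rangle^{-2}\,dy$ diverges at infinity, not merely logarithmically over $|y|\lesssim t$. Your explanation presupposes a horizontal cutoff at $|x_h|\lesssim t\cdot 2^{-k'}$, which must be supplied by Lemma~\ref{lem:largel} (the $H_l$ far-field bound), and the out-of-slab region in $x_3$ likewise needs the $\mathcal R$-estimates of Lemma~\ref{lem:slab}(ii)--(iii) combined with this horizontal cutoff. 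Stitching these regions together into a clean $L^4$ bound is possible but not automatic; in fact the most efficient way to obtain $\|e^{it\Lmd}P^{\iota}_{k'}F\|_{L^4}\lesssim t^{-3/4}$ is to first perform the $q$/$p$ decomposition and interpolate the $L^2$ and $L^\infty$ bounds for each angular piece --- which is exactly the mechanism the paper exploits directly, without passing through $L^4$. So your approach can be made to work, but it is a detour through the same angular decomposition rather than a genuine shortcut.
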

\begin{proof}
	 Let $f$ be any component of $P_k S^a \bar{\Omega}^b  \f_\mu$. By \eqref{eq:510-1} and \eqref{eq:510-51}, we can write $\partial_t f$ as a linear combination of bilinear terms of the form
	 \begin{equation}
	 	|\nabla| P_k e^{-\mu i t \Lmd} D \{(e^{\mu_1 i t \Lmd} f_1) \cdot (e^{\mu_2 i t \Lmd} f_2)\}, 
	 \end{equation}
 	where for $i = 1,2$, $f_i = P^{\iota_i}_{k_i} F_i$ with $\iota_{1,2}\in \{\hp, \vp\}$, and $F_i$ is some component of  $ S^{a_i} \bar{\Omega}^{b_i}  \f_{\mu_i}$ with $a_1+a_2 \le a$, $b_1+b_2 \le b$. We will consider the case $\mu = \mu_1 = \mu_2 = +$, as the signs play no role in the proof here. Suppose that $t \in [2^m, 2^{m+1})$ with $m \ge 1$.

 	Let $I = (e^{ i t \Lmd} f_1) \cdot (e^{ i t \Lmd} f_2)$ and notice that
 	\begin{equation}
 		\||\nabla| P_k e^{- i t \Lmd} D I\|_{L^2} \lesssim 2^k \|I\|_{L^2}.
 	\end{equation}
 	By Corollary \ref{cor:59-1} and a direct $L^\infty$-$L^2$ estimate, we have
 	\begin{align} \label{eq:730-001}
 		\|I\|_{L^2} 		&\lesssim \|e^{i t \Lmd} f_1\|_{L^\infty} \|f_2\|_{L^2} \nonumber \\
 		&\lesssim 2^{\frac32 k_1 - 40 k_1^+ - 40 k_2^+} t^{-1} \|\f_\pm\|_{(X\cap Y)^{40,40}_{\beta'}}^2.
 	\end{align} 
    Similarly,  a direct $L^2$-$L^\infty$ estimate gives 
    \begin{equation} \label{eq:730-002}
    	\|I\|_{L^2} \lesssim 2^{\frac32 k_2 - 40 k_1^+- 40 k_2^+} t^{-1} \|\f_\pm\|_{(X\cap Y)^{40,40}_{\beta'}}^2. 
    \end{equation}
	By interpolation between \eqref{eq:BA} and \eqref{eq:slow-growth}, there holds that
	\begin{equation} \label{eq:730-003}
		\|\f_\pm\|_{(X\cap Y)^{40,40}_{\beta'}} \lesssim \langle t \rangle^{C \varepsilon_1} \varepsilon_1.
	\end{equation}
	Combining \eqref{eq:730-001}--\eqref{eq:730-003}, we have an acceptable contribution in the case when $k_1 < -3m$ or $k_2 < -3m$. Hence, in the sequel we may assume that $k_1, k_2 \ge -3m$.
 	
 	\smallskip
 	\emph{Case 1.} $(\iota_1, \iota_2) = (\bighp, \bighp)$.
 	
 	We further decompose $f_1, f_2$ into  $P^{\hp,q_{1}}$ and $P^{\hp,q_{2}}$ pieces, \emph{i.e.},
 	\begin{equation}\label{eq:428-0001}
 		f_j =  \sum_{ -m < q_j \le -1   } P^{\hp, q_j} f_j + P^{\hp, \le -m} f_j, \quad j = 1,2.
 	\end{equation}
 	For the last piece on the right of \eqref{eq:428-0001}, we think of $q_j = -m$. For simplicity, we shall write
 	 \begin{equation}
 	 	P^{\hp,(\le)q_j} := \begin{cases} P^{\hp, q_j}, \quad q_j > - m, \\
 	 		P^{\hp, \le -m}, \quad q_j = - m.
 	 \end{cases}
 	 \end{equation}
 	 In the case that $q_1 \le q_2$, an $L^2$-$L^\infty$ bound with the help of Lemma \ref{lem:hp-disp}  gives
 	\begin{align}
 		&\quad \ \|(e^{ i t \Lmd} P^{\hp, (\le)q_1} f_1) \cdot (e^{ i t \Lmd}  P^{\hp, (\le)q_2} f_2)\|_{L^2} \nonumber \\	
 		&\lesssim  \| P^{\hp, (\le)q_1} f_1\|_{L^2} \|e^{ i t \Lmd } P^{\hp, (\le)q_2} f_2^I\|_{L^\infty} + \| e^{  i t \Lmd } P^{\hp, (\le)q_1} f_1\|_{L^\infty} \| P^{\hp,(\le)q_2} f_2^{II}\|_{L^2} \nonumber \\
 		&\lesssim (2^{ \frac{q_1}{2} + \frac32k_2 -\frac{q_2}{2}} t^{-\frac32} + 2^{\frac32 k_1} t^{-2-\beta'}) \|F_1\|_{X^{0,3}_{\beta'}} \|F_2\|_{X^{0,3}_{\beta'}} \nonumber \\
 		&\lesssim 2^{ \frac32 \max\{k_1,k_2\} - 40k_1^+ - 40 k_2^+} t^{-\frac32 + C \varepsilon_1} \varepsilon_1^2. 
 	\end{align} 
 		(For $q_2 = -m$, we simply take $f_2^I = f_2$ and $f_2^{II} = 0$.) The case when $q_1 > q_2$ can be similarly treated via exchanging the roles of $f_1$ and $f_2$. Summing over the possible $q_{1}, q_2$ and $k_{1}, k_2$ gives an acceptable contribution.
 	
 	\medskip
 	\emph{Case 2.} $(\iota_1, \iota_2) = (\bighp, \bigvp)$. (The case $(\iota_1, \iota_2) = (\bigvp, \bighp)$ is similar.)
 	
 	We further decompose $f_1, f_2$ into 
 	\begin{equation}\label{eq:428-1}
 		f_1 =  \sum_{   -m < q_1 \le -1 } P^{\hp, q_1} f_1 + P^{\hp, \le -m} f_1, \quad f_2 =  \sum_{    -\frac{m}{2} < p_2 \le -1 } P^{\vp, p_2} f_2 + P^{\vp, \le -\frac{m}{2}} f_2.
 	\end{equation}
 	For the last piece on the right of \eqref{eq:428-1}, we think of $p_2 = -\frac{m}{2}$ and write
 		 \begin{equation}
 		P^{\vp,(\le)p_2} := \begin{cases} P^{\vp, p_2}, p_2 >  - \frac{m}{2}, \\
 			P^{\vp, \le - \frac{m}{2}}, p_2 = - \frac{m}{2}.
 		\end{cases}
 	\end{equation}
 	 For $q_1 \le 2p_2$, an $L^2$-$L^\infty$ bound with the help of Lemma \ref{lem:vp-disp}  gives
 	\begin{align}
 		&\quad \ \|(e^{ i t \Lmd} P^{\hp, (\le)q_1} f_1) \cdot (e^{ i t \Lmd}  P^{\vp, (\le)p_2} f_2)\|_{L^2} \nonumber\\	
 		&\lesssim  \| P^{\hp, (\le)q_1} f_1\|_{L^2} \|e^{ i t \Lmd } P^{\vp,(\le) p_2} f_2^{I}\|_{L^\infty} + \| e^{ i t \Lmd} P^{\hp, (\le) q_1} f_1\|_{L^\infty} \| P^{\vp,(\le) p_2} f_2^{II}\|_{L^2} \nonumber \\
 		&\lesssim ( 2^{ \frac{q_1}{2} + \frac32k_2 - p_2} t^{-\frac32}  +  2^{ \frac32 k_1 - (1+\beta')p_2} t^{-2-\beta'})  \|F_1\|_{X^{0,3}_{\beta'}} \|F_2\|_{Y^{0,3}_{\beta'}} \nonumber \\
 		&\lesssim 2^{ \frac32 \max\{k_1, k_2\} - 40 k_1^+ - 40 k_2^+} t^{-\frac32 + C\varepsilon_1} \varepsilon_1^2. 
 	\end{align} 
 	(For $p_2 = -\frac{m}{2}$, we simply take $f_2^I = f_2$ and $f_2^{II} = 0$.)  The case $q_1 > 2p_2$ can be similarly treated by exchanging the role of $f_1$ and $f_2$. Summing over possible $q_{1}, p_2$ and $k_{1,2}$ again gives an acceptable contribution.
 	 	
 		\medskip
 	\emph{Case 3.} $(\iota_1, \iota_2) = (\bigvp, \bigvp)$.
 	Here,	we decompose $f_1, f_2$ into 
 	\begin{equation}
 		f_j =  \sum_{  -\frac{m}{2}<p_j \le -1  } P^{\vp, p_j} f_j + P^{\vp, \le -\frac{m}{2}} f_j,\quad j = 1,2.
 	\end{equation}
 	Then, the proof is similar to Case 1, by comparing $p_1$ and $p_2$ and using Lemma \ref{lem:vp-disp} instead of Lemma \ref{lem:hp-disp}. 
\end{proof}

\medskip 

The following lemma implies the propagation of $X$ and $Y$ norms if the localization parameter $l$ is suitably large with respect to time.

\begin{lemma} \label{lem:428-1}
	Assuming the bootstrap assumption \eqref{eq:BA}, for $l+k \ge (1+\delta) m$, $t \in [2^m, 2^{m+1}) \, (m \ge 1)$ and $a+b \le 20$, there holds that
	\begin{equation} \label{eq:428-ddddd}
		\left\| \partial_t \left(Z_l P_k^\hp S^a \Omega^b \Fp_\pm\right) \right\|_{L^2} + \left\| \partial_t \left(H_l P_k^\vp S^a \bar{\Omega}^b \f_\pm \right) \right\|_{L^2} \lesssim 2^{- 30 k^+ - (1+\beta)(l+k)}  t^{-1-\frac{\delta}{4}}\varepsilon_1^2.
	\end{equation}
\end{lemma}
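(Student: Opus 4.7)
The plan is to decompose $\partial_t S^a \bar\Omega^b \f_\mu$ into bilinear terms via \eqref{eq:510-1}, apply the output localization $Z_l P_k^\hp$ or $H_l P_k^\vp$, push these spatial/frequency cut-offs past the linear operators onto the product, and then extract a strong $2^{-(1+\beta)(l+k)}$ gain from one of the two factors using Lemma~\ref{lem:largel}. The extra $t^{-\delta/4}$ factor will be created by trading a sliver of the spatial gain against the hypothesis $l+k \ge (1+\delta)m$.

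Fix one bilinear piece
\begin{equation*}
B_{\mua,k_1,k_2} = e^{-i\mu t\Lmd}\mathbb{P}_\mu \PL \nabla \cdot \bigl(e^{i\mu_1 t\Lmd} P_{k_1}G_1 \otimes e^{i\mu_2 t\Lmd} P_{k_2}G_2\bigr),
\end{equation*}
with $G_i$ a component of $S^{a_i}\bar\Omega^{b_i}\f_{\mu_i}$, $a_1+a_2 \le a$, $b_1+b_2 \le b \le 20$. Since we only need $20$ good derivatives, interpolation of \eqref{eq:BA} against the slow-growth bound \eqref{eq:59-02} gives
\begin{equation*}
\|P_{k_i} S^{a_i}\bar\Omega^{b_i}\f_{\mu_i}\|_{X^{0,3}_{\beta+\delta'} \cap Y^{0,3}_{\beta+\delta'}} \lesssim 2^{-40 k_i^+}\, \varepsilon_1 \langle t\rangle^{C\varepsilon_1}
\end{equation*}
for some $\delta' \in (0,\delta)$. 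The first step is then to commute $Z_l$ (resp.\ $H_l$) past the operators $P_k^\hp, P_k^\vp, \mathbb{P}_\mu \PL, \nabla$, and $e^{-i\mu t\Lmd}$ standing outside the product, up to remainders whose convolution kernels decay rapidly on scale $2^{-k}$; since $l+k \ge (1+\delta)m \ge 1$, these commutators produce only an admissible loss (this is precisely the spirit of the commutator tools alluded to in Section~\ref{sec:25-boot}). What remains to bound is essentially
\begin{equation*}
2^k \bigl\| Z_{\sim l}\bigl(e^{i\mu_1 t\Lmd} P_{k_1}G_1 \cdot e^{i\mu_2 t\Lmd} P_{k_2}G_2\bigr)\bigr\|_{L^2},
\end{equation*}
with the analogous $H_{\sim l}$ statement for the $P_k^\vp$ output.

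The second step exploits the far-field support: writing $Z_{\sim l}(F_1 F_2) = Z_{\sim l}(Z_{\sim l}F_1 \cdot F_2) + Z_{\sim l}(Z_{\le l-C} F_1 \cdot Z_{\sim l} F_2)$, at least one factor carries a $Z_{\sim l}$ cut-off. Since $l \ge (1+\delta)m - k$ and the frequency support of the output forces $\max\{k_1,k_2\} \gtrsim k - O(1)$, for the $Z_{\sim l}$-localized factor we may invoke Lemma~\ref{lem:largel} at parameter $\beta + \delta'$, obtaining
\begin{equation*}
\|Z_{\sim l} e^{i\mu_i t\Lmd} P_{k_i}G_i\|_{L^2} \lesssim 2^{-(1+\beta+\delta')(l+k_i)}\, 2^{-40 k_i^+}\, \varepsilon_1 \langle t\rangle^{C\varepsilon_1},
\end{equation*}
and we estimate the complementary factor in $L^\infty$ by Corollary~\ref{cor:59-1}, gaining a factor $t^{-1}2^{3k_j/2}$. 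Pairing $L^2$ with $L^\infty$, combining with the $2^k$ prefactor, and using frequency balance to absorb $k_j$ against $2^{-40k_i^+}$, the sum over $k_1,k_2$ converges to a bound of order $2^{k - 30k^+} t^{-1} 2^{-(1+\beta+\delta')(l+k)}\varepsilon_1^2 \langle t\rangle^{C\varepsilon_1}$. Finally, writing $2^{-\delta'(l+k)} \le 2^{-\delta'(1+\delta)m} \lesssim t^{-\delta'(1+\delta)}$ and choosing $\delta' \in (\delta/4(1+\delta), \delta)$, the leftover $\langle t\rangle^{C\varepsilon_1}$ with $\varepsilon_1 \ll \delta$ is absorbed and \eqref{eq:428-ddddd} follows.

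The main obstacle I anticipate is the high-high regime $k_1 \sim k_2 \gg k$, in which Lemma~\ref{lem:largel} produces a gain $2^{-(1+\beta)(l+k_i)}$ tied to the input frequency rather than the output frequency $2^k$. This mismatch is recovered by the generous $2^{-40k_i^+}$ margin in the weighted norm definitions \eqref{eq:46-11}--\eqref{eq:46-14}: the gap $k_i^+ - k^+$ is compensated by the available Sobolev regularity, which is more than enough because only $a+b \le 20$ good derivatives and the factor $2^{-30k^+}$ on the right-hand side of \eqref{eq:428-ddddd} are required.
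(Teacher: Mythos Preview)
Your overall strategy---commute $Z_l$ past the outer multipliers via Lemma~\ref{lem:a1-1}, then extract far-field decay from Lemma~\ref{lem:largel}---is the right one and matches the paper. However, the key step where you claim that interpolating \eqref{eq:BA} against \eqref{eq:59-02} yields control of $X^{0,3}_{\beta+\delta'}\cap Y^{0,3}_{\beta+\delta'}$ is incorrect: the interpolation inequalities \eqref{eq:724-01}--\eqref{eq:724-02} give $\tilde\beta = \beta - \vartheta - \beta\vartheta$, which is strictly \emph{smaller} than $\beta$ for any $\vartheta>0$. None of the bootstrap norms carry a parameter exceeding $\beta$, so there is no way to manufacture $\beta+\delta'$. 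Without this, your single-factor $L^2$--$L^\infty$ estimate gives at best $2^{k}\,t^{-1}\,2^{3k_j/2}\,2^{-(1+\beta)(l+k_i)}$, which after summation is only $\lesssim 2^{-30k^+-(1+\beta)(l+k)}t^{-1}\varepsilon_1^2$---precisely missing the required $t^{-\delta/4}$.

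The paper repairs this by working with the \emph{weaker} parameter $\beta-\delta_0$ (via \eqref{eq:428-dd}) and compensating through a case split on the size of $\min\{k_1,k_2\}$. When both inputs satisfy $k_i \gtrsim k-(l+k-m)/2$ (Cases~1 and~3), Lemma~\ref{lem:largel} is applied to \emph{both} factors, yielding a product $2^{-(1+\beta-\delta_0)(2l+k_1+k_2)}$ whose excess over $2^{-(1+\beta)(l+k)}$ is at least $2^{-\frac12(1+\beta-\delta_0)(l+k)}$, easily beating $t^{-\delta/4}$. When one input is very low, say $k_1 \le k-(l+k-m)/2$ (Case~2), you revert to $L^\infty$ on that factor via Corollary~\ref{cor:59-1} and $L^2$ via Lemma~\ref{lem:largel} on the other; the point is that the constraint on $k_1$ itself contributes an extra $2^{-\frac12(l+k-m)}$ through the $2^{3k_1/2}$ factor, and since $(\tfrac12-\delta_0)(l+k) > (\tfrac12+\tfrac{\delta}{3})m$ this again produces $t^{-\delta/4}$. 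Your anticipated obstacle in the high-high regime is therefore not the real issue---the gap is entirely in the direction of the interpolation.
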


\begin{proof}
	Consider the first norm on the left of \eqref{eq:428-ddddd}. By \eqref{eq:510-3-001}, \eqref{eq:510-51} and \eqref{eq:808-i11}, $\partial_t \left(Z_l P_k^\hp S^a \Omega^b \Fp_\mu \right)$ can be written as a linear combination of
		 \begin{equation}
		Z_l |\nabla| P_k^\hp e^{-\mu i t \Lmd} D \{(e^{\mu_1 i t \Lmd} f_1) \cdot (e^{\mu_2 i t \Lmd} f_2)\},
	\end{equation}
	where $f_i = P_{k_i} F_i$ and $F_i$ stands for some component of $ S^{a_i} \bar{\Omega}^{b_i}\f_{\mu_i}$, $i=1,2$	with $a_1+a_2 \le a$, $b_1+b_2 \le b$ and $\mu_1, \mu_2 \in \{\pm\}$. Moreover, $D$ is generated by the operators $\frac{\partial_j}{|\nabla|}$ and $\frac{\partial_j}{|\nabla_h|}$, $i=1,2,3$.	We will consider the case $\mu = \mu_1 = \mu_2 = +$, as the signs play no role in the proof here. Without loss of generality, we assume that $k_1 \le k_2$.
	
	By interpolation between \eqref{eq:BA} and \eqref{eq:slow-growth}, there holds that
	\begin{equation} \label{eq:428-dd}
		 \|\f_\pm\|_{(X\cap Y)^{40,40}_{\beta-\delta_0}}\le  \langle t \rangle^{C \varepsilon_1} \varepsilon_1.
	\end{equation}
	  By Lemma \ref{lem:a1-1} and the assumption that $l+k \ge (1+\delta)m$, we have, for any $K\ge 1$,
	  \begin{align}
	  	&\ \quad \|Z_l |\nabla| P_k^\hp e^{- i t \Lmd} D Z_{[l-1, l+1]^c} \{(e^{i t \Lmd} f_1) \cdot (e^{i t \Lmd} f_2)\}\|_{L^2} \nonumber \\
	  	&\lesssim_K 2^{-K(l+k) +k} \|(e^{i t \Lmd} f_1) \cdot (e^{i t \Lmd} f_2)\|_{L^2} \nonumber \\
	  	&\lesssim 2^{-K (l+k) +k } \|e^{i t \Lmd} f_1\|_{L^\infty} \|e^{i t \Lmd} f_2\|_{L^2} \nonumber \\
	  	&\lesssim 2^{-2 (l+k)-(K-2)m + k + \frac{3k_1}{2} - 40 k_1^+ - 40 k_2^+} \langle t \rangle^{C \varepsilon_1} \varepsilon_1^2,
	  \end{align}
	which is an acceptable contribution. Hence, it suffices to effectively bound
	\begin{equation} \label{eq:428-01}
		2^k \|Z_{[l-1,l+1]}  \{(e^{i t\Lmd} f_1) \cdot   ( e^{ i t \Lmd} f_2) \}\|_{L^2}.
	\end{equation}

	\medskip
	\emph{Case 1.} $k-(l+k-m)/2 < k_1 \le k_2 \sim k$.
	\medskip
	
	In this case, $l+k_2 \ge l +k_1 \gg m$. Using Lemma \ref{lem:largel} and \eqref{eq:428-dd}, there holds that
	\begin{align} \label{eq:428-cc}
	\eqref{eq:428-01} &\lesssim 2^k \|Z_{\sim l}  e^{ i t\Lmd} f_1\|_{L^\infty} \|Z_{\sim l}   e^{ i t \Lmd} f_2\|_{L^2} \nonumber \\
	&\lesssim 2^{k+\frac32 k_1} 2^{-(1+\beta-\delta_0) (2l+k_1+k_2)} \|F_1\|_{(X\cap Y)^{0,2}_{\beta-\delta_0}} \|F_2\|_{(X\cap Y)^{0,2}_{\beta-\delta_0}} \nonumber  \\
	&\lesssim 2^{k+\frac32 k_1} 2^{-(1+\beta-\delta_0) (2(l+k)-(l+k-m)/2)} \|F_1\|_{(X\cap Y)^{0,2}_{\beta-\delta_0}} \|F_2\|_{(X\cap Y)^{0,2}_{\beta-\delta_0}} \nonumber  \\
	&\lesssim 2^{k+\frac32 k_1 - 40 k_1^+ - 40 k_2^+} 2^{-\frac32(1+\beta-\delta_0)(l+k) - \frac12 (1+\beta-\delta_0) m} \langle t \rangle^{C \varepsilon_1}\varepsilon_1^2 \nonumber \\
	 &\lesssim 2^{k+\frac32 k_1 - 40 k_1^+ - 40 k_2^+} 2^{-(1+\beta)(l+k) -  (1+\frac{\beta}{2}) m} \varepsilon_1^2.
	\end{align}
 Then, summing over such $k_1, k_2$ we obtain an acceptable contribution.
 
 	\medskip
 \emph{Case 2.} $k_1 \le k-(l+k-m)/2   < k_2 \sim k$.
 \medskip
 
 Here, Lemma \ref{lem:largel} can still be applied to $Z_{\sim l}  e^{ i t \Lmd} f_2$. On the other hand, we use the pointwise $t^{-1}$ decay guaranteed by Corollary \ref{cor:59-1} for  $  e^{ i t \Lmd} f_1$. 
\begin{align} \label{eq:428-bb}
			 \eqref{eq:428-01} &\lesssim 2^k \|e^{ i t\Lmd} f_1\|_{L^\infty} \|Z_{\sim l}  e^{ i t \Lmd} f_2\|_{L^2} \nonumber \\
			 &\lesssim 2^{k+\frac32 k_1 } t^{-1}  2^{-(1+\beta-\delta_0)(l+k)}  \|F_1\|_{(X\cap Y)^{0,3}_{\beta'}}  \|F_2\|_{(X\cap Y)^{0,2}_{\beta-\delta_0}} \nonumber \\
			 &\lesssim 2^{2k + \frac12 k_1 - 40 k_1^+  - 40 k_2^+ }   2^{-(1+\beta-\delta_0) (l+k) - \frac12 (l+k-m)} t^{-1+C\varepsilon_1} \varepsilon_1^2 \nonumber \\
			 &\lesssim 2^{2k + \frac12 k_1  - 40 k_1^+ - 40 k_2^+ }   2^{-(1+\beta) (l+k)} t^{-1-\frac{\delta}{4}} \varepsilon_1^2.
\end{align}
In the last line, we have used the fact that 
$(\frac12-\delta_0) (l+k) > (\frac12 + \frac{\delta}{3}) m.$
Then, summing \eqref{eq:428-bb} over such $k_1, k_2$ gives an acceptable contribution.

 	\medskip
\emph{Case 3.} $k_1 \sim k_2 \gtrsim k$.
\medskip

In this case, $l+k_2 \sim l +k_1 \gg m$. Applying Lemma \ref{lem:largel} and \eqref{eq:428-dd}, we now have
\begin{align} 
	\eqref{eq:428-01} &\lesssim 2^k \|Z_{\sim l}  e^{ i t\Lmd} f_1\|_{L^\infty} \|Z_{\sim l}   e^{ i t \Lmd} f_2\|_{L^2} \nonumber \\
	&\lesssim 2^{k+\frac32 k_1} 2^{-(1+\beta-\delta_0) (2l+k_1+k_2)} \|F_1\|_{(X\cap Y)^{0,2}_{\beta-\delta_0}} \|F_2\|_{(X\cap Y)^{0,2}_{\beta-\delta_0}} \nonumber  \\
	&\lesssim 2^{k+\frac32 k_1 - 40 k_1^+ - 40 k_2^+ } 2^{-2(1+\beta-\delta_0) (l+k)} \langle t \rangle^{C \varepsilon_1}\varepsilon_1^2 \nonumber  \\
	&\lesssim 2^{k+\frac32 k_1 - 40 k_1^+ - 40 k_2^+ } 2^{-(1+\beta) (l+k)}  t^{-1-\frac{\beta}{2}}\varepsilon_1^2.
\end{align}
Again, summing over such $k_1, k_2$ we obtain an acceptable contribution.

So far, we have proved the desired estimate involving $Z_l P_k^\hp$ in \eqref{eq:428-ddddd}. The estimate involving $H_l P_k^\vp$ can be proved in an entirely similar way.
\end{proof}

The next lemma is important for applying normal form in trilinear energy type estimates.

\begin{lemma} \label{lem:51-1}
	Assuming the bootstrap assumption \eqref{eq:BA}, for $l+k \ge m+5$,  $t \in [2^m, 2^{m+1}) (m \ge 1)$ and $a+b \le 20$, there holds that
	\begin{equation}
		\left\| \partial_t \left(Z_l P_k^\hp S^a \Omega^b \Fp_\pm\right) \right\|_{L^2} \lesssim 2^{k - 30 k^+}  t^{-2}\varepsilon_1^2.
	\end{equation}
\end{lemma}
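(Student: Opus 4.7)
The plan is to follow the same scheme as in the proof of Lemma \ref{lem:428-1}, but now exploit the weaker hypothesis $l+k \ge m+5$ to extract the sharper $t^{-2}$ decay (in place of the $t^{-1-\delta/4}$ bound with an extra $2^{-(1+\beta)(l+k)}$ factor). The sharper decay is possible because Lemma \ref{lem:largel} applies as soon as $l+k \ge m+2$, and we are willing to sacrifice the sharp weight $2^{-(1+\beta)(l+k)}$ in exchange for an absolute $t^{-2}$ bound.

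First I would write $\partial_t(Z_l P_k^\hp S^a \Omega^b \Fp_\mu)$ as a finite linear combination of bilinear terms
\[ Z_l |\nabla| P_k^\hp e^{-\mu i t \Lmd} D\bigl\{ (e^{\mu_1 i t \Lmd} f_1)\cdot (e^{\mu_2 i t \Lmd} f_2)\bigr\} \]
with $f_i = P_{k_i} F_i$ and $F_i$ some component of $S^{a_i}\bar\Omega^{b_i}\f_{\mu_i}$, where $a_1+a_2 \le a$, $b_1+b_2 \le b$, and $D$ is a harmless 0-th order multiplier. Applying the commutator estimate in Lemma \ref{lem:a1-1} to commute a $Z_{[l-1,l+1]}$ past the Fourier multipliers, the problem reduces (up to an admissible rapidly decaying remainder) to controlling
\[ 2^k \bigl\| Z_{[l-1,l+1]}\bigl((e^{i t\Lmd} f_1)(e^{i t\Lmd} f_2)\bigr)\bigr\|_{L^2}. \]

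Next I would split into cases according to $k_1, k_2$, assuming WLOG $k_1 \le k_2$. In the high-high case $k_1 \sim k_2 \ge k$, both inputs satisfy $l+k_i \ge m+5$, so Lemma \ref{lem:largel} applies to each factor and an $L^\infty$–$L^2$ product bound gives decay $\lesssim 2^{-(2+2\beta)(m+5)}$, which easily dominates $2^{-2m}$ after Sobolev-type summation in $k_1, k_2$ (the high-frequency $2^{-40 k_i^+}$ factors absorb the $2^{\frac32 k_1}$ from the $L^\infty$ bound). When $k_2 \sim k$ and $l + k_1 \ge m+2$, the same strategy applies. The main case is when $k_2 \sim k$ and $l+k_1 < m+2$ (so $k_1 < k-3$, and Lemma \ref{lem:largel} cannot be used on $f_1$); here I would apply the pointwise $t^{-1}$ dispersive bound of Corollary \ref{cor:59-1} to $e^{i t \Lmd} f_1$ and the $L^2$ far-field bound of Lemma \ref{lem:largel} to $Z_{\sim l} e^{i t \Lmd} f_2$, giving
\[ 2^k \cdot 2^{\frac32 k_1} t^{-1} \cdot 2^{-(1+\beta)(l+k_2)} \varepsilon_1^2 \lesssim 2^{k+\frac32 k_1 - 40 k_1^+ - 40 k_2^+} \cdot 2^{-(2+\beta)m}\, \varepsilon_1^2, \]
where we used $l+k_2 \ge m+5$. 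Summing over $k_1 \le k-3$ (the geometric sum in $2^{\frac32 k_1}$ converges) and over $k_2 \sim k$ then yields the desired bound $2^{k-30 k^+}\, t^{-2}\, \varepsilon_1^2$.

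The main subtlety — rather than a real obstacle — is the low-frequency case $l + k_1 < m+2$, since there the far-field decay cannot be harvested from both factors simultaneously. What saves us is that Corollary \ref{cor:59-1} provides the sharp $t^{-1}$ pointwise decay for $e^{i t \Lmd} f_1$, which combines with the single far-field factor $2^{-(1+\beta)(l+k_2)} \lesssim 2^{-(1+\beta)m}$ to recover the required $t^{-2}$ rate (with a small $2^{-\beta m}$ to spare for summation). No null structure or further geometric information is needed at this step, since the localization parameter $l$ already places us outside the resonant physical slab.
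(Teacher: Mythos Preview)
Your proposal is correct and follows essentially the same route as the paper: reduce via Lemma~\ref{lem:a1-1} to a product localized at $|x_3|\sim 2^l$, then apply an $L^\infty$--$L^2$ bound using Corollary~\ref{cor:59-1} on the lower-frequency input and Lemma~\ref{lem:largel} on the higher-frequency one. The paper's proof is slightly leaner in that it uses this single estimate uniformly for all $k_1\le k_2$ (since $k_2\ge k-2$ always, so $l+k_2\ge m+3$ and Lemma~\ref{lem:largel} applies to $f_2$ regardless), whereas you split into three cases; your first two cases, where you apply Lemma~\ref{lem:largel} to both inputs, are valid but unnecessary.
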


\begin{proof}
	By the same notation and reasoning as in the proof of Lemma \ref{lem:428-1}, it is sufficient to effectively bound
	\begin{equation} \label{eq:428-ww}
		2^k \| e^{\mu_1 i t\Lmd} f_1 \cdot  Z_{[l-1,l+1]}   e^{\mu_2 i t \Lmd} f_2\|_{L^2}.
	\end{equation}
	We shall only consider the case $k_1 \le k_2$, as the opposite case can be similarly treated by exchanging the roles of $f_1, f_2$. Note that $k_2 \ge k-2$. 	Similar to Case 2 in the proof of  Lemma \ref{lem:428-1}, here we have
	\begin{align} \label{eq:428-www}
		\eqref{eq:428-ww} &\lesssim 2^k \|e^{\mu_1 i t\Lmd} f_1\|_{L^\infty} \|Z_{[l-1,l+1]}  e^{\mu_2 i t \Lmd} f_2\|_{L^2} \nonumber \\
		&\lesssim 2^{k+\frac32 k_1 } t^{-1}  2^{-(1+\beta-\delta_0)(l+k)}  \|F_1\|_{(X \cap Y)^{0,3}_{\beta'}}  \|F_2\|_{(X\cap Y)^{0,2}_{\beta-\delta_0}} \nonumber \\
		&\lesssim 2^{k+\frac32 k_1 - 40 k_1^+ - 40 k_2^+} t^{-2}    \varepsilon_1^2,
	\end{align}
which is acceptable after summation in $k_1, k_2$.
\end{proof}

\section{The method of partial symmetries} \label{sec:srm}

In this section, we give a brief overview on the spacetime resonance formalism in \cite{GuoInvent}, with adaptations to the non-axisymmetric setting. 

\subsection{Integration by parts along vector fields $S$ and $\Omega$}
As in \cite{GuoInvent}, we denote
\begin{equation}
	S_\eta = \eta \cdot \nabla_\eta, \quad \Omega_\eta = \eta_h^\perp \cdot \nabla_\eta, \quad S_{\xi-\eta} = (\xi-\eta) \cdot \nabla_{\xi-\eta}, \quad \Omega_{\xi-\eta} = (\xi-\eta)_h^\perp \cdot \nabla_{\xi-\eta},
\end{equation}
and
\begin{equation}
	\bar{\sigma} = \xi_3 \eta_h - \eta_3 \xi_h = -(\xi \times \eta)_h^\perp.
\end{equation}
The following observation from \cite{GuoCpam,GuoInvent} is of fundamental importance.
\begin{lemma}\label{lem:53-1} There holds that
		\begin{equation}
		S_\eta \Lambda(\xi - \eta) = \frac{(\xi - \eta)_h \cdot \bar{\sigma}}{|\xi- \eta|^3}, \quad \Omega_\eta \Lambda(\xi - \eta) = - \frac{(\xi - \eta)_h^\perp \cdot \bar{\sigma}}{|\xi- \eta|^3}.
	\end{equation}
	As a consequence, for $\Phi = \pm \Lmd(\xi) \pm \Lmd(\xi - \eta) \pm \Lmd(\eta)$, 
	\begin{equation} \label{eq:805-a1}
		|S_\eta \Phi| + |\Omega_\eta \Phi| \sim \frac{|(\xi-\eta)_h| |\bar{\sigma}|}{|\xi -\eta|^3}. 
	\end{equation}
		The symmetric statements hold with the roles of $\eta$ and $\xi - \eta$
	exchanged.
\end{lemma}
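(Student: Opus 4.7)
\medskip

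\noindent\textbf{Proof plan.} The proof is a direct computation once one exploits the structure of the dispersion relation $\Lambda(\xi)=\xi_3/|\xi|$. First, I would set $\zeta=\xi-\eta$ and record the elementary identities
\[
\partial_{\zeta_1}\Lambda=-\frac{\zeta_1\zeta_3}{|\zeta|^3},\quad \partial_{\zeta_2}\Lambda=-\frac{\zeta_2\zeta_3}{|\zeta|^3},\quad \partial_{\zeta_3}\Lambda=\frac{|\zeta_h|^2}{|\zeta|^3}.
\]
Since $\nabla_\eta=-\nabla_\zeta$, applying $S_\eta=\eta\cdot\nabla_\eta$ gives
\[
S_\eta\Lambda(\xi-\eta)=-\eta\cdot(\nabla\Lambda)(\zeta)=\frac{\zeta_3(\eta_h\cdot\zeta_h)-\eta_3|\zeta_h|^2}{|\zeta|^3}=\frac{(\zeta_3\eta_h-\eta_3\zeta_h)\cdot\zeta_h}{|\zeta|^3}.
\]
The key algebraic point, which I would verify in one line, is that $\zeta_3\eta_h-\eta_3\zeta_h=\xi_3\eta_h-\eta_3\xi_h=\bar{\sigma}$ (the $\eta$-terms cancel), which yields the first formula. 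The computation for $\Omega_\eta\Lambda(\xi-\eta)$ is analogous: one obtains $\zeta_3\,\eta_h^\perp\cdot\zeta_h/|\zeta|^3$, and then uses the elementary identity $\eta_h^\perp\cdot\zeta_h=-\zeta_h^\perp\cdot\eta_h$ together with $\zeta_h^\perp\cdot\zeta_h=0$ to rewrite this as $-\zeta_h^\perp\cdot\bar{\sigma}/|\zeta|^3$.

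For the consequence \eqref{eq:805-a1}, I would first observe that both $S_\eta$ and $\Omega_\eta$ annihilate $\Lambda(\xi)$ (no $\eta$-dependence) and $\Lambda(\eta)$ (by $0$-homogeneity for $S_\eta$, and because $\Omega_\eta$ is the angular derivative in $\eta_h$ while $\Lambda(\eta)$ depends only on $|\eta_h|$ and $\eta_3$, for $\Omega_\eta$). Hence
\[
S_\eta\Phi=\pm\frac{(\xi-\eta)_h\cdot\bar{\sigma}}{|\xi-\eta|^3},\qquad \Omega_\eta\Phi=\mp\frac{(\xi-\eta)_h^\perp\cdot\bar{\sigma}}{|\xi-\eta|^3}.
\]
Since $\bar{\sigma}=\xi_3\eta_h-\eta_3\xi_h$ is a horizontal vector, $(\xi-\eta)_h$ and $(\xi-\eta)_h^\perp$ form an orthogonal frame in $\mathbb{R}^2$ with common length $|(\xi-\eta)_h|$, giving Pythagoras
\[
|(\xi-\eta)_h\cdot\bar{\sigma}|^2+|(\xi-\eta)_h^\perp\cdot\bar{\sigma}|^2=|(\xi-\eta)_h|^2\,|\bar{\sigma}|^2,
\]
from which \eqref{eq:805-a1} follows. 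The symmetric statement (with $\eta$ and $\xi-\eta$ exchanged) is obtained by the change of variables $\eta\mapsto\xi-\eta$, which preserves $\bar{\sigma}$ up to sign.

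There is no real obstacle here; the whole statement is a careful chain-rule computation. The only point that benefits from being written out explicitly is the algebraic identity $(\xi-\eta)_3\eta_h-\eta_3(\xi-\eta)_h=\bar{\sigma}$, since it is what makes the formulas symmetric in form between the two roles $\eta\leftrightarrow\xi-\eta$ and explains why the same $|\bar{\sigma}|$ appears in \eqref{eq:805-a1} regardless of which variable one differentiates in.
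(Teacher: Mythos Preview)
Your proof is correct. The paper does not give its own proof of this lemma, citing it instead from \cite{GuoCpam,GuoInvent}; your direct chain-rule computation, together with the key algebraic identity $(\xi-\eta)_3\eta_h-\eta_3(\xi-\eta)_h=\bar{\sigma}$ and the Pythagorean step for \eqref{eq:805-a1}, is exactly the natural argument.
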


\medskip
To apply integration by parts along vector fields, a first step is to find a lower bound on $|S_\eta \Phi|+|\Omega_\eta \Phi|$, which is equivalent to finding a lower bound on $\bar{\sigma}$ in view of \eqref{eq:805-a1}. In many cases, one can make use of the following result. 

\begin{lemma} \label{lem:805-c1}
	Suppose that the triangle formed by $\xi, \xi-\eta, \eta$ is non-degenerate in the sense that one of the three corner angles takes value in $[2^{-5}, \pi-2^{-5}]$, and moreover $|\Lmd(\zeta)| \ge 2^{-5}$ for some $\zeta \in \{\xi, \xi-\eta, \eta\}$. Then, there holds that
	\begin{equation}
		|\bar{\sigma}| \sim \min \{|\xi|, |\xi-\eta|, |\eta|\} \cdot \max \{|\xi|, |\xi-\eta|, |\eta|\}.
	\end{equation}
\end{lemma}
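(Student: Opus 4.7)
The plan is to read $|\bar\sigma|$ geometrically and then combine a short triangle-inequality argument with a calculation in $\R^3$. The convention $(a_1,a_2)^\perp = (-a_2,a_1)$ gives $\bar\sigma = -(\xi\times\eta)_h^\perp$, so $|\bar\sigma| = |(\xi\times\eta)_h|$. Writing $n = (\xi\times\eta)/|\xi\times\eta|$ for the unit normal to the plane $\Pi$ spanned by $\xi$ and $\eta$, I would express
\[ |\bar\sigma| = |\xi\times\eta|\cdot |n_h| = 2T\cdot |n_h|, \]
where $T$ is the area of the triangle with vertices $0,\xi,\eta$. The usefulness of this is that $2T$ admits the three equal expressions $|\xi||\eta|\sin\theta_0 = |\xi||\xi-\eta|\sin\theta_\xi = |\eta||\xi-\eta|\sin\theta_\eta$ at the three vertices, and the three sides of the triangle are precisely $|\xi|,|\xi-\eta|,|\eta|$.

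The upper bound is immediate: $|n_h|\le 1$ together with the identities $\xi\times\eta = -\xi\times(\xi-\eta) = (\xi-\eta)\times\eta$ gives $|\bar\sigma|\le \min\{|\xi||\eta|,\,|\xi||\xi-\eta|,\,|\xi-\eta||\eta|\}$, and this right-hand side equals the product of the two smallest sides of the triangle, hence is bounded by $\min\cdot\max$.

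For the lower bound I would split the claim into the two sub-bounds $T\gtrsim \min\cdot\max$ and $|n_h|\gtrsim 1$. Writing $\theta$ for the non-degenerate angle and $x,y$ for its two adjacent sides (so the opposite side $z$ is the third), the formula $2T = xy\sin\theta$ together with $\sin\theta\gtrsim 1$ reduces the first sub-bound to showing $xy\gtrsim \min\cdot\max$. A short case check based on the position of $z$ in the ordering $\min\le \mathrm{mid}\le \max$ handles this: if $z=\mathrm{mid}$ then $xy = \min\cdot\max$; if $z=\min$ then $xy\ge \min\cdot\max$; and if $z=\max$ the triangle inequality gives $\max\le \min+\mathrm{mid}\le 2\,\mathrm{mid}$, hence $xy = \min\cdot\mathrm{mid}\gtrsim \min\cdot\max$.

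For the second sub-bound I would invoke the hypothesis $|\Lambda(\zeta)|\ge 2^{-5}$. Each of $\xi,\xi-\eta,\eta$ lies in $\Pi$, so for the given $\zeta$ one has $\zeta\cdot n=0$, that is $\zeta_3 n_3 = -\zeta_h\cdot n_h$, giving $|\zeta_3||n_3| \le |\zeta_h||n_h|\le |\zeta||n_h|$ and hence $|n_h|/|n_3|\ge |\Lambda(\zeta)|\ge 2^{-5}$. The constraint $|n_h|^2 + n_3^2 = 1$ then forces $|n_h|\gtrsim 1$, and combining the two ingredients yields $|\bar\sigma| = 2T|n_h|\gtrsim \min\cdot\max$. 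I expect the triangle-inequality case check to be the only mildly non-obvious step; everything else is a direct unwinding of definitions.
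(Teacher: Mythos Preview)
Your proof is correct and follows essentially the same approach as the paper: write $|\bar\sigma| = |\xi\times\eta|\cdot|n_h|$ and bound each factor separately, using the non-degenerate angle for the area and the orthogonality $\zeta\cdot n=0$ for $|n_h|$. Your case check on the position of the opposite side is more explicit than the paper's terse appeal to ``simple geometry'', but the underlying argument is the same.
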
  

\begin{proof}
	Note that $\bar{\sigma} = -(\xi \times \eta)_h^\perp = -((\xi-\eta) \times \eta)_h^\perp = -(\xi \times (\xi-\eta))_h^\perp$, and consequently,
	$$|\bar{\sigma}| \lesssim \min \{|\xi|, |\xi-\eta|, |\eta|\} \cdot \max \{|\xi|, |\xi-\eta|, |\eta|\}.$$ 
	Without loss of generality, let us assume that 
	$$|\xi| = \min \{|\xi|, |\xi-\eta|, |\eta|\}, \quad |\eta| = \max \{|\xi|, |\xi-\eta|, |\eta|\}.$$  
	By the assumptions and simple geometry, one can deduce that $|\Lmd (\xi)|+| \Lmd(\eta)| \gtrsim 1$, and the angle between $\xi$ and $\eta$ is separated from $0$ and $\pi$. Hence, $\Lmd(\xi \times \eta)$ is separated from $\pm 1$, and $|\xi \times \eta| \gtrsim |\xi| |\eta|$. It follows that $|\bar{\sigma}| \gtrsim |\xi||\eta|$.
\end{proof}

\medskip

The next lemma follows from straightforward calculations, and is useful for bilinear estimates after integration by parts. The identities \eqref{eq:805-bb1}--\eqref{eq:805-bb3} are crucial for controlling $S_\eta \wh{f}(\xi-\eta)$ and $\Omega_\eta \wh{f}(\xi-\eta)$ via the $X$ and $Y$ norm bounds of $f$, while \eqref{eq:805-bb4}--\eqref{eq:805-bb7} are very helpful for computing the cross terms.  

\begin{lemma} \label{lem:429-ct}
	The following identities are valid.
	\begin{enumerate}[(i)]
		\item There holds that \begin{align} \label{eq:805-bb1}
			S_{\eta} = - \frac{(\xi - \eta)_h \cdot \eta_h}{| (\xi - \eta)_h |^2}
			S_{\xi - \eta} - \frac{(\xi - \eta)^{\perp}_h \cdot \eta_h}{| (\xi -
				\eta)_h |^2} \Omega_{\xi - \eta} + \frac{(\xi - \eta)_h \cdot
				\bar{\sigma}}{| (\xi - \eta)_h |^2} \partial_{(\xi - \eta)_3},
		\end{align}  
		\begin{align} \label{eq:805-bb2} \Omega_{\eta} = \frac{(\xi - \eta)_h^{\perp} \cdot \eta_h}{| (\xi -
				\eta)_h |^2} S_{\xi - \eta} - \frac{(\xi - \eta)_h \cdot \eta_h}{| (\xi -
				\eta)_h |^2} \Omega_{\xi - \eta} - \frac{(\xi - \eta)_h^{\perp} \cdot
				\bar{\sigma}}{| (\xi - \eta)_h |^2} \partial_{(\xi - \eta)_3}.
		\end{align}
	\item There holds that
		\begin{align} \label{eq:805-bb3}
		S_{\eta} = - \frac{\eta_3}{(\xi - \eta)_3} S_{\xi - \eta} - \frac{\bar{\sigma} \cdot \nabla_{\xi - \eta}}{(\xi - \eta)_3}, \quad \Omega_{\eta} = -\eta_h^\perp \cdot \nabla_{\xi - \eta} = \Omega_{\xi-\eta} -\xi_h^\perp \cdot \nabla_{\xi - \eta}.
	\end{align}  
	\item There holds that
	\begin{equation} \label{eq:805-bb4}
		S_\eta \eta = -S_\eta (\xi-\eta) = \eta, \quad \Omega_\eta \eta = \eta_h^\perp,
	\end{equation}
	\begin{equation} \label{eq:805-bb5}
		S_\eta |\xi-\eta| = - \frac{\eta \cdot (\xi - \eta)}{|\xi - \eta|}, \quad \Omega_\eta |\xi - \eta| = \frac{\xi_h^\perp \cdot \eta_h}{|\xi - \eta|},
	\end{equation}
	\begin{equation} \label{eq:805-bb6}
		S_\eta |(\xi-\eta)_h| = - \frac{\eta_h \cdot (\xi - \eta)_h}{|(\xi - \eta)_h|}, \quad \Omega_\eta |(\xi - \eta)_h| = \frac{\xi_h^\perp \cdot \eta_h}{|(\xi - \eta)_h|},
	\end{equation}
	\begin{equation} \label{eq:805-bb7}
		S_\eta \bar{\sigma} = \bar{\sigma}, \quad \Omega_\eta \bar{\sigma} = \xi_3 \eta_h^\perp.
	\end{equation}
	\end{enumerate}
	The symmetric statements hold with the roles of $\eta$ and $\xi - \eta$
	exchanged.
\end{lemma}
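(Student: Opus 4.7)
The plan is to prove all three parts by direct calculation, organized around the observation that on any function $F(\xi-\eta)$ one has $S_\eta F(\xi-\eta) = -\eta \cdot \nabla_{\xi-\eta} F$ and $\Omega_\eta F(\xi-\eta) = -\eta_h^\perp \cdot \nabla_{(\xi-\eta)_h} F$. Consequently, parts (i) and (ii) reduce to decomposing the vectors $-\eta$ and $-\eta_h^\perp$ in bases adapted to $\xi-\eta$, with all residuals identified via the alternative expression
\begin{equation*}
\bar{\sigma} \;=\; (\xi-\eta)_3 \, \eta_h \,-\, \eta_3 \, (\xi-\eta)_h,
\end{equation*}
which follows by substituting $\xi = (\xi-\eta) + \eta$ into the definition of $\bar{\sigma}$ and which, moreover, is manifestly antisymmetric in the pair $(\eta,\xi-\eta)$; this antisymmetry automatically yields the symmetric versions of the identities by swapping roles.

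For (ii), the formula $\Omega_\eta = \Omega_{\xi-\eta} - \xi_h^\perp \cdot \nabla_{\xi-\eta}$ is immediate from $-\eta_h^\perp = (\xi-\eta)_h^\perp - \xi_h^\perp$. For the $S_\eta$ identity I would write $-\eta = -\tfrac{\eta_3}{(\xi-\eta)_3}(\xi-\eta) + w$, match components to get $w_3 = 0$ and $w_h = (\eta_3 (\xi-\eta)_h - (\xi-\eta)_3 \eta_h)/(\xi-\eta)_3$, and then read off from the displayed formula for $\bar{\sigma}$ that $w = -\bar{\sigma}/(\xi-\eta)_3$. Part (i) follows the same strategy in the richer basis $\{\xi-\eta,\ (\xi-\eta)_h^\perp,\ e_3\}$: the horizontal coefficients, obtained by dotting with $(\xi-\eta)_h$ and $(\xi-\eta)_h^\perp$, produce the stated coefficients of $S_{\xi-\eta}$ and $\Omega_{\xi-\eta}$; the remaining vertical coefficient is then $-\eta_3 - a(\xi-\eta)_3$ (respectively $-a(\xi-\eta)_3$), and a direct calculation using the displayed form of $\bar{\sigma}$ together with $(\xi-\eta)_h^\perp \cdot (\xi-\eta)_h = 0$ simplifies these to $(\xi-\eta)_h \cdot \bar{\sigma}/|(\xi-\eta)_h|^2$ and $-(\xi-\eta)_h^\perp \cdot \bar{\sigma}/|(\xi-\eta)_h|^2$, respectively.

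Part (iii) is routine chain rule combined with the identity $a^\perp \cdot b = -a \cdot b^\perp$: the formulas $S_\eta \eta = \eta$ and $S_\eta |\xi-\eta| = -\eta \cdot (\xi-\eta)/|\xi-\eta|$ are immediate; the horizontal versions in \eqref{eq:805-bb5}--\eqref{eq:805-bb6} are the same computation restricted to horizontal variables, using $\eta_h^\perp \cdot \eta_h = 0$ to rewrite $-\eta_h^\perp \cdot (\xi-\eta)_h$ as $\xi_h^\perp \cdot \eta_h$; and \eqref{eq:805-bb7} follows from differentiating $\bar{\sigma} = \xi_3 \eta_h - \eta_3 \xi_h$ in $\eta$ and using $\Omega_\eta \eta_3 = 0$. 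There is no conceptual obstacle at any step; the proof is essentially a careful accounting exercise, and the only point requiring mild vigilance is the appearance of $\bar{\sigma}$ as the universal residual in (i)--(ii), reflecting its role as the algebraic obstruction to moving freely between $\eta$- and $(\xi-\eta)$-derivatives.
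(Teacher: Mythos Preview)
Your proposal is correct and is exactly what the paper intends: the paper gives no proof of this lemma beyond the remark that it ``follows from straightforward calculations,'' and your direct-computation approach---expressing $S_\eta$ and $\Omega_\eta$ on functions of $\xi-\eta$ as $-\eta\cdot\nabla_{\xi-\eta}$ and $-\eta_h^\perp\cdot\nabla_{\xi-\eta}$, then decomposing these vectors in bases adapted to $\xi-\eta$ with the residuals identified via $\bar\sigma = (\xi-\eta)_3\eta_h - \eta_3(\xi-\eta)_h$---is precisely the calculation being left to the reader. The details you supply all check out.
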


\medskip

%
%
%
%

We now demonstrate how to use integration by parts in $S_\eta$ and $\Omega_\eta$ to estimate the bilinear form  
\begin{equation}
	\Q[\m \chi] (f_1, f_2),
\end{equation}
where $\chi$ is a phase space cutoff function of the type
\begin{equation} \label{eq:429-1}
	\chi(\xi, \eta) = \chi^{\vp/\hp, p/q}_k(\xi) \chi^{\vp/\hp, p_1/q_1}_{k_1}(\xi-\eta) \chi^{\vp/\hp, p_2/q_2}_{k_2}(\eta).
\end{equation}
Suppose that $f_j = Z_{\le l_j} P^{\hp} F_j$ or $H_{\le l_j} P^{\vp} F_j$, $j =1, 2$, and $\|\{S, \Omega\}^{{M_2}/{2}} F_j\|_{H^{M_1/2}} < +\infty$. For simplicity of presentation, we shall think of $p = 0$ in the case of a $(\hp, q)$-localization, and think of $q = 0$ in the case of a $(\vp, p)$-localization. Hence, on the support of $\chi$, we always have
\begin{equation}
	|\Lmd(\xi)| \sim 2^q, \quad  \sqrt{1-\Lmd^2}(\xi) \sim 2^p.
\end{equation}
The same convention applies to the parameters $p_{1,2}, q_{1,2}$ as well. Denote
\begin{equation}
	k_{\max} = \max \{k, k_1, k_2\}, \quad k_{\min} = \min \{k, k_1, k_2\},
\end{equation}
and similarly define $p_{\max}, p_{\min}$ and $q_{\max}, q_{\min}$. In all our applications, we will have $p_{\max} = 0$.
 
Assume that on the support of $\chi$, there exists some constant $L$ such that
\begin{equation} \label{eq:429-2}
	|\bar{\sigma}| \ge L \gtrsim 2^{k_{\max} + k_{\min} + q_{\max}},
\end{equation}
so that
\begin{equation}
	|S_\eta \Phi|+|\Omega_\eta \Phi| \gtrsim 2^{p_1-2k_1} L.
\end{equation}
Note that the condition \eqref{eq:429-2} quantifies the non-degeneracy of the triangle formed by $\xi, \xi-\eta$ and $\eta$. Then, introduce the following cut-off functions on the phase space,
\begin{equation}
	\chi_{_{S_\eta}}(\xi, \eta)  :=  (1-\psi)(2^{2k_1 - p_1+C} L^{-1} S_\eta \Phi), \quad \chi_{_{\Omega_\eta}} : = 1- \chi_{_{S_\eta}},
\end{equation}
with constant $C$ sufficiently large so that, for $V \in \{S, \Omega\}$, on the support of $\chi_{_{V_\eta}}$ there holds that
\begin{equation}
	|V_\eta \Phi| \gtrsim 2^{p_1-2k_1} L.
\end{equation}
Integration by parts in $V_\eta$ gives
\begin{align} \label{eq:429-001}
\F \Q[\m \chi \, \chi_{_{V_\eta}}](f_1, f_2) 
	&=  \int_{\R^3} \frac{V_\eta e^{i t \Phi}}{it V_\eta \Phi}    \m\chi \, \chi_{_{V_\eta}} \wh{f_1}(\xi-\eta) \wh{f_2}(\eta) d\eta \nonumber \\
	&= -\frac{1}{it} \int_{\R^3} e^{i t \Phi} V_\eta^* \left(\frac{ \m\chi \, \chi_{_{V_\eta}}}{V_\eta \Phi} \wh{f_1}(\xi-\eta) \wh{f_2}(\eta) \right) d\eta \nonumber \\
	&=  -\frac{1}{it} \int_{\R^3} e^{i t \Phi} V_\eta^* \left(\frac{ \m\chi \, \chi_{_{V_\eta}}}{V_\eta \Phi} \right) \wh{f_1}(\xi-\eta) \wh{f_2}(\eta)  d\eta \nonumber \\
	&\quad -\frac{1}{it} \int_{\R^3} e^{i t \Phi} \frac{ \m\chi \, \chi_{_{V_\eta}}}{V_\eta \Phi}  V_\eta \wh{f_1}(\xi-\eta) \wh{f_2}(\eta)  d\eta \nonumber \\
	&\quad -\frac{1}{it} \int_{\R^3} e^{i t \Phi} \frac{ \m\chi \, \chi_{_{V_\eta}}}{V_\eta \Phi}  \wh{f_1}(\xi-\eta)  V_\eta \wh{f_2}(\eta)  d\eta,
\end{align}
where we denote
\begin{equation*}
	V_\eta^*  = \begin{cases}
		S_\eta + 3, &\quad \mathrm{if} \  V = S, \\
		\Omega_\eta, &\quad \mathrm{if} \ V = \Omega.
	\end{cases}  
\end{equation*}
The first term on the right of \eqref{eq:429-001} (we call it  a ``cross term") can be calculated using the identities in Lemma \ref{lem:429-ct} (iii), while the penultimate term in \eqref{eq:429-001} can be treated using Lemma \ref{lem:429-ct} (i)--(ii).  Repeated integration by parts would generate more ``cross terms"; we refer to  \cite[Section 5]{GuoInvent} for the  complete formalism.  Each time of applying \eqref{eq:429-001}, we gain a factor of $t^{-1}$ multiplied by some function of the parameters $k, p, q, k_{1,2}, p_{1,2}, q_{1,2}, l_{1,2}$. If the overall gain of each step is controlled by $t^{-\delta}$, then one gets very fast decay by applying integration by parts for $O(\delta^{-1})$ times. To be precise, when we say each step of integration by parts gains a factor of size $\vartheta \ll 1$, we really mean an estimate of the type
\begin{equation}
	\|\F \Q[\m\chi](f_1, f_2)\|_{L^\infty} \lesssim \vartheta^N \|\m \chi\|_{L^\infty}  \|P^{\hp/\vp}_{k_1} \{S, \Omega\}^{\le N} F_1\|_{L^2} \|P^{\hp/\vp}_{k_2} \{S, \Omega\}^{\le N} F_2\|_{L^2}.
\end{equation}
In Section \ref{sec:8} and Section \ref{sec:9}, the above methodology will be implemented in a  case-by-case style. 

%

\subsection{Normal form}

In the bilinear or trilinear expressions to be estimated, we often need to decompose the phase space towards a normal form. For  a number $\lmd > 0 $ (to be chosen specifically in each case of application), we can define
\begin{equation}
	\chi^\res = \psi(\lmd^{-1} \Phi), \quad \chi^\nr = (1-\psi)(\lmd^{-1} \Phi).
\end{equation}
Then, on the support of $\chi^\nr$ we can integrate by parts in the dyadic time interval $\mathcal{I}_m = [2^m, 2^{m+1}) \cap [0,t]$ to get
\begin{align}
	\int_{\mathcal{I}_m} \Q[\chi^\nr \m](f_1, f_2)ds &= -i \Q[\Phi^{-1}\chi^\nr \m](f_1, f_2)|_{2^m}^{2^{m+1}\wedge t} \nonumber \\
	&\quad + i \int_{\mathcal{I}_m} \Q[\Phi^{-1} \chi^\nr \m](\partial_t f_1, f_2)ds \nonumber \\
	&\quad + i \int_{\mathcal{I}_m} \Q[\Phi^{-1} \chi^\nr \m]( f_1, \partial_t f_2)ds.
\end{align}
When following the approach of energy method, we also need
\begin{align}
	\int_{\mathcal{I}_m} \langle \Q[\chi^\nr \m](f_1, f_2), f_3 \rangle_{L^2} ds &= -i\langle \Q[\Phi^{-1}\chi^\nr \m](f_1, f_2), f_3\rangle_{L^2}|_{2^m}^{2^{m+1}\wedge t} \nonumber \\
	&\quad +i  \int_{\mathcal{I}_m} \langle \Q[\Phi^{-1} \chi^\nr \m ](\partial_t f_1, f_2), f_3  \rangle_{L^2} ds \nonumber \\
	&\quad +i  \int_{\mathcal{I}_m} \langle \Q[\Phi^{-1} \chi^\nr \m]( f_1, \partial_t f_2), f_3  \rangle_{L^2} ds  \nonumber \\
	&\quad +i  \int_{\mathcal{I}_m} \langle \Q[\Phi^{-1} \chi^\nr \m]( f_1,  f_2), \partial_t f_3  \rangle_{L^2} ds.
\end{align}

\section{Horizontal frequency interactions} \label{sec:8}


 For simplicity of presentation, we shall write
\begin{equation}
	f = P^\hp_k F, \quad f_1 = P^\hp_{k_1} F_1, \quad f_2 = P^\hp_{k_2} F_2,
\end{equation}
with
\begin{equation}
	F = S^a \Omega^b \Fp_\mu, \quad F_1 = S^{a_1} \Omega^{b_1}  \Fp_{\mu_1}, \quad F_2 =  S^{a_2} \Omega^{b_2}  \Fp_{\mu_2},
\end{equation} 
so that \eqref{eq:807-000} can be written as
\begin{equation}
	\sum_{k_1, k_2 \in \Z} {\sum}^* Z_l^{(k)} P_k^\hp \Q_\mua [\m_\mua] (f_1, f_2).
\end{equation}
Let $s \in \mathcal{I}_m = [2^m, 2^{m+1}) \cap [0,t]$ with $m \ge 1, t \in [1, T]$.  Assuming the bootstrap assumption \eqref{eq:BA}, the aim of this section is to prove the following bilinear and trilinear estimates, which imply the propagation of $X^{10}_\beta$ and $X^{20, 20}_{\beta'}$ norms for the scalar profiles $\Fp_\pm$, see Section \ref{sec:25-boot}.

\begin{itemize}
	\item Suppose that $a+b \le 10$ and $l+k \in [0, m+4]$, then there holds that
	\begin{align} \label{eq:511-1}
	  \sum_{k_1, k_2}     \left\| \int_{\mathcal{I}_m} {\sum}^*  Z_l^{(k)} P_k^\hp \Q_\mua[\m_\mua] (f_1, f_2) ds   \right\|_{L^2}   
	  &\lesssim 2^{-(10-a-b)k^+ -(1+\beta)m}  \,  \varepsilon_1^2.
	\end{align} 
	If the first condition is relaxed to $a+b \le 20$, we can replace the right hand side of \eqref{eq:511-1} with
	\begin{equation} \label{eq:511-901}
		2^{-20 k^+ - (1+\beta')(l+k) - \delta m }  \,  \varepsilon_1^2.
	\end{equation}
	
	\item Suppose that $a+b \le 10$ and $l+k \in [m+5, (1+\delta) m]$, then there holds that
	\begin{align} \label{eq:511-2}
		&\qquad \ \left|\sum_{k_1, k_2} \sum_{\mu=\pm} {\sum}^*  \int_{\mathcal{I}_m} \langle  Z_l P_k^\hp \Q_\mua[\m_\mua] (f_1, f_2)    ,Z_l f\rangle_{L^2} ds \right| \nonumber\\
		& \lesssim 2^{-2(10-a-b)k^+ - 2(1+\beta)(l+k)} \left(2^{-\delta m} + 2^{- |m-l| } + \mathbf{1}_{l+k \le m+15}\right) \,  \varepsilon_1^3.
	\end{align}
		If the first condition is relaxed to $a+b \le 20$, we can replace the right hand side of \eqref{eq:511-2} with
	\begin{equation} \label{eq:511-902}
		2^{-40 k^+ - 2(1+\beta')(l+k) - \delta m }  \,  \varepsilon_1^3.
	\end{equation}

\end{itemize}

During the proof, we will drop the multi-index $\mua$ for simplicity when it plays no role, \emph{i.e.}, we sometimes write $\Q[\m]$ instead of $\Q_\mua[\m_\mua]$.  By the basic energy estimate \eqref{eq:slow-growth} and Young's inequality (cf. the set size gain lemma \cite[Lemma A.3]{GuoInvent}), we have the crude bound at time $s \in \mathcal{I}_m$ for $a+b \le 20$,
\begin{align} \label{eq:806-aa1}
	\left\| P^\hp_k \mathcal{Q}[\m](f_1,f_2)\right\|_{L^2} &\lesssim 2^{ k+\frac32 k_{\min} } \|F_1\|_{L^2} \|F_2\|_{L^2} \nonumber \\
	&\lesssim 2^{ k+\frac32 k_{\min} } 2^{-\frac{M_1}{2} k_1^+ - \frac{M_1}{2} k_2^+} 2^{C \varepsilon_1 m} \varepsilon_1^2.
\end{align}
Hence, it suffices to consider the case
\begin{equation} \label{eq:not-too-large-k}
	k_{\max} := \max\{k, k_1, k_2\} < \delta_0 m \quad \mathrm{and} \quad k_{\min} := \min\{k, k_1, k_2\} > - 3m.
\end{equation}
In the rest of this section, we shall always assume that \eqref{eq:BA}, \eqref{eq:not-too-large-k} and  
\begin{equation} \label{eq:415-01}
	l+k \le (1+\delta)m
\end{equation} 
hold. In Sections \ref{sec:high-low-hp}--\ref{sec:high-high} we  treat the case $a+b \le 10$, while in Section \ref{sec:Xbetap} we  relax the constraint to $a+b \le 20$. 

\subsection{high-low interactions} \label{sec:high-low-hp}

Here, we consider the case $k_1 \ge k_2 + 11$ (thus $|k-k_1|\le 1$) and the main result is that, for any $\mua \in \{\pm\}^3$, $\bar{a}, \bar{b}$ with $a+b \le 10$ and at time $s \in \mathcal{I}_m (m \ge 1)$, there holds that
\begin{align} \label{eq:hh-hl-goal-2}
	 \sum_{k_2 \le k_1-11} \left\| Z_l^{(k)} P^\hp_k \mathcal{Q}[\m](f_1,f_2)\right\|_{L^2} &\lesssim 2^{-(10-a-b)k^+ - (1+\beta)(l+k)} A_{k,l,m} \,  \varepsilon_1^2,
\end{align}
in which we set
\begin{equation} \label{eq:A-def}
	A_{k,l,m} = \left(2^{-(1+\delta)m} + 2^{- |m-l| - m}\right) \mathbf{1}_{m \le l+k-5} + 2^{-(2+\beta)m + (1+\beta)(l+k)} \mathbf{1}_{m \ge l+k-4}.
\end{equation}
 The three types of bounds in $A_{k,l,m}$ will essentially correspond to the three cases considered below.

Due to Lemma \ref{lem:L-bilinear}, it suffices to prove \eqref{eq:hh-hl-goal-2} with $\Q[\m](f_1, f_2)$ replaced by
\begin{equation}
	 e^{-\mu i  s\Lmd }D_0[(D_1 e^{\mu_1 i  s \Lmd} f_1)\cdot(|\nabla| D_2 e^{\mu_2 i  s \Lmd} f_2)],
\end{equation}
where $D_{0,1,2}$ are 0th order operators generated by \eqref{eq:415-02}.


\medskip
\emph{Case 1}. $l + k \ge m + 5$ and $l + k_2 \notin [m-6, m+3]$. 
\smallskip

In this case, $Z_l^{(k)}=Z_l$. The first step is to insert a physical space localization $Z_{[l-1,l+1]}$ after the Fourier multipliers $P^\hp_k D_0 e^{- \mu i  s\Lambda}$. By Lemma \ref{lem:a1-1}, there holds that
\begin{equation} \label{eq:easy-comm}
	\|Z_l P^\hp_k D_0  e^{- \mu i s \Lambda} Z_{[l-1,l+1]^c}\|_{L^2 \to L^2} \lesssim_K 2^{-K(l+k)}, \quad  \forall K\ge 1,
\end{equation}
and thus we have
\begin{align} \label{eq:easy-part}
	&\quad\left\|Z_l P^\hp_k D_0  e^{- \mu i  s \Lambda} Z_{[l-1,l+1]^c} \left[(e^{\mu_1 i s \Lambda}D_1 f_1) \cdot (e^{\mu_2 i s \Lambda } |\nabla| D_2 f_2)\right]\right\|_{L^2} \nonumber \\
	&\lesssim_K  2^{k_2+\frac32 k_1} 2^{-K(l+k)}  \|f_1\|_{L^2} \|f_2\|_{L^2} \lesssim 2^{k_2+\frac32 k_1} 2^{-K(l+k)} \varepsilon_1^2.
\end{align}
Since $l+k > m$, $2^{-K(l+k)}$ gives an arbitrarily fast decay in $s$, which is more than needed. Consequently, to show \eqref{eq:hh-hl-goal-2}, it suffices to effectively bound
\begin{equation} \label{eq:26a}
		 \left\|(Z_{l_1} e^{\mu_1 i s \Lambda}D_1 f_1) \cdot (Z_{l_2} e^{\mu_2 i s \Lambda} |\nabla| D_2 f_2)\right\|_{L^2},
\end{equation}
where $l_1,l_2 \in [l-2, l+2]$. By the assumptions on $l+k$ and $l+k_2$, we know that $l_i \notin  [m - k_i -4,  m-k_i+1], \ i=1,2$. Using Lemma \ref{lem:slab} (ii)--(iii), there holds that
\begin{align} \label{eq:G2-Linf}
	\|Z_{l_2} e^{\mu_2 i s \Lambda} |\nabla| D_2 f_2\|_{L^\infty}  &\lesssim 2^{\frac52 k_2} 2^{-(1+\beta')m} \|F_2\|_{X^{0,3}_{\beta'}} \nonumber \\
	&\lesssim 2^{\frac52 k_2 - 20 k_2^+} 2^{-(1+\beta')m} \varepsilon_1.
\end{align} 
and
\begin{align} \label{eq:G1-L2}
	\|{Z}_{l_1} e^{\mu_1 i s \Lambda}  D_1 f_1\|_{L^2}  &\lesssim  2^{-(1+\beta)(l_1+k_1)} \|F_1\|_{X^{0}_{\beta}} \nonumber \\
	&\lesssim 2^{-(10-a-b)k^+} 2^{-(1+\beta)(l+k)} \varepsilon_1.
\end{align}
Hence,  a direct $L^\infty$--$L^2$ bound gives
\begin{align}
			&\quad  \left\| (Z_{l_1}e^{\mu_1 i s \Lambda}  D_1 f_1) \cdot ({Z}_{l_2} e^{\mu_2 i s \Lambda} |\nabla| D_2 f_2)  \right\|_{L^2} \nonumber \\
			&\lesssim \|Z_{l_1}e^{\mu_1 i s \Lambda}  D_1 f_1\|_{L^2} \|{Z}_{l_2} e^{\mu_2 i s \Lambda} |\nabla| D_2 f_2\|_{L^\infty} \nonumber\\
				 &\lesssim 2^{- (10-a-b)k^+ +\frac52 k_2 - 20 k_2^+ }   2^{-(1+\beta)(l+k)-(1+\beta')m} \varepsilon_1^2, 
\end{align}
which is acceptable for \eqref{eq:hh-hl-goal-2}.

\medskip
\emph{Case 2}.  $l + k_2 \in [m-6, m+3]$.
\smallskip

In this case, we still have $l+k_1 \ge m+5$ and $Z_l^{(k)} = Z_l$. As in Case 1, it suffices to effectively bound \eqref{eq:26a} with $l_1,l_2 \in [l-2,l+2]$. Instead of \eqref{eq:G2-Linf}, by Lemma \ref{lem:slab}(i) we now have
\begin{align} \label{eq:525-11}
	\|Z_{l_2} e^{\mu_2 i s \Lambda} |\nabla| D_2 f_2\|_{L^\infty} &\lesssim 2^{\frac52 k_2 - m} \|F_2\|_{X_{\beta'}^{0,3}} \nonumber \\
	&\lesssim 2^{\frac52 k_2 - 20 k_2^+ - m} \varepsilon_1.
\end{align}
Using a direct  $L^2-L^\infty$ bound with  \eqref{eq:G1-L2}, \eqref{eq:525-11},  and the fact that $|k_2| \ge |m-l|- 6$, we obtain
\begin{align*}
	\left\| (Z_{l_1}e^{\mu_1 i s \Lambda} D_1 f_1) \cdot (Z_{l_2} e^{\mu_2 i s \Lambda} |\nabla| D_2 f_2)  \right\|_{L^2} &\lesssim \|Z_{l_1}e^{\mu_1 i s \Lambda}  D_1 f_1\|_{L^2} \|Z_{l_2} e^{\mu_2 i s \Lambda} |\nabla| D_2 f_2\|_{L^\infty} \\
	&\lesssim 2^{\frac52 k_2 - 20 k_2^+ -(10-a-b)k^+}   2^{-(1+\beta)(l+k)-m} \varepsilon_1^2 \\
	&\lesssim 2^{-\frac32 |m-l|-m} 2^{-|k_2|-(10-a-b)k^+} 2^{-(1+\beta)(l+k)} \varepsilon_1^2,
\end{align*}
which is again acceptable for \eqref{eq:hh-hl-goal-2}.

\medskip
\emph{Case 3}.  $l+k \le m+4$.  
\smallskip

As a first step, we use the $L^2 \to L^2$ boundedness of the localization operators and $e^{- \mu i  t \Lmd}$, as well as Lemma \ref{lem:L-bilinear} to deduce that
\begin{align*}
	\left\| Z_l^{(k)} P^\hp_k \Q[\m](f_1, f_2)\right\|_{L^2}\lesssim \sum \left\|(e^{\mu_1 i s\Lambda} D_1 f_1) \cdot (e^{\mu_2 i  s\Lambda} |\nabla| D_2 f_2)\right\|_{L^2},
\end{align*} 
where the summation runs over a finite number of possible combinations of $D_1, D_2$. Then we make the physical space decomposition
\begin{align} \label{eq:prod-decomp}
	&\quad (e^{\mu_1 i s\Lambda} D_1 f_1) \cdot (e^{\mu_2 i s\Lambda} |\nabla| D_2 f_2) \nonumber \\
	&= \left[Z_{\sim m-k_1} + Z_{\sim m-k_2} + (Id-Z_{\sim m-k_1} - Z_{\sim m-k_2})\right] \left[ ( e^{\mu_1 i s\Lambda} D_1 f_1) \cdot ( e^{\mu_2 i  s\Lambda} |\nabla| D_2 f_2) \right] \nonumber\\
	&=:I_1+I_2+I_3.
\end{align}
Here, $Z_{\sim m-k_i}$ stands for $Z_{[m-k_i-5, m-k_i+2]}$, $i=1,2$. Let us also write $\wt{Z}_{\sim m-k_i}$ for $Z_{[m-k_i-6, m-k_i+3]}$, $i=1,2$, and remark that $\wt{Z}_{\sim m-k_i} Z_{\sim m-k_i} = Z_{\sim m-k_i}$ and $\wt{Z}_{\sim m-k_1} Z_{\sim m-k_2} = \wt{Z}_{\sim m-k_2} Z_{\sim m-k_1} = 0$. Similar to Cases 1 and Case 2, using $L^2$--$L^\infty$ or $L^\infty$--$L^2$ bounds and Lemma \ref{lem:slab}, there holds that
\begin{align} \label{eq:case3-I1-L2}
	\|I_1\|_{L^2} &\lesssim \|Z_{\sim m-k_1} e^{\mu_1  i s\Lambda} D_1 f_1\|_{L^\infty} \|\wt{Z}_{\sim m-k_1} e^{\mu_2 i s\Lambda} |\nabla| D_2 f_2\|_{L^2} \nonumber \\
	&\lesssim 2^{\frac32 k_1-m} \|F_1\|_{X_{\beta'}^{0,3}} 2^{k_2-(1+\beta) m} \|F_2\|_{X_\beta^{0}}  \nonumber \\
	&\lesssim 2^{\frac32 k_1 -20 k_1^++k_2 - (10-a-b)k_2^+} 2^{-(2+\beta)m} \varepsilon_1^2,
\end{align}
and
\begin{align} \label{eq:case3-I2-L2}
	\|I_2\|_{L^2} &\lesssim \|Z_{\sim m-k_2} e^{\mu_1 i s\Lambda} D_1 f_1\|_{L^2} \|\wt{Z}_{\sim m-k_2} e^{\mu_2 i  s\Lambda} |\nabla| D_2 f_2\|_{L^\infty} \nonumber \\
	&\lesssim 2^{-(1+\beta)m} \|F_1\|_{X_\beta^{0}} \, 2^{\frac52 k_2 - m} \|F_2\|_{X^{0,3}_{\beta'}} \nonumber \\
	&\lesssim 2^{\frac52 k_2 -20 k_2^+ -(10-a-b)k_1^+} 2^{-(2+\beta)m} \varepsilon_1^2.
\end{align}
Writing $Z_{\nsim\nsim} = Id-Z_{\sim m-k_1} - Z_{\sim m-k_2}$ and consider an  operator $\wt{Z}_{\nsim\nsim}$ which is similar to $Z_{\nsim\nsim}$ only with slightly enlarged physical support and satisfies $\wt{Z}_{\nsim\nsim} Z_{\nsim\nsim} = Z_{\nsim\nsim}$ and $\wt{Z}_{\nsim\nsim} Z_{[m-k_i-4,m-k_i+1]} = 0, \ i=1,2$. Using an $L^2$--$L^\infty$ bound and Lemma \ref{lem:slab} again, we have
\begin{align} \label{eq:case3-I3-L2}
	\|I_3\|_{L^2} &\lesssim \| Z_{\nsim\nsim} e^{\mu_1 i s\Lambda} D_1 f_1 \|_{L^\infty} \|\wt{Z}_{\nsim\nsim} e^{\mu_2 i s\Lambda} |\nabla| D_2 f_2 \|_{L^2} \nonumber \\
	&\lesssim 2^{\frac32 k_1 -(1+\beta')m} \|F_1\|_{X^{0,3}_{\beta'}} \, 2^{k_2 - (1+\beta)m} \|F_2\|_{X^{0}_\beta} \nonumber\\
	&\lesssim 2^{\frac32 k_1 -20 k_1^+ +k_2 -(10-a-b)k_2^+} 2^{-(2+\beta+\beta')m} \varepsilon_1^2.
\end{align}
All three contributions \eqref{eq:case3-I1-L2}--\eqref{eq:case3-I3-L2} are acceptable for \eqref{eq:hh-hl-goal-2}.


\subsection{low-high interactions}

In this section, we consider the case $k_2 \ge k_1 + 11$, in particular, there holds that $|k_2 - k| \le 1$. Similar to Section \ref{sec:high-low-hp}, we shall consider each time slice with $s \in \mathcal{I}_m$, and show the following timewise bounds.

\begin{enumerate}[(1)]
	\item Suppose that $l + k  \ge m + 5$ and $a+b\le 10$, then for any $\mu_1 \in \{\pm\}$,
	\begin{align} \label{eq:l-h-goal-i'}
		&\sum_{k_1 \le k_2-11} \bigg| \sum_{\substack{\mu = \pm \\ \mu_2 = \pm}} \left< Z_l P^\hp_k \mathcal{Q}_\mua[\m_\mua](f_1,f_2), Z_l f \right>_{L^2} \bigg| \nonumber \\
		&\quad \quad \lesssim A_{k,l,m} 2^{-2(10-a-b)k^+ - 2(1+\beta)(l+k)} \varepsilon_1^3,
	\end{align}
	with $A_{k,l,m}$ given by \eqref{eq:A-def}.
	
	\item Suppose that $l+k \le m+4$ and $a+b \le 10$, then for any $\mua \in \{\pm\}^3$,
	\begin{align}  \label{eq:l-h-goal-ii'}
		\sum_{k_1 \le k_2-11} \left\|Z_l^{(k)} P^\hp_k \mathcal{Q}_\mua[\m_\mua] (f_1,f_2)\right\|_{L^2}  \lesssim A_{k,l,m} 2^{-(10-a-b)k^+ - (1+\beta)(l+k)} \varepsilon_1^2.
	\end{align}
\end{enumerate}

\medskip
\emph{Case 1}.  $l+k \ge m+5$ and $l + k_1 \notin [m-6, m+3]$.
\smallskip

In this case, the energy structure of the nonlinearity is not essential. To show \eqref{eq:l-h-goal-i'}, we start with a direct Cauchy-Schwarz estimate:
\begin{align}
	 \left| \left< Z_l P^\hp_k \mathcal{Q}[\m](f_1,f_2), Z_l f \right> \right|  &\lesssim \|Z_l P^\hp_k \mathcal{Q}[\m](f_1,f_2)\|_{L^2}  \|Z_l f\|_{L^2} \nonumber \\
	 &\lesssim \|Z_l P^\hp_k \mathcal{Q}[\m](f_1,f_2)\|_{L^2}  \left(2^{-(10-a-b)k^+-(1+\beta)(l+k)} \varepsilon_1\right).
\end{align}
The estimate \eqref{eq:easy-part} is still valid in this case, hence it suffices to effectively control
\begin{equation} \label{eq:416-1}
	\left\|(Z_{l_1} e^{\mu_1 i s \Lambda}D_1 f_1) \cdot (Z_{l_2} e^{\mu_2 i s \Lambda} |\nabla| D_2 f_2)\right\|_{L^2},
\end{equation}
 with $l_1, l_2 \in [l-2,l+2]$. Since $l_i+k_i \notin [m-4,m+1], \  i=1,2$, by Lemma \ref{lem:slab} we have
\begin{align}
	\|Z_{l_1}e^{\mu_1 i s \Lambda} D_1 f_1\|_{L^\infty} &\lesssim 2^{\frac32 k_1} 2^{-(1+\beta') m}  \|F_1\|_{X^{0,3}_{\beta'}} \nonumber \\
	&\lesssim 2^{\frac32 k_1 - 20 k_1^+} 2^{-(1+\beta')m} \varepsilon_1,
\end{align}
and
\begin{align}
	\|Z_{l_2}e^{\mu_2 i s \Lambda} |\nabla| D_2 f_2\|_{L^2} &\lesssim 2^{k_2} 2^{-(1+\beta) (l_2+k_2)}  \|F_2\|_{X^{0}_{\beta}} \nonumber \\
	&\lesssim 2^{ k -(10-a-b) k^+} 2^{-(1+\beta)(l+k)} \varepsilon_1.
\end{align}
Hence, we deduce that
\begin{align}
    &\quad \left\|(Z_{l_1} e^{\mu_1 i s \Lambda}D_1 f_1) \cdot (Z_{l_2} e^{\mu_2  i s \Lambda} |\nabla| D_2 f_2)\right\|_{L^2} \nonumber \\
    &\lesssim \|Z_{l_1}e^{\mu_1 i s \Lambda} D_1 f_1\|_{L^\infty} \|Z_{l_2}e^{\mu_2 i s \Lambda} |\nabla| D_2 f_2\|_{L^2} \nonumber \\
    &\lesssim 2^{\frac32 k_1 - 20 k_1^+ +k - (10-a-b) k^+}  2^{-(1+\beta')m}   2^{-(1+\beta)(l+k)} \varepsilon_1^2,
\end{align}
which gives an acceptable contribution to \eqref{eq:l-h-goal-i'}. Note that the factor $2^k$ could be large, but it is absorbed by the extra decay in $2^{m}$ due to $k < \delta_0 m$.

\medskip
\emph{Case 2}.  $l + k_1 \in [m-6, m+3]$.
\smallskip

In this case, to prove the trilinear estimate \eqref{eq:l-h-goal-i'} we have to exploit the energy structure of the nonlinearity captured by the identity \eqref{eq:ener-struc}. 

\medskip
\emph{Subcase 2.1}.  $a_2=a, b_2 = b$.
\smallskip

Since $a_1+a_2\le a, b_1+b_2\le b$, the assumption implies that $a_1 = b_1 = 0$. Let 
\begin{equation}
	\wt{P}_k^\hp f = \F^{-1} \bigg\{\wt{\chi}^\hp(\xi) \sum_{k' \in [k-2, k+2]} \varphi(2^{-k'} |\xi_h|)  \wh{f} \bigg\}, 
\end{equation}
then, since $|k_2 - k|\le 1$, there holds $\widetilde{P}_k^\hp P_k^\hp = P_k^\hp$ and $\widetilde{P}_k^\hp P_{k_2}^\hp = P_{k_2}^\hp$ simultaneously. 
For simplicity of presentation, we denote
\begin{equation}
	\quad g_2 = \widetilde{P}_k^\hp F_2 = \widetilde{P}_k^\hp S^a \Omega^b \Fp_{\mu_2},\quad g = \widetilde{P}_k^\hp F = \widetilde{P}_k^\hp S^a \Omega^b \Fp_\mu.
\end{equation}
 By the same reasoning as in \eqref{eq:easy-comm}--\eqref{eq:easy-part}, up to good remainder terms, in \eqref{eq:l-h-goal-i'} we can replace 
 $f_2$ with
 $$h_2 = P_{k_2}^\hp Z_{[l-2,l+2]} g_2,$$ 
 and also replace $f$ with
 $$h = P_k^\hp Z_{[l-1,l+1]} g.$$
 Namely, it is sufficient to effectively bound the modified trilinear form
\begin{align} \label{eq:11a}
 \sum_{\substack{\mu = \pm \\ \mu_2 = \pm}} \left< Z_l P^\hp_k \mathcal{Q}_\mua[\m_\mua](f_1, h_2), Z_l h \right>_{L^2}.
\end{align}

By Lemma \ref{lem:L-bilinear}, we can decompose $\Q_\mua[\m_\mua](f_1, h_2)$ in \eqref{eq:11a} into terms of the form
\begin{equation}\label{eq:416-2}
		 e^{-\mu i  s\Lmd }D_0[(e^{\mu_1 i  s \Lmd} D_1  f_1)\cdot(e^{\mu_2 i  s \Lmd} |\nabla| D_2  h_2)],
\end{equation}
where $D_{0,1,2}$ are 0th order differential operators generated by \eqref{eq:415-02}. Then, we can commute $Z_l$  with the operators $P_k^\hp e^{-\mu i  s \Lambda} D_0$ and ${P}_{k_2}^\hp e^{\mu_2 i  s \Lambda} D_2 |\nabla|$ within \eqref{eq:11a}, based on the commutator estimate \eqref{eq:comm-2}. Indeed, the first commutator term
\begin{align} \label{eq:11b}
	\left[Z_l, P^\hp_k e^{- \mu i  s \Lambda} D_0\right] \left( (e^{\mu_1 i s \Lambda} D_1 f_1)\cdot (e^{\mu_2 i s \Lambda} |\nabla| D_2 h_2) \right)
\end{align}
gives an acceptable contribution since
\begin{align} \label{eq:525-c1}
	\|\eqref{eq:11b}\|_{L^2} &\lesssim 2^{-l-k+m} \|e^{\mu_1 i s \Lambda} D_1 f_1\|_{L^\infty} \| |\nabla| h_2\|_{L^2} \nonumber \\
	&\lesssim 2^{-l-k+m} 2^{\frac32 k_1  - m} 2^{k_2 - (1+\beta)(l+k_2)} \|F_1\|_{{X}^{0,3}_{\beta'}} \|F_2\|_{\wt{X}^{0}_{\beta}} \nonumber \\
	&\lesssim 2^{\frac52 k_1 - 20 k_1^+ - (10-a-b)k^+} 2^{-m-(1+\beta)(l+k)} \varepsilon_1^2 \nonumber \\
	&\lesssim  2^{- |k_1| - (10-a-b)k^+} 2^{-\frac32|m-l|-m-(1+\beta)(l+k)} \varepsilon_1^2.
\end{align}
Here, the $\wt{X}$ norm is defined by replacing $P^\hp$ with $\wt{P}^\hp$ in the definition of corresponding the $X$ norms, and in view of \eqref{eq:729-001}--\eqref{eq:729-002} we have  that  
$$\|F_2\|_{\wt{X}^{0}_{\beta}} \lesssim \|F_2\|_{{X}^{0}_{\beta}} + \|F_2\|_{{Y}^{0,2}_{\beta}} \lesssim 2^{-(10-a-b)k^+} \varepsilon_1.$$
In the last line of \eqref{eq:525-c1} we also used that $k_1 \sim m-l$. The second commutator term
\begin{align} \label{eq:11c}
	P^\hp_k e^{- \mu i  s \Lambda} D_0 \left( (e^{\mu_1 i s \Lambda} D_1 f_1)\cdot \Big([Z_l, e^{\mu_2 i s \Lambda} |\nabla| D_2 P_{k_2}^\hp] Z_{[l-2,l+2]} g_2 \Big) \right)
\end{align}
can be estimated similarly as 
\begin{align*}
	\|\eqref{eq:11c}\|_{L^2} &\lesssim 2^{-l+m} \|e^{\mu_1 i s \Lambda} D_1 f_1\|_{L^\infty} \| Z_{[l-2,l+2]} g_2\|_{L^2} \\
	&\lesssim 2^{-l+m} 2^{\frac32 k_1 - m} 2^{ - (1+\beta)(l+k_2)} \|F_1\|_{X^{0,3}_{\beta'}} \|F_2\|_{\wt{X}^{0}_{\beta}} \\
	&\lesssim 2^{\frac52 k_1 - 20 k_1^+ - (10-a-b)k^+} 2^{-m-(1+\beta)(l+k)} \varepsilon_1^2 \\
	&\lesssim  2^{- |k_1| - (10-a-b)k^+} 2^{-\frac32|m-l|-m-(1+\beta)(l+k)} \varepsilon_1^2.
\end{align*}
By such two commutations within \eqref{eq:11a}, we obtain the expression
\begin{align} \label{eq:11d}
	P^\hp_k e^{- \mu i  s \Lambda} D_0 \left( (e^{\mu_1 i s \Lambda} D_1 f_1)\cdot \big(e^{\mu_2 i s \Lambda} |\nabla| D_2 P_{k_2}^\hp Z_{l} g_2 \big) \right).
\end{align}
Summing over all versions of $D_{0,1,2}$, we get
\begin{align} \label{eq:12a}
	 Z_lP_k^\hp \Q_\mua[\m_\mua] (f_1, h_2) - \textit{comm.} =  P^\hp_k \Q_\mua[\m_\mua] (f_1, P_{k_2}^\hp Z_{l} g_2),
\end{align}
where we write \emph{comm.} for the \emph{acceptable commutator terms}. For the second factor $Z_l h$ in the inner product of \eqref{eq:11a}, we commute $Z_l$ with $P_k^\hp$ with the identity
\begin{align} \label{eq:12b}
	Z_l P_k^\hp Z_{[l-1,l+1]} g - [Z_l, P_k^\hp] Z_{[l-1,l+1]} g = P^\hp_k Z_l g.
\end{align}
The commutator term in \eqref{eq:12b} is also acceptable. Indeed, using \eqref{eq:comm-2} with $m=0$, we have
\begin{align*}
	&\quad \left|\left< Z_l P^\hp_k \Q[\m](f_1, h_2), [Z_l, P^\hp_k]Z_{[l-1,l+1]} g \right>_{L^2}\right|  \\
	&\lesssim 2^{k_2} \|e^{\mu_1 i  s \Lmd} f_1\|_{L^\infty} \|h_2\|_{L^2} \cdot 2^{-l-k} \|Z_{[l-1,l+1]} g\| \\
	&\lesssim 2^{-l} 2^{\frac32 k_1 -m} 2^{-2(1+\beta)(l+k)} \|F_1\|_{X^{0,3}_{\beta'}} \|F_2\|_{\wt{X}^0_\beta} \|F\|_{\wt{X}^0_\beta} \\
	&\lesssim 2^{\frac52 k_1 - 20k_1^+ -2m} 2^{-2(1+\beta)(l+k)-2(10-a-b)k^+} \varepsilon_1^3.
\end{align*}

Next, expressing the inner product between the right of \eqref{eq:12a} and \eqref{eq:12b} on the Fourier side, and symmetrizing the roles of $\xi, \eta$, we obtain
\begin{align} \label{eq:13a}
	&\quad \sum_{\mu, \mu_2} \left<P^\hp_k \mathcal{Q}_\mua[\m_\mua](f_1, P_{k_2}^\hp Z_{l} g_2), P^\hp_k Z_l g\right> \nonumber\\ 
	&= \sum_{\mu, \mu_2} \iint e^{i s \Phi_\mua} \m_\mua (\xi, \eta) \left[\chi_k^\hp(\xi)\right]^2 \chi_{k_2}^\hp(\eta) \widehat{f_1}(\xi-\eta)  \widehat{Z_l g_2}(\eta) \overline{\widehat{Z_l g}(\xi)} d\xi  d\eta \nonumber \\
	&\stackrel{\eqref{eq:ener-struc}}{=} \frac12 \sum_{\mu, \mu_2}  \iint  e^{i s \Phi_\mua} \mathfrak{m}_\mua(\xi, \eta) \left[ \left(\chi_k^\hp\right)^2(\xi) \chi_{k_2}^\hp(\eta) - \left(\chi_k^\hp\right)^2(\eta) \chi_{k_2}^\hp(\xi)\right] \widehat{f_1}(\xi-\eta)  \widehat{Z_l g_2}(\eta) \overline{\widehat{Z_l g}(\xi)} d\xi  d\eta.
\end{align}
Let $\mathfrak{n}(\xi, \eta) = \chi_{k_1}^\hp(\xi-\eta) \big[\left(\chi_k^\hp\right)^2(\xi) \chi_{k_2}^\hp(\eta) - \left(\chi_k^\hp\right)^2(\eta) \chi_{k_2}^\hp(\xi)\big]$. Using the fact that
\begin{equation} \label{eq:813-pp1}
	\left\|\F_{\xi,\eta}^{-1} \mathfrak{n} \right\|_{L^1} \lesssim 2^{-k+k_1}
\end{equation}
($\F_{\xi, \eta}^{-1}$ stands for the inverse Fourier transform in both $\xi$ and $\eta$) and the physical product structure of $\m$ given by Lemma \ref{lem:L-bilinear}, we obtain that
\begin{align*}
	|\eqref{eq:13a}| &\lesssim \sum_{\mu, \mu_2} 2^{-k+k_1+k_2} \|e^{\mu_1 i s \Lambda} f_1\|_{L^\infty} \|Z_l g_2\|_{L^2} \|Z_l g\|_{L^2}   \\
	&\lesssim  \sum_{\mu, \mu_2} 2^{\frac52 k_1 - m} 2^{- 2(1+\beta)(l+k)} \|F_1\|_{X^{0,3}_{\beta'}} \|F_2\|_{\wt{X}^0_\beta} \|F\|_{\wt{X}^0_\beta} \\
	&\lesssim 2^{\frac52 k_1 - 20k_1^+ - m} 2^{-2(10-a-b)k^+ - 2(1+\beta)(l+k)} \varepsilon_1^2,
\end{align*}
which is again acceptable since $k_1 \sim m-l$.  To see \eqref{eq:813-pp1}, we point out that
\begin{align}
	&\iint \left|\mathfrak{n}\right|  d\xi d\eta + \sum_{N=1}^{7} \iint  \left( 2^{N k} |(\nabla_{\xi} + \nabla_\eta)^N \mathfrak{n}| + 2^{N k_1} |(\nabla_{\xi} - \nabla_\eta)^N \mathfrak{n}| \right) d\xi d\eta \nonumber \\
	& \qquad  \lesssim 2^{-k+k_1+3k+3k_1} = 2^{2k+4k_1}.
\end{align}

\medskip
\emph{Subcase 2.2}.  $a_2<a$ or $b_2 < b$.
\smallskip

By assumption, we have $a_1 + b_1 = a+b-(a_2+b_2)\ge 1$. A direct $L^\infty$--$L^2$ estimate gives
\begin{align*}
	\eqref{eq:416-1} &\lesssim \|Z_{l_1} e^{\mu_1 i s \Lambda}  D_1 f_1\|_{L^\infty} \|Z_{l_2}e^{\mu_2 i s \Lambda} |\nabla| D_2 f_2\|_{L^2} \\
	&\lesssim 2^{\frac32 k_1 - m} 2^{k_2 - (1+\beta)(l_2+k_2)} \|F_1\|_{X^{0,3}_{\beta'}} \|F_2\|_{X^0_\beta} \\
	& \lesssim 2^{\frac32 k_1 - 20k_1^+ - m} 2^{k_2-(10-a_2-b_2)k_2^+ - (1+\beta)(l_2+k_2)} \varepsilon_1^2 \\
	&\lesssim 2^{\frac32 k_1 - 20k_1^+ - m} 2^{-(10-a-b)k^+ - (1+\beta)(l+k)} \varepsilon_1^2,
\end{align*}
which is acceptable since $k_1 \sim m-l$.

\medskip
\emph{Case 3}.  $l+k \le m+4$.
\smallskip

Similar to Case 3 of Section \ref{sec:high-low-hp}, we define $I_1,I_2, I_3$ as in \eqref{eq:prod-decomp}. Using Lemma \ref{lem:slab}, there holds that 
\begin{align} \label{eq:case3-I1-L2-lh}
	\|I_1\|_{L^2} &\lesssim \|Z_{\sim m-k_1} e^{\mu_1 i s\Lambda} D_1 f_1\|_{L^\infty} \|\wt{Z}_{\sim m-k_1} e^{\mu_2 i  s\Lambda} |\nabla| D_2 f_2\|_{L^2} \nonumber \\
	&\lesssim 2^{\frac32 k_1-m} \|F_1\|_{X_{\beta'}^{0,3}}\, 2^{k_2-(1+\beta) (m-k_1+k_2)} \|F_2\|_{X_\beta^{0}}  \nonumber \\
	&\lesssim 2^{\frac52  k_1 - 20 k_1^+ -\beta (k_2-k_1) - (10-a-b)k^+} 2^{-(2+\beta)m} \varepsilon_1^2,
\end{align}
and
\begin{align} \label{eq:case3-I3-L2-lh}
	\|I_3\|_{L^2} &\lesssim \| Z_{\nsim\nsim} e^{\mu_1 i s\Lambda} D_1 f_1 \|_{L^\infty} \|\wt{Z}_{\nsim\nsim} e^{\mu_2 i s\Lambda} |\nabla| D_2 f_2 \|_{L^2} \nonumber \\
	&\lesssim 2^{\frac32 k_1 -(1+\beta')m} \|F_1\|_{X^{0,3}_{\beta'}}  2^{k_2 - (1+\beta)m} \|F_2\|_{X^{0}_\beta} \nonumber\\
	&\lesssim 2^{\frac32 k_1 - 20 k_1^+ +k -(10-a-b)k^+} 2^{-(2+\beta+\beta')m} \varepsilon_1^2,
\end{align}
both of which are acceptable contributions to \eqref{eq:l-h-goal-ii'}. For $I_2$, a similar analysis would lead to
\begin{align} \label{eq:case3-I2-L2-lh}
	\|I_2\|_{L^2} &\lesssim \|Z_{\sim m-k_2} e^{\mu_1 i s\Lambda} D_1 f_1\|_{L^2} \|\wt{Z}_{\sim m-k_2} e^{\mu_2 i  s\Lambda} |\nabla| D_2 f_2\|_{L^\infty} \nonumber \\
	&\lesssim 2^{-(1+\beta)m} \|F_1\|_{X_\beta^{0}} 2^{\frac52 k_2 - m} \|F_2\|_{X^{0,3}_{\beta'}} \nonumber \\
	&\lesssim 2^{\frac52 k - 20 k^+ - (10-a-b) k_1^+} 2^{-(2+\beta)m} \varepsilon_1^2
\end{align}
However, we point out that \eqref{eq:case3-I2-L2-lh} is not summable in $k_1 \ll k$, thus a refined estimate is required. By Lemma \ref{lem:slab}(i) we have the pointwise bound
\begin{align}
	|\wt{Z}_{\sim m-k_2} e^{\mu_2 i  s\Lambda} |\nabla| D_2 f_2| &\lesssim 2^{\frac52 k_2 - m} \langle |x_h| \rangle^{-\frac12} \|F_2\|_{X^{0,3}_{\beta'}} \nonumber \\
	&\lesssim 2^{\frac52 k_2 - 20 k_2^+ - m} \langle |x_h| \rangle^{-\frac12} \varepsilon_1,
\end{align}
and by Lemma \ref{lem:slab}(iii) we have
\begin{align}
	\|{Z}_{\sim m-k_2} e^{\mu_1 i  s\Lambda} D_1 f_1\|_{L^\infty_{x_h} L^2_{x_3}} &\lesssim 2^{-(1+\beta)m+k_1} \|F_1\|_{X^0_\beta} \\
	&\lesssim 2^{-(1+\beta)m+k_1} 2^{-(10-a-b)k_1^+} \varepsilon_1.
\end{align}
Then, we estimate $\|I_2\|_{L^2}$ on two sets $\mathcal{C}_1 = \{x \in \R^3: |x_h|<2^{-\frac{2k_1}{3}}\}$ and
$\mathcal{C}_2 = \R^3 \setminus \mathcal{C}_1$ separately as
\begin{align} \label{eq:416-01}
	\|I_2\|_{L^2(\mathcal{C}_1)} &\lesssim 2^{-\frac23 k_1} \|Z_{\sim m-k_2} e^{\mu_1 i s\Lambda} D_1 f_1\|_{L^\infty_{x_h} L^2_{x_3}(\mathcal{C}_1)} \|\wt{Z}_{\sim m-k_2} e^{\mu_2 i  s\Lambda} |\nabla| D_2 f_2\|_{L^\infty(\mathcal{C}_1)} \nonumber \\
	&\lesssim 2^{-\frac{2k_1}{3}} 2^{-(1+\beta)m+k_1} 2^{-(10-a-b)k_1^+} 2^{\frac52 k_2 - 20 k_2^+ - m} \varepsilon_1^2 \nonumber \\
	&\lesssim 2^{\frac{k_1}{3} - (10-a-b) k_1^+ } 2^{\frac52 k -20 k^+ } 2^{-(2+\beta)m} \varepsilon_1^2,
\end{align}
and
\begin{align} \label{eq:416-02}
	\|I_2\|_{L^2(\mathcal{C}_2)} &\lesssim  \|Z_{\sim m-k_2} e^{\mu_1 i s\Lambda} D_1 f_1\|_{L^2(\mathcal{C}_2)} \|\wt{Z}_{\sim m-k_2} e^{\mu_2 i  s\Lambda} |\nabla| D_2 f_2\|_{L^\infty(\mathcal{C}_2)} \nonumber \\
	&\lesssim  2^{-(1+\beta)m} 2^{-(10-a-b)k_1^+} 2^{\frac52 k_2 - 20 k_2^+ - m} 2^{\frac{k_1}{3}}\varepsilon_1^2 \nonumber \\
	&\lesssim 2^{\frac{k_1}{3} - (10-a-b) k_1^+} 2^{\frac52 k - 20 k^+ } 2^{-(2+\beta)m} \varepsilon_1^2.
\end{align}
Due to the common factor $2^{\frac{k_1}{3}}$, we can sum \eqref{eq:416-01} and \eqref{eq:416-02} over $k_1 \ll k$ and both of them are acceptable contributions to \eqref{eq:l-h-goal-ii'}.

\subsection{high-high interactions, $\mu_1 \neq \mu_2$} \label{sec:511}

Consider the case $|k_1 - k_2| \le 11$ and $\mu_1 = - \mu_2$. In this case, there holds $k_1 \ge k-12$ and $k_2 \ge k-12$. The method here is in fact simpler than the previous two types of interactions (high-low, low-high). We apply $L^2$--$L^\infty$ or $L^\infty$--$L^2$ estimates in two separate cases. 

\medskip
\emph{Case 1}. $l+k \ge m + 16$.
\smallskip

Similar to Case 1 of Section \ref{sec:high-low-hp}, we may consider, for $l_1, l_2 \in [l-2, l+2]$, the following bound.
\begin{align} \label{eq:hh-hh-case1-1}
	&\quad \|(Z_{l_1}e^{\mu_1 i s \Lambda} D_1 f_1) \cdot (Z_{l_2}e^{\mu_2 i  s \Lambda} |\nabla| D_2 f_2)\|_{L^2} \nonumber \\
	&\le \|Z_{l_1} e^{\mu_1 i s \Lambda} D_1 f_1\|_{L^2}  \|Z_{l_2} e^{\mu_2 i  s \Lambda} |\nabla| D_2 f_2\|_{L^\infty} \nonumber \\
	&\lesssim 2^{-(1+\beta) (l_1+k_1)} \|F_1\|_{X^{0}_{\beta}} 2^{\frac52 k_2 -(1+\beta') m} \|F_2\|_{X^{0,3}_{\beta'}} \nonumber \\
	&\lesssim 2^{-(1+\beta) (l_1+k_1)-(10-a-b)k_1^+} 2^{\frac52 k_2 -(1+\beta') m - 20k_2^+} \varepsilon_1^2 \nonumber \\
	&\lesssim 2^{-(1+\beta')m} 2^{-(10-a-b)k^+ - (1+\beta)(l+k)}  \varepsilon_1^2.
	\end{align}
This gives an acceptable contribution.

\medskip
\emph{Case 2}.  $l + k \le m + 15 $.
\smallskip

We make the decomposition
\begin{align} \label{eq:prod-decomp-2}
	&\quad (e^{\mu_1 i s\Lambda} D_1 f_1) \cdot (e^{\mu_2 i  s\Lambda} |\nabla| D_2 f_2) \nonumber \\
	&= \left[Z^{-\mu_1}_{[m-k_1-4, m-k_1+1]} + (\mbox{Id}-Z^{-\mu_1}_{[m-k_1-4, m-k_1+1]})\right] \left[ ( e^{\mu_1 i s\Lambda} D_1 f_1) \cdot ( e^{\mu_2 i  s\Lambda} |\nabla| D_2 f_2) \right] \nonumber\\
	&=:I_1+I_2.
\end{align}
Using Lemma \ref{lem:slab}, there holds that
\begin{align}
	I_1 &\le  \|Z_{[m-k_1-4, m-k_1+1]}^{-\mu_1} e^{\mu_1 i s \Lambda} D_1 f_1\|_{L^\infty}  \|Z_{[m-k_1-5, m-k_1+2]}^{-\mu_1} e^{\mu_2 i  s \Lambda} |\nabla| D_2 f_2\|_{L^2} \nonumber \\
	&\lesssim 2^{\frac32 k_1 - m} \|F_1\|_{X^{0,3}_{\beta'}} 2^{ k_2 - (1+\beta) m} \|F_2\|_{X^{0}_{\beta}} \nonumber \\
	&\lesssim 2^{\frac32 k_1 - m - 20 k_1^+} 2^{k_2 - (1+\beta) m - (10-a-b)k_2^+} \varepsilon_1^2 \nonumber \\
	&\lesssim 2^{-|k_1|-(10-a-b)k^+ - (2+\beta)m}  \varepsilon_1^2,
\end{align}
and
\begin{align}
	I_2 &\le  \|(\mbox{Id}-Z^{-\mu_1}_{[m-k_1-4, m-k_1+1]}) e^{\mu_1 i s \Lambda} D_1 f_1\|_{L^2}  \|e^{\mu_2 i  s \Lambda} |\nabla| D_2 f_2\|_{L^\infty} \nonumber \\
	&\lesssim 2^{-(1+\beta) m} \|F_1\|_{X^{0}_{\beta}} 2^{\frac52 k_2 - m} \|F_2\|_{X^{0,3}_{\beta'}} \nonumber \\
	&\lesssim 2^{-(1+\beta) m -(10-a-b)k_1^+} 2^{\frac52 k_2 - m - 20k_2^+} \varepsilon_1^2 \nonumber \\
	&\lesssim 2^{-|k_2| -(10-a-b)k^+ - (2+\beta)m}  \varepsilon_1^2.
\end{align}
%
Both contributions are acceptable.

\subsection{high-high interactions, $\mu_1 = \mu_2$} \label{sec:high-high}

Consider the case $|k_1 - k_2| \le 11$ (hence $k_1 \ge k-12$ and $k_2 \ge k-12$) and $\mu_1 = \mu_2$. We can take $\mu_1 = \mu_2 = +$ as the opposite case can be treated in the same way up to a reflection with respect to the $x_h$ plane. 

First of all, the case $l+k \ge m + 16$ can be treated similarly as in Case 1 of Section \ref{sec:511}. Hence, in the sequel we may assume that $l+k\le m+15$.

Then, we make use of the following decompositions
\begin{align}
		\Q[\m](f_1, f_2) = 	\Q[\m^{(1)}+\m^{(2)}](f_1, f_2)+	\Q[\m^{(3)}](f_1, f_2),
\end{align}
and
\begin{align} \label{eq:422-1}
	\Q[\m^{(1)}+\m^{(2)}](f_1, f_2) &= \sum_{q_1> - m\alpha} \Q[\m^{(1)}+\m^{(2)}](P^{\hp, q_1} f_1, P^{\hp, \le q_1-2} f_2) \nonumber \\ 
	&\quad + \sum_{q_2 > - m\alpha } \Q[\m^{(1)}+\m^{(2)}](P^{\hp, \le q_2-2} f_1, P^{\hp, q_2} f_2) \nonumber \\
	&\quad + \sum_{\substack{|q_1 - q_2| \le 1, \\ \max\{q_1, q_2\}>-m \alpha}} \Q[\m^{(1)}+\m^{(2)}](P^{\hp, q_1} f_1, P^{\hp, q_2} f_2) \nonumber \\
	&\quad + \Q[\m^{(1)}+\m^{(2)}](P^{\hp,\le -m \alpha} f_1, P^{\hp, \le -m \alpha} f_2) \nonumber \\
	&=: S_1 + S_2 + S_3+ S_4,
\end{align}
and
\begin{align} \label{eq:422-1'}
	\Q[\m^{(3)}](f_1, f_2) &= \sum_{q_1> - \frac{9m}{10}} \Q[\m^{(3)}](P^{\hp, q_1} f_1, P^{\hp, \le q_1-2} f_2) \nonumber \\ 
	&\quad + \sum_{q_2 > - \frac{9m}{10} } \Q[\m^{(3)}](P^{\hp, \le q_2-2} f_1, P^{\hp, q_2} f_2) \nonumber \\
	&\quad + \sum_{\substack{|q_1 - q_2| \le 1, \\ \max\{q_1, q_2\}>- \frac{9m}{10}}} \Q[\m^{(3)}](P^{\hp, q_1} f_1, P^{\hp, q_2} f_2) \nonumber \\
	&\quad + \Q[\m^{(3)}](P^{\hp,\le - \frac{9m}{10}} f_1, P^{\hp, \le - \frac{9m}{10}} f_2) \nonumber \\
	&=: S_1' + S_2' + S_3'+ S_4',
\end{align}
in which $\m^{(1)}, \m^{(2)}$ and $\m^{(3)}$ are given by Proposition \ref{prop:nonlinear}. Here, the parameters $q_1,q_2$ are always taking values in $\Z_-$.

\subsubsection{Estimates for $S_i$ and $S_i', \ i=1,2,3$ via spacetime resonances}

In this section, we treat $S_1, S_2$ and $S_3$ defined in \eqref{eq:422-1}, as well as   $S_1', S_2'$ and $S_3'$ defined in \eqref{eq:422-1'}. We shall first work on the case $q_1, q_2 \gtrsim -m\alpha$ for $\Q[\m]$. Then we show the improved bounds for $\Q[\m^{(3)}]$ based on the GPW null condition on $\m^{(3)}$ (see Proposition \ref{prop:nonlinear}) which allows us to treat more negative $q_1, q_2$.

\medskip
{\bf $S_1, S_2$ and $S_1', S_2'$}
\smallskip

We consider the estimates for $S_1$ and $S_1'$, and the method for $S_2$ and $S_2'$ is similar by exchanging the roles of the two inputs.  

Decompose $f_1 =  Z_{\le l_1} f_1 +  Z_{> l_1} f_1$ and $f_2 =  Z_{\le l_2} f_2 +  Z_{> l_2} f_2$ with 
\begin{equation} \label{eq:422-2}
	l_1 = (1-\delta)m-k_1, \quad l_2 = (1-\delta)m-k_2.
\end{equation}
To treat the $Z_{\le l_1} f_1$ part, we use integration by parts along vector fields $S_\eta, \Omega_\eta$.  Note that on the support of $\chi_{k_1}^{\hp, q_1}(\xi-\eta) \chi_{k_2}^{\hp, \le q_1-2}(\eta)$, there holds that
\begin{equation}
	|\bar\sigma| = |(\xi-\eta)_h| |\eta_h| \bigg|\frac{(\xi-\eta)_3}{|(\xi-\eta)_h|} \frac{\eta_h}{|\eta_h|} - \frac{\eta_3}{|\eta_h|} \frac{(\xi-\eta)_h}{|(\xi-\eta)_h|}\bigg| \sim 2^{q_1+2k_1},
\end{equation}
and
\begin{equation}
	|S_\eta \Phi| + |\Omega_\eta \Phi| \sim \frac{|(\xi-\eta)_h||\bar{\sigma}|}{|\xi-\eta|^3} \sim 2^{q_1}.
\end{equation}
Based on \eqref{eq:429-001}, if $f_1$ is replaced by $Z_{\le l_1} f_1$, integration by parts in $V_\eta \in \{S_\eta, \Omega_\eta\}$ gains a factor bounded by 
\begin{equation} \label{eq:813-uu1}
	s^{-1} (2^{-q_1} +2^{l_1+k_1}) \lesssim s^{-\delta}.
\end{equation}
Note that here we do not need to localize further in the size of $\Lmd(\xi)$. To see \eqref{eq:813-uu1}, we analyze the main terms arising from \eqref{eq:429-001} (at time $s \in \mathcal{I}_m$ and with $L = 2^{q_1+2k_1}$):
\begin{enumerate}[(a)]
	\item There is always a gain of $s^{-1} 2^{-q_1}$ due to the denominator $s V_\eta \Phi$. Other factors are generated according to where $V_\eta$ lands.
	\item If $V_\eta$ lands on $(\chi^{\hp,\le q_1 -2}\wh{f}_2)(\eta)$, this produces a bounded factor  as $V_\eta$ is a good derivative for $\chi^{\hp,\le q_1 -2} \wh{f}_2$.
	\item If $V_\eta$ lands on $\chi^{\hp, q_1}(\xi-\eta) \F (Z_{\le l_1} f_1)(\xi-\eta)$, this produces a factor $\lesssim 1 + 2^{l_1+q_1+k_1}$ using the identities \eqref{eq:805-bb1}--\eqref{eq:805-bb2}.
	\item If $V_\eta$ lands on $\chi_{_{V_\eta}}$ or $\frac{1}{V_\eta\Phi}$, this produces a factor of size $\left|\frac{V_\eta V_\eta' \Phi}{V_\eta \Phi}\right|  \lesssim 1$ ($V_\eta' \in \{S_\eta, \Omega_\eta\}$) due to $k_1 \sim k_2$ and $|\Omega_\eta \bar{\sigma}|=|\xi_3 \eta_h| \le |(\xi-\eta)_3 \eta_h| + |\eta_3 \eta_h| \lesssim L$. 
	\item If $V_\eta$ lands on the multiplier or the coefficients in \eqref{eq:805-bb1}--\eqref{eq:805-bb2} that arise from previous steps of integration by parts, this leads to a bounded factor using the identities \eqref{eq:805-bb4}--\eqref{eq:805-bb7}.
\end{enumerate}
Then, repeated integration by parts in $V_\eta$ gives fast pointwise decay in time for the Fourier transform of $\Q[\m](P^{\hp, q_1} Z_{\le l_1} f_1, P^{\hp, \le q_1-2} f_2)$ which is more than needed. Similarly, repeated integration by parts in $V_{\xi-\eta}$ also gives fast decay in time if $f_2$ is replaced by $ Z_{\le l_2} f_2$.

To treat the $Z_{> l_i} f_i$ parts, we use the $X$ norm bounds and set size gain (cf. \cite[Section A.2]{GuoInvent}) to get
\begin{align} \label{eq:422-3}
	\|\Q[\m]({P}^{\hp, q_1} Z_{> l_1} f_1, {P}^{\hp, \le q_1-2} Z_{> l_2} f_2)\|_{L^2} &\lesssim 2^{k+\frac{3k}{2} }\|Z_{> l_1} f_1\|_{L^2} \|Z_{> l_2} f_2\|_{L^2} \nonumber \\
	 &\lesssim 2^{\frac{5k}{2}  - 2(1+\beta) (1-\delta)m } \|F_1\|_{X^0_\beta} \|F_2\|_{X^0_\beta} \nonumber \\
	 &\lesssim 2^{\frac{5k}{2}  - 2(1+\beta) (1-\delta)m - (20-a-b)k_1^+  } \varepsilon_1^2,
\end{align}
which gives an acceptable contribution. 

Note that in this step we only require that $q_1 \gtrsim -(1-\delta) m$, and it also works if $\m$ is replaced by $\m^{(3)}$.

\medskip
{\bf $S_3$ and $S_3'$}
\smallskip

Next, we consider the estimates for $S_3$ and $S_3'$. In order to utilize the multiplier bounds in \cite[Lemma A.8]{GuoInvent}, we also localize the output frequency in parameter $q$ as
\begin{align} \label{422-2-1}
P_k^{\hp} \Q[\m](P^{\hp, q_1}f_1, P^{\hp, q_2}f_2) &= \sum_{ q_1+5 \le q \le -1} P^{\hp} P^{\hp,q}_k \Q[\m](P^{\hp, q_1}f_1, P^{\hp, q_2}f_2) \nonumber \\
&\quad + P^{\hp} P^{\hp,\le q_1+4}_k \Q[\m](P^{\hp, q_1}f_1, P^{\hp, q_2}f_2).
\end{align}
(If $q_1+4 \ge 0$, we take $P^{\hp, \le q_1+4} = P^{\hp, \le -1}$.) For the terms with $q\ge q_1+5$ there holds that 
\begin{equation}
	|\Phi| = \big|-\mu \Lmd(\xi) + \mu_1 \Lmd(\xi-\eta) + \mu_2 \Lmd (\eta)\big| \gtrsim 2^q,
\end{equation}
and thus we consider such terms as ``nonresonant". For the last term in \eqref{422-2-1}, we make a further decomposition of the phase space towards normal form:
\begin{equation}
	1 \equiv  \chi^{\res}(\xi,\eta) +\chi^{\nr}(\xi, \eta) := \psi(2^{-q_1+5} \Phi) + (1-\psi)(2^{-q_1+5} \Phi).
\end{equation}
Let us first treat the resonant part, \emph{i.e.},
\begin{equation} \label{eq:51-1}
	P^{\hp,\le q_1+4}_k \Q[\m \chi^\res](P^{\hp, q_1} f_1, P^{\hp, q_2} f_2).
\end{equation} 
 Note that on the support of $\chi^{\res} \chi_{k_1}^{\hp, q_1}(\xi-\eta) \chi_{k_2}^{\hp, q_2}(\eta)$, we have (cf. \cite[Proposition 6.2]{GuoCpam} or \cite[Proposition 5.2]{GuoInvent})
\begin{equation}
	\Lmd(\xi) \lesssim 2^{q_1}, \quad |\bar{\sigma}| \sim  2^{k+k_1+q_1},
\end{equation}
and
\begin{equation}
	|S_\eta \Phi| + |\Omega_\eta \Phi| \sim  2^{k - k_1+q_1}.
\end{equation}
Consider the following two cases:

\medskip
\emph{Case 1. $q_1 + k - k_1 \gtrsim -(1-\delta)m$.} Decompose $f_1 =  Z_{\le l_1} f_1 +  Z_{> l_1} f_1$ and $f_2 =  Z_{\le l_2} f_2 +  Z_{> l_2} f_2$ where $l_1, l_2$ are taken as in \eqref{eq:422-2}. For the $Z_{\le l_1} f_1$ part, each step of integration by parts in $V_\eta \in \{S_\eta, \Omega_\eta\}$  gains a factor bounded by 
\begin{equation} \label{eq:factor-001}
	s^{-1} (2^{-q_1 -k +k_1} + 2^{-q_1} + 2^{l_1+k_1}) \lesssim s^{-\delta}.
\end{equation} 
To see this, we analyze the main terms arising from \eqref{eq:429-001} (with $L = 2^{k+k_1+q_1}$):
\begin{enumerate}[(a)]
	\item There is always a gain of $s^{-1} 2^{-q_1-k+k_1}$ due to the denominator $s V_\eta \Phi$. Other factors are generated according to where $V_\eta$ lands.
	\item If $V_\eta$ lands on $(\chi^{\hp,q_2}\wh{f}_2)(\eta)$, this produces a bounded factor  as $V_\eta$ is a good derivative for $\chi^{\hp,q_2} \wh{f}_2$.
	\item If $V_\eta$ lands on $\chi^{\res}(\xi) \chi^{\hp, q_1}(\xi-\eta) \F (Z_{\le l_1} f_1)(\xi-\eta)$, this produces a factor $\lesssim 1 + 2^{k-k_1}+ 2^{l_1+k+q_1}$ using the identities \eqref{eq:805-bb1}--\eqref{eq:805-bb2}.
	\item If $V_\eta$ lands on $\chi_{_{V_\eta}}$ or $\frac{1}{V_\eta\Phi}$, this produces a factor of size $\left|\frac{V_\eta V_\eta' \Phi}{V_\eta \Phi}\right|  \lesssim 1$ ($V_\eta' \in \{S_\eta, \Omega_\eta\}$) due to $k_1 \sim k_2$ and $|\Omega_\eta \bar{\sigma}|=|\xi_3 \eta_h| \lesssim L$. 
	\item If $V_\eta$ lands on the multiplier or the coefficients in \eqref{eq:805-bb1}--\eqref{eq:805-bb2} that arise from previous steps of integration by parts, this leads to a bounded factor using the identities \eqref{eq:805-bb4}--\eqref{eq:805-bb7}.
\end{enumerate}
Similarly, for the $Z_{\le l_2} f_2$ part, each step of integration by parts in $V_{\xi-\eta}$  gains a factor bounded by 
\begin{equation}
	s^{-1} (2^{-q_2 -k + k_2} + 2^{-q_2} + 2^{l_2+k_2}) \lesssim s^{-\delta}.
\end{equation} 
Hence, repeated integration by parts gives fast decay of \eqref{eq:51-1} if $f_1$ is replaced by $ Z_{\le l_1} f_1$ or if $f_2$ is replaced by $ Z_{\le l_2} f_2$. Then, we treat the  $Z_{> l_i} F_i$ parts in a similar way as in \eqref{eq:422-3},
\begin{align}
	&\quad \|P^{\hp, \le q_1+4}_k \Q[\m \chi^\res]({P}^{\hp, q_1} Z_{> l_1} f_1, {P}^{\hp, q_2} Z_{> l_2} f_2)\|_{L^2} \nonumber \\ 
	&\lesssim 2^{k+\frac{3k}{2} + \frac{q_1}{2}} \|Z_{> l_1} f_1\|_{L^2} \|Z_{> l_2} f_2\|_{L^2} \nonumber \\
	&\lesssim 2^{\frac{5k}{2} + \frac{q_1}{2} - 2(1+\beta) (1-\delta)m } \|F_1\|_{X^0_\beta} \|F_2\|_{X^0_\beta}  \nonumber \\
	&\lesssim 2^{\frac{5k}{2} + \frac{q_1}{2} - 2(1+\beta) (1-\delta)m - (20-a-b)k_1^+  } \varepsilon_1^2.
\end{align}
Here, we do not rely on the condition that $q_1 \gtrsim -\alpha m$, hence the argument works equally well for $S_3$ and $S_3'$.

\smallskip
\emph{Case 2. $q_1 + k - k_1 \lesssim -(1-\delta)m$.} In this case, repeated integration by parts in $V_\eta$ or $V_{\xi - \eta}$ is not beneficial. We directly use set size gain and the $L^\infty$ bounds for $\wh{f_{1}}$ and $\wh{f_2}$ as follows.
\begin{align} \label{eq:422-4}
	&\quad \|P^{\hp, \le q_1+4}_k \Q[\m \chi^\res](P^{\hp, q_1} f_1, P^{\hp, q_2} f_2)\|_{L^2} \nonumber \\ 
	&\lesssim 2^{k+\frac{3k}{2} + \frac{q_1}{2}} 2^{\frac{3k_1}{2} +  \frac{q_1}{2} + \frac{3k_2}{2} + \frac{q_2}{2}} \|\wh{f_1}\|_{L^\infty} \|\wh{ f_2}\|_{L^\infty} \nonumber \\
	&\lesssim 2^{\frac{5k}{2} + \frac{3q_1}{2}} \|F_1\|_{X^{0,3}_{\beta'}} \|F_2\|_{X^{0,3}_{\beta'}} \nonumber \\
	&\lesssim 2^{\frac{5(k-k_1)}{2} + \frac{3q_1}{2} + \frac{5k_1}{2} - 20 k_1^+ - 20 k_2^+  } \varepsilon_1^2.
\end{align}
In the case that $q_1 \gtrsim - \alpha m$, we have that $k-k_1\lesssim -(1 - \alpha -\delta) m$, hence
\begin{equation} \label{eq:422-5}
	2^{\frac{5(k-k_1)}{2} + \frac{3q_1}{2}} = 2^{\frac{3(k-k_1+q_1)}{2} + {k-k_1}} \lesssim 2^{(-\frac52 + \alpha + \frac52 \delta)m} \lesssim 2^{-(2+\beta+10\delta)m}.
\end{equation}
For the last inequality, we rely on the facts that $\alpha < \frac12$ (see \eqref{eq:614-aa1}--\eqref{eq:802-001}) and $\beta \ll 1$. Using \eqref{eq:422-5}, we see that \eqref{eq:422-4} gives an acceptable contribution. If $\m$ is replaced by $\m^{(3)}$ and $q_1 \ge -\frac{9m}{10}$, we receive an additional factor $2^{q_1}$ in \eqref{eq:422-4} due to the GPW null condition (see \eqref{eq:GPW-null}). Then, using the fact that 
\begin{equation} \label{eq:422-5'}
	2^{\frac{5(k-k_1)}{2} + \frac{5q_1}{2}} \lesssim 2^{-\frac52(1-\delta)m},
\end{equation}
we still get an acceptable contribution.

\medskip

Let us now treat the nonresonant parts, \emph{i.e.},
\begin{equation} \label{eq:422-2-2}
	P^{\hp,\le q_1+4}_k \Q[\m \chi^\nr](P^{\hp, q_1} f_1, P^{\hp, q_2} f_2)
\end{equation}
and
\begin{equation} \label{eq:422-2-3}
	P^{\hp, q}_k \Q[\m](P^{\hp, q_1} f_1, P^{\hp, q_2} f_2), \quad q \ge q_1+5.
\end{equation} 
Here, we can apply a normal form as $|\Phi| \gtrsim 2^{q}$ (for \eqref{eq:422-2-2} we think of $q = q_1$).  We just demonstrate the treatment of \eqref{eq:422-2-3}, while the estimate for \eqref{eq:422-2-2} is similar by taking $q = q_1$ in all the estimates below (cf. \cite[Lemma A.8]{GuoInvent}). Consider the following two cases in which we obtain a bilinear estimate and a trilinear estimate respectively. 

\smallskip
\emph{Case 1.} $l+k \le m + 4$. In this case we are following the approach of Duhamel's formula. Performing an integration by parts in time, we receive that
\begin{align} \label{eq:51-01}
 \int_{2^m}^{2^{m+1} \wedge t} P^{\hp, q}_k \Q[\m](P^{\hp, q_1} f_1, P^{\hp, q_2} f_2) ds
	& = -iP^{\hp, q}_k \Q[\Phi^{-1} \m ](P^{\hp, q_1} f_1, P^{\hp, q_2} f_2)\big|_{s = 2^{m+1} \wedge t } \nonumber\\
	&\quad + i P^{\hp, q}_k \Q[\Phi^{-1} \m ](P^{\hp, q_1} f_1, P^{\hp, q_2} f_2)\big|_{s = 2^{m}} \nonumber\\
	&\quad + i \int_{2^m}^{2^{m+1} \wedge t} P^{\hp, q}_k \Q[\Phi^{-1} \m ](P^{\hp, q_1} \partial_t f_1, P^{\hp, q_2} f_2) ds \nonumber\\
	&\quad + i \int_{2^m}^{2^{m+1} \wedge t} P^{\hp, q}_k \Q[\Phi^{-1} \m ](P^{\hp, q_1}  f_1, P^{\hp, q_2} \partial_t f_2) ds.
\end{align} 
For the first two term on the right of \eqref{eq:51-01}, we invoke Lemma \ref{lem:hp-disp} (ii) and \cite[Lemma A.8]{GuoInvent} to obtain that
\begin{align} \label{eq:51-2}
	&\quad \|P^{\hp, q}_k \Q[\Phi^{-1} \m ](P^{\hp, q_1} f_1, P^{\hp, q_2} f_2)\|_{L^2} \nonumber \\
	&\lesssim 2^{k-q} \left(\|e^{\mu_1 i s \Lmd} P^{\hp, q_1} f_1^I\|_{L^\infty} 2^{ \frac{3k_2}{2} + \frac{q_2}{2}}  \|\wh{ f_2}\|_{L^\infty} + \|e^{\mu_1 i s \Lmd} P^{\hp, q_1} f_1^{II}\|_{L^2}   \|e^{\mu_2 i s \Lmd} P^{\hp, q_2} f_2\|_{L^\infty}\right) \nonumber \\
	&\lesssim 2^{k-q}(2^{\frac32 k_1 - \frac{3m}{2}}+2^{\frac32 k_2-(2+\beta')m}) \|F_1\|_{X^{0,3}_{\beta'}} \|F_2\|_{X^{0,3}_{\beta'}} \nonumber \\
	&\lesssim 2^{k+\frac32 k_1-q- \frac{3m}{2}} 2^{- 20 k_1^+ - 20 k_2^+} \varepsilon_1^2.
\end{align} 
This is acceptable for $q_1 \gtrsim -\alpha m$ since we have $q \gtrsim q_1$ and $\alpha < \frac12$. In the case that $\m$ is replaced by $\m^{(3)}$ and the range of $q_1$ is replaced by $q_1 \gtrsim -\frac{9 m}{10}$, we receive an additional factor $2^q$ in \eqref{eq:51-2}, which makes the estimates  acceptable again. For the third term on the right of \eqref{eq:51-01}, we use Lemma \ref{lem:528-01} to deduce that
\begin{align} \label{eq:614-002}
	&\quad \ \|P^{\hp, q}_k \Q[\Phi^{-1} \m ](P^{\hp, q_1} \partial_t f_1, P^{\hp, q_2} f_2)\|_{L^2} \nonumber \\
	&\lesssim 2^{k-q} \|\partial_t f_1\|_{L^2} \|e^{\mu_2 i s \Lmd} f_2\|_{L^\infty} \nonumber \\
	&\lesssim 2^{k-q} 2^{k_1-30k_1^+ + \frac32 k_2- 20 k_2^+ - (\frac52-\delta_0) m}  \varepsilon_1^3,
\end{align}
which is similar to \eqref{eq:51-2} upon integration in time. The last term in \eqref{eq:51-01} is similarly treated as in \eqref{eq:614-002} by exchanging the roles of the inputs.

\medskip
\emph{Case 2.} $l+k \ge m + 5$. In this case we are following the approach of energy method. Performing an integration by parts in time for the trilinear form, we receive that
\begin{align} \label{eq:51-3}
	&\quad \int_{2^m}^{2^{m+1} \wedge t} \langle P^{\hp, q}_k \Q[\m ](P^{\hp, q_1} f_1, P^{\hp, q_2} f_2),  Z_l^2 F \rangle_{L^2}  ds \nonumber \\
	& = -i \langle  P^{\hp, q}_k \Q[\Phi^{-1} \m ](P^{\hp, q_1} f_1, P^{\hp, q_2} f_2), Z_l^2 F \rangle_{L^2}  \big|_{s = 2^{m+1} \wedge t } \nonumber\\
	&\quad + i \langle P^{\hp, q}_k \Q[\Phi^{-1} \m ](P^{\hp, q_1} f_1, P^{\hp, q_2} f_2) , Z_l^2 F \rangle_{L^2} \big|_{s = 2^{m}} \nonumber\\
	&\quad + i \int_{2^m}^{2^{m+1} \wedge t} \langle P^{\hp, q}_k \Q[\Phi^{-1} \m ](P^{\hp, q_1} \partial_t f_1, P^{\hp, q_2} f_2), Z_l^2 F \rangle_{L^2} ds \nonumber\\
	&\quad + i \int_{2^m}^{2^{m+1} \wedge t} \langle P^{\hp, q}_k \Q[\Phi^{-1} \m ](P^{\hp, q_1}  f_1, P^{\hp, q_2} \partial_t f_2), Z_l^2 F \rangle_{L^2} ds \nonumber\\
		&\quad + i \int_{2^m}^{2^{m+1} \wedge t} \langle P^{\hp, q}_k \Q[\Phi^{-1} \m ](P^{\hp, q_1}  f_1, P^{\hp, q_2}  f_2), Z_l^2 \partial_t F \rangle_{L^2} ds.
\end{align} 
All terms in \eqref{eq:51-3} except the last one can be treated
by applying Cauchy-Schwarz inequality to the inner product in $L^2$ and reducing to \eqref{eq:51-2} and \eqref{eq:614-002}. For the last term, we invoke Lemma \ref{lem:51-1} and \eqref{eq:51-2} to get that
\begin{align} \label{eq:51-5}
	&\quad \langle P^{\hp, q}_k \Q[\Phi^{-1} \m ](P^{\hp, q_1}  f_1, P^{\hp, q_2}  f_2), Z_l^2 \partial_t F \rangle_{L^2}  \nonumber \\
	&\lesssim \|P^{\hp, q}_k \Q[\Phi^{-1} \m ](P^{\hp, q_1}  f_1, P^{\hp, q_2}  f_2)\|_{L^2} \|\partial_t Z_l F\|_{L^2} \nonumber \\
	&\lesssim 2^{k+\frac32 k_1 -30 k^+ - 20 k_1^+ - 20 k_2^+} 2^{-q- \frac{7m}{2}} \varepsilon_1^4 \nonumber\\
	&\lesssim 2^{k+\frac32 k_1 -30 k^+ - 20 k_1^+ - 20 k_2^+} 2^{-q- (\frac{3}{2} - 3\beta)  m } 2^{-2(1+\beta)(l+k)} \varepsilon_1^4.
\end{align}
As in Case 1, \eqref{eq:51-5} is acceptable for $q_1 \gtrsim - \alpha m$. If $\m$ is replaced by $\m^{(3)}$ and $q_1 \gtrsim -\frac{9}{10}m$, then we receive an additional factor $2^q$ in \eqref{eq:51-5}, which makes it  acceptable again.

\subsubsection{Estimate for $S_4$ via a  multiscale decomposition} \label{sec:742}

The treatment of $S_4$ is one of the most challenging tasks of this work. For simplicity, we denote
\begin{equation}
	g_1 = P^{\hp, \le -\alpha m} f_1, \quad g_2 = P^{\hp, \le -\alpha m} f_2.
\end{equation}

\medskip
\textbf{Step 1.} Symmetrization.
\smallskip

Since $\mu_1 = \mu_2 = +$ and the constraints 
\begin{equation}
	|k_1-k_2| \le 11, a_1+a_2 \le a, b_1+b_2 \le b
\end{equation}
are all symmetric in the indices $1$ and $2$, a symmetrization procedure can be applied. Recall that by definition $\L_\mua[\m_\mua] = e^{\mu i s \Lmd} Q_\mua[\m_\mua]$ and note that
\begin{equation}
	\L_\mua[\m_\mua](g_2, g_1) = \L_\mua[\wt{\m}_\mua](g_1, g_2)
\end{equation}
with $\wt{\m}_\mua(\xi,  \eta) = \m_\mua(\xi, \xi-\eta)$. Hence, it suffices to effectively bound
\begin{equation}
	\L_\mua[\m_\mua^{(1), \mathrm{Sym}} + \m_\mua^{(2), \mathrm{Sym}}](g_1, g_2) = \L_\mua[\m_\mua^{(1)}+\m_\mua^{(2)}](g_1, g_2) + \L_\mua[\m_\mua^{(1)}+\m_\mua^{(2)}](g_2, g_1)
\end{equation}
in $L^2$, where $\m_\mua^{(1), \mathrm{Sym}}$ and $\m_\mua^{(2), \mathrm{Sym}}$ are given by Remark \ref{rem:null-sym}. 

\smallskip

 Define the bilinear multipliers
\begin{equation}
	\mathfrak{n}_1 = \frac{  (\xi-\eta)_1 }{|(\xi-\eta)_h|}(|\eta_h|-|(\xi-\eta)_h|),\quad 	\mathfrak{n}_2 = \frac{  (\xi-\eta)_2 }{|(\xi-\eta)_h|} (|\eta_h|-|(\xi-\eta)_h|),
\end{equation}
\begin{equation}
	\mathfrak{n}_3 = \frac{  \eta_1 }{|\eta_h|} (|\eta_h|-|(\xi-\eta)_h|), \quad 	\mathfrak{n}_4 = \frac{  \eta_2 }{|\eta_h|} (|\eta_h|-|(\xi-\eta)_h|),
\end{equation}
and
\begin{equation}
	\mathfrak{n}_5 = \frac{  (\xi-\eta)_h \cdot \eta_h^\perp }{|(\xi-\eta)_h||\eta_h|} (|\eta_h|-|(\xi-\eta)_h|).
\end{equation}
Due to the identity
\begin{equation}
	2(\m_\mua^{(1), \mathrm{Sym}} + \m_\mua^{(2), \mathrm{Sym}}) = \frac{\xi_1}{|\xi_h|} \mathfrak{n}_2  -   \frac{\xi_2}{|\xi_h|} \mathfrak{n}_1  -\frac{\xi_1}{|\xi_h|} \mathfrak{n}_4 +\frac{\xi_2}{|\xi_h|} \mathfrak{n}_3 + \mu \mu_1\mathfrak{n}_5,
\end{equation}
there holds that
\begin{equation}
	\|	\L_\mua[\m_\mua^{(1), \mathrm{Sym}} + \m_\mua^{(2), \mathrm{Sym}}](g_1, g_2)\|_{L^2} \lesssim \sum_{i=1}^5 \|\L_\mua[\mathfrak{n}_i](g_1, g_2)\|_{L^2}.
\end{equation}
For simplicity, in the sequel we will write $ \mathcal{B}$ for any one of  $\L_\mua[\mathfrak{n}_i], \ i=1,2,3,4,5$.
 
 \medskip
 \textbf{Step 2.} Reduction to $k_1 \sim k_2 \sim 0$. 
 \smallskip
 
 Our next goal is to show that
 \begin{align} \label{eq:804-b1}
 	\| \mathcal{B}(g_1, g_2)\|_{L^2} &\lesssim   2^{\frac52 k_1 -(2+\beta)m} \big(\|F_1\|_{X^{0,10}_{\beta'} } \|F_2\|_{X^{0}_{\beta}} + \|F_1\|_{X^{0}_{\beta}} \|F_2\|_{X^{0,10}_{\beta'} }  + \|F_1\|_{X^{0,10}_{\beta'} } \|F_2\|_{X^{0,10}_{\beta'} }\big) \nonumber \\
 	&\lesssim 2^{\frac52 k_1 -(2+\beta)m - 20 k_1^+} \varepsilon_1^2,
 \end{align}
which gives an acceptable contribution to \eqref{eq:511-1} and \eqref{eq:511-2}.
 By scaling, it is sufficient to prove the first inequality in \eqref{eq:804-b1} for the case $k_1, k_2 \in [-10, 10]$, which will be considered in the following.
 
\medskip
\textbf{Step 3.} Far field bounds.
\smallskip

	Note that the conclusion of Lemma \ref{lem:slab} (ii) and (iii) also applies to $P_k^{\hp,  \le -\alpha m} F$, namely, if we let $\mathcal{R} = Z^{-\mu}_{[m-k-4, m-k+1]^c} e^{\mu i t \Lmd} P_k^{\hp,  \le -\alpha m} F$, then the estimates \eqref{eq:803-001}--\eqref{eq:803-003} are still valid. The proof is the same, only with $\chi^\hp_k$ replaced by $\chi_k^{\hp, \le -\alpha m}$ when necessary (cf. Remark \ref{rem:413}). Hence, by the above remark and a direct $L^\infty$--$L^2$ estimate, we obtain that
\begin{align} \label{eq:803-a1}
	&\quad  \ \| \mathcal{B}(g_1, g_2)\|_{L^2( x_3 \notin [-C_2 s, -C_2^{-1} s] )}  \nonumber\\
	&\lesssim  \|e^{i s\Lmd} D_1 g_1\|_{L^\infty ( x_3 \notin [-C_2 s, -C_2^{-1} s] )} \|e^{i s\Lmd} D_2 g_2\|_{L^2 ( x_3 \notin [-C_2 s, -C_2^{-1} s] )} \nonumber \\
	&\lesssim  2^{-(2+2 \beta')m} \|F_1\|_{X^0_{\beta'}} \|F_2\|_{X^0_{\beta'}}.
\end{align}
Here and in the sequel, we write $D_1, D_2$ for smooth multipliers from the set $\left\{\frac{\partial_1}{|\nabla_h|}, \frac{\partial_2}{|\nabla_h|}, |\nabla_h|, \partial_1, \partial_2\right\}$. On the other hand, by Lemma \ref{lem:51-3} (ii) we have the bound
\begin{align} \label{eq:51-z}
	&\quad  \ \| \mathcal{B}(g_1, g_2)\|_{L^2( \{x_3 \in [-C_2 s, -C_2^{-1} s]\} \setminus \mathcal{C} )}  \nonumber\\
	&\lesssim \|e^{i s\Lmd} D_1 g_1\|_{L^\infty ( \{x_3 \in [-C_2 s, -C_2^{-1} s]\} \setminus \mathcal{C} )} \|e^{i s\Lmd} D_2 g_2\|_{L^2 } \nonumber \\
	&\lesssim  2^{-3m} \|F_1\|_{X^{0,10}_{0}} \|F_2\|_{X^0_{0}}. 
\end{align}
Both  \eqref{eq:803-a1} and \eqref{eq:51-z} give acceptable contributions.   Hence, in the following steps it is sufficient to consider
$$\| \mathcal{B}(g_1, g_2)\|_{L^2(\mathcal{C})}.$$

\medskip
\textbf{Step 4.} Multiscale decomposition of the bilinear form via $Q^J_j$ operators.
\smallskip

Let us take the numbers
\begin{equation}
	J_0 = 5, \quad J_{\max} = \lfloor \frac{m\gamma}{2} \rfloor.
\end{equation}
and denote, for $J \in [J_0, J_{\max}] \cap \Z$,
\begin{equation}
	N_J = [-10\times 4^J, 10\times 4^J] \cap \Z.
\end{equation}
By definition of the $Q^J_j$ localization operators in Section \ref{sec:52}, we have the following two simple but important facts. Consider $j_1, j_2 \in N_J$ and $j_1', j_2' \in N_{J+1}$ with $J \in [J_0, J_{\max}-1] \cap \Z$.
\begin{enumerate} [(i)]
	\item Suppose that $|j_1' - j_2'| \ge 6$ and $|j_1 - j_2| \le 1$, then there holds either	$ Q^{J}_{j_1} Q^{J+1}_{j_1'} = 0$ or	$ Q^{J}_{j_2} Q^{J+1}_{j_2'} = 0$.
	
	\item Suppose that $|j_1' - j_2'| \le 1$ and $ Q^{J}_{j_2} Q^{J+1}_{j_2'} \neq 0$, then there holds that
	\begin{equation}
		Q^{J+1}_{j_1'} \sum_{j_1 \in N_{J}: |j_1-j_2| \le 1} Q^{J}_{j_1} = Q^{J+1}_{j_1'}.
	\end{equation}
\end{enumerate}
Based on (i) and (ii), we can make a decomposition of the bilinear form in order to exploit the hidden null type structure revealed in Remark \ref{rem:null-sym}. 

As a first step, by writing $g_i = \sum_{j_i \in N_{J_0}} Q^{J_0}_{j_i} g_i, \ i=1,2$, we have
\begin{align} \label{eq:51-6}
	 \mathcal{B}(g_1, g_2) &= \sum_{j_1, j_2 \in N_{J_0}}  \mathcal{B}(Q^{J_0}_{j_1} g_1, Q^{J_0}_{j_2} g_2) \nonumber \\
	&= \sum_{\substack{j_1, j_2\in N_{J_0} \\ |j_1-j_2|>1}}  \mathcal{B}(Q^{J_0}_{j_1} g_1, Q^{J_0}_{j_2} g_2)  +\sum_{\substack{j_1, j_2\in N_{J_0} \\ |j_1-j_2|\le 1}}  \mathcal{B}(Q^{J_0}_{j_1} g_1, Q^{J_0}_{j_2} g_2).
\end{align}
The second sum  in \eqref{eq:51-6} can be further decomposed as
\begin{align} \label{eq:51-9}
	\sum_{\substack{j_1, j_2\in N_{J_0} \\ |j_1-j_2| \le 1}}  \mathcal{B}(Q^{J_0}_{j_1} g_1, Q^{J_0}_{j_2} g_2) &=	\sum_{\substack{j_1, j_2\in N_{J_0} \\ |j_1-j_2| \le 1}} \sum_{\substack{j_1', j_2'\in N_{J_0+1} \\ |j_1'-j_2'| > 1}}  \mathcal{B}(Q^{J_0}_{j_1} Q^{J_0+1}_{j_1'}g_1, Q^{J_0}_{j_2} Q^{J_0+1}_{j_2'} g_2) \nonumber \\
	&+\sum_{\substack{j_1, j_2\in N_{J_0} \\ |j_1-j_2| \le 1}} \sum_{\substack{j_1', j_2'\in N_{J_0+1} \\ |j_1'-j_2'| \le 1}}  \mathcal{B}(Q^{J_0}_{j_1} Q^{J_0+1}_{j_1'}g_1, Q^{J_0}_{j_2} Q^{J_0+1}_{j_2'} g_2).
\end{align}
Using the fact (i), in the first double sum on the right of \eqref{eq:51-9}, we can further make the restriction that $|j_1'-j_2'|<6$. Using the fact (ii), the last double sum in \eqref{eq:51-9} can be simplified as
\begin{equation} \label{eq:51-10}
	\sum_{\substack{j_1', j_2'\in N_{J_0+1} \\ |j_1'-j_2'| \le 1}}  \mathcal{B}( Q^{J_0+1}_{j_1'}g_1,  Q^{J_0+1}_{j_2'} g_2),
\end{equation}
which has the same form as the second sum in \eqref{eq:51-6} only with $J_0$ replaced by $J_0+1$. Hence the process in \eqref{eq:51-9}--\eqref{eq:51-10} can be iterated, until we reach the level $J_{\max}$. As a result, we obtain the full telescoping series given by
\begin{align} \label{eq:tele}
	 \mathcal{B}(g_1, g_2) &= \sum_{\substack{j_1, j_2\in N_{J_0} \\ |j_1-j_2| > 1}}  \mathcal{B}(Q^{J_0}_{j_1}g_1, Q^{J_0}_{j_2}g_2) \nonumber \\
	& + \sum_{J_0<J \le J_{\max} } \ \sum_{\substack{j_1, j_2\in N_J \\ 1<|j_1-j_2|<6}} \ \sum_{\substack{j_1', j_2'\in N_{J-1} \\ |j_1'-j_2'|\le 1}}   \mathcal{B}(Q^{J}_{j_1} Q^{J-1}_{j_1'} g_1, Q^{J}_{j_2} Q^{J-1}_{j_2'} g_2) \nonumber \\
	& + \sum_{\substack{j_1, j_2\in N_{J_{\max}} \\ |j_1-j_2|\le 1}}   \mathcal{B}(Q^{J_{\max}}_{j_1} g_1, Q^{J_{\max}}_{j_2}  g_2).
\end{align}
(For convenience, the roles of $j_i$ and $j_i'$ have been exchanged in the triple sum.) We point out that the above multiscale decomposition works for general bilinear forms. In the next steps, we shall omit the ranges like $j_1, j_2 \in N_{J}$ or $j_1', j_2' \in N_{J-1}$ in the summations for simplicity.

\medskip
\textbf{Step 5.} Bilinear estimate for non-overlapping case.
\smallskip

Here, we treat the triple sum term in \eqref{eq:tele}. Note that the estimate for the first term in \eqref{eq:tele} is similar by taking $J=J_0 = 5$ and removing the constraint $|j_1-j_2|<6$ as well as the $Q^{J-1}$ localizations. Take any $J \in [J_0+1, J_{\max}] \cap \Z$. Given $j_1, j_2 \in N_J$ such that $1<|j_1 - j_2| <6$, we write
\begin{align}
	|\eta_h|-|(\xi-\eta)_h| &= (|\eta_h| - \r_{j_1,1}^J) -(|(\xi-\eta)_h| - \r_{j_1,1}^J) \nonumber \\
	&= 2^{-2J} \left(2^{2J}(|\eta_h| - \r_{j_1,1}^J) -2^{2J}(|(\xi-\eta)_h| - \r_{j_1,1}^J)\right).
\end{align}
(See the definition of $\rho_{{j_1},1}^J$ in Section \ref{sec:52}.) The smallness factor $2^{-2J}$ will be crucial for the estimates below. For short, we denote $Q_1$ for one of
\begin{equation}
	Q^J_{j_1} Q_{j_1'}^{J-1} D \quad \mathrm{or} \quad   2^{2J} Q^J_{j_1} Q_{j_1'}^{J-1} D \left(|\nabla_h|- \rho_{{j_1},1}^J\right),
\end{equation}
where $D \in \left\{1, \frac{\partial_1}{|\nabla_h|}, \frac{\partial_2}{|\nabla_h|}\right\}$, and similarly denote $Q_2$ for one of
\begin{equation}
	Q^J_{j_2} Q_{j_2'}^{J-1} D   \quad \mathrm{or} \quad 2^{2J} Q^J_{j_2} Q_{j_2'}^{J-1} D \left(|\nabla_h|- \rho_{{j_1},1}^J\right).
\end{equation}
Moreover, in the estimates below we implicitly take supremum over all possible versions of $Q_1$ or $Q_2$ if they show up. Note that the multipliers represented by $Q_{2}$ enjoy similar support properties on the Fourier side as the cut-off function $\phi_{j_2}^J(\ln \r)$. Indeed, for $\eta$ from the support of $Q^J_{j_2}$, there holds that 
\begin{equation}\label{eq:51-12}
		\left||\eta_h|- \rho_{{j_1},1}^J\right| \lesssim 2^{-2J}, \quad \left|S_\eta \left(|\eta_h| - \rho_{{j_1},1}^J\right)\right| \lesssim 1,
\end{equation}
and moreover,
\begin{equation} \label{eq:51-11}
 \left|S_\eta^2 \left(|\eta_h| - \rho_{{j_1},1}^J\right)\right| \lesssim 1.
\end{equation}
(The last property \eqref{eq:51-11} is in fact stronger than what we need.) Using \eqref{eq:51-12}--\eqref{eq:51-11}, we see that the proof of Lemma \ref{lem:51-3} can also be applied to $Q_2 g_2$ instead of $Q^J_{j_2} g_2$. By the same reason, Lemma \ref{lem:51-3} also applies to $Q_1 g_1$ instead of $Q^J_{j_1} g_1$. 

Next, we make a decomposition $\mathcal{C} = \mathcal{C}_1 \cup \mathcal{C}_2$ with $\mathcal{C}_1 = \{x \in \mathcal{C}: |x_h| < 2^{\frac{2J}{3}}\}$ and $\mathcal{C}_2 = \mathcal{C}\, \setminus \, \mathcal{C}_1$. Due to $|j_1-j_2|>1$, we have
\begin{equation} \label{eq:804-001}
	B^J_{j_1}(0) \cap B^J_{j_2}(0) = \emptyset.
\end{equation}
Using \eqref{eq:804-001} and Lemma \ref{lem:51-3} (i)\&(iii), on the set $B^J_{j_1}(0)$ there holds that
\begin{align} \label{eq:51-21}
	&\quad \| \mathcal{B}( Q^J_{j_1} Q^{J-1}_{j_1'} g_1,  Q^J_{j_2} Q^{J-1}_{j_2'} g_2)\|_{L^2(B^J_{j_1}(0))} \nonumber\\ 
	&\lesssim 2^{-2J} \Big\{ 2^{\frac{2J}{3}}\|e^{is\Lmd} Q_1 g_1\|_{L^\infty(B^J_{j_1}(0) \cap \mathcal{C}_1)} \|e^{is\Lmd} Q_2 g_2\|_{L^\infty_{x_h}L^2_{x_3}(B^J_{j_1}(0) \cap \mathcal{C}_1 ) } \nonumber \\
	&\quad \quad \quad \quad  + \|e^{is\Lmd} Q_1 g_1\|_{L^\infty(B^J_{j_1}(0) \cap \mathcal{C}_2)} \|e^{is\Lmd} Q_2 g_2\|_{L^2(B^J_{j_1}(0) \cap \mathcal{C}_2 ) } \Big\} \nonumber\\
	&\lesssim 2^{-2J}  2^{-\frac{J}{3}-m}   \|F_1\|_{X^{0,3}_{\beta'}} \big(\|Z_{\sim m-2J}  Q^J_{j_2} F_2\|_{L^2} + C_K 2^{-mK} \|F_2\|_{X^0_0}\big) \nonumber\\
	&\lesssim 2^{-m-\frac{7J}{3}}  \|F_1\|_{X^{0,3}_{\beta'}}  \big(\|Z_{\sim m-2J} Q^J_{j_2} F_2\|_{L^2}+ C_K 2^{-mK} \|F_2\|_{X^0_0}\big),
\end{align}
and on the set $B^J_{j_1}(l)$ with $l \ge 1$ we have

\begin{align} \label{eq:51-22}
	&\quad \| \mathcal{B}( Q^J_{j_1} Q^{J-1}_{j_1'} g_1,  Q^J_{j_2} Q^{J-1}_{j_2'} g_2)\|_{L^2(B^J_{j_1}(l))} \nonumber\\ 
	&\lesssim 2^{-2J} \Big\{ 2^{\frac{2J}{3}} \|e^{i s\Lmd}Q_1 g_1\|_{L^\infty_{x_h}L^2_{x_3}(B^J_{j_1}(l) \cap \mathcal{C}_1)} \|e^{i s \Lmd}Q_2 g_2\|_{L^\infty(B^J_{j_1}(l) \cap \mathcal{C}_1 ) } \nonumber\\
	& \quad \quad \quad + \|e^{i s \Lmd} Q_1 g_1\|_{L^2(B^J_{j_1}(l) \cap \mathcal{C}_2)} \|e^{i s \Lmd} Q_2 g_2\|_{L^\infty(B^J_{j_1}(l) \cap \mathcal{C}_2 ) } \Big\} \nonumber\\
	&\lesssim 2^{-2J}  2^{-\frac{J}{3}-m}   \|F_2\|_{X^{0,3}_{\beta'}} \big(\|Z_{\sim m-2J+l}  Q^J_{j_1} F_1\|_{L^2} + C_K 2^{-mK} \|F_1\|_{X^0_0}\big) \nonumber\\
	&\lesssim 2^{-m-\frac{7J}{3}}  \|F_2\|_{X^{0,3}_{\beta'}}   \big(\|Z_{\sim m-2J+l} Q^J_{j_1} F_1\|_{L^2} + C_K 2^{-mK} \|F_1\|_{X^0_0}\big).
\end{align}
Here, $K$ stands for an arbitrary positive integer and $C_K$ depends only on $K$. 

For each $j_1$, the  number of possible choices of $(j_2, j_1', j_2')$ is bounded by $32$. Moreover, for each $j_1$ and $l$, the length in the $x_3$ direction of $B^J_{j_1}(l)$ is comparable with $2^{m-2J+l}$, hence the number of $j \in N_J$ such that $B^J_j(l) \cap B^J_{j_1}(l) \neq \emptyset$ is controlled by $2^l$. Applying Cauchy-Schwarz inequality and using \eqref{eq:51-22}, for each $1 \le l \lesssim m-2J$ we get that
\begin{align} \label{eq:51-31}
	&\quad \ \Bigg\| \sum_{ 1<|j_1-j_2|<6} \ \sum_{|j_1'-j_2'|\le 1}  \mathbf{1}_{B^J_{j_1}(l)} \cdot  \mathcal{B}(Q^{J}_{j_1} Q^{J-1}_{j_1'} g_1, Q^{J}_{j_2} Q^{J-1}_{j_2'} g_2) \Bigg\|_{L^2} \nonumber \\
	&\lesssim \left\|\left(\sum_{ 1<|j_1-j_2|<6} \ \sum_{ |j_1'-j_2'|\le 1} \bigg|   \mathbf{1}_{B^J_{j_1}(l)} \cdot  \mathcal{B}(Q^{J}_{j_1} Q^{J-1}_{j_1'} g_1, Q^{J}_{j_2} Q^{J-1}_{j_2'} g_2) \bigg|^2 \right)^\frac12 \right.  \nonumber \\
	&\quad \quad \quad  \cdot \left. \left(\sum_{ 1<|j_1-j_2|<6} \ \sum_{ |j_1'-j_2'|\le 1} \mathbf{1}_{B^J_{j_1}(l)} \right)^\frac12 \right\|_{L^2} \nonumber \\
	&\lesssim 2^{\frac{l}{2}} \left( \sum_{ 1<|j_1-j_2|<6} \ \sum_{ |j_1'-j_2'|\le 1} \left\|   \mathbf{1}_{B^J_{j_1}(l)} \cdot  \mathcal{B}(Q^{J}_{j_1} Q^{J-1}_{j_1'} g_1, Q^{J}_{j_2} Q^{J-1}_{j_2'} g_2) \right\|_{L^2}^2  \right)^\frac12  \nonumber \\
	&\lesssim 2^{\frac{l}{2}} \left( \sum_{ 1<|j_1-j_2|<6} \ \sum_{|j_1'-j_2'|\le 1}  \left( 2^{-m-\frac{7J}{3}}  \|F_2\|_{X^{0,3}_{\beta'}}  \|Z_{\sim m-2J+l} Q^J_{j_1} F_1\|_{L^2} \right)^2 \right)^\frac12 \nonumber \\
	&\quad \ + C_K 2^{-mK} \|F_2\|_{X_{\beta'}^{0,3}} \|F_1\|_{X^0_0} \nonumber \\
	&\lesssim   2^{\frac{l}{2}-m-\frac{7J}{3}}  \|F_2\|_{X^{0,3}_{\beta'}}  \|Z_{\sim m-2J+l} F_1\|_{L^2} + C_K 2^{-mK} \|F_2\|_{X_{\beta'}^{0,3}} \|F_1\|_{X^0_0}.
\end{align}
In the last inequality, we rely on the orthogonality of the $Q^J_j$ operators in $j$. Similarly, for $l = 0$, using \eqref{eq:51-21} we have
\begin{align} \label{eq:51-41}
	&\quad \ \Bigg\| \sum_{ 1<|j_1-j_2|<6} \ \sum_{ |j_1'-j_2'|\le 1}  \mathbf{1}_{B^J_{j_1}(0)} \cdot  \mathcal{B}(Q^{J}_{j_1} Q^{J-1}_{j_1'} g_1, Q^{J}_{j_2} Q^{J-1}_{j_2'} g_2) \Bigg\|_{L^2} \nonumber \\
	&\lesssim   2^{-m-\frac{7J}{3}}   \|F_1\|_{X^{0,3}_{\beta'}} \|Z_{\sim m-2J} F_2\|_{L^2} + C_K 2^{-mK} \|F_1\|_{X_{\beta'}^{0,3}} \|F_2\|_{X^0_0}.
\end{align}
 Then, summing \eqref{eq:51-31} and \eqref{eq:51-41} over $0 \le l \lesssim m-2J$, we get
\begin{align} \label{eq:51-51}
		&\quad \ \Bigg\| \sum_{ 1<|j_1-j_2|<6} \ \sum_{ |j_1'-j_2'|\le 1 }    \mathcal{B}(Q^{J}_{j_1} Q^{J-1}_{j_1'} g_1, Q^{J}_{j_2} Q^{J-1}_{j_2'} g_2) \Bigg\|_{L^2} \nonumber \\
		&\lesssim \sum_{l \lesssim m-2J} 2^{\frac{l}{2}-m-\frac{7J}{3}}  \big(\| F_2\|_{X^{0,3}_{\beta'}} \|Z_{\sim m-2J+l} F_1\|_{L^2} + \| F_1\|_{X^{0,3}_{\beta'}} \|Z_{\sim m-2J+l} F_2\|_{L^2}\big)  \nonumber \\
		&\quad \ + C_K 2^{-mK} \| F_1\|_{X^{0,3}_{\beta'}}  \| F_2\|_{X^{0,3}_{\beta'}} \nonumber \\
		&\lesssim \sum_{l \lesssim m-2J} 2^{\frac{l}{2}}    2^{-(1+\beta) (m-2J+l)} 2^{-m-\frac{7J}{3}}  \big( \| F_2\|_{X^{0,3}_{\beta'}} \| F_1\|_{X^{0}_{\beta}} + \| F_1\|_{X^{0,3}_{\beta'}} \| F_2\|_{X^{0}_{\beta}}\big) \nonumber \\
		&\quad \ + C_K 2^{-mK} \| F_1\|_{X^{0,3}_{\beta'}}  \| F_2\|_{X^{0,3}_{\beta'}} \nonumber \\
	&\lesssim 2^{-(2+\beta) m} 2^{(2\beta-\frac13) J} \big( \| F_2\|_{X^{0,3}_{\beta'}} \| F_1\|_{X^{0}_{\beta}} + \| F_1\|_{X^{0,3}_{\beta'}} \| F_2\|_{X^{0}_{\beta}} + \| F_1\|_{X^{0,3}_{\beta'}}  \| F_2\|_{X^{0,3}_{\beta'}} \big).
\end{align}
Summing the above estimate over $J_0 \le J \le J_{\max}$, we receive an acceptable contribution to \eqref{eq:804-b1}.


\medskip
\textbf{Step 6.} The final term in \eqref{eq:tele}.
\smallskip

Consider $J = J_{\max}$ and $|j_1-j_2| \le 1$. The $L^2$ estimate on $B_{j_1}^J(l)$ with $l \ge 1$ is the same as in \eqref{eq:51-22}, \emph{i.e.}, there holds that
\begin{align}
	 \| \mathcal{B}( Q^J_{j_1} g_1,  Q^J_{j_2} g_2)\|_{L^2(B^J_{j_1}(l))} \lesssim 2^{-m-\frac{7J}{3}}  \|F_2\|_{X^{0,3}_{\beta'}}  (\|Z_{\sim m-2J+l} Q^J_{j_1} F_1\|_{L^2} + C_K 2^{-mK} \|F_1\|_{X^0_0}).
\end{align}
Consequently, similar to \eqref{eq:51-31} and \eqref{eq:51-51}, here we have
\begin{align*}
	\sum_{1 \le l \lesssim m-2J} \Bigg\|\sum_{ |j_1 - j_2| \le 1} \mathbf{1}_{B^J_{j_1}(l)} \cdot  \mathcal{B}( Q^J_{j_1} g_1, Q^J_{j_2}  g_2)\Bigg\|_{L^2} \lesssim   2^{-(2+\beta) m} 2^{(2\beta-\frac13) J} \| F_2\|_{X^{0,3}_{\beta'}} \|F_1\|_{X^0_\beta}. 
\end{align*}
However, the estimate on the set $B^J_{j_1}(0)$ requires a different method. By Lemma \ref{lem:51-3},  we have the following pointwise bound for $x \in B^J_{j_1}(0)$:
\begin{align} \label{eq:51-52}
	| \mathcal{B}( Q^J_{j_1} g_1,  Q^J_{j_2} g_2)| &\lesssim 2^{-2J}|e^{i t \Lmd} Q_1 g_1| |e^{i t \Lmd} Q_2 g_2| \nonumber  \\
	&\lesssim  2^{-2m-2J} \langle |x_h| \rangle^{-1} \|F_1\|_{X^{0,3}_{\beta'}} \|F_2\|_{X^{0,3}_{\beta'}}.
\end{align}
Hence, directly integrating the square of \eqref{eq:51-52} in $B^J_{j_1}(0)$ and using that $J = J_{\max} \sim \frac{\gamma m}{2}$, we obtain that
\begin{align} \label{eq:804-a1}
	\| \mathcal{B}( Q^J_{j_1} g_1,  Q^J_{j_2} g_2)\|_{L^2(B^J_{j_1}(0))} &\lesssim m \,2^{-2m-2J} 2^{\frac12(m-2J)} \|F_1\|_{X^{0,3}_{\beta'}} \|F_2\|_{X^{0,3}_{\beta'}}   \nonumber \\
	&\lesssim 2^{-\frac32 m (1+\gamma-\delta)} \|F_1\|_{X^{0,3}_{\beta'}} \|F_2\|_{X^{0,3}_{\beta'}}.
\end{align}
Since the family of sets $\{B_{j_1}^J(0)\}_{j_1 \in N_J}$ is locally finite, we deduce that
\begin{align}
 \Big\| \sum_{ |j_1-j_2|\le 1} \mathbf{1}_{B^J_{j_1}(0)} \cdot  \mathcal{B}( Q^J_{j_1} g_1,  Q^J_{j_2} g_2)\Big\|_{L^2} &\lesssim |N_J|^\frac12 2^{-\frac32 m (1+\gamma-\delta)} \|F_1\|_{X^{0,3}_{\beta'}} \|F_2\|_{X^{0,3}_{\beta'}} \nonumber \\
	&\lesssim 2^{-\frac32 m (1+\frac23 \gamma-\delta)} \|F_1\|_{X^{0,3}_{\beta'}} \|F_2\|_{X^{0,3}_{\beta'}}.
\end{align}
Due to the constraint $\gamma > \frac12$ and by taking $\delta$ and $\beta$ small, we receive an acceptable contribution to \eqref{eq:804-b1}.

\subsubsection{Estimate for $S_4'$} \label{sec:743}

For simplicity, here we denote 
\begin{equation}
	h_1 = P^{\hp, \le -\frac{9m}{10}}f_1, \quad h_2 = P^{\hp, \le -\frac{9m}{10}}f_2.
\end{equation}
As in the study of $S_3'$, we also localize the output frequency in parameter $q$ as follows.
\begin{align} \label{eq:423-1}
	\Q[\m^{(3)}](h_1, h_2) &= \sum_{q \ge -\frac{9m}{10}+5} P^{\hp,q}_k \Q[\m^{(3)}](h_1, h_2) \nonumber \\
	&\quad + P^{\hp,\le -\frac{9m}{10}+4}_k \Q[\m^{(3)}](h_1, h_2).
\end{align}
For $q \ge -\frac{9m}{10}+5$, we perform a normal form as in 
\eqref{eq:51-01} or \eqref{eq:51-3} (depending on whether $l+k \le m+4$ or $l+k \ge m+5$), and obtain acceptable contributions using the GPW null condition  (see Proposition \ref{prop:nonlinear}) and set size gain (see \cite[Section A.2]{GuoInvent}):
\begin{align*}
	\|P^{\hp, q}_k \Q[\Phi^{-1} \m^{(3)}](h_1, h_2)\|_{L^2}  	&\lesssim 2^{k+\frac{3k_1}{2}-\frac{9m}{20}} \|h_1\|_{L^2}  \|h_2\|_{L^2} \\
	&\lesssim 2^{k+\frac{3k_1}{2}-\frac{27m}{20}} \|F_1\|_{X^{0,3}_{\beta'}} \|F_2\|_{X^{0,3}_{\beta'}} \\
	&\lesssim 2^{k+\frac{3k_1}{2}-\frac{27m}{20}} 2^{-20 k_1^+ - 20 k_2^+} \varepsilon_1^2,
\end{align*} 
\begin{align*}
	\|P^{\hp, q}_k \Q[\Phi^{-1} \m^{(3)}](\partial_t h_1, h_2)\|_{L^2}  	&\lesssim 2^{k+\frac{3k_1}{2}-\frac{9m}{20}} \|\partial_t h_1\|_{L^2}  \|h_2\|_{L^2} \\
	&\lesssim 2^{k+\frac{3k_1}{2} - 20 k_1^+ -\frac{9m}{20}} 2^{(-\frac32 +\delta_0)m - \frac{9}{20} m} \varepsilon_1^2 \|F_2\|_{X^{0,3}_{\beta'}} \\
	&\lesssim 2^{k+\frac{3k_1}{2}-\frac{9m}{4}} 2^{-20 k_1^+ - 20 k_2^+} \varepsilon_1^3,
\end{align*} 
and in the case $l+k \ge m+5$,
\begin{align*} 
	\langle P^{\hp, q}_k \Q[\Phi^{-1} \m^{(3)}](h_1, h_2), Z_l^2 \partial_t f \rangle_{L^2}  
	&\lesssim \|P^{\hp, q}_k \Q[\Phi^{-1} \m^{(3)}](h_1, h_2)\|_{L^2} \|Z_l \partial_t  f\|_{L^2} \nonumber \\
	&\lesssim 2^{k+\frac{3k_1}{2}-\frac{67m}{20}} 2^{-30 k^+ -20 k_1^+ - 20 k_2^+} \varepsilon_1^4 \nonumber \\
	&\lesssim 2^{k+\frac{3k_1}{2}-\frac{5m}{4}} 2^{-30 k^+ -20 k_1^+ - 20 k_2^+} 2^{-2(1+\beta)(l+k)} \varepsilon_1^4. 
\end{align*}
For the last term on the right of \eqref{eq:423-1}, we directly use the GPW null condition  and set size gain to get that
\begin{align*}
	\|P^{\hp, \le -\frac{9m}{10}+4}_k \Q[\m^{(3)}](h_1, h_2)\|_{L^2}
	&\lesssim 2^{k+\frac{3k_1}{2}-\frac{27m}{20}} \|h_1\|_{L^2}  \|h_2\|_{L^2} \\
	&\lesssim 2^{k+\frac{3k_1}{2}-\frac{9m}{4}} \|F_1\|_{X^{0,3}_{\beta'}} \|F_2\|_{X^{0,3}_{\beta'}} \\
	&\lesssim 2^{k+\frac{3k_1}{2}-\frac{9m}{4}} 2^{-20 k_1^+ - 20 k_2^+} \varepsilon_1^2.
\end{align*}

\subsection{The propagation of $X_{\beta'}$ norm} \label{sec:Xbetap}

In this section, we relax the constraint $a+b \le 10$ to $a+b \le 20$. By interpolation between \eqref{eq:BA} and \eqref{eq:slow-growth}, we have
\begin{equation} \label{eq:511-999}
	\|F_1\|_{X^{30, 30}_{\beta-\delta_0}} + \|F_2\|_{X^{30, 30}_{\beta-\delta_0}} \lesssim \langle t \rangle^{C\varepsilon_1} \varepsilon_1.
\end{equation}
Repeating the arguments in Sections \ref{sec:high-low-hp}--\ref{sec:high-high}, but using \eqref{eq:511-999} instead of the uniformly bounded bootstrap norms, we obtain that
\begin{align} \label{eq:511-1-1}
	\sum_{k_1, k_2}    \int_{\mathcal{I}_m} \left\|  Z_l^{(k)} P_k^\hp \Q_\mua[\m_\mua] (f_1, f_2)  \right\|_{L^2}  ds   \lesssim 2^{-30 k^+ -(1+\beta-\delta_0 - C \varepsilon_1)m }  \, \varepsilon_1^2
\end{align}
in the case of $l+k \le m+4$, and
	\begin{align} \label{eq:511-2-2}
	& \left|\sum_{k_1, k_2} \sum_{\mua \in \{\pm\}^3} \sum_{\substack{a_1+a_2 \le a \\ b_1+b_2 \le b}} C_{\bar{a}, \bar{b}}  \int_{\mathcal{I}_m} \langle  Z_l P_k^\hp \Q_\mua[\m_\mua] (f_1, f_2)    ,Z_l f\rangle_{L^2} ds \right| \nonumber\\
	& \lesssim 2^{-60k^+ - 2(1+\beta-\delta_0)(l+k)} \left(2^{-\delta m} + 2^{- |m-l| } + \mathbf{1}_{l+k \le m+15}\right) 2^{C\varepsilon_1 m}  \varepsilon_1^2
\end{align}
in the case of $m+5 \le l+k \le (1+\delta) m$, which imply the the desired bounds \eqref{eq:511-901} and \eqref{eq:511-902} respectively.

\section{Other types of interactions} \label{sec:9}

Throughout  this section, we assume that \eqref{eq:BA} holds and work with $a+b \le 20, \  l+k < (1+\delta)m$. Write $F_i$ for some component of $S^{a_i} \bar{\Omega}^{b_i} \f_{\mu_i}, \mu_i \in \{\pm\}, \ i=1,2$.  The goal is to show that, for $\bar{\iota} = (\iota, \iota_1, \iota_2)\neq (\bighp, \bighp, \bighp)$,
\begin{align} \label{eq:808-001}
	\left\|\int_{\mathcal{I}_m} P_k^\iota \Q_\mua[\m] (P^{\iota_1} F_1, P^{\iota_2} F_2)  ds \right\|_{L^2} &\lesssim  2^{-20 k^+ - (1+\beta+3\delta)m} \varepsilon_1^2 \nonumber \\
	&\lesssim 2^{-20 k^+ - (1+\beta)(l+k) - \delta m} \varepsilon_1^2,
\end{align}
in which we assume that $\m= |\xi|\m(|\xi|^{-1}\xi)$ is smooth away from the origin.  In the case when $\iota=\bighp$, $(\iota_1, \iota_2) \neq (\bighp, \bighp)$ and $l+k \in [m+5, (1+\delta)m)$, we also prove that
\begin{align}
	\left\|\int_{\mathcal{I}_m} \langle Z_l P_k^\hp \Q_\mua[\m] (P^{\iota_1} F_1, P^{\iota_2} F_2), Z_l P_k^\hp S^a \Omega^b \Fp_\mu \rangle_{L^2}  ds \right\|_{L^2} &\lesssim 2^{-40 k^+ - (2+2\beta+5\delta) m}  \varepsilon_1^3 \nonumber \\ 
	&\lesssim 2^{-40 k^+ - 2(1+\beta)(l+k) - \delta m}  \varepsilon_1^3.
\end{align}
Since the signs in the phase play no role in the proof blow, for simplicity we shall work with $\mua = (+,+,+)$ and drop it from the notations. As usual, we dyadically localize the inputs into $P^{\iota_1}_{k_1}$ and $P^{\iota_2}_{k_2}$ pieces. Similar to \eqref{eq:806-aa1}--\eqref{eq:not-too-large-k}, in all types of interactions considered below we assume that $k_{\max } < \delta_0 m$ and $k_{\min} > -3m$ hold. Moreover, we shall often make use of the estimate 
\begin{equation}
	\|\f_\pm\|_{(X\cap Y)^{30, 30}_{\beta-\delta_0}} \lesssim \langle t \rangle^{C\varepsilon_1} \varepsilon_1,
\end{equation}
which follows from interpolation between \eqref{eq:BA} and \eqref{eq:slow-growth}.




\subsection{$\bighp + \bighp \to \bigvp$ interactions} \label{sec:8-1}

Here, we give the bilinear $L^2$ estimate on $ \int_{\mathcal{I}_m} {P}_k^\vp \Q[\m](f_1, f_2) ds$
with $f_i = P_{k_i}^\hp F_i, \ i =1,2$. 

\medskip
We decompose the first input as 
\begin{equation}
	f_1 = P^{\hp, \le -5} f_1 +  P^{\hp, > -5} f_1
\end{equation}
where $P^{\hp, >-5} = \sum_{-5<q\le -1} P^{\hp, q}$, and also introduce the following  decomposition of the phase space towards a normal form:
\begin{equation}
	1 \equiv  \chi^{\res}(\xi, \eta) +\chi^{\nr}(\xi, \eta) := \psi(2^{5} \Phi) + (1-\psi)(2^{5} \Phi).
\end{equation}
Hence, we can write
\begin{align} \label{eq:805-a2}
	{P}_k^\vp \Q_\mua[\m](f_1, f_2) &= 	{P}_k^\vp \Q_\mua[\m](P^{\hp, \le -5} f_1, f_2)  + {P}_k^\vp \Q_\mua[\m \chi^\res](P^{\hp, > -5} f_1, f_2) \nonumber \\
	&\quad + {P}_k^\vp \Q_\mua[\m \chi^\nr](P^{\hp, > -5} f_1, f_2).
\end{align}

First, we study the the first two terms on the right of \eqref{eq:805-a2}. Note that on the support of $\chi^\vp(\xi)\chi^{\hp, \le -5}(\xi-\eta)\chi^\hp(\eta)$ or $\chi^\res \chi^\vp(\xi)\chi^{\hp,>-5}(\xi-\eta)\chi^\hp(\eta)$, the triangle formed by $\xi, \xi-\eta, \eta$ is non-degenerate in the sense of Lemma \ref{lem:805-c1}. Using Lemma \ref{lem:805-c1}, there holds that
\begin{equation}
	|\bar{\sigma}| \sim 2^{k_{\max} + k_{\min}},
\end{equation}
and due to \eqref{eq:805-a1},  we have
\begin{equation} \label{eq:805-d1}
	|S_\eta \Phi| + |\Omega_\eta \Phi| \sim 2^{k_{\max} + k_{\min} - 2k_1}.
\end{equation}
Here, we   just show the estimate for $${P}_k^\vp \Q_\mua[\m \chi^\res](P^{\hp, > -5} f_1, f_2),$$ while the argument for $${P}_k^\vp \Q_\mua[\m](P^{\hp, \le -5} f_1, f_2)$$ is entirely similar. Make the decompositions
$$f_1 =  Z_{\le l_1} f_1 +  Z_{> l_1} f_1, \quad f_2 =  \wt{P}^\hp Z_{\le l_2} f_2 +  \wt{P}^\hp Z_{> l_2} f_2,$$  
with 
\begin{equation} 
	l_1 = (1-\delta)m-k_1, \quad l_2 = (1-\delta)m-k_2.
\end{equation}
To treat the $Z_{\le l_1} f_1$ part, we use integration by parts along vector fields $S_\eta$ and $\Omega_\eta$. By \eqref{eq:805-d1} and \eqref{eq:429-001}, each integration by parts in $V_\eta \in \{S_\eta, \Omega_\eta\}$ in the expression
$${P}_k^\vp \Q_\mua[\m \chi^\res](P^{\hp, > -5} Z_{\le l_1} f_1, f_2),$$
gains a factor of
\begin{equation} \label{eq:423-01}
	s^{-1}(2^{2k_1-k_{\max} - k_{\min}} (1+2^{k_2-k_1}) + 2^{l_1+k_1}).
\end{equation}
The factor $2^{k_2 - k_1}$ here is due to the ``cross terms", \emph{e.g.}, $\frac{V_\eta^2 \Phi}{V_\eta \Phi}$. The analysis here is similar to the derivation of \eqref{eq:factor-001}, and we have the following major terms arising from repeated integration by parts using \eqref{eq:429-001} (with $L = 2^{k_{\max} + k_{\min}}$):
\begin{enumerate}[(a)]
	\item There is always a gain of $s^{-1} 2^{2k_1-k_{\max}-k_{\min}}$ due to the denominator $s V_\eta \Phi$.
	\item $V_\eta$ landing on  $\chi^{\res}(\xi) {\chi}^{\hp, >-5}(\xi-\eta) \F (Z_{\le l_1} f_1)(\xi-\eta)$ gives a factor $\lesssim 2^{k_2-k_1}+2^{l_1-k_1+k_{\max}+k_{\min}}$ using \eqref{eq:805-bb1}--\eqref{eq:805-bb2}.
	\item $V_\eta$ landing on the second input $\wh{f}_2(\eta)$ gives a bounded factor.
	\item If $V_\eta$ lands on $\chi_{_{V_\eta}}$ or $\frac{1}{V_\eta\Phi}$ or coefficients from previous steps of integration by parts, this produces a factor $\lesssim 1 + 2^{k_2-k_1}$  using the identities \eqref{eq:805-bb4}--\eqref{eq:805-bb7}. Note that in this case we have
	\begin{equation}
		|\Omega_\eta \bar\sigma| = |\xi_3 \eta_h| \lesssim 2^{k+k_2} \lesssim L (1+2^{k_2-k_1}).
	\end{equation}
\end{enumerate}
Note that if $k_2 \gg k_1$, then
$$2^{2k_1-k_{\max} - k_{\min}} (1+2^{k_2-k_1}) \sim 2^{k_1+k_2 -k_{\max} - k_{\min} } \sim 1. $$
Hence, there holds that
\begin{equation} \label{eq:423-02}
	\eqref{eq:423-01} \lesssim s^{-1}	(1+2^{2k_1-k_{\max} - k_{\min}} + 2^{l_1+k_1}) \lesssim 	s^{-1} 2^{2k_1-k_{\max} - k_{\min}} + s^{-\delta}.
\end{equation}
Similarly, for the $Z_{\le l_2} f_2$ part, each integration by parts along $V_{\xi-\eta}$ gains a factor bounded by
$$	s^{-1} 2^{2k_2-k_{\max} - k_{\min}} + s^{-\delta}.$$

\smallskip
\emph{Case 1.} $k_{\max} + k_{\min} - 2k_1 \lesssim -(1-\delta)m$ \ or \ $k_{\max} + k_{\min} - 2k_2 \lesssim -(1-\delta)m$. 
\smallskip

In this case, repeated integration by parts in $V_\eta$ or $V_{\xi-\eta}$ is not beneficial. Instead, we rely on set size gain, multiplier bounds and linear dispersive estimates. Let us only treat the case $k_{\max} + k_{\min} - 2k_1 \lesssim -(1-\delta)m$ as the other case is similar. If $k = k_{\min}$, then $k_1 \sim k_2 \sim k_{\max}$ and we have
\begin{align*}
	\|	P^\vp_k \Q[\m \chi^\res](P^{\hp, >-5} f_1, f_2)\|_{L^2} &\lesssim 2^{k+\frac{3k}{2}} \|f_1\|_{L^2} \|f_2\|_{L^2} \\
	&\lesssim 2^{\frac{5k}{2} - 30k_1^+ - 30 k_2^+} \varepsilon_1^2 \\
	&\lesssim 2^{-\frac{5}{2}(1-\delta)m + \frac{5k_1}{2} - 30k_1^+ - 30 k_2^+} \varepsilon_1^2, 
\end{align*}
which is acceptable.  If  $k_2 = k_{\min}$, then $k_1 \sim k \sim k_{\max}$ and using \cite[Lemma A.8]{GuoInvent} (with direct adaptations) we have 
\begin{align*}
	\|	P^\vp_k \Q[\m \chi^\res](P^{\hp, >-5}  f_1, f_2)\|_{L^2} &\lesssim 2^{k} \|e^{i s \Lmd} f_2\|_{L^\infty} \|f_1\|_{L^2} \\
	&\lesssim 2^{k + \frac{3k_2}{2} - 30k_1^+ - 30 k_2^+} 2^{-m}  \varepsilon_1^2 \\
	&\lesssim 2^{k +  \frac{3}{2} k_1 - 30k_1^+ - 30 k_2^+} 2^{-m - \frac{3}{2} (1-\delta) m}  \varepsilon_1^2,
\end{align*}
which is again acceptable. If $k_1 = k_{\min}$, then $k_2 \sim k \sim k_{\max}$, but this is excluded by the assumption $k_{\max} + k_{\min} - 2k_1 \lesssim -(1-\delta)m$.

\medskip
\emph{Case 2.} $k_{\max} + k_{\min} - 2k_1 \gtrsim -(1-\delta)m$ and $k_{\max} + k_{\min} - 2k_2 \gtrsim -(1-\delta)m$.
\smallskip

In this case, the factor \eqref{eq:423-02} is bounded by $s^{-\delta}$, and repeated integration by parts in $V_\eta$ leads to arbitrary fast decay in time if the first input is replaced by $P^{\hp, >-5}  Z_{\le l_1} f_1$. Integration by parts in $V_{\xi - \eta}$ is similarly effective if the second input is replaced by $\wt{P}^\hp Z_{\le l_2} f_2$.  Hence, it is sufficient to treat
the $Z_{> l_i} f_i, i=1,2$ interactions, for which we receive an acceptable contribution as
\begin{align*}
	\|	P^\vp_k \Q[\m \chi^\res](P^{\hp, >-5}  Z_{> l_1} f_1, \wt{P}^{\hp} Z_{> l_2} f_2)\|_{L^2} &\lesssim 2^{k+\frac{3k_{\min}}{2}} \|Z_{> l_1} f_1\|_{L^2} \|Z_{> l_2} f_2\|_{L^2} \\
	&\lesssim 2^{k+\frac{3k_{\min}}{2}} 2^{-2(1+\beta')(1-\delta) m} \|F_1\|_{X^{0}_{\beta'}} \|F_2\|_{X^{0}_{\beta'}} \\
	&\lesssim 2^{k+\frac{3k_{\min}}{2} - 30 k_1^+ - 30 k_2^+} 2^{-2(1+\beta')(1-\delta) m + C \varepsilon_1 m} \varepsilon_1^2. 
\end{align*}

\medskip

Next, we study the non-resonant part in \eqref{eq:805-a2}, namely, $P_k^\vp \Q[\m \chi^\nr](P^{\hp, > -5} f_1, f_2)$. Performing a normal form, we get
\begin{align}
	&\quad \int_{2^m}^{2^{m+1}\wedge t} P_k^\vp \Q[\Phi^{-1} \m \chi^\nr](P^{\hp, > -5} f_1, f_2) ds  \nonumber\\
	& = - i P_k^\vp \Q[\Phi^{-1} \m \chi^\nr](P^{\hp, > -5}  f_1, f_2)|_{s = 2^{m+1} \wedge t} \nonumber \\
	& \quad + i P_k^\vp \Q[\Phi^{-1} \m \chi^\nr](P^{\hp, > -5}  f_1, f_2)|_{s = 2^{m}} \nonumber \\
	& \quad + i \int_{2^m}^{2^{m+1}\wedge t} P_k^\vp \Q[\Phi^{-1} \m \chi^\nr](P^{\hp, > -5} \partial_t f_1, f_2) ds \nonumber \\
	&\quad + i \int_{2^m}^{2^{m+1}\wedge t} P_k^\vp \Q[\Phi^{-1} \m \chi^\nr](P^{\hp, > -5} f_1, \partial_t f_2) ds.
\end{align}
For the first two terms, applying \cite[Lemma A.8]{GuoInvent}  and Lemma \ref{lem:hp-disp} there holds that
\begin{align} \label{eq:512-1}
	&\quad \ \|P_k^\vp \Q[\Phi^{-1} \m \chi^\nr](P^{\hp, > -5} f_1, f_2)\|_{L^2} \nonumber \\
	&\lesssim 2^{k} \|e^{i s \Lmd} P^{\hp, > -5} f_1^I\|_{L^\infty} \|f_2\|_{L^2} + 2^k \|P^{\hp, > -5} f_1^{II}\|_{L^2} \|e^{ i s \Lmd} f_2\|_{L^\infty} \nonumber \\
	&\lesssim 2^{k+ \frac32 k_1 -\frac32 m} \|F_1\|_{X^{0,3}_{\beta'}} \|F_2\|_{X^0_{\beta'}}  + 2^{k + \frac32 k_2 - (2+\beta') m} \|F_1\|_{X^0_{\beta'}} \|F_2\|_{X^{0,3}_{\beta'}}. \nonumber \\
	&\lesssim 2^{k+ \frac32 \max \{k_1, k_2\} - 30 k_1^+ - 30 k_2^+} 2^{-(\frac32-\delta) m} \varepsilon_1^2.
\end{align}
Hence, the estimate  \eqref{eq:512-1} gives an acceptable contribution.  For the second term in \eqref{eq:512-1}, we have
\begin{align} 
	\|P_k^\vp \Q[\Phi^{-1} \m \chi^\nr](P^{\hp, > -5} \partial_t f_1, f_2)\|_{L^2} &\lesssim 2^k \|\partial_t f_1\|_{L^2} \|e^{ i s \Lmd} f_2\|_{L^\infty} \nonumber \\
	&\lesssim 2^{k+k_1 + \frac32 k_2 - 30 k_1^+} 2^{-(\frac52 - \delta_0) m} \varepsilon_1^2 \|F_2\|_{X^{0,3}_{\beta'}} \nonumber \\
	 &\lesssim 2^{k+k_1 + \frac32 k_2 -30 k_1^+ - 30 k_2^+} 2^{-(\frac52 - \delta) m} \varepsilon_1^3.
\end{align}
The third term in \eqref{eq:512-1} is similar. Finally, in the case $l+k \ge m+4$, we are following the energy method approach, hence in addition we need
\begin{align} \label{eq:512-hahaha}
	&\quad \ \langle P_k^\vp \Q[\Phi^{-1} \m \chi^\nr](P^{\hp, > -5}  f_1, f_2), Z_l^2 \partial_t f \rangle_{L^2}  \nonumber \\
	&\lesssim  \|P_k^\vp \Q[\Phi^{-1} \m \chi^\nr](P^{\hp, > -5}  f_1, f_2)\|_{L^2}  \|\partial_t f\|_{L^2} \nonumber \\
	&\lesssim 2^{k+ \frac32 \max \{k_1, k_2\} - 30 k^+ - 30 k_1^+ - 30 k_2^+} 2^{-(\frac72-2\delta) m} \varepsilon_1^4,
\end{align}
where $f$ stands for $P_k^{\vp} S^a \Omega^b \Fp_\mu$ or a component of $P_k^{\vp} S^a \bar\Omega^b \f_\mu$ and we used \eqref{eq:512-1} and Lemma \ref{lem:51-1} in the last line.

\subsection{$\bigvp + \bigvp \to \bigvp$ interactions} \label{sec:9-2}

Here, we give the bilinear $L^2$ estimate on $ \int_{\mathcal{I}_m}	{P}_k^\vp \Q[\m](f_1, f_2) ds$ with $f_i = P_{k_i}^\vp F_i, i =1,2$. 

\medskip

In this case, there holds that $|\Phi| \gtrsim 1$, hence we can directly perform a normal form as
\begin{align} \label{eq:512-41}
   \int_{\mathcal{I}_m} {P}_k^\vp \Q[\m](f_1, f_2) ds 
	& = - i P_k^\vp \Q[\Phi^{-1} \m ](  f_1, f_2)|_{s = 2^{m+1} \wedge t} \nonumber \\
	& \quad + i P_k^\vp \Q[\Phi^{-1} \m ](  f_1, f_2)|_{s = 2^{m}} \nonumber \\
	& \quad + i \int_{\mathcal{I}_m} P_k^\vp \Q[\Phi^{-1} \m ]( \partial_t f_1, f_2) ds \nonumber \\
	&\quad + i \int_{\mathcal{I}_m} P_k^\vp \Q[\Phi^{-1} \m ]( f_1, \partial_t f_2) ds.
\end{align}
To treat the first two terms in \eqref{eq:512-41}, we further decompose the inputs as
\begin{equation} \label{eq:512-11}
	f_1 = \sum_{p_1 > -\frac{m}{2}} P^{\vp, p_1} f_1 + P^{\vp, \le -\frac{m}{2}} f_1
\end{equation}
and
\begin{equation} \label{eq:512-12}
	f_2 = \sum_{p_2 > -\frac{m}{2}} P^{\vp, p_2} f_2 + P^{\vp, \le -\frac{m}{2}} f_2.
\end{equation}
For the last terms in \eqref{eq:512-11} and \eqref{eq:512-12}, we think of $p_1 = -\frac{m}{2}$ and  $p_2 = - \frac{m}{2}$ respectively. For simplicity, we shall write for $j=1,2$,
\begin{equation}
	P^{\vp,(\le)p_j} := \begin{cases} P^{\vp, p_j}, \quad p_j > - \frac{m}{2}, \\
		P^{\vp, \le -\frac{m}{2}}, \quad p_j = - \frac{m}{2}.
	\end{cases}
\end{equation}
In the case that $p_1 \le p_2$, an $L^2$--$L^\infty$ bound with the help of  \cite[Lemma A.8]{GuoInvent} (with direct adaptation) and Lemma \ref{lem:vp-disp}  gives
\begin{align} \label{eq:806-71}
	&\quad \ \|P_k^\vp \Q[\Phi^{-1} \m](P^{\vp, (\le)p_1} f_1, P^{\vp,(\le)p_2} f_2)\|_{L^2} \nonumber \\	
	&\lesssim 2^k \| P^{\vp, (\le)p_1} f_1\|_{L^2} \|e^{ i s \Lmd } P^{\vp, (\le)p_2} f_2^I\|_{L^\infty} + 2^k \| e^{ i s \Lmd } P^{\vp, (\le)p_1} f_1\|_{L^\infty} \| P^{\vp,(\le)p_2} f_2^{II}\|_{L^2} \nonumber \\
	&\lesssim (2^{k+ p_1 + \frac32k_2 - p_2} t^{-\frac32} + 2^{k+\frac32 k_1 - (1+\beta')p_2} 2^{(-2-\beta')m} ) \|F_1\|_{Y^{0,3}_{\beta'}} \|F_2\|_{Y^{0,3}_{\beta'}} \nonumber \\
	&\lesssim 2^{k + \frac32 \max\{k_1,k_2\} - 30k_1^+ - 30 k_2^+} 2^{-(\frac32-\delta) m} \varepsilon_1^2. 
\end{align} 
	(For $p_2 = -\frac{m}{2}$, we simply take $f_2^I = f_2$ and $f_2^{II} = 0$.) The case $p_1 > p_2$ can be similarly treated by exchanging the role of $f_1$ and $f_2$. Summing \eqref{eq:806-71} over possible $p_{1,2}$, we receive an acceptable contribution. To treat the third terms in \eqref{eq:512-41}, we also use an $L^2$--$L^\infty$ bound to get
\begin{align} \label{eq:806-72}
	 \|P_k^\vp \Q[\Phi^{-1} \m ]( \partial_t f_1, f_2)\|_{L^2} &\lesssim 2^k \|\partial_t f_1\|_{L^2} \|e^{ i  s \Lmd} f_2\|_{L^\infty} \nonumber \\
	 &\lesssim 2^{k+k_1 + \frac32 k_2 -30k_1^+ -30k_2^+}2^{ - (\frac52 - \delta) m} \varepsilon_1^3.  
\end{align}
The estimate for the fourth term in \eqref{eq:512-41} is similar to \eqref{eq:806-72}. 


\subsection{$\bigvp + \bigvp \to \bighp$ interactions} \label{sec:93}

Here, we give the bilinear $L^2$ estimate on $\int_{\mathcal{I}_m}	{P}_k^\hp \Q[\m](f_1, f_2) ds$ with $f_i = P_{k_i}^\vp F_i, \ i=1,2$.

\medskip
Let $\bar{P}^{\hp} = 1- \wt{P}^{\vp}$, and further decompose the output using
\begin{equation}
	{P}^\hp = ({P}^\hp - \bar{P}^{\hp})+\bar{P}^{\hp}.
\end{equation} 
For the first part, due to $P^\hp - \bar{P}^\hp = \wt{P}^\vp P^\hp$, we can apply the method for $\vp+\vp \to \vp$ interaction. Indeed, the arguments in Section \ref{sec:9-2} still apply if $P^\vp$ is replaced by $\wt{P}^\vp$. (In the case when $l+k \ge m+5$, the energy estimate approach is compatible with normal forms due to Lemma \ref{lem:51-1}.)

%


Next, we localize the input in $p$ as
\begin{equation}\label{eq:53-1}
	f_1 = \sum_{p_1 > -\frac{(1-\delta)m}{2}} P^{\vp, p_1} f_1 + P^{\vp, \le -\frac{(1-\delta)m}{2}} f_1,
\end{equation} 
and
\begin{equation} \label{eq:53-2}
	f_2 = \sum_{p_2 > -\frac{(1-\delta)m}{2}} P^{\vp, p_2} f_2 + P^{\vp, \le -\frac{(1-\delta)m}{2}} f_2. 
\end{equation} 
For the last terms in \eqref{eq:53-1} and  \eqref{eq:53-2}, we think of $p_1 = -\frac{(1-\delta)m}{2}$ and $p_2 = -\frac{(1-\delta)m}{2}$. Without loss of generality, let us assume that $p_1 \le p_2$ (the opposite case is similar by exchanging the roles of $f_1$ and $f_2$). On the support of $\bar{\chi}_k^\hp(\xi) \chi_{k_1}^{\vp,p_1}(\xi-\eta) \chi_{k_2}^{\vp,p_2}(\eta)$, we have
\begin{equation}
	-p_2 + k \lesssim k_1 \sim k_2.
\end{equation} 
Consider the non-endpoint case $p_1> -\frac{(1-\delta)m}{2}$ first. Take two numbers
\begin{equation}
	l_1 = (1-\delta) m - k_1 + p_1,\quad l_2 = (1-\delta) m - k_2 + p_2.
\end{equation}
By Lemma \ref{lem:805-c1}, we have
\begin{equation}
	|\bar{\sigma}| \sim 2^{k_1 + k}
\end{equation}
and hence, by Lemma \ref{lem:53-1}, there holds that
\begin{equation} \label{eq:806-91}
	|S_\eta \Phi| + |\Omega_\eta \Phi| \sim 2^{p_1+k-k_1}.
\end{equation}
Using \eqref{eq:806-91} and \eqref{eq:429-001}, each integration by parts in $V_\eta \in \{S_\eta, \Omega_\eta\}$ gains  a factor of size
\begin{equation} \label{eq:614-01}
	s^{-1}(2^{-p_1-k+k_1}(1+ 2^{p_2-p_1}) + 2^{-2p_1} + 2^{-p_1+ l_1+k_1}) \lesssim s^{-1} 2^{-p_1-k+k_1}(1+ 2^{p_2-p_1}) +  s^{-\delta},
\end{equation}
if the input $f_1$ is replaced by $H_{\le l_1 }f_1$. Indeed, we have the following major terms arising from repeated integration by parts using \eqref{eq:429-001} (with $L = 2^{k_1+k}$):
\begin{enumerate}[(a)]
	\item There is always a gain of $s^{-1} 2^{-p_1-k+k_1}$ due to the denominator $s V_\eta \Phi$.
	\item $V_\eta$ landing on  ${\chi}^{\vp,p_1}(\xi-\eta) \F (H_{\le l_1} f_1)(\xi-\eta)$ gives a factor $\lesssim 1 + 2^{-p_1 -k_1 +k} +2^{l_1+k}$ using \eqref{eq:805-bb3} and $k_1 \sim k_2$.
	\item $V_\eta$ landing on $(\chi^{\vp, p_2} \wh{f}_2)(\eta)$ gives a bounded factor.
	\item If $V_\eta$ lands on $\chi_{_{V_\eta}}$ or $\frac{1}{V_\eta\Phi}$ or coefficients from previous steps of integration by parts, this produces a factor $\lesssim 1 + 2^{p_2-p_1}$  using the identities \eqref{eq:805-bb4}--\eqref{eq:805-bb7}. Note that in this case we have
	\begin{equation}
		|\Omega_\eta \bar\sigma| = |\xi_3 \eta_h| \lesssim L, \quad \frac{|V_\eta (\xi-\eta)_h|}{|(\xi-\eta)_h|} \lesssim 2^{p_2-p_1}.
	\end{equation}
\end{enumerate}
 Similarly, each integration by parts in $V_{\xi - \eta} \in \{S_{\xi-\eta}, \Omega_{\xi- \eta}\}$ gains  a factor of size
\begin{equation} \label{eq:614-02}
	s^{-1}(2^{-p_2-k+k_2} + 2^{-2p_2} + 2^{-p_2+ l_2+k_2})	\lesssim s^{-1} 2^{-p_2-k+k_2} + s^{-\delta},
\end{equation}
if $f_2$ is replaced by $H_{\le l_2 }f_2$. 
If $k_1-k \gg -p_2$ and $k_1-k-p_1 \gg (1-\delta)m$, then $p_1 \sim p_2$ and we have an acceptable bound via set size gain:
	\begin{align*}
 \|\bar{P}_k^\hp \Q[\m](P^{\vp, p_1} f_1, P^{\vp, p_2}  f_2)\|_{L^2}
	& \lesssim 2^k 2^{\frac32 k} \|P^{\vp, p_1} f_1\|_{L^2} \|P^{\vp, p_2}  f_2\|_{L^2} \\
	&\lesssim 2^{k} 2^{\frac32 k} 2^{p_1} 2^{p_2} 2^{-30k_1^+ - 30 k_2^+} 2^{C \varepsilon_1 m} \varepsilon_1^2 \\
	&\lesssim 2^{\frac52 k_1 - \frac94 (1-\delta) m + C \varepsilon_1 m} 2^{-30k_1^+ - 30 k_2^+} \varepsilon_1^2.
\end{align*}
In the last inequality, we used that
\begin{equation}
	\frac{5}{2} (k-k_1) \lesssim -\frac52 p_1 - \frac52(1-\delta) m \lesssim -2p_1 - \frac94 (1-\delta) m. 
\end{equation}
If  $k_1-k \lesssim -p_2$ or $k_1-k-p_1 \lesssim (1-\delta)m$, the factors \eqref{eq:614-01} and \eqref{eq:614-02} are both bounded by $s^{-\delta}$, hence repeated integration by parts in $V_\eta$ as well as $V_{\xi-\eta}$ yields acceptable contributions for the $H_{\le l_i} f_i, i=1,2$ inputs. Thus, it remains to treat   the $H_{> l_i} f_i$ inputs, \emph{i.e.}, to estimate
\begin{equation} \label{eq:806-011}
	\|\bar{P}_k^\hp \Q[\m](P^{\vp,p_1} H_{>l_1} f_1, P^{\vp,p_2}  H_{>l_2} f_2)\|_{L^2}.
\end{equation}

\medskip
\emph{Case 1.} $p_2 \le -10 \delta m$.
\smallskip

In this case, we use set size gain and $Y$ norm bounds to deduce that
\begin{align}
\eqref{eq:806-011}	&\lesssim 2^{k} 2^{\frac{k}{2} + k_1 + p_1} \|H_{>l_1} f_1\|_{L^2} \|H_{>l_2} f_2\|_{L^2} \nonumber \\
	&\lesssim 2^{\frac{5k_1}{2} + \frac{3p_2}{2} + p_1} 2^{-(p_1+(1-\delta)m)} 2^{-(1+\beta-\delta_0)(p_2+(1-\delta)m)} \|F_1\|_{Y^0_0} \|F_2\|_{Y^0_{\beta-\delta_0}} \nonumber \\
	&\lesssim 2^{\frac{5k_1}{2} + (\frac12 - \beta)p_2} 2^{-(2+\beta-\delta_0)(1-\delta)m} 2^{-30k_1^+-30k_2^+} t^{C \varepsilon_1} \varepsilon_1^2 \nonumber \\
	&\lesssim 2^{\frac{5k_1}{2}} 2^{-(2+\beta+3 \delta)m} 2^{-30k_1^+-30 k_2^+} \varepsilon_1^2.
\end{align}

\medskip
\emph{Case 2.} $p_2 > -10 \delta m$ and $p_1 \sim p_2$.
\smallskip

Here, we also use  set size gain and $Y$ norm bounds to deduce that
\begin{align}
	\eqref{eq:806-011}&\lesssim 2^{k} 2^{\frac{k}{2} + k_1 + p_1} \|H_{>l_1} f_1\|_{L^2} \|H_{>l_2} f_2\|_{L^2} \nonumber \\
	&\lesssim 2^{\frac{5k_1}{2} + \frac{3p_2}{2} + p_1} 2^{-(1+\beta-\delta_0)(p_1+(1-\delta)m)} 2^{-(1+\beta-\delta_0)(p_2+(1-\delta)m)} \|F_1\|_{Y^0_{\beta-\delta_0}} \|F_2\|_{Y^0_{\beta-\delta_0}} \nonumber \\
	&\lesssim 2^{\frac{5k_1}{2}} 2^{-2(1+\beta-\delta_0)(1-\delta)m + C \varepsilon_1 m} 2^{-30 k_1^+ - 30 k_2^+} \varepsilon_1^2,
\end{align}
which is acceptable since $2(1+\beta-\delta_0)(1-\delta) > 2+ \beta + \delta$.

\medskip
\emph{Case 3.} $p_2 > -10 \delta m$ and $p_1 \ll p_2$.
\smallskip

As observed in \cite{GuoInvent}, in this case we have
\begin{equation}
	\partial_{\eta_3} \Lmd(\eta) \sim 2^{2p_2 - k_2}, \quad -\partial_{\eta_3} \Lmd(\xi - \eta) \sim 2^{2p_1-k_1}
\end{equation}
and combined with $p_1 \ll p_2$, there holds that
\begin{equation} \label{eq:53-11}
	|\partial_{\eta_3} \Phi|  \gtrsim 2^{2 p_2 - k_2} \gtrsim 2^{-20 \delta m - k_2}.
\end{equation}
Make the decomposition of phase space
\begin{equation}
	1 \equiv \chi^{\res}(\xi, \eta) + \chi^{\nr}(\xi, \eta)
\end{equation}
with
\begin{equation}
	\chi^{\res} = \psi(2^{10\beta m} \Phi), \quad \chi^{\nr} = (1-\psi)(2^{10\beta m} \Phi).
\end{equation}
For the resonant part, the lower bound \eqref{eq:53-11} helps us to gain a crucial set size bounded by $2^{-8 \beta m + k_2}$ in the vertical direction ({cf.} \cite[Lemma A.4]{GuoInvent}), which leads to an acceptable contribution as
\begin{align}
	&\quad \ \|\bar{P}_k^\hp \Q[\m \chi^\res](P^{\vp, p_1} H_{>l_1} f_1, P^{\vp, p_2} H_{>l_2} f_2)\|_{L^2} \nonumber \\
	&\lesssim 2^{k} 2^{k_1+p_1+\frac{k_2}{2} - 4 \beta m} \|H_{>l_1} f_1\|_{L^2} \|H_{>l_2} f_2\|_{L^2} \nonumber \\
	&\lesssim 2^{\frac52 k_1 + p_1 + p_2 - 4 \beta m} 2^{-(l_1+k_1)-(l_2+k_2)} \|F_1\|_{Y^0_0} \|F_2\|_{Y^0_{0}} \nonumber \\
	&\lesssim 2^{\frac52 k_1 -4 \beta m- 2(1-\delta)m + C \varepsilon_1 m} 2^{-30 k_1^+ -30k_2^+}  \varepsilon_1^2.
\end{align}
Then, we consider the non-resonant part. Performing a normal form, we receive acceptable contributions due to the estimates
\begin{align} \label{eq:531047}
	&\quad \ \|\bar{P}_k^\hp \Q[\m \chi^\nr \Phi^{-1}](P^{\vp, p_1} H_{>l_1} f_1, P^{\vp, p_2} H_{>l_2} f_2)\|_{L^2} \nonumber \\
	&\lesssim 2^{k+10\beta m} 2^{k_1+p_1+\frac{k}{2} } \|H_{>l_1} f_1\|_{L^2} \|H_{>l_2} f_2\|_{L^2} \nonumber \\
	&\lesssim 2^{\frac12 k + 2 k_1 +10\beta m} 2^{-2(1-\delta)m} \|F_1\|_{Y^0_0} \|F_2\|_{Y^0_0} \nonumber \\
	&\lesssim 2^{\frac12 k + 2 k_1 - 30 k_1^+ - 30 k_2^+} 2^{-(2-11\beta)m} \varepsilon_1^2,
\end{align}
and
\begin{align}
	&\quad \ \|\bar{P}_k^\hp \Q[\m \chi^\nr \Phi^{-1}](P^{\vp, p_1} H_{>l_1} \partial_t f_1, P^{\vp, p_2} H_{>l_2} f_2)\|_{L^2} \nonumber \\
	&\lesssim 2^{k+10\beta m} 2^{k_1+p_1+\frac{k}{2} } \|\partial_t f_1\|_{L^2} \|H_{>l_2} f_2\|_{L^2} \nonumber \\
	&\lesssim 2^{\frac32 k + k_1 - 30 k_1^+ +10\beta m} 2^{(-\frac32+\delta_0)m-(1-\delta)m} \varepsilon_1^2  \|F_2\|_{Y^0_0} \nonumber \\
	&\lesssim 2^{\frac32 k + k_1 - 30 k_1^+ - 30 k_2^+} 2^{-(\frac52-11\beta)m} \varepsilon_1^2
\end{align}
(the case $\partial_t$ landing on $f_2$ is similar), and in the case of $m+5 \le l+k <(1+\delta)m$,
\begin{align} \label{eq:527-a1}
	&\quad \ \langle \bar{P}_k^\hp \Q[\m \chi^\nr \Phi^{-1}](P^{\vp, p_1} H_{>l_1} f_1, P^{\vp, p_2} H_{>l_2} f_2), Z_l^2 \partial_t f \rangle_{L^2} \nonumber \\
	&\stackrel{\eqref{eq:531047}}{\lesssim} 2^{\frac12 k + 2k_1 - 30 k_1^+ - 30 k_2^+} 2^{-(2-11 \beta)m} \varepsilon_1^2 \cdot 2^{-2m} \varepsilon_1^2 \nonumber \\
	&\ \lesssim 2^{\frac12 k + 2k_1 - 30 k_1^+ - 30 k_2^+} 2^{-2(1+\beta)(l+k) -\frac32 m} \varepsilon_1^4.
\end{align}

\medskip

Finally, we discuss the endpoint cases for $p_1$ and $p_2$. If $p_1 = -\frac{(1-\delta) m}{2}$, we avoid using the $H$-decomposition for $f_1$ and integration by parts in $V_\eta$. Following the approach in Case 1 and Case 3, instead of using $Y$ norm bounds for $\|H_{>l_1} f_1\|_{L^2}$, we simply use the crude bound 
\begin{equation} \label{eq:512-841}
	2^{p_1} \|P^{\vp, \le p_1} f_1\|_{L^2} \lesssim 2^{2p_1} \|F_1\|_{Y^{0,3}_{\beta'}} \lesssim 2^{-(1-\delta)m} \|F_1\|_{Y^{0,3}_{\beta'}}.
\end{equation}
Similarly, if $p_2 = p_1 = -\frac{(1-\delta) m}{2}$, we also avoid using the $H$-decomposition for $f_2$ and integration by parts in $V_{\xi-\eta}$. Following the approach in Case 1, instead of using $Y$ norm bounds for $\|H_{>l_2} f_2\|_{L^2}$, we use the crude bound 
\begin{equation}
	2^{p_2} \|P^{\vp, \le p_2} f_2\|_{L^2}\lesssim 2^{2p_2} \|F_2\|_{Y^{0,3}_{\beta'}} \lesssim 2^{-(1-\delta)m} \|F_2\|_{Y^{0,3}_{\beta'}}.
\end{equation}
It is straightforward to check that the resulting estimates are still acceptable.

%
%
%
%
%
%
%
%

\subsection{$\bigvp + \bighp$ interactions} \label{sec:84}

Here, we give the bilinear $L^2$ estimate on $\int_{\mathcal{I}_m} P_k\Q[\m](f_1, f_2) ds$ with $f_1 = P_{k_1}^\vp F_1$ and $f_2 = P_{k_2}^\hp F_2$. (The $\bighp$+$\bigvp$ case is similar by exchanging the roles of $f_1$ and $f_2$.)

\medskip

Decompose the second input into
\begin{equation} \label{eq:53-a1}
	f_2 = (P^{\hp} - \bar{P}^{\hp}) f_2 +  \bar{P}^{\hp} f_2.
\end{equation}
For the first part in \eqref{eq:53-a1}, since $P^\hp - \bar{P}^\hp = \wt{P}^\vp P^\hp$, we can apply the methods for $\bigvp + \bigvp \to \bigvp$ and $\bigvp + \bigvp \to \bighp$ cases directly. Indeed, the arguments in Section \ref{sec:9-2} and Section \ref{sec:93} still work if $P^\vp$ is replaced by $\wt{P}^\vp$. Note that  on the support of $\chi_k(\xi) \chi_{k_1}^{\vp}(\xi-\eta) \bar{\chi}_{k_2}^{\hp}(\eta)$, there holds that	$k \sim k_{\max}$.


\smallskip
Decompose the first input into
\begin{equation}
		f_1 = \sum_{p_1 > -\frac{(1-\delta)m}{2}} P^{\vp, p_1} f_1 + P^{\vp, \le -\frac{(1-\delta)m}{2}} f_1.
\end{equation}
We first consider the non-endpoint case $2p_1 > -(1-\delta)m$, while the endpoint case will be treated at the end of the proof. Take two numbers
\begin{equation}
	l_1 = (1-\delta)m - k_1 +p_1, \quad l_2 = (1-\delta)m - k_2.
\end{equation}
As in previous sections, we use integration by parts in $V_\eta, V_{\xi-\eta}$ to treat the cases when $f_1$ is replaced by $H_{\le l_1} f_1$ or when $f_2$ is replaced by $Z_{\le l_2} f_2$. By Lemma \ref{lem:805-c1}, on the support of $\chi^{\vp, p_1}_{k_1}(\xi - \eta) \bar{\chi}^{\hp}_{k_2}(\eta)$, there holds that
\begin{equation}
  |\bar{\sigma}| \sim 2^{k_1+k_2},
\end{equation}
and
\begin{equation} \label{eq:806-f1}
	|S_\eta \Phi| + |\Omega_\eta \Phi| \sim 2^{k_2-k_1+p_1}.
\end{equation}
By \eqref{eq:806-f1} and \eqref{eq:429-001}, each integration by parts in $V_\eta \in \{S_\eta, \Omega_\eta\}$ gains a factor  of size
\begin{equation}
	s^{-1}(2^{-p_1+k_1-k_2} + 2^{-2p_1} + 2^{-p_1+l_1+k_1}) \lesssim s^{-1} 2^{-p_1+k_1-k_2} + s^{-\delta}.
\end{equation}
Indeed, we have the following major terms arising from repeated integration by parts using \eqref{eq:429-001} (with $L = 2^{k_1+k_2}$):
\begin{enumerate}[(a)]
	\item There is always a gain of $s^{-1} 2^{-p_1+k_1-k_2}$ due to the denominator $s V_\eta \Phi$.
	\item $V_\eta$ landing on  ${\chi}^{\vp,p_1}(\xi-\eta) \F (H_{\le l_1} f_1)(\xi-\eta)$ gives a factor $\lesssim 1 + 2^{k_2-k_1}+2^{-p_1+k_2-k_1} +2^{l_1+k_2}$ using \eqref{eq:805-bb3}.
	\item $V_\eta$ landing on the second input gives a bounded factor.
	\item If $V_\eta$ lands on $\chi_{_{V_\eta}}$ or $\frac{1}{V_\eta\Phi}$ or coefficients from previous steps of integration by parts, this produces a factor $\lesssim 1 + 2^{k_2-k_1-p_1}$  using the identities \eqref{eq:805-bb4}--\eqref{eq:805-bb7}. Note that in this case we have
	\begin{equation}
		|\Omega_\eta \bar\sigma| = |\xi_3 \eta_h| \lesssim L(1+2^{k_2-k_1}), \quad \frac{|V_\eta (\xi-\eta)_h|}{|(\xi-\eta)_h|} \lesssim 2^{k_2-k_1-p_1}.
	\end{equation}
\end{enumerate}
On the other hand (using \eqref{eq:805-bb1}--\eqref{eq:805-bb2} instead of \eqref{eq:805-bb3}), each integration by parts in $V_{\xi-\eta} \in \{S_{\xi- \eta}, \Omega_{\xi-\eta}\}$ gains a factor  of size
\begin{equation}
	s^{-1}(2^{k_2-k_1} (1+ 2^{k_1-k_2})+2^{l_2+k_2}) \lesssim s^{-1}2^{k_2-k_1} + s^{-\delta}.
\end{equation}

\medskip
\emph{Case 1.} $-p_1 + k_1 - k_2 \ge (1-\delta)m$.
\smallskip

In this case, $k_2-k_1 \le -(1-\delta)m-p_1 \le -\frac{(1-\delta)m}{2}$, hence repeated integration by parts in $V_{\xi - \eta}$ is beneficial. Then, we use set size gain to deduce that
\begin{align}
	& \quad \ \| P_k \Q[\m](P^{\vp, p_1}  f_1, \bar{P}^\hp Z_{> l_2} f_2)\|_{L^2} \nonumber \\
	&\lesssim 2^{k} \min\{2^{k_1+p_1+\frac12 k_2}, 2^{\frac32 k_2}\} \|P^{\vp, p_1} f_1\|_{L^2} \|Z_{>l_2} f_2\|_{L^2} \nonumber \\
	&\lesssim  2^{k+  \frac14 k_1 + \frac54 k_2 + \frac14 p_1} 2^{p_1-(1+\beta-\delta_0)(l_2+k_2)} \|F_1\|_{Y^{0,3}_{\beta'}} \|F_2\|_{X^0_{\beta-\delta_0}} \nonumber \\
	&\lesssim 2^{k+ \frac32 k_1 - 30 k_1^+ - 30 k_2^+} 2^{-(\frac94+\beta-\delta_0)(1-\delta)m} 2^{C \varepsilon_1 m} \varepsilon_1^2.
\end{align}

\medskip
\emph{Case 2.} $k_2 - k_1 \ge (1-\delta)m$.
\smallskip

In this case, $-p_1 + k_1-k_2 \le -\frac{(1-\delta)m}{2}$, hence repeated integration by parts in $V_\eta$ is beneficial. Then, we use set size gain to deduce that
\begin{align}
	&\quad \ \|P_k \Q[\m]( P^{\vp, p_1} H_{> l_1} f_1, \bar{P}^\hp  f_2)\|_{L^2} \nonumber \\
	&\lesssim 2^{k}  2^{\frac32 k_1 + p_1} \|H_{>l_1} f_1\|_{L^2} \|\bar{P}^\hp f_2\|_{L^2} \nonumber \\
	&\lesssim  2^{k+  \frac32 k_2 - \frac32 (1-\delta)m  + p_1} 2^{-(l_1+k_1)} \|F_1\|_{Y^{0}_{0}} \|F_2\|_{X^0_0} \nonumber \\
	&\lesssim 2^{k + \frac32 k_2 - 30 k_1^+ - 30 k_2^+ } 2^{-\frac52 (1-\delta)m + C \varepsilon_1 m} \varepsilon_1^2.
\end{align}

\medskip
\emph{Case 3.} $k_2 - k_1 < (1-\delta)m$ and $-p_1 + k_1 - k_2 < (1-\delta)m$.
\smallskip

In this case, repeated integration by parts in $V_\eta$ and in $V_{\xi-\eta}$ is beneficial. Hence, it suffices to consider
\begin{equation} \label{eq:806-yy1}
	\|P_k\Q[\m](P^{\vp, p_1} H_{> l_1} f_1, \bar{P}^\hp Z_{> l_2} f_2)\|_{L^2}.
\end{equation}

\medskip
\emph{Subcase 3.1.} $2p_1 -k_1 \gtrsim -k_2$.
\smallskip

We directly apply set size gain to achieve an acceptable contribution as  
\begin{align}
\eqref{eq:806-yy1}&\lesssim 2^{k} 2^{k_1+p_1+\frac12 k_2} \|H_{>l_1} f_1\|_{L^2} \|Z_{>l_2} f_2\|_{L^2} \nonumber \\
	&\lesssim  2^{k+  k_2 + \frac12 k_1 +2p_1} 2^{-(1+\beta-\delta_0)(l_1+k_1)-(1+\beta-\delta_0)(l_2+k_2)} \|F_1\|_{Y^0_{\beta-\delta_0}} \|F_2\|_{X^0_{\beta-\delta_0}} \nonumber \\
	&\lesssim 2^{k+ k_2+ \frac12 k_1 - 30 k_1^+ -30k_2^+} 2^{-2(1+\beta-\delta_0)(1-\delta)m + C \varepsilon_1 m} \varepsilon_1^2.
\end{align}

\medskip
\emph{Subcase 3.2.} $2p_1 -k_1 \ll -k_2$.
\smallskip

In this case, there holds that
\begin{equation}
	|\partial_{\eta_3} \Phi| \gtrsim 2^{-k_2}.
\end{equation}
Make the decomposition of phase space
\begin{equation}
	1 \equiv \chi^{\res}(\xi, \eta) + \chi^{\nr}(\xi, \eta)
\end{equation}
with
\begin{equation}
	\chi^{\res} = \psi(2^{100\delta m} \Phi), \quad \chi^{\nr} = (1-\psi)(2^{100\delta m} \Phi).
\end{equation}
For the resonant part, we use \cite[Lemma A.4]{GuoInvent} to get that
\begin{align}
		&\quad \ \|P_k\Q[\m \chi^{\res}](P^{\vp, p_1} H_{> l_1} f_1, \bar{P}^\hp Z_{> l_2} f_2)\|_{L^2} \nonumber \\
		&\lesssim 2^k 2^{k_1+p_1+\frac12 k_2 -50 \delta m} \|H_{>l_1} f_1\|_{L^2} \|Z_{>l_2} f_2\|_{L^2} \nonumber \\
		&\lesssim  2^{k+  k_1 + \frac12 k_2 +p_1 - 50 \delta m} 2^{-(l_1+k_1)-(1+\beta-\delta_0)(l_2+k_2)} \|F_1\|_{Y^0_0} \|F_2\|_{X^0_{\beta-\delta_0}} \nonumber \\
		&\lesssim 2^{k+ k_1+ \frac12 k_2-30 k_1^+ -30 k_2^+} 2^{-(2+\beta-\delta_0)(1-\delta)m - (50 \delta-C \varepsilon_1) m} \varepsilon_1^2.
\end{align}
For the non-resonant part, we perform a normal form and achieve an acceptable contribution due to the estimates
\begin{align}
	&\quad \ \|P_k\Q[\m \chi^{\nr} \Phi^{-1}](P^{\vp, p_1} H_{> l_1} f_1, \bar{P}^\hp Z_{> l_2} f_2)\|_{L^2} \nonumber \\
	&\lesssim 2^{k +100 \delta m} 2^{k_1 + p_1 + \frac12 k_2} \|H_{>l_1} f_1\|_{L^2} \|Z_{>l_2} f_2\|_{L^2} \nonumber \\
	&\lesssim 2^{k +100 \delta m} 2^{k_1 + p_1 + \frac12 k_2} 2^{-(l_1+k_1)-(l_2+k_2)} \|F_1\|_{Y^0_0} \|F_2\|_{X^0_0} \nonumber \\
	&\lesssim  2^{k+k_1  + \frac12 k_2 - 30 k_1^+ - 30 k_2^+} 2^{-2(1-\delta)m + (100 \delta +C\varepsilon_1) m} \varepsilon_1^2,
\end{align}
and
\begin{align} \label{eq:512-ff}
	&\quad \ \|P_k\Q[\m \chi^{\nr} \Phi^{-1}](P^{\vp, p_1} H_{> l_1} \partial_t f_1, \bar{P}^\hp Z_{> l_2} f_2)\|_{L^2} \nonumber \\
	&\lesssim 2^{k +100 \delta m} 2^{k_1 + p_1 + \frac12 k_2} \|\partial_t f_1\|_{L^2} \|Z_{>l_2} f_2\|_{L^2} \nonumber \\
	&\lesssim 2^{k +100 \delta m} 2^{k_1 + p_1 + \frac12 k_2} 2^{-\frac32m + \delta_0 m-(l_2+k_2)} \varepsilon_1^2 \, \|F_2\|_{X^0_0} \nonumber \\
	&\lesssim  2^{k+k_1  + \frac12 k_2 - 30 k_1^+ - 30 k_2^+} 2^{-\frac52 m + 120 \delta m} \varepsilon_1^3.
\end{align}
The case of $\partial_t$ landing on $f_2$ is similar to \eqref{eq:512-ff}, and in the case of energy approach the additional term can be treated similarly as in \eqref{eq:527-a1} using Lemma \ref{lem:51-1}.

Finally, for the endpoint case $p_1 = - \frac{(1-\delta)m}{2}$, we do not make the $H$-decomposition for $f_1$. Using the crude bound \eqref{eq:512-841}  instead of $Y$ norm bounds, the resulting estimate is still acceptable.

\section*{Acknowledgement} 

We would like to thank professors Zhen Lei, Yi Zhou, Dongyi Wei and Yuan Cai for some useful discussions and professor Taoufik Hmidi for a helpful suggestion. X. Ren is supported by the China Postdoctoral Science Foundation under Grant Number BX20230019 and the National Key R\&D Program of China (No.2023YFA1010700). G. Tian is supported in part
by NSFC No.11890660 \& No.12341105, MOST No.2020YFA0712800.

\appendix
\section{Appendix}

\subsection{Commutator estimates}

On the support of $\chi_k$ (see Section \ref{sec:515-22}), the group velocity has an upper bound given by
\begin{equation}
	|\nabla \Lmd| = \frac{|\xi_h|}{|\xi|^2} \le \frac{1}{|\xi|} \le \frac{10}{9} \times 2^{-k}.
\end{equation}
The following result can be seen as  a property of finite propagation speed  for the linear dispersion $e^{i t \Lmd}$.
\begin{lemma} \label{lem:a1-1}
	Let $D$ be a Fourier multiplier operator corresponding to a homogeneous symbol $\lambda(\xi) = \bar{\lambda}(\xi/|\xi|)$ with $\bar{\lambda} \in C^\infty(\mathbb{S}^2)$. Suppose that $t \in [2^m, 2^{m+1}) (m \ge 1)$, $l+k \ge m +3$, $\mu \in \{+,-\}$, then for any integer $K \ge 1$ there holds that
	\begin{equation} \label{eq:a1-est-1}
		\left\|Z_l \left(e^{\mu i t\Lambda} D P_k \right) Z_{[l-1, l+1]^c}\right\|_{L^2 \to L^2} + \left\|H_l \left(e^{\mu i t\Lambda} D P_k \right) H_{[l-1, l+1]^c}\right\|_{L^2 \to L^2} \lesssim_K  2^{-K(l+k)}.
	\end{equation}
\end{lemma}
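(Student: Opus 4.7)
The operator $T := e^{\mu i t\Lambda}DP_k$ is a Fourier multiplier and hence a translation-invariant convolution operator, with kernel
\[
K(z) = \frac{1}{(2\pi)^3}\int_{\R^3} e^{iz\cdot\xi+\mu i t\Lambda(\xi)}\lambda(\xi)\chi_k(\xi)\,d\xi.
\]
The composition $Z_l T Z_{[l-1,l+1]^c}$ has kernel $K_L(x,y):=\varphi(2^{-l}|x_3|)K(x-y)\varphi_{[l-1,l+1]^c}(y_3)$, and elementary support considerations give $|x_3-y_3|\gtrsim 2^l$ on $\supp K_L$. The overall heuristic is that on $\supp\chi_k$ the group velocity $|\nabla\Lambda|=|\xi_h|/|\xi|^2\lesssim 2^{-k}$ caps the propagation distance at $O(t\cdot 2^{-k})=O(2^{m-k})$, while the hypothesis $l+k\ge m+3$ forces $|x_3-y_3|\gtrsim 2^l\ge 8\cdot 2^{m-k}$ to exceed this by a fixed factor. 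The estimate thus amounts to quantifying the vanishing of $K$ outside the propagation cone via repeated integration by parts.

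The plan is to first rescale to $k=0$ by setting $\eta=2^{-k}\xi$ and using the $0$-homogeneity of $\lambda$ and $\Lambda$, which gives $K(z) = 2^{3k}K_0(2^k z)$ with $K_0$ the analogous integral at scale $1$. In the rescaled variables $\tilde z = 2^k z$ the constraint $|z_3|\gtrsim 2^l$ becomes $|\tilde z_3|\gtrsim 2^{l+k}\ge 8\, t$, while on $\supp\chi_0$ all derivatives of $\lambda\chi_0$ and of $\Lambda$ are $O(1)$. Consequently $|\partial_{\eta_3}(\tilde z\cdot\eta+\mu t\Lambda)|\gtrsim|\tilde z_3|$, and one-dimensional integration by parts in $\eta_3$ yields, for any $N_1\ge 1$, the bound $|K_0(\tilde z)|\lesssim_{N_1}|\tilde z_3|^{-N_1}$, uniformly in $\tilde z_h$. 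To secure integrability in the horizontal variable I would additionally integrate by parts in $\eta_h$ in the far-horizontal regime $|\tilde z_h|>2t$, upgrading this to
\[
|K_0(\tilde z)|\lesssim_{N_1,N_2}|\tilde z_3|^{-N_1}(t+|\tilde z_h|)^{-N_2}.
\]

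The estimate then closes by Schur's test: for $x$ fixed with $|x_3|\sim 2^l$,
\[
\int|K_L(x,y)|\,dy\ \lesssim\ \int_{|\tilde z_3|\gtrsim 2^{l+k}} |K_0(\tilde z)|\,d\tilde z\ \lesssim_{N_1,N_2}\ (2^{l+k})^{1-N_1}\, t^{2-N_2},
\]
by separate integration in vertical and horizontal variables. Choosing $N_1=K+3$ and $N_2=3$ and using $t\le 2^{l+k-3}$ gives a bound $\lesssim 2^{-K(l+k)}$; the symmetric estimate holds with the roles of $x$ and $y$ exchanged, which is all that Schur's test requires. The $H_l$ estimate proceeds identically, with $|\nabla_h\Lambda|=|\xi_3||\xi_h|/|\xi|^3\lesssim 2^{-k}$ playing the role of $|\partial_{\xi_3}\Lambda|\lesssim 2^{-k}$ and the primary IBP performed in $\eta_h$ (along whichever horizontal component of $\tilde z_h$ is largest) rather than $\eta_3$. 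The main bookkeeping task is organizing the two-parameter integration by parts so that the $(\tilde z_h,\tilde z_3)$-decay is obtained uniformly; this is routine given the uniform derivative bounds on $\lambda\chi_0$ and on $\Lambda$ restricted to $\supp\chi_0$.
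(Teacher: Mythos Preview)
Your proof is correct and shares the paper's core idea---nonstationary phase exploiting the group-velocity bound $|\nabla\Lambda|\lesssim 2^{-k}$ on $\supp\chi_k$---but the implementation differs. You work with the full three-dimensional convolution kernel $K(z)$, which forces you to control both vertical and horizontal decay (the two-parameter IBP) and then apply Schur's test; this works, and your bound $|K_0(\tilde z)|\lesssim |\tilde z_3|^{-N_1}(t+|\tilde z_h|)^{-N_2}$ is indeed justified by using $N_1+N_2$ rounds of $\eta_3$-IBP in the regime $|\tilde z_h|\lesssim t\lesssim|\tilde z_3|$ and mixed IBP in the far-horizontal regime.

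The paper instead exploits the fact that $Z_l$ acts only on $x_3$ and commutes with the horizontal Fourier transform $\F_h$, which is an $L^2$-isometry. Writing $a=e^{\mu it\Lambda}\lambda\chi_k$ and passing to $\F_h$, the operator becomes, for each fixed $\xi_h$, a one-dimensional convolution in $x_3$ with kernel $(\F_3^{-1}a)(\xi_h,\cdot)$. A single round of $\xi_3$-IBP (repeated $K$ times) then gives $|(\F_3^{-1}a)(\xi_h,x_3)|\lesssim_K 2^{-(K-1)k}|x_3|^{-K}$ for $|x_3|\ge 2t\cdot 2^{-k}$, and Young's inequality in one variable finishes the estimate. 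No horizontal decay is needed at all, and no Schur test. This buys a shorter argument and avoids the two-parameter bookkeeping you flag at the end; your approach, on the other hand, is more symmetric in the coordinates and makes the $H_l$ case genuinely parallel rather than requiring the remark ``by a similar argument.''
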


\begin{proof}
We only consider the estimate involving $Z_l$ as the estimate involving $H_l$ can be shown in a similar way. Write for short $A= e^{\mu i t \Lambda} D P_k$ and $a = e^{\mu i \Lambda t} \lambda \chi_k $, and consider
\begin{align} \label{eq:F3-1-h}
	(\F_3^{-1}a)(\xi_h, x_3)	&= \frac{1}{2 \pi}\int e^{i(\mu t \Lambda +  x_3 \xi_3)} \lambda \chi_k d\xi_3 \nonumber \\
	&= \frac{-i}{2 \pi}\int \frac{\partial_{\xi_3}e^{i(\mu t \Lambda +  x_3 \xi_3)}}{\partial_{\xi_3} (\mu t\Lambda +x_3 \xi_3)} \lambda \chi_k d\xi_3 \nonumber \\
	&= \frac{-i}{2 \pi}\int e^{i(\mu t \Lambda +  x_3 \xi_3)} \left\{ \frac{\partial^2_{\xi_3} (\mu t\Lambda +x_3 \xi_3)}{[\partial_{\xi_3} (\mu t\Lambda +x_3 \xi_3)]^2}  \lambda \chi_k - \frac{\partial_{\xi_3} (\lambda \chi_k)}{\partial_{\xi_3} (\mu t\Lambda +x_3 \xi_3)}  \right\} d\xi_3.
\end{align}
Suppose that $|x_3| \ge t 2^{-k+1}$, then on the support of $\chi_k$  we have
\begin{align} \label{eq:730-201}
	|\partial_{\xi_3} (\mu t\Lambda +x_3 \xi_3)| \ge \frac{1}{2} |x_3|,
\end{align}
hence from \eqref{eq:F3-1-h}  we deduce that
\begin{align*}
	|(\F_3^{-1}a)(\xi_h, x_3)| \lesssim (t 2^{-k}|x_3|^{-1} + 1)|x_3|^{-1} \lesssim |x_3|^{-1}.
\end{align*}
Further repeated integration by parts based on \eqref{eq:F3-1-h} and \eqref{eq:730-201} gives
 \begin{align} \label{eq:F3-1-h-pw}
 	|(\F_3^{-1}a)(\xi_h, x_3)| \lesssim_K 2^{-(K-1)k} |x_3|^{-K}
 \end{align}
 for any $K \ge 1$. 
 
 Now, \eqref{eq:a1-est-1} follows from \eqref{eq:F3-1-h-pw}, Young's inequality for convolutions and the identity
 \begin{equation}
  \left[\F_h \left( A Z_{[l-1,l+1]^c} f \right) \right] (\xi_h, x_3) = \int_{\R} (\F_3^{-1} a)(\xi_h, x_3-y_3) (Z_{[l-1,l+1]^c} \F_h f)(\xi_h, y_3) dy_3.
 \end{equation}
\end{proof}

\begin{lemma} Under the assumptions of Lemma \ref{lem:a1-1}, there holds that
	\begin{equation} \label{eq:comm-2}
		\|[Z_l, e^{\mu i t \Lambda} D P_k^\hp]\|_{L^2 \to L^2} \lesssim  2^{-l-k+m}.
	\end{equation}
The constant in \eqref{eq:comm-2} depends only on $\bar{\lambda}$.
\end{lemma}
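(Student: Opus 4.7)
The plan is to express the commutator explicitly on the Fourier side and reduce the bound to Schur's test in the $\xi_3$-variable, combined with the mean value theorem in $\xi_3$ for the symbol $a(\xi) = e^{\mu i t \Lambda(\xi)}\lambda(\xi)\chi_k^\hp(\xi)$.

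First, I would set $c(x_3) := \varphi(2^{-l}|x_3|)$, so that $Z_l$ is multiplication by $c$ while $A := e^{\mu i t\Lambda}D P_k^\hp$ is the Fourier multiplier with symbol $a$. A direct computation then gives
\begin{equation*}
\widehat{[Z_l,A]f}(\xi) = \frac{1}{2\pi}\int_{\R}\hat c(\xi_3-\eta_3)\bigl[a(\xi_h,\eta_3) - a(\xi_h,\xi_3)\bigr]\hat f(\xi_h,\eta_3)\, d\eta_3,
\end{equation*}
with $\hat c(\zeta) = 2^l \psi(2^l\zeta)$ and $\psi := \F[\varphi(|\cdot|)]$ Schwartz. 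A rescaling yields $\|\zeta\hat c(\zeta)\|_{L^1_\zeta}\lesssim 2^{-l}$.

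Next, by the fundamental theorem of calculus I would factor
\[
a(\xi_h,\eta_3) - a(\xi_h,\xi_3) = (\eta_3-\xi_3)\,b(\xi_h,\xi_3,\eta_3), \qquad b := \int_0^1 (\partial_{\xi_3}a)(\xi_h,\xi_3+s(\eta_3-\xi_3))\,ds.
\]
Differentiating gives $\partial_{\xi_3}a = i\mu t\,(\partial_{\xi_3}\Lambda)\,a + e^{\mu i t\Lambda}\,\partial_{\xi_3}(\lambda\chi_k^\hp)$. On $\supp\chi_k^\hp$ one has $|\partial_{\xi_3}\Lambda| = |\xi_h|^2|\xi|^{-3}\lesssim 2^{-k}$ and $|\partial_{\xi_3}(\lambda\chi_k^\hp)|\lesssim 2^{-k}$; since $t\le 2^{m+1}$ and $m\ge 1$, the $t(\partial_{\xi_3}\Lambda)$ term dominates and one gets the uniform bound $|b|\lesssim 2^{m-k}$.

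Finally, for each fixed $\xi_h$ the commutator acts in $\xi_3$ as the integral operator with kernel $K(\xi_3,\eta_3) = (2\pi)^{-1}\hat c(\xi_3-\eta_3)(\eta_3-\xi_3)\,b(\xi_h,\xi_3,\eta_3)$. Combining the previous estimates,
\begin{equation*}
\sup_{\xi_3}\int_\R|K|\,d\eta_3 + \sup_{\eta_3}\int_\R|K|\,d\xi_3 \lesssim 2^{m-k}\cdot 2^{-l},
\end{equation*}
so Schur's test gives $\|\widehat{[Z_l,A]f}(\xi_h,\cdot)\|_{L^2_{\xi_3}}\lesssim 2^{m-l-k}\|\hat f(\xi_h,\cdot)\|_{L^2_{\eta_3}}$. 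Integrating in $\xi_h$ and applying Plancherel yields the desired bound. The only real point to double-check is the uniform estimate $|b|\lesssim 2^{m-k}$, which hinges on keeping track of the $\xi_3$-support of $a$ so that the group velocity estimate $|\partial_{\xi_3}\Lambda|\lesssim 2^{-k}$ on $\supp\chi_k^\hp$ applies throughout the range of integration in $s$; all else is routine.
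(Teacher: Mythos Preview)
Your proof is correct and mirrors the paper's approach on the Fourier side: both extract the factor $2^{-l}$ from the cutoff $\varphi_l$ and the factor $2^{m-k}$ from $\|\partial_{\xi_3}a\|_{L^\infty}$, the paper via the difference quotient $(\varphi_l(x_3)-\varphi_l(y_3))/(x_3-y_3)$ in physical space together with a trilinear estimate, and you via the mean value theorem applied to $a$ together with Schur's test. On your support concern: since $a$ vanishes outside $\supp\chi_k^\hp$, so does $\partial_{\xi_3}a$, hence the pointwise bound $|\partial_{\xi_3}a|\lesssim 2^{m-k}$ holds globally (it is zero off the support) and $|b|\le\sup_{\R^3}|\partial_{\xi_3}a|$ requires no constraint on the integration segment.
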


\begin{proof}
Using notations from the previous proof, we have
\begin{align}
	\{\F_h [Z_l, A] f\}(\xi_h,x_3) &= \int \big(\varphi_l(x_3)-\varphi_l(y_3)\big)(\F_3^{-1} a)(\xi_h, x_3-y_3) (\F_h f)(\xi_h, y_3) dy_3 \nonumber \\
	&= i \int \frac{ \varphi_l(x_3)-\varphi_l(y_3)}{x_3 - y_3} (\F_3^{-1} \partial_3 a)(\xi_h, x_3-y_3) (\F_h f)(\xi_h, y_3) dy_3.
\end{align}
Then, \eqref{eq:comm-2} is a consequence of the fact that
\begin{equation}
	\left\|\F_{x_3,y_3} \left(\frac{ \varphi_l(x_3)-\varphi_l(y_3)}{x_3 - y_3}\right)\right\|_{L^1} \lesssim 2^{-l}
\end{equation}
and the trilinear inequality
\begin{equation}
	\left\|\int g_1(x, y) g_2(x-y) g_3(y) dy \right\|_{L^2(dx)} \lesssim \|\F_{x, y} g_1\|_{L^1} \|\F g_2\|_{L^\infty} \|g_3\|_{L^2}.
\end{equation}
\end{proof}

\subsection{Interpolation estimates}

\begin{lemma}
	\label{lem:16-a}There holds that
	\begin{equation}
		\| S^a f \|_{L^{\frac{2 n}{a}}} \lesssim \| f \|_{L^{\infty}}^{1 -
			\frac{a}{n}} \left( \sum_{b = 0}^n \| S^b f \|_{L^2}
		\right)^{\frac{a}{n}}, \label{eq:16-a}
	\end{equation}
	\begin{equation}
		\| \Omega^a f \|_{L^{\frac{2 n}{a}}} \lesssim \| f \|_{L^{\infty}}^{1 -
			\frac{a}{n}} \| \Omega^n f \|_{L^2}^{\frac{a}{n}} . \label{eq:16-b}
	\end{equation}
\end{lemma}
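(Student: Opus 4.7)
Both inequalities are one-dimensional Gagliardo--Nirenberg estimates applied along the orbits of the vector fields $\Omega$ and $S$, and then integrated in the transverse variables. The two cases are structurally similar; the weight arising in the $S$-case is the main source of complication.

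For \eqref{eq:16-b}, I would pass to cylindrical coordinates $(r,\theta,z)$ with $dx = r\,dr\,d\theta\,dz$, so that $\Omega = \partial_\theta$ and $\theta$ ranges over the compact circle $\mathbb{T}$. For each fixed $(r,z)$, the standard one-dimensional Gagliardo--Nirenberg on $\mathbb{T}$ yields
\[
\|\partial_\theta^a f\|_{L^{2n/a}(\mathbb{T})}^{\,2n/a} \lesssim \|f\|_{L^\infty(\mathbb{T})}^{\,2(n-a)/a}\,\|\partial_\theta^n f\|_{L^2(\mathbb{T})}^{\,2}.
\]
Multiplying by $r$, integrating in $(r,z)$, using $\|f(r,\cdot,z)\|_{L^\infty(\mathbb{T})}\le \|f\|_{L^\infty(\R^3)}$ to pull the sup-norm factor outside, and finally taking a $(2n/a)$-th root yields \eqref{eq:16-b}.

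For \eqref{eq:16-a}, I would work in spherical coordinates $x = r\omega$, $\omega\in\mathbb{S}^2$, with $dx = r^2\,dr\,d\omega$, and substitute $\tau=\ln r$. Setting $h(\tau,\omega) = f(e^\tau\omega)$, we have $S^a f = \partial_\tau^a h$ and $r^2\,dr = e^{3\tau}\,d\tau$. The task reduces to proving, for each $\omega$,
\[
\int_{\R}|\partial_\tau^a h|^{2n/a}\,e^{3\tau}\,d\tau \ \lesssim\ \|h\|_{L^\infty}^{\,2(n-a)/a}\sum_{b=0}^n \int_{\R} |\partial_\tau^b h|^{2}\,e^{3\tau}\,d\tau,
\]
after which integration in $\omega$ and a $(2n/a)$-th root give \eqref{eq:16-a}. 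To prove this weighted bound I would decompose $\R = \bigsqcup_{j\in\Z} [j,j+1]$; on each unit interval the weight $e^{3\tau}$ is comparable to $e^{3j}$, hence can be pulled in and out of the integrals at the price of a harmless constant. On $[j,j+1]$ I would extend $h$ to a compactly supported function $Eh$ on $\R$ satisfying $\|Eh\|_{L^\infty}\lesssim \|h\|_{L^\infty([j,j+1])}$ and $\|\partial^b Eh\|_{L^2(\R)}\lesssim \sum_{c\le b}\|\partial^c h\|_{L^2([j,j+1])}$ (for instance by iterated reflection), and apply the standard Gagliardo--Nirenberg inequality on $\R$ with $p=2n/a$, for which $p(1-a/n) = p-2$ and $pa/n = 2$. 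Reinserting the weight and summing in $j$ gives the display above.

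The main obstacle is exactly the exponential weight $e^{3\tau}$: it is not preserved under $\partial_\tau$, so a direct application of Gagliardo--Nirenberg to $h$ or to a weighted modification of $h$ produces unwanted lower-order commutator terms that do not close. The unit-scale dyadic decomposition in $\tau$ sidesteps this issue by reducing to a uniformly weighted problem on each slab and invoking an extension operator. This is precisely why the right-hand side of \eqref{eq:16-a} contains the full sum $\sum_{b=0}^n \|S^b f\|_{L^2}$ rather than only $\|S^n f\|_{L^2}$: the extension on a bounded interval controls $\|\partial^n Eh\|_{L^2(\R)}$ by \emph{all} derivatives $\|\partial^c h\|_{L^2([j,j+1])}$ up to order $n$. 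By contrast, in \eqref{eq:16-b} the $\theta$-direction is already compact, no extension is needed, and only $\|\Omega^n f\|_{L^2}$ appears.
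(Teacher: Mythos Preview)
Your argument is correct, but it takes a different route from the paper. The paper never passes to polar or spherical coordinates; instead it works directly on $\R^3$ via integration by parts and a log-convexity argument. For \eqref{eq:16-b}, setting $A(a,n)=\|\Omega^a f\|_{L^{2n/a}}$ and integrating by parts once (using that $\Omega$ is divergence-free, hence skew-adjoint) gives
\[
A(a,n)^{2n/a}\ \lesssim\ A(a-1,n)\,A(a+1,n)\,A(a,n)^{2(n/a-1)},
\]
so $A(a,n)^2\lesssim A(a-1,n)A(a+1,n)$. A discrete convexity argument then interpolates between $A(0,n)=\|f\|_{L^\infty}$ and $A(n,n)=\|\Omega^n f\|_{L^2}$. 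For \eqref{eq:16-a} the paper refers to \cite{GuoCpam}, where the same idea is run with $S$; since $S^*=-S-3$, the integration by parts throws off lower-order commutator terms, and this is precisely what forces the full sum $\sum_{b\le n}\|S^b f\|_{L^2}$ on the right-hand side.

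Your reduction to one-dimensional Gagliardo--Nirenberg along the $\Omega$- and $S$-orbits is a perfectly legitimate alternative, and your explanation of why the sum appears in the $S$-case (the extension operator on a bounded interval) is a nice geometric counterpart to the paper's algebraic reason (the nonzero divergence of $S$). One small point worth making explicit: the periodic Gagliardo--Nirenberg inequality you invoke on $\mathbb{T}$ in the exact form $\|\partial_\theta^a f\|_{L^{2n/a}}\lesssim \|f\|_{L^\infty}^{1-a/n}\|\partial_\theta^n f\|_{L^2}^{a/n}$, with no additive lower-order term, is true but is not always stated this way in references; it follows most cleanly from exactly the log-convexity argument the paper uses.
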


\begin{proof}
	Inequality \eqref{eq:16-a} is proved in {\cite[Lemma 5.3]{GuoCpam}}, and
	\eqref{eq:16-b} can be proved using the same argument. For readers'
	convenience, we present the details for \eqref{eq:16-b} here. We define, for
	$0 \leqslant a \leqslant n$,
	\[ A (a, n) : = \| \Omega^a f \|_{L^{\frac{2 n}{a}}} . \]
	Using integration by parts and Holder's inequality, for $1 \leqslant a
	\leqslant n - 1$, we have
	\[ A (a, n)^{\frac{2 n}{a}} = \int \Omega^a f \cdot \Omega^a f | \Omega^a f
	|^{2 \left( \frac{n}{a} - 1 \right)} d x = - \int \Omega^{a - 1} f \cdot
	\Omega \left( \Omega^a f | \Omega^a f |^{2 \left( \frac{n}{a} - 1
		\right)} \right) d x \]
	\[ = - \left( \frac{2 n}{a} - 1 \right) \int \Omega^{a - 1} f \cdot
	\Omega^{a + 1} f | \Omega^a f |^{2 \left( \frac{n}{a} - 1 \right)} d x
	\lesssim A  (a - 1, n) A (a + 1, n) A (a, n)^{2 \left( \frac{n}{a} - 1
		\right)} \]
	Consequently, we get
	\[ A (a, n) \lesssim \sqrt{A (a - 1, n) A (a + 1, n)} . \]
	This shows that for some constant $C$, the sequence
	\[ b_a = C a^2 + \ln A (a, n) \]
	is discretely convex, that is,
	\[ b_{a - 1} - 2 b_a + b_{a + 1} \geqslant 0, \quad a \in [1, n - 1] . \]
	Hence, for $a \in [0, n]$ we have
	\[ b_a \leqslant \left( 1 - \frac{a}{n} \right) b_0 + \frac{a}{n} b_n \]
	which implies \eqref{eq:16-b} directly.
	
	\ 
\end{proof}

\begin{lemma} \label{lem:730-a}
	There holds that
	\begin{equation}
		\| S^a f \|_{H^{n - a}} \lesssim \| f \|_{H^n}^{1 - \frac{a}{n}} \left(
		\sum_{b = 1}^n \| S^b f \|_{L^2} \right)^{\frac{a}{n}} \label{eq:16-2b}
	\end{equation}
	\begin{equation}
		\| \Omega^a f \|_{H^{n - a}} \lesssim \| f \|_{H^n}^{1 - \frac{a}{n}}
		\left( \sum_{b = 1}^n \| \Omega^b f \|_{L^2} \right)^{\frac{a}{n}}
		\label{eq:16-3b}
	\end{equation}
	\begin{equation}
		\| S^a \Omega^{n - a} f \|_{L^2} \lesssim \| \Omega^{a + b} f \|_{L^2}^{1
			- \frac{a}{n}} \left( \sum_{k = 0}^n \| S^k f \|_{L^2}
		\right)^{\frac{a}{n}} \label{eq:26aa}
	\end{equation}
\end{lemma}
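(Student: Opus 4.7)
The plan is to prove all three estimates by establishing discrete (log-)convexity of a suitable $a$-indexed sequence, in direct analogy with the shift-convexity argument used for Lemma \ref{lem:16-a}. The main tools are the adjoint identities $\Omega^* = -\Omega$ and $S^* = -S-3$ (applied by integration by parts), together with the commutators $[\Omega,|\nabla|]=0$, $[S,|\nabla|^{2s}] = -2s\,|\nabla|^{2s}$, and $[S,\Omega]=0$ (scaling commutes with rotation).

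The cleanest case is \eqref{eq:16-3b}, for which the convexity turns out to be exact. I would use $\|g\|_{H^{n-a}}^2 \sim \|g\|_{L^2}^2 + \||\nabla|^{n-a}g\|_{L^2}^2$ to split the claim into the two sequences
\[
E_1(a) := \|\Omega^a f\|_{L^2}^2, \qquad E_2(a) := \||\nabla|^{n-a}\Omega^a f\|_{L^2}^2,
\]
both of which I want to show are log-convex in $a$. For $E_1$, antisymmetry of $\Omega$ gives $E_1(a) = -\langle \Omega^{a-1}f, \Omega^{a+1}f\rangle \le E_1(a-1)^{1/2} E_1(a+1)^{1/2}$. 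For $E_2$, using $\Omega^* = -\Omega$, then $[\Omega,|\nabla|]=0$, and finally moving one power of $|\nabla|$ across the inner product, one obtains
\[
E_2(a) = -\langle |\nabla|^{n-a+1}\Omega^{a-1}f,\,|\nabla|^{n-a-1}\Omega^{a+1}f\rangle \le E_2(a-1)^{1/2}E_2(a+1)^{1/2}.
\]
Discrete log-convexity then yields $E_i(a) \le E_i(0)^{1-a/n}E_i(n)^{a/n}$ for $i=1,2$, and summing the two contributions gives \eqref{eq:16-3b}.

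For \eqref{eq:16-2b}, the same scheme applies with $S$ in place of $\Omega$, but now $S^* = -S - 3$ and $[S,|\nabla|^{2(n-a)}] = -2(n-a)|\nabla|^{2(n-a)}$ produce lower-order corrections of the form $(2n-2a-3)\langle S^{a-1}f,\,|\nabla|^{2(n-a)}S^a f\rangle$ after one integration by parts in $S$. These are cross terms between levels $a-1$ and $a$ that destroy exact log-convexity. The remedy, exactly as in Lemma \ref{lem:16-a}, is to work with the shifted sequence $b_a := Ca^2 + \ln\bigl(\|S^a f\|_{H^{n-a}}^2 + \sum_{k=1}^n \|S^k f\|_{L^2}^2\bigr)$: augmenting by $\sum_k\|S^k f\|_{L^2}^2$ dominates the cross-term errors, and choosing $C$ large enough renders $b_a$ discretely convex, after which $b_a \le (1-a/n) b_0 + (a/n) b_n$ yields the interpolation. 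For \eqref{eq:26aa}, I would use $[S,\Omega]=0$ to write $A(a) := \|S^a\Omega^{n-a}f\|_{L^2}^2 = \|\Omega^{n-a}S^a f\|_{L^2}^2$; integration by parts in $\Omega$ gives the clean bilinear identity $A(a) = -\langle S^a\Omega^{n-a-1}f,\, S^a\Omega^{n-a+1}f\rangle$, while integration by parts in $S$ introduces the same type of lower-order cross term controlled by $\sum_k\|S^k f\|_{L^2}$, again handled by the convexity shift.

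The main obstacle, as in the original proof of Lemma \ref{lem:16-a}, is the careful bookkeeping of these lower-order corrections in \eqref{eq:16-2b} and \eqref{eq:26aa}: the shift $b_a = Ca^2 + \ln(\cdots)$ together with the augmentation by $\sum_k\|S^k f\|_{L^2}^2$ is precisely what the $\sum_{b=1}^n\|S^b f\|_{L^2}$ term on the RHS of the statements is designed to absorb.
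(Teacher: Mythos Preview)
Your overall strategy (log-convexity of an appropriate $a$-indexed sequence via integration by parts) matches the paper, and your treatment of \eqref{eq:16-3b} is correct and in fact a bit cleaner than what the paper sketches, thanks to $[\Omega,|\nabla|]=0$ and $\Omega^*=-\Omega$ giving exact convexity.

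For \eqref{eq:16-2b}, however, there is a genuine gap in the augmentation you propose. After the $S$-integration by parts, the cross term is (up to constants)
\[
\langle |\nabla|^{n-a}S^{a-1}f,\,|\nabla|^{n-a}S^{a}f\rangle,
\]
which is controlled by $\|S^{a-1}f\|_{\dot H^{n-a}}\|S^{a}f\|_{\dot H^{n-a}}$. These are $H^{n-a}$-level quantities, \emph{not} $L^2$ norms of $S^kf$, so your augmentation by $\sum_k\|S^kf\|_{L^2}^2$ does not dominate them (think of $f$ concentrated at frequency $N\gg1$). Consequently the inequality you obtain is only $\tilde A(a)\lesssim \tilde A(a-1)^{1/2}\tilde A(a+1)^{1/2}+\tilde A(a-1)^{1/2}\tilde A(a)^{1/2}$, which does \emph{not} yield log-convexity after a $Ca^2$ shift (the extra $\tilde A(a-1)$ term survives). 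The same issue propagates to your treatment of \eqref{eq:26aa}, where the cross term $\|S^{a-1}\Omega^{n-a}f\|_{L^2}$ is at total order $n-1$ and is not part of your sequence.

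The paper's fix is to work instead with the \emph{cumulative} quantity $A(a,n)=\sum_{b=0}^{a}\|S^{b}f\|_{H^{n-a}}$. Then, after moving one spatial derivative across the pairing, the cross term becomes $\le\|S^{a-1}f\|_{H^{n-a+1}}\|S^{a}f\|_{H^{n-a-1}}\le A(a-1,n)\,A(a+1,n)$, and the lower-$b$ summands in $A(a,n)$ are handled by the standard interpolation $\|g\|_{H^{n-a}}\le\|g\|_{H^{n-a+1}}^{1/2}\|g\|_{H^{n-a-1}}^{1/2}$. This restores $A(a,n)\lesssim A(a-1,n)^{1/2}A(a+1,n)^{1/2}$ and the argument of Lemma~\ref{lem:16-a} then closes.
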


\begin{proof}
	We define, for $0 \leqslant a \leqslant n$,
	\[ A (a, n) = \sum_{b = 0}^a \| S^b f \|_{H^{n - a}} . \]
	Using integration by parts and Holder's inequality, we have, for $| \mu | =
	n - a$,
	\[ \| \partial^{\mu} S^a f \|_{L^2}^2 = - \int \partial^{\mu - 1} S^{a + 1}
	f \cdot \partial^{\mu + 1} S^{a - 1} f d x + l.o.t. \lesssim A (a + 1, n)
	A (a - 1, n), \]
	which show that
	\begin{equation}
		A (a, n) \leqslant \sqrt{A (a - 1, n) A (a + 1, n)} . \label{eq:16-2a}
	\end{equation}
	Hence, the argument from Lemma \ref{lem:16-a} can be applied to prove
	\eqref{eq:16-2b}.
	
	The proofs for \eqref{eq:16-3b} and \eqref{eq:26a} are similar.
\end{proof}




		\bibliographystyle{plain}
	\bibliography{Euler-Coriolis}

\end{document}